\documentclass[11pt]{amsart}

\usepackage[margin=1.2in]{geometry}
\usepackage{amsmath,amssymb,mathtools}
\usepackage{enumitem}
\usepackage{xcolor}
\usepackage[colorlinks=true,linkcolor=blue,citecolor=blue,urlcolor=blue]{hyperref}

\numberwithin{equation}{section}

\theoremstyle{plain}
\newtheorem{theorem}{Theorem}[section]
\newtheorem*{maintheorem}{Main result}
\newtheorem{proposition}[theorem]{Proposition}
\newtheorem{lemma}[theorem]{Lemma}

\theoremstyle{definition}
\newtheorem{assumption}[theorem]{Assumption}

\theoremstyle{remark}
\newtheorem{remark}[theorem]{Remark}

\newcommand{\T}{\mathbb{T}}
\newcommand{\R}{\mathbb{R}}
\newcommand{\Z}{\mathbb{Z}}
\newcommand{\E}{\mathbb{E}}
\newcommand{\Prob}{\mathbb{P}}
\newcommand{\Pp}{\mathcal{P}}
\newcommand{\F}{\mathcal{F}}
\newcommand{\dm}{\boldsymbol{d}}                
\newcommand{\Lip}{\operatorname{Lip}}
\newcommand{\diver}{\operatorname{div}}

\newcommand{\bx}{\boldsymbol{x}}
\newcommand{\by}{\boldsymbol{y}}
\newcommand{\bX}{\boldsymbol{X}}
\newcommand{\bY}{\boldsymbol{Y}}
\newcommand{\balpha}{\boldsymbol{\alpha}}
\newcommand{\1}{\mathbf{1}}

\title[Optimal rate in mean field control via shadow flows]
      {The optimal rate of convergence in mean field control via recoupled shadow flows}

\author{Sebastian Munoz}
\address{Department of Mathematics, University of California, Los Angeles,
CA 90095, USA}
\email{sebastian@math.ucla.edu}

\subjclass[2020]{49N80, 93E20, 60H30, 35Q89, 49Q22}
\keywords{Mean field control, McKean--Vlasov control, stochastic optimal
control, interacting particle systems, sharp convergence rates, Wasserstein
distance, empirical measures, Fokker--Planck equations, optimal transport,
common noise}

\begin{document}

\begin{abstract}
We determine the rate of convergence of the value functions of the
$N$-particle stochastic optimal control problem to the value function
of the corresponding mean field control problem, for mean field costs
that are merely Lipschitz continuous in the $1$-Wasserstein
distance---a class that covers problems whose mean field optimizers
are neither unique nor stable.  For $d\geq2$, the optimal rate is the
empirical-measure rate ($N^{-1/d}$ for $d\geq3$,
$N^{-1/2}\sqrt{\log N}$ for $d=2$): this proves the rate conjectured
by Daudin, Delarue and Jackson, and removes the semiconcavity
hypothesis made there.  In dimension one, we discover that the
empirical-measure benchmark is not optimal: cooperating particles beat
it, and the optimal polynomial exponent is $4/7$, strictly between the
accuracy of independent samples and that of quantization by freely
placed points.  The proofs are control-theoretic: from each
realization of an $N$-particle control we build a pathwise
Fokker--Planck flow---a \emph{shadow flow}---which, repeatedly
recoupled to the particles by optimal transport, shadows the empirical
measure at the optimal rate.  The one-dimensional rate requires additional constructions:
we correct the shadow flow with a filter built on the future of the
discarded noise, draw the cooperating particles from a Gibbs law, and
prove the optimality of the exponent by a Schr\"odinger ground-state
estimate.  The empirical-measure rate also holds under additive common
noise, uniformly in its intensity.
\end{abstract}

\maketitle

\setcounter{tocdepth}{1}
\tableofcontents

\section{Introduction}

\subsection{The convergence problem}
Mean field control theory studies the optimal control of McKean--Vlasov
dynamics, understood as the limit, as $N\to\infty$, of the cooperative optimal
control of $N$ exchangeable interacting particles.  The basic objects are two
value functions.  On the one hand, for $N\in\mathbb N$, an initial time
$t_0\in[0,T]$ and an initial configuration $\bx=(x^1,\dots,x^N)\in(\T^d)^N$,
the \emph{$N$-particle value function} is
\begin{equation*}
 V^N(t_0,\bx)
 =\inf_{\balpha}\,
 \E\bigg[\int_{t_0}^T\Big(\frac1N\sum_{i=1}^NL(X^i_t,\alpha^i_t)
 +F(\mu^N_t)\Big)\,dt+G(\mu^N_T)\bigg],
 \qquad
 \mu^N_t:=\frac1N\sum_{i=1}^N\delta_{X^i_t},
\end{equation*}
where the states evolve on the torus $\T^d$ according to
$dX^i_t=\alpha^i_t\,dt+\sqrt2\,dW^i_t$, driven by independent Brownian motions
$(W^i)_{i\geq1}$, and the infimum runs over all square-integrable controls
that are progressively measurable with respect to the common
filtration.  In
particular, the players may correlate their actions in an arbitrary,
path-dependent way.  On the other hand, the \emph{mean field value
function} is defined through the Fokker--Planck (Eulerian) formulation: for
$m_0\in\Pp(\T^d)$,
\begin{equation}\label{eq:U-informal}
 U(t_0,m_0)
 =\inf_{(m,\alpha)}
 \bigg\{\int_{t_0}^T\Big(\int_{\T^d}L\big(y,\alpha(t,y)\big)\,m_t(dy)
 +F(m_t)\Big)dt+G(m_T)\bigg\},
\end{equation}
the infimum running over \emph{deterministic} curves
$m:[t_0,T]\to\Pp(\T^d)$ paired with Borel drift fields $\alpha$, subject to
the Fokker--Planck equation
$\partial_tm_t=\Delta m_t-\diver(m_t\,\alpha(t,\cdot))$ with $m_{t_0}=m_0$.
See Section~\ref{subsec:problems} for the precise class of pairs and for
the equivalence with the McKean--Vlasov (Lagrangian) formulation.  It is a
classical fact that
$V^N(t_0,\bx)-U(t_0,m^N_{\bx})\to 0$ as $N\to\infty$, where
$m^N_{\bx}:=\frac1N\sum_i\delta_{x^i}$.  Qualitative convergence results of
this type were proved in \cite{Lacker2017}, via weak-convergence methods
going back to \cite{BudhirajaDupuisFischer2012}, and, in great
generality including common noise, in \cite{DjetePossamaiTan2022}.  See also
\cite{FornasierLisiniOrrieriSavare2019,CavagnariLisiniOrrieriSavare2022,
MayorgaSwiech2023} and
\cite{CarmonaDelarue2018I,CarmonaDelarue2018II} for background, and
\cite{HuangMalhameCaines2006,LasryLions2007,Fischer2017,
CardaliaguetDelarueLasryLions2019}
for the companion (non-cooperative) theory of mean field games.

In its quantitative form, the \emph{convergence problem} in mean field
control asks for the sharp rate at which
\begin{equation}\label{eq:gap}
 \sup_{(t,\bx)\in[0,T]\times(\T^d)^N}
 \big|V^N(t,\bx)-U(t,m^N_{\bx})\big|
\end{equation}
tends to zero.  The first global algebraic rate in the continuous-state
diffusive setting, valid without convexity of the costs $F$ and $G$ in the
measure argument and without smoothness of the limiting value function,
was obtained by Cardaliaguet, Daudin, Jackson and Souganidis
\cite{CardaliaguetDaudinJacksonSouganidis2023}.  Earlier, Germain, Pham
and Warin \cite{GermainPhamWarin2022} had obtained the rate $1/N$ for
limiting problems with smooth solutions.  See also
\cite{BayraktarCecchinChakraborty2023} for a regime-switching model.  It was then substantially
sharpened by Daudin, Delarue and Jackson \cite{DaudinDelarueJackson2024}, who
also identified the critical role played by the regularity of the data.  When
$F$ and $G$ are Lipschitz continuous with respect to the $1$-Wasserstein
distance $d_1$ (the ``$d_1$-regular case''), the natural benchmark for the
quantity \eqref{eq:gap} is the \emph{empirical-measure
rate}
\begin{equation}\label{eq:RNd}
 r_{N,d}:=
 \begin{cases}
 N^{-1/2}, & d=1,\\
 N^{-1/2}\sqrt{\log(1+N)}, & d=2,\\
 N^{-1/d}, & d\geq3,
 \end{cases}
\end{equation}
the worst-case expected $d_1$-error of the empirical measure of
$N$ independent samples from a probability measure on $\T^d$
\cite{FournierGuillin2015,DereichScheutzowSchottstedt2013,AjtaiKomlosTusnady1984,BobkovLedoux2021}.
For $d\geq2$, the quantity \eqref{eq:gap} indeed cannot tend to zero faster
than $N^{-1/d}$: a counterexample realizing this obstruction is given in
\cite[Sec.~2.4]{DaudinDelarueJackson2024}.  At $d=1$
this obstruction falls short of the benchmark, and there is a surprising mechanism at play (Section~\ref{subsec:intro-one-dim}).  In the positive direction,
the main result of \cite[Thm.~2.6]{DaudinDelarueJackson2024} in the
$d_1$-regular case states that, when $F$ and $G$ are Lipschitz and
semiconcave with respect to $d_1$, for every $\eta>0$,
\begin{equation}\label{eq:DDJ-rate}
 V^N(t,\bx)-Cr_{N,d}
 \;\leq\;
 U(t,m^N_{\bx})
 \;\leq\;
 V^N(t,\bx)+CN^{-2/(3d+6)+\eta}.
\end{equation}
The lower bound (the ``easy inequality'', in the terminology of
\cite{DaudinDelarueJackson2024}) already reaches the
benchmark: it is established there with
the factor $\log(1+N)$ at $d=2$, and its proof yields
\eqref{eq:DDJ-rate} as stated once the empirical-measure estimate of
\cite{FournierGuillin2015} is replaced by that of
\cite[Thm.~3]{BobkovLedoux2021}.  The upper
bound (the ``hard inequality'') does not, and closing this gap, that is,
proving
\begin{equation}\label{eq:hard-intro}
 U(t,m^N_{\bx})\;\leq\;V^N(t,\bx)+C\,r_{N,d}
\end{equation}
in the $d_1$-regular case, was raised as an open problem in
\cite[Sec.~1.5]{DaudinDelarueJackson2024}.  Subsequently, Cecchin,
Daudin, Jackson and Martini
\cite[Thm.~2.4]{CecchinDaudinJacksonMartini2025}, refining the
techniques of \cite{DaudinDelarueJackson2024}, obtained, for a
constant nondegenerate idiosyncratic noise and allowing for a common
noise, the bound
\[
 U(t,m^N_{\bx})\;\leq\;V^N(t,\bx)+C\,
 \begin{cases}
  N^{-1/(d+7)}, & d\ \text{even},\\
  N^{-1/(d+6)}, & d\ \text{odd},
 \end{cases}
\]
which improves the rate for $d\geq7$.

Using a different, purely analytic viscosity-solution
approach, Bayraktar, Ekren and Zhang \cite{BayraktarEkrenZhang2025}
obtained a non-sharp bound on \eqref{eq:gap} for smooth data; see also
\cite{BayraktarEkrenZhang2025Comparison,BayraktarEkrenHeZhang2026} for
the underlying comparison theory.

\subsection{Main result: the conjectured rate}
In this paper we prove the hard inequality \eqref{eq:hard-intro}, assuming
only $d_1$-Lipschitz continuity of the costs (semiconcavity is not needed).
In this class the mean field optimizers may be neither
unique nor stable, and the value function $U$ need not be
differentiable.
Together with the easy inequality (established in
\cite[Prop.~6.2]{DaudinDelarueJackson2024}, also without
semiconcavity), this yields the following two-sided bound.  See
Section~\ref{sec:setting} for the precise setting,
Theorem~\ref{thm:main} for the statement including constant
dependencies, and Theorem~\ref{thm:common-noise} for
the common-noise statement.

\begin{maintheorem}
Assume that the Hamiltonian $H\in C^2(\T^d\times\R^d)$ satisfies, for some
constant $C_0\geq1$,
\[
 C_0^{-1}I_d\leq D^2_{pp}H(x,p)\leq C_0I_d,
 \qquad
 |D_xH(x,p)|\leq C_0\big(1+|p|\big),
 \qquad (x,p)\in\T^d\times\R^d,
\]
and that $F,G:\Pp(\T^d)\to\R$ are
Lipschitz continuous with respect to $d_1$.  Then there exists a constant $C$,
independent of $N$, such that for all $N\geq1$ and all
$(t,\bx)\in[0,T]\times(\T^d)^N$,
\[
 \big|U(t,m^N_{\bx})-V^N(t,\bx)\big|\leq Cr_{N,d}.
\]
For $d\geq2$ the rate $r_{N,d}$ cannot be improved.  Moreover, the
same result holds in the presence of an additive common noise
$\sigma_0\,dW^0_t$, with $C$ independent of both $N$ and $\sigma_0\in\R$.
\end{maintheorem}

At $d=2$, the optimality includes the exact logarithmic factor
(Remark~\ref{rem:comparison-DDJ}).  Dimension one is exceptional.  It is the subject of
Section~\ref{subsec:intro-one-dim} below.

In the common-noise setting, Theorem~\ref{thm:common-noise} improves
the rate of \cite[Thm.~2.4]{CecchinDaudinJacksonMartini2025} to
$r_{N,d}$ and removes their semiconcavity hypothesis.

Theorem~\ref{thm:main} is complementary to the work of Cardaliaguet, Jackson,
Mimikos-Sta\-ma\-to\-pou\-los and Souganidis
\cite{CardaliaguetJacksonMimikosSouganidis2023}, who proved, for smooth data,
the much faster rate $1/N$ locally in the open and dense region of strong
regularity identified by Cardaliaguet and Souganidis
\cite{CardaliaguetSouganidis2022}.  The two results describe
the two regimes---local strong regularity versus global low regularity---in
which sharp rates are now available.  For the corresponding sharp results on
finite state spaces see \cite{Cecchin2021,Kolokoltsov2012}; for convex
costs see \cite[Prop.~2.8]{DaudinDelarueJackson2024} and
\cite{JacksonLacker2025}; for related
quantitative results see \cite{GermainPhamWarin2022}.

\subsection{Main result: the exceptional dimension one}\label{subsec:intro-one-dim}
In dimension one, strikingly, the conjectured benchmark can be beaten.

\begin{maintheorem}[dimension one]
Let $d=1$ and let the assumptions above hold.  There exists a constant
$C$, independent of $N$, such that for all $N\geq1$ and all
$(t,\bx)\in[0,T]\times\T^N$,
\[
 \big|U(t,m^N_{\bx})-V^N(t,\bx)\big|\leq CN^{-4/7}\log^{1/7}(1+N),
\]
and the polynomial exponent $4/7$ cannot be improved.
\end{maintheorem}

See Theorem~\ref{thm:one-dim} for the full statement: the hard
inequality holds without the logarithmic factor, and the optimality is
established by an explicit example.

The benchmark $r_{N,1}=N^{-1/2}$ is the accuracy of independent
samples, and cooperating particles can do better.  The opposite
extreme is full coordination: $N$ points of mass $1/N$ that are placed
rather than sampled (at quantiles, say) approximate any measure on
$\T$ in $d_1$ at the equal-weight quantization rate $N^{-1}$
\cite{GrafLuschgy2000,BencheikhJourdain2022}.
The optimal exponent $4/7$ falls strictly between the two benchmarks:
cooperation pays off, but particles that are to stay better spread than
random samples must fight their own diffusion, incurring a running cost.  This comparison also explains why the exception is low-dimensional: for $d\geq3$, independent samples already achieve the
quantization rate $N^{-1/d}$ of absolutely continuous targets, and the
example of \cite{DaudinDelarueJackson2024} transfers this geometric
obstruction to the control problem, while at $d=2$ the two benchmarks
differ only by the factor
$\sqrt{\log(1+N)}$ (Remark~\ref{rem:comparison-DDJ}).  Only at $d=1$
is there polynomial room between the two, and the true rate lies
strictly inside it.

At this precision the easy inequality also stops being easy: the known
proofs (e.g., \cite[Prop.~6.2]{DaudinDelarueJackson2024}) realize a
mean field control through $N$ independent copies,
whose empirical measure fluctuates at the central-limit scale, and no
such construction reaches the exponent $4/7$.  Both directions
therefore require new constructions, discussed below and carried out in
Section~\ref{sec:one-dim}, the technical climax of the paper.

\subsection{Strategy of the proofs}\label{subsec:strategy}
The obstruction in the hard inequality \eqref{eq:hard-intro}, discussed
already in \cite[Sec.~1.5]{DaudinDelarueJackson2024}, is the following.  Given an
$N$-particle control $\balpha$, its empirical measure flow
$(\mu^N_t)_{t_0\leq t\leq T}$ is a \emph{random} flow of measures, while the
mean field problem optimizes over \emph{deterministic} flows.  Since the
particle controls may be arbitrarily correlated, no law of large numbers is
available to decouple them.  Diffusion is the source of the obstruction.  Indeed, when
idiosyncratic noise is absent, Cecchin, Daudin, Jackson and Martini have
shown that an $N$-particle control lifts directly to a mean field control,
yielding $U(t,m^N_{\bx})\leq V^N(t,\bx)$ without error
\cite[Prop.~6.1]{CecchinDaudinJacksonMartini2025}.  Independent
Brownian noises destroy this lift, because the realized empirical flow no
longer solves an admissible deterministic diffusive Fokker--Planck equation.

The PDE approach of \cite{DaudinDelarueJackson2024}, and its refinement in \cite{CecchinDaudinJacksonMartini2025}, circumvent this obstruction through comparison arguments on the Wasserstein space, but yield non-sharp exponents. In the argument of \cite{DaudinDelarueJackson2024}, the loss arises from the regularization needed to carry out the comparison with \(V^N\): applying the finite-particle Laplacian to the regularized candidate subsolution produces a correction of order \(N^{-1}\), controlled by its second derivative in the measure variable. With merely Lipschitz costs, \(U\) does not have this regularity a priori, and the approximation error introduced in creating it leads to the weaker rate.

Our proof is control-theoretic: it does not regularize the value function
and never touches the infinite-dimensional Hamilton--Jacobi equation.
Since $U$ is an infimum, the hard inequality needs only a competitor: it
suffices to produce, from each particle control, an admissible pair for the
mean field problem whose expected cost is within the optimal error of the
particle cost.  The shadow flow is that competitor.  The proof rests on
three ideas.

\smallskip
\emph{(i) A pathwise shadow flow.}
Fix an adapted particle control $\balpha=(\alpha^1,\dots,\alpha^N)$ with
$|\alpha^i|\leq M$, and fix a realization $\omega$ of the noise.  We
construct from it a flow $\nu=(\nu_t)_{t_0\leq t\leq T}$ of
probability measures---the \emph{shadow flow}---starting from the initial
empirical measure, which
solves pathwise a Fokker--Planck equation
\[
 \partial_t\nu_t=\Delta\nu_t-\diver(\beta_t\nu_t),
 \qquad |\beta_t|\leq M .
\]
The flow, and with it the drift $\beta$, is assembled from mass-$(1/N)$
components, one per particle: the component attached to particle $i$ is
transported by the realized drift $\alpha^i_t(\omega)$ of that particle,
while the realized Brownian increments are replaced by heat smoothing.
At the times $s_k=t_0+kh$ of a grid of mesh $h$, the
components are recoupled to the particles: the existing mass of
$\nu_{s_k}$ is redecomposed into $N$ pieces of mass $1/N$ so as to
minimize the total cost of transporting each piece to its particle.
The minimal cost is the distance $d_1(\nu_{s_k},\mu^N_{s_k})$ itself.  This
relabeling is free: no mass actually moves, so the cost functional
never sees it and the pathwise Fokker--Planck equation survives each
grid time; yet it reattaches each particle control to nearby shadow
mass.  

This freeness is what makes the shadow flow effective: the
coupling to the particles can be re-optimized as often as the argument
demands, at no price in action or in admissibility. Between grid times the attachment
degrades (the components spread and drift while the particles
diffuse), but only by $C\sqrt h$ (Lemma~\ref{lem:provisional}).
Crucially, these local errors do not accumulate, because they enter the
final estimate only through time integrals against integrable kernels rather than
interval by interval.

The flow depends on the control, on the realized noise, and on the grid,
as well as on $N$.  The mesh $h$ is sent to zero only at the end of the
argument.  The shadow flow is thus a quantitative diffusive analogue of the
naive lift available in the noiseless case
\cite{CecchinDaudinJacksonMartini2025}, to which it collapses when the
idiosyncratic noise is switched off: every component is then a Dirac
mass following its particle.  Heat smoothing and recoupling are what
diffusion adds.  The grid construction may be viewed formally as an approximation of a continuously recoupled shadow flow, whose attachment to the particles is refreshed at every instant; see Remark~\ref{rem:continuous-shadow}.

\smallskip
\emph{(ii) Shadowing the empirical measure.}
The key quantitative fact that justifies our constructed competitor is the estimate
(Proposition~\ref{prop:tracking})
\[
 \sup_{t_0\leq t\leq T}\E\, d_1(\nu_t,\mu^N_t)\leq C_M\big(r_{N,d}+\sqrt h\big),
\]
which says that the shadow flow follows the empirical measure at the optimal rate,
uniformly in time. The difference of the two flows is driven by two error
terms.  The first is a flux mismatch: both flows transport their mass
along the same realized controls, applied on one side to the particles and
on the other to the nearby components, so the tightness of the coupling
controls the heat-smoothed flux mismatch
(Lemma~\ref{lem:flux}).  The second is the discarded Brownian noise, which
enters the difference as a martingale.  Heat smoothing tames both: it
converts the flux mismatch into an integrable singularity
$(t-s)^{-1/2}$---the critical parabolic gain---and it caps the
mass of the martingale
(Lemma~\ref{lem:martingale}).  By the It\^o
isometry and translation invariance, the size of the
martingale does not depend on the particle positions at all. For this
reason, beyond the independence of the driving idiosyncratic Brownian motions, the estimate requires neither independence nor exchangeability of the particle states, nor
any mixing of the controls.

The accounting is exactly critical: as the
smoothing time shrinks, the flux grows at precisely the rate at which the
heat flow damps it, so the flux mismatch is returned neither amplified nor
reduced---it comes back proportional to the distance being
estimated---and the true smallness must come entirely from the martingale
term and from the choice of the smoothing scale.  This exact balance is where the sharp
rate is won.  The second-order regularity that the comparison argument of
\cite{DaudinDelarueJackson2024} must manufacture by regularizing the value
function, at a price in powers of $N$, is provided here by the heat flow
at no cost in the rate (Remark~\ref{rem:critical}):
parabolic smoothing is intrinsic to the dynamics, but it acts on
flows, affecting only the competitor, while the value
function inherits none of their smoothing.  A Gr\"onwall argument closes the loop, and optimizing the
smoothing scale produces exactly the empirical-measure rate $r_{N,d}$.

\smallskip
\emph{(iii) The cost of the competitor.}
Finally, the running cost is transferred from the particles to the shadow
flow: the drift $\beta_t$ is, at each point, a convex combination of the
realized particle controls, so Jensen's inequality bounds the action of
$\nu$ by the average particle action, up to a coupling error, and the
$d_1$-Lipschitz continuity of $F$ and $G$ handles the mean field costs.
For each fixed $\omega$, the pair $(\nu^\omega,\beta^\omega)$ is an
admissible deterministic competitor in the Fokker--Planck formulation of
$U$, so $U(t_0,m^N_{\bx})\leq J(\nu^\omega,\beta^\omega)$ pointwise, and
the hard inequality follows by taking expectations.

\smallskip
The above proves the hard inequality \eqref{eq:hard-intro} for particle
controls bounded by $M$,
with a constant depending on $M$ but not on $N$
(Proposition~\ref{prop:comparison}).  The passage to the value function
$V^N$ uses a smoothing of $F$ and $G$ that preserves the $d_1$-Lipschitz
constant \cite[Lem.~4.1 and App.~B]{DaudinDelarueJackson2024}, together
with the uniform bound $|D_{x^i}V^{N,n}|\leq C/N$ for the smoothed problems
\cite[Lem.~6.1]{DaudinDelarueJackson2024}, which makes their optimal
feedback controls bounded uniformly in $n$ and $N$.

\smallskip
\emph{The one-dimensional case.}
At $d=1$ each direction requires a substantially deeper
construction, and both reflect
the trade described in Section~\ref{subsec:intro-one-dim}: accuracy beyond the
central-limit scale is bought with control energy.

For the hard inequality \eqref{eq:hard-intro}, the construction is a
corrected shadow flow: the transport of each component acquires an
additional drift, built from the future of the realized noise.  The
correction is tailored to control the dominant term of the shadowing error, the
discarded noise, which enters at the central-limit scale $N^{-1/2}$.
Heat smoothing already controls its high frequencies, so the added
drift steers against the low Fourier modes before they can
accumulate.  The
shadowing argument of Section~\ref{sec:tracking} is then rerun for the
corrected flow: damping the first $K$ frequencies costs drift energy
of order $K^{3}/N$, the surviving noise contributes $1/\sqrt{NK}$, and
at the optimal choice $K=N^{1/7}$ the two meet at $N^{-4/7}$.  The
corrected flow is anticipating rather than adapted, yet each of its
realizations is a deterministic Fokker--Planck pair, admissible in
\eqref{eq:U-informal}.  The resulting bound on $U$ holds realization by
realization, and the hard inequality follows by taking expectations.

The (no longer) easy inequality requires a competitor for the other problem: $V^N$ is itself an infimum, and the task is to produce a
particle control whose empirical measure follows a near-optimal mean
field flow, at a
cost within $N^{-4/7}\log^{1/7}(1+N)$ of the mean field value $U$.  As discussed earlier, independent
copies fluctuate at the central-limit scale $N^{-1/2}$, so our construction necessarily makes the particles cooperate.  Their positions are drawn from a Gibbs law that
exponentially penalizes the $d_1$-distance of the empirical measure to
the uniform measure, at an inverse temperature $\varsigma$, and they are carried
along the target flow itself (regularized over a short initial
heat layer) by its quantile map, each particle
tracking a quantile of the moving flow.  At
inverse temperature $\varsigma$ the empirical measure typically lies within
$(N\varsigma)^{-1/3}$ of the uniform measure, and the quantile map carries this
accuracy along the flow.  The price is the drift that holds the
configuration together: it is a pointwise multiple of the gradient of
the Gibbs potential, so its first-order cost vanishes by an exact
integration by parts against the Gibbs law, which the dynamics hold
stationary.  What remains is its
quadratic energy, of order $(\varsigma/N)^2$ times the logarithm
generated by the initial heat layer.  As before, the two scalings meet at
the optimal temperature, $\varsigma=N^{5/7}\log^{-3/7}(1+N)$, at the
rate $N^{-4/7}\log^{1/7}(1+N)$.

The optimality of the exponent follows from a Schr\"odinger ground-state
estimate: for explicit data, a
Cole--Hopf transformation, as in the optimality example of
\cite{DaudinDelarueJackson2024}, writes the particle value as a partition
function, and a cell-by-cell spectral bound shows that even the
best-spread configurations pay at least $N^{-4/7}$.  This is carried
out in Section~\ref{sec:one-dim}.

\smallskip
\emph{Extension to common noise.}
With additive common noise $\sigma_0\,dW^0_t$ in the dynamics, the random
translation $x\mapsto x-\sigma_0(W^0_t-W^0_{t_0})$, a standard reduction for
spatially homogeneous additive common noise
\cite{LackerWebster2015,CardaliaguetDelarueLasryLions2019}, reduces the particle
system to the zero-common-noise dynamics, and the shadowing estimate applies
verbatim, since its martingale term involves only the idiosyncratic noises.
What requires care is admissibility: a mean field competitor may draw its
randomness from $W^0$ alone, while the translated shadow flow depends on
all the noises.  Conditioning on $W^0$ would not help, because for
correlated controls the empirical measure need not concentrate
conditionally on the common noise.  Instead we freeze the idiosyncratic
paths and read the shadow flow as a family of $W^0$-adapted competitors,
one for each frozen realization (Lemma~\ref{lem:common-sections}).  The
easy inequality is also proved directly in the common-noise setting, by
replicating a mean field control across the particles and exploiting
conditional independence, so that semiconcavity is not needed on either
side.

\smallskip
\emph{The whole space, nonseparable Hamiltonians, and degenerate noise.}
The shadow flow is constructed pathwise, and makes no essential use of
the compactness of the torus or of the separated form
$L(x,\alpha)+F(m)$ of the cost.  We therefore expect
Theorem~\ref{thm:main} to extend to the whole space $\R^d$ under moment
bounds on the initial empirical measure, and to nonseparable
Hamiltonians $H(x,p,m)$ Lipschitz in the measure argument. Regarding the still-open case $d=1$ with no idiosyncratic noise (where the missing ingredient is the ``easy'' inequality), we expect the sharp rate to be the full quantization rate $N^{-1}$, since in this case the particles need not spend energy steering against the noise.  The ideas of Section~\ref{subsec:gibbs} appear well-suited for such an extension. By contrast, one of the most interesting directions left to investigate is the problem with degenerate idiosyncratic noise, which seems to require substantially new ideas.

\subsection{Organization of the paper}
Section~\ref{sec:setting} describes the setting and states the main results.
Section~\ref{sec:shadow} constructs the recoupled shadow flow and proves the
coupling-cost estimate.  Section~\ref{sec:tracking} proves the sharp
shadowing estimate.  Section~\ref{sec:comparison} transfers the cost to the shadow
flow and
completes the proof of Theorem~\ref{thm:main}.
Section~\ref{sec:one-dim} establishes
Theorem~\ref{thm:one-dim}, and Section~\ref{sec:common-noise} gives
the extension to additive common noise
(Theorem~\ref{thm:common-noise}).
Appendix~\ref{app:construction} collects the
measurable-selection, Fokker--Planck, and noise-regularity lemmas
and the construction of regular near-optimal mean field flows,
together with
the properties of the Lagrangian and the heat-kernel,
frequency-cutoff, empirical-measure, and quantization estimates used
throughout.
Appendix~\ref{app:two-dim} contains the construction that shows optimality of
Theorem~\ref{thm:main} in dimension $2$.

\section{Setting and main results}\label{sec:setting}

\subsection{Notation}\label{subsec:notation}
We work on the flat torus $\T^d=\R^d/\Z^d$, whose geodesic distance is
denoted by $\dm$.  Note $\operatorname{diam}(\T^d)=\sqrt d/2$.  Points of
$(\T^d)^N$ are written $\bx=(x^1,\dots,x^N)$, and
$m^N_{\bx}:=\frac1N\sum_{i=1}^N\delta_{x^i}$.  The set $\Pp(\T^d)$ of Borel
probability measures on $\T^d$ is equipped with the $1$-Wasserstein distance, which by
 duality \cite[Chap.~5]{Villani2009} may be defined by
\begin{equation}\label{eq:d1-duality}
 d_1(m_1,m_2)
 =\sup\Big\{\int_{\T^d}\varphi\,d(m_1-m_2)\;:\;
 \varphi:\T^d\to\R,\ \Lip(\varphi)\leq1\Big\}.
\end{equation}
 More generally, for
a finite signed measure $\sigma$ on $\T^d$ with $\sigma(\T^d)=0$ we write
$\|\sigma\|_{\Lip^*}$ for the supremum in \eqref{eq:d1-duality} with $m_1-m_2$
replaced by $\sigma$, so that $d_1(m_1,m_2)=\|m_1-m_2\|_{\Lip^*}$.

Functions on $\T^d$ are identified with $\Z^d$-periodic functions on $\R^d$.
The Fourier coefficients of $f$ are
$\widehat f(k)=\int_{\T^d}f(x)e^{-2\pi ik\cdot x}dx$, $k\in\Z^d$.  We write
$(P_\tau)_{\tau\geq0}$ for the heat semigroup generated by $\Delta$ on
$\T^d$, i.e.\ $\widehat{P_\tau f}(k)=e^{-4\pi^2|k|^2\tau}\widehat f(k)$,
acting on measures by duality,
$\int_{\T^d}\varphi\,d(P_\tau\mu):=\int_{\T^d}P_\tau\varphi\,d\mu$.
Equivalently, $P_\tau\mu=p_\tau\star\mu$, where
\begin{equation}\label{eq:heat-kernel-def}
 p_\tau(x)=\sum_{k\in\Z^d}(4\pi\tau)^{-d/2}\,e^{-|x+k|^2/(4\tau)}
\end{equation}
is the periodization of the Gaussian kernel on $\R^d$.  On finite
$\R^d$-valued measures, $P_\tau$ acts componentwise, and
$\int_{\T^d}\Phi\cdot d\mathsf m:=\sum_{j=1}^d\int_{\T^d}\Phi^j\,d\mathsf m^j$
for such a measure $\mathsf m$ and a bounded Borel
$\Phi:\T^d\to\R^d$.  For $a\in\T^d$,
$\tau_a$ denotes the translation $x\mapsto x+a$, and $\tau_a{}_\#\mu$ the
push-forward of the measure $\mu$.

Throughout, $T>0$ is a fixed horizon, and
$(\Omega,\F,(\F_t)_{0\leq t\leq T},\Prob)$ is a filtered probability space
satisfying the usual conditions, with $\F_0$ atomless, carrying independent
$d$-dimensional Brownian motions $(W^i)_{i\geq1}$.  For a continuous
vector-valued martingale $\mathsf M$, its quadratic covariation is the
matrix-valued process
\[
 \langle\mathsf M\rangle_t
 :=\lim_{|\Pi|\to0}\sum_{k}\big(\mathsf M_{t_{k+1}}-\mathsf M_{t_k}\big)
 \big(\mathsf M_{t_{k+1}}-\mathsf M_{t_k}\big)^{\mathsf T},
\]
where $\Pi=\{0=t_0<t_1<\dots<t_n=t\}$ ranges over the partitions of
$[0,t]$, $|\Pi|$ denotes the mesh, and the limit is in probability.  Constants denoted $C$, $c$ depend only on
the dimension $d$, the horizon $T$, the Hamiltonian $H$, and the Lipschitz
constants $L_F$, $L_G$ of Assumption~\ref{ass:standing}, and may change
from line to line.  Constants denoted $C_M$, $c_M$ may in addition depend
on the bound $M$ of \eqref{eq:bounded-control}.  None of these constants
depends on $N$, on the controls, or on the initial condition.  For
$d\geq2$ the dependence on $H$ is only through the constant $C_0$ and
$\sup_x|D_pH(x,0)|$.  The one-dimensional arguments of
Section~\ref{sec:one-dim} involve $H$ also through local bounds on its
second derivatives.

\subsection{The two control problems}\label{subsec:problems}
The data of the problem are a Hamiltonian $H:\T^d\times\R^d\to\R$ and two
mean field costs $F,G:\Pp(\T^d)\to\R$.

\begin{assumption}\label{ass:standing}
There exist constants $C_0\geq1$ and $L_F,L_G\geq0$ such that:
\begin{enumerate}[label=\textup{(A\arabic*)},leftmargin=3.2em]
\item\label{ass:H-convex} $H\in C^2(\T^d\times\R^d)$ and
$C_0^{-1}I_d\leq D^2_{pp}H(x,p)\leq C_0I_d$ for all $(x,p)$;
\item\label{ass:H-x} $|D_xH(x,p)|\leq C_0(1+|p|)$ for all $(x,p)$;
\item\label{ass:FG} $F$ and $G$ are, respectively, $L_F$- and $L_G$-Lipschitz
continuous with respect to $d_1$.
\end{enumerate}
\end{assumption}

The Lagrangian is defined through the Legendre duality
\begin{equation}\label{eq:Legendre}
 L(x,a):=\sup_{p\in\R^d}\big\{-H(x,p)-a\cdot p\big\},
 \qquad (x,a)\in\T^d\times\R^d,
\end{equation}
so that, conversely, $H(x,p)=\sup_{a}\{-L(x,a)-a\cdot p\}$.

\begin{remark}\label{rem:legendre}
Assumption~\ref{ass:standing} matches the setting of 
\cite{DaudinDelarueJackson2024}, except for the fact that we do not assume semiconcavity of the costs. 
\end{remark}

Fix $(t_0,\bx)\in[0,T]\times(\T^d)^N$.  The set $\mathcal A^N_{t_0}$ of
\emph{admissible particle controls} consists of all
$\balpha=(\alpha^1,\dots,\alpha^N)$, defined on an arbitrary stochastic
basis carrying independent Brownian motions $W^1,\dots,W^N$ on
$[t_0,T]$, with each $\alpha^i$ an $\R^d$-valued process on $[t_0,T]$,
progressively measurable with respect to any common filtration for
which $W^1,\dots,W^N$ remain independent Brownian motions, and
satisfying $\E\int_{t_0}^T|\alpha^i_t|^2dt<\infty$.  The corresponding state processes
and empirical measures are
\begin{equation}\label{eq:particle-SDE}
 dX^i_t=\alpha^i_t\,dt+\sqrt2\,dW^i_t,\quad t\in[t_0,T],
 \qquad X^i_{t_0}=x^i,
 \qquad
 \mu^N_t:=\frac1N\sum_{i=1}^N\delta_{X^i_t},
\end{equation}
the processes being regarded as $\T^d$-valued.  The cost and the value
function are
\begin{equation}\label{eq:JN-definition}
 J_N(t_0,\bx;\balpha)
 :=\E\bigg[\int_{t_0}^T\Big(\frac1N\sum_{i=1}^NL(X^i_t,\alpha^i_t)
 +F(\mu^N_t)\Big)dt+G(\mu^N_T)\bigg],
\end{equation}
\begin{equation}\label{eq:VN-definition}
 V^N(t_0,\bx):=\inf_{\balpha\in\mathcal A^N_{t_0}}J_N(t_0,\bx;\balpha).
\end{equation}

The mean field value function is defined through the Fokker--Planck
(Eulerian) formulation: for $m_0\in\Pp(\T^d)$,
\begin{equation}\label{eq:U-definition}
 U(t_0,m_0)
 :=\inf_{(m,\alpha)}
 \bigg\{\int_{t_0}^T\Big(\int_{\T^d}L\big(y,\alpha(t,y)\big)\,m_t(dy)
 +F(m_t)\Big)dt+G(m_T)\bigg\},
\end{equation}
the infimum running over all pairs $(m,\alpha)$ such that
$m=(m_t)_{t_0\leq t\leq T}\in C([t_0,T];\Pp(\T^d))$,
$\alpha:[t_0,T]\times\T^d\to\R^d$ is Borel measurable with
$\int_{t_0}^T\int_{\T^d}|\alpha(t,y)|^2m_t(dy)\,dt<\infty$, and
\begin{equation*}
 \partial_tm_t=\Delta m_t-\diver\big(m_t\,\alpha(t,\cdot)\big)
 \quad\text{in }(t_0,T)\times\T^d,
 \qquad m_{t_0}=m_0,
\end{equation*}
in the sense of distributions.  As discussed in
\cite[Sec.~2.2]{DaudinDelarueJackson2024}, by a mimicking argument
\cite{LackerShkolnikovZhang2020,BrunickShreve2013} this definition agrees with
the McKean--Vlasov (Lagrangian) formulation of the mean field control problem.
We will only use \eqref{eq:U-definition}.

\subsection{Main results}
Recall $r_{N,d}$ from \eqref{eq:RNd}.

\begin{theorem}\label{thm:main}
Let Assumption~\ref{ass:standing} hold.  There exists a constant $C$, depending
only on $d$, $T$, $H$, $L_F$, $L_G$, such that for every
$N\geq1$ and every
$(t,\bx)\in[0,T]\times(\T^d)^N$,
\begin{equation}\label{eq:two-sided}
 \big|U(t,m^N_{\bx})-V^N(t,\bx)\big|\;\leq\;C\,r_{N,d}.
\end{equation}
Moreover, for $d\geq2$ the rate $r_{N,d}$ cannot be improved under
Assumption~\ref{ass:standing}: the bound \eqref{eq:two-sided} fails
for every sequence of smaller order.
\end{theorem}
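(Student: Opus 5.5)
The proof of Theorem~\ref{thm:main} splits into the easy inequality $V^N\leq U+Cr_{N,d}$, the hard inequality $U\leq V^N+Cr_{N,d}$, and the optimality for $d\geq2$. I plan to treat each in turn, with the hard inequality handled by the shadow-flow programme of Section~\ref{subsec:strategy}.

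\textbf{The easy inequality.} I will take it from \cite[Prop.~6.2]{DaudinDelarueJackson2024}, which does not use semiconcavity. The argument there goes as follows. Take an $\epsilon$-optimal mean field pair $(m,\alpha)$, regularized so that the feedback is bounded and Lipschitz. Run $N$ independent copies of the corresponding McKean--Vlasov dynamics, started at $x^i$ and driven by $W^i$. The running Lagrangian cost matches in expectation. The mean field costs differ by $L_F\,\E d_1(\mu^N_t,m_t)$. This term is bounded by $Cr_{N,d}$ once we invoke the sharp empirical-measure bound of \cite[Thm.~3]{BobkovLedoux2021} for independent, non-identically distributed samples, which is what produces the exact $\sqrt{\log}$ factor at $d=2$.

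\textbf{The hard inequality.} First, reduce to bounded controls. Approximate $F,G$ by smooth $F^n,G^n$ with the same $d_1$-Lipschitz constants, following \cite[Lem.~4.1, App.~B]{DaudinDelarueJackson2024}. The gradient bound $|D_{x^i}V^{N,n}|\leq C/N$ of \cite[Lem.~6.1]{DaudinDelarueJackson2024} makes the optimal feedback $-D_pH(x^i,ND_{x^i}V^{N,n})$ bounded by some $M$ independent of $N$ and $n$. Both $U$ and $V^N$ are $1$-Lipschitz in $\sup|F-F^n|+\sup|G-G^n|$, so it suffices to prove $U(t_0,m^N_\bx)\leq J_N(t_0,\bx;\balpha)+C_M r_{N,d}$ for controls with $|\alpha^i|\leq M$.

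For such $\balpha$ and a mesh $h$, I build pathwise the recoupled shadow flow $(\nu^\omega,\beta^\omega)$. Between grid times, the mass-$1/N$ components are advected by the realized drifts $\alpha^i_t(\omega)$ and heat-smoothed. At each grid time $s_k$, the mass of $\nu_{s_k}$ is redistributed among the particles by an optimal $d_1$ coupling to $\mu^N_{s_k}$, with measurability handled by a selection lemma. This yields a distributional solution of $\partial_t\nu=\Delta\nu-\diver(\beta\nu)$ with $|\beta|\leq M$. The key step is the tracking estimate $\sup_t\E d_1(\nu_t,\mu^N_t)\leq C_M(r_{N,d}+\sqrt h)$. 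To prove it, I write $\nu_t-\mu^N_t$ in Duhamel form tested against $P_{\tau}\varphi$ with $\varphi$ $1$-Lipschitz, which produces three terms:
\begin{itemize}
\item a heat-smoothing error of size $\sqrt\tau$;
\item a flux-mismatch term, bounded through $\|\nabla P_{t-s}\|\lesssim (t-s)^{-1/2}$ by $\int (t-s)^{-1/2}\bigl(\E d_1(\nu_s,\mu^N_s)+C\sqrt h\bigr)ds$;
\item a martingale $\frac{\sqrt2}{N}\sum_i\int\nabla P_{t-s+\tau}\varphi(X^i_s)\,dW^i_s$, whose supremum over $\varphi$ is controlled via Fourier truncation and the It\^o isometry at size $r_{N,d}$ after optimizing $\tau$. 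The bound is independent of positions by translation invariance.
\end{itemize}
A singular Gr\"onwall inequality then closes the estimate. I expect this step to be the main obstacle. The flux term is exactly critical, returning the unknown itself with no smallness gain, so the constants must be arranged so that the singular Gr\"onwall inequality stays uniform in $N$. The martingale must also be bounded in $\Lip^*$ norm, which requires a sup over test functions, and not merely for each fixed $\varphi$.

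Next, I transfer the costs. The drift $\beta$ is pointwise a convex combination of the $\alpha^i$, so Jensen's inequality together with the convexity of $L$ and its Lipschitz continuity in $x$ on bounded velocities bounds $\int L(y,\beta)\,d\nu$ by the average particle Lagrangian plus $C_M(r_{N,d}+\sqrt h)$. The $d_1$-Lipschitz continuity of $F$ and $G$ handles the mean field terms. Since each realization $(\nu^\omega,\beta^\omega)$ is admissible in \eqref{eq:U-definition}, taking expectations and letting $h\to0$ gives the hard inequality.

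\textbf{Optimality for $d\geq2$.} For $d\geq3$, I will invoke the counterexample of \cite[Sec.~2.4]{DaudinDelarueJackson2024}, which gives the lower bound $N^{-1/d}$ through Cole--Hopf and the quantization lower bound for absolutely continuous targets. At $d=2$, an extra $\sqrt{\log N}$ is needed. Here I would adapt that example with data making the gap comparable to $\E d_1$ between the particle measure and the uniform measure, so that the cost of any cooperation compatible with the diffusion is forced to be at least the independent-sample rate $\sqrt{\log N/N}$ (Ajtai--Koml\'os--Tusn\'ady). This is the construction deferred to Appendix~\ref{app:two-dim}.
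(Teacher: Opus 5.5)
Your proofs of the two inequalities follow the paper's proof. The easy inequality is \cite[Prop.~6.2]{DaudinDelarueJackson2024} with the Bobkov--Ledoux bound substituted. The hard inequality is the smoothing and bounded-feedback reduction, followed by the recoupled shadow flow, the Duhamel/Gr\"onwall tracking estimate and the Jensen transfer of the action. Your concern about the critical flux term is not a real obstacle: the weakly singular Gr\"onwall inequality gives a constant depending only on $C_M$ and $T$. The $d\geq3$ clause is also fine, and on the torus it needs no Cole--Hopf step. With $H=\frac12|p|^2$, $F=0$, $G=d_1(\cdot,P_T\delta_0)$ and $\bx=(0,\dots,0)$, one has $U(0,\delta_0)=0$. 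Since $p_T\geq\kappa>0$, the quantization bound gives $G(\mu^N_T)\geq cN^{-1/d}$ pathwise.

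The genuine gap is the optimality at $d=2$, which your proposal only points to. The route you sketch cannot produce the logarithm. Adapting the $d\geq3$ example is a static argument: it bounds the empirical measure by quantization, which at $d=2$ gives only $N^{-1/2}$, a rate that $N$ grid points attain. Ajtai--Koml\'os--Tusn\'ady concerns i.i.d.\ samples, whereas here the particles are arbitrarily correlated through the controls. The obstruction is dynamical, and the paper needs a new argument for it (Appendix~\ref{app:two-dim}).

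That argument takes $H=\frac12|p|^2$, $F=d_1(\cdot,\lambda)$, $G=0$ and a grid start with $N=R^2$, so that the heat flow gives $U(0,m^N_{\bx_R})\leq CTN^{-1/2}=o(r_{N,2})$. It then proves $J_N(0,\bx;\balpha)\geq c\,r_{N,2}$ for every control. If the action $\mathsf A$ is at least $r_{N,2}$ there is nothing to prove. Otherwise, on windows of length $\tau=\kappa\sqrt{r_{N,2}}$, one compares $\mu^N_{s+\tau}$ with the freed particles $X^i_s+\sqrt2(W^i_{s+\tau}-W^i_s)$, at cost $\sqrt{\tau\mathsf A_s}$.

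Conditionally on $\F_s$, the freed particles are independent, non-identically distributed points with densities $p_\tau(\cdot-X^i_s)$. A fresh-noise estimate then gives $\E[d_1(\bar\mu^N_{s+\tau},\lambda)\mid\F_s]\gtrsim r_{N,2}$. This estimate is an AKT-type matching lower bound for such samples, proved by a Fourier fourth-moment argument plus a Lipschitz truncation. It holds only under a density bound $\|P_\tau m^N_{\bX_s}\|_{L^\infty}\leq\varrho$ that excludes clustered centers.

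That bound holds on $\{d_1(\mu^N_s,\lambda)\leq\tau^{3/2}\}$. The complementary event is paid for by the running discrepancy itself through Markov's inequality, and integrating over $s$ forces $\mathsf D\gtrsim r_{N,2}$. Your plan has none of these ingredients: the freezing coupling, the non-i.i.d.\ lower bound under a density hypothesis, the window scale, or the running discrepancy cost that enforces that hypothesis. So the $d=2$ part of the optimality clause is left unproved.
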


The content of the theorem is the hard inequality,
\begin{equation}\label{eq:hard-inequality}
 U(t,m^N_{\bx})\;\leq\;V^N(t,\bx)+C\,r_{N,d},
 \qquad (t,\bx)\in[0,T]\times(\T^d)^N,
\end{equation}
which occupies Sections~\ref{sec:shadow}--\ref{sec:comparison}.  The easy
inequality, $V^N(t,\bx)\leq U(t,m^N_{\bx})+C\,r_{N,d}$, is
\cite[Prop.~6.2]{DaudinDelarueJackson2024}, established there under
Assumption~\ref{ass:standing} alone, with
\cite[Thm.~3]{BobkovLedoux2021} supplying the sharp $d=2$ empirical
rate.

In dimension one, the rate $r_{N,1}=N^{-1/2}$ of Theorem~\ref{thm:main} is
achieved but not optimal.  The next result determines the optimal
polynomial exponent, with a logarithmic gap.

\begin{theorem}[The one-dimensional rate]\label{thm:one-dim}
Let $d=1$ and let Assumption~\ref{ass:standing} hold.  There exists a
constant $C$, depending only on $T$, $H$, $L_F$, $L_G$, such
that for every $N\geq1$ and every $(t,\bx)\in[0,T]\times\T^N$,
\begin{equation}\label{eq:one-dim-rate}
 \big|U(t,m^N_{\bx})-V^N(t,\bx)\big|
 \leq CN^{-4/7}\log^{1/7}(1+N).
\end{equation}
Moreover,
\begin{equation}\label{eq:one-dim-hard-statement}
 U(t,m^N_{\bx})\leq V^N(t,\bx)+CN^{-4/7}.
\end{equation}
The polynomial exponent $4/7$ cannot be improved under
Assumption~\ref{ass:standing}.
\end{theorem}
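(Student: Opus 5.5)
The statement has three parts: the hard inequality \eqref{eq:one-dim-hard-statement}, the matching easy inequality with the logarithmic loss, and the optimality of the exponent. I would prove each with its own construction, following the strategy of Section~\ref{subsec:strategy}.

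\emph{Hard inequality.} I first reduce to particle controls bounded by $M$. This uses the smoothing of $F,G$ and the uniform gradient bound $|D_{x^i}V^{N,n}|\le C/N$, exactly as in the proof of Theorem~\ref{thm:main}. Fix such a control, a realization $\omega$, and a grid of mesh $h$. I then build the recoupled shadow flow of Section~\ref{sec:shadow} with one modification: each component is transported by $\alpha^i_t(\omega)+\gamma_t$. The correction $\gamma$ is built from the future increments of the discarded noise $\frac1N\sum_i\sqrt2\,dW^i$. It acts on the Fourier modes $0<|k|\le K$ and steers against them so that the low-frequency part of the martingale in Lemma~\ref{lem:martingale} is cancelled by time $t$. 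Concretely, I would define $\gamma$ so that the Fourier coefficients $\widehat{\nu_t-\mu^N_t}(k)$ for $|k|\le K$ contain no martingale contribution, at the price of anticipation.

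Each realization is still a deterministic Fokker--Planck pair, so it is admissible in \eqref{eq:U-definition}. Rerunning Proposition~\ref{prop:tracking}, the surviving noise at frequencies $|k|>K$ contributes about $\sum_{|k|>K}|k|^{-2}\cdot N^{-1}$ in $\Lip^*$. Up to heat damping this is $(NK)^{-1/2}$. The correction costs, through a Taylor expansion of $L$ around $\alpha^i$ (using \ref{ass:H-convex}), a first-order term plus a quadratic energy. The first-order term is $\int D_aL\cdot\gamma\,d\nu$. I would show it has small expectation because $\gamma$ is a linear functional of independent future noise increments, which are centered given the past; a conditioning argument handles this. The quadratic energy is $\E\int|\gamma|^2\,d\nu\lesssim\sum_{|k|\le K}|k|^2\cdot|k|^2\cdot\dots$, which I expect to total $K^3/N$. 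Optimizing with $K=N^{1/7}$ gives $N^{-4/7}$.

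\emph{Easy inequality.} I start from a regular near-optimal mean field flow (Appendix~\ref{app:construction}), heat-regularized over an initial layer. The particles are moved by the quantile map of the flow, started from configurations drawn from the Gibbs law $\propto\exp(-N\varsigma\, d_1(m^N_{\bx},\mathrm{Leb}))$ in quantile coordinates. An added drift proportional to the gradient of the Gibbs potential keeps that law stationary against the Brownian noise. The empirical error is then $(N\varsigma)^{-1/3}$, which I would get from a volume/entropy count of near-uniform configurations. The first-order cost vanishes by integration by parts against the stationary law, and the quadratic energy is $(\varsigma/N)^2\log N$. Choosing $\varsigma=N^{5/7}\log^{-3/7}N$ balances the two.

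\emph{Optimality.} I take $L=|a|^2/4$, $F=0$, and $G(m)=d_1(m,\mathrm{Leb})$ or a smooth-distance surrogate. A Cole--Hopf transform turns $V^N$ into $-\log$ of a heat partition function on $\T^N$. Lower-bounding $V^N-U$ then reduces to a ground-state estimate for $-\Delta+\lambda N G$ on $\T^N$, which I would do cell by cell in ordered coordinates. The local gaps must be spread at scale finer than $N^{-1/2}$, and each costs kinetic energy; balancing gives at least $N^{-4/7}$.

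The main obstacle is the hard-side first-order cross term: the anticipating correction must not correlate with the adapted controls $\alpha^i$. If the conditioning argument fails, I would instead make $\gamma$ depend only on noise increments within the current grid cell and recouple.
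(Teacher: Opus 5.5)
\textbf{Hard inequality.} The correction step has a gap. Exact removal of the low-mode martingale is impossible. The correction enters the low-mode vector $Z_t$ only through an absolutely continuous drift $\Gamma_s$, and making $\int_0^te^{-\Lambda_K(t-s)}(d\mathsf M_s-\Gamma_s\,ds)$ vanish for all $t$ forces $\int_0^t\Gamma_s\,ds=\mathsf M_t$, which has infinite energy. A finite-energy correction can only \emph{damp} the low modes, on the time scale $K^{-2}$ (strength $\kappa=K^2$), and leaves a residual of order $(NK)^{-1/2}$. The energy $K^3/N$ is the price of this damping, not of cancellation.

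The correction also has to be component-dependent, since a drift $\gamma_t$ common to all components merely translates $\nu$. If it is evaluated at the particles, $\gamma^i_t=\kappa\Psi_K(X^i_t)^{\mathsf T}Y_t$, it acts on the modes through $G_t=\frac1N\sum_i\Psi_K(X^i_t)\Psi_K(X^i_t)^{\mathsf T}$. This matrix is random, possibly degenerate, and is also the covariance of the noise being damped. Steering the low modes toward a prescribed target requires inverting $G_t$. A kernel that looks at future values of $G$ destroys both the conditional centering and the It\^o isometry.

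The missing idea is the backward filter of Lemma~\ref{lem:filter}: $Y_t=\int_t^T\Phi(t,s)\,d\mathsf M_s$ with $\partial_t\Phi(t,s)=\kappa G_t\Phi(t,s)$. Its integrand is predictable in $s$. The identity $2\kappa\int_t^T\Phi(t,s)G_s\Phi(t,s)^{\mathsf T}ds=I-\Phi(t,T)\Phi(t,T)^{\mathsf T}$ then gives $\E[Y_tY_t^{\mathsf T}\mid\F_t]\le(N\kappa)^{-1}I$, with no nondegeneracy assumption on $G$.

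Your cross-term argument also fails as written. The term $\int D_aL\,\gamma\,d\nu_t$ does not vanish by conditioning, because $\nu_{i,t}$ was transported by earlier corrections that depend on noise after $t$, so it is not $\F_t$-measurable. You must first move $D_aL$ to the adapted point $(X^i_t,\alpha^i_t)$. This costs $C_M(c_t+b_t)$ with $b_t\le C\sqrt{A_tc_t}$, which is fed back into the Gr\"onwall loop. You also need to show that the high-frequency part of the correction's own flux contributes only $\sqrt{K^3/N}\,K^{-2}=(NK)^{-1/2}$. Your fallback, using only the noise within one grid cell, fails because $h\downarrow0$ while the filter must act on the fixed scale $K^{-2}$.

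\textbf{Optimality and easy inequality.} The optimality example does not support the reduction you describe. With $F=0$ and a terminal $G$, Cole--Hopf gives $V^N(0,\bx)=-\frac1N\log\E_{\bx}e^{-NG(\mu^N_T)}$ over free particles, in your normalization $L=|a|^2/4$. That is the heat semigroup applied to $e^{-NG}$: there is no Schr\"odinger operator and no ground state to estimate. It is a static small-ball problem. Heuristically, the entropy count (discrepancy $r$ against entropy $(Nr)^{-2}$ per particle) balances at $N^{-2/3}=o(N^{-4/7})$, so this example cannot witness the exponent $4/7$.

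The exponent comes from holding a spread configuration against diffusion over a time interval. The discrepancy must therefore be a running cost, $F=d_1(\cdot,\lambda)$ and $G=0$; for $H=\frac12p^2$ this produces $e^{-T\mathcal H_N}$ with $\mathcal H_N$ as in Lemma~\ref{lem:many-particle}. Two further ingredients are still needed:
\begin{enumerate}
\item a cell bound that is uniform in the number of particles per cell (Lemma~\ref{lem:cell}, via Poincar\'e and Paley--Zygmund), combined with decoupling $\T^N$ into boxes so that $R/2$ cells each contribute $cR^2$;
\item a way to convert the averaged bound $\langle\1,e^{-T\mathcal H_N}\1\rangle\le e^{-cTR^3}$ into a lower bound at a configuration where $U$ is also small. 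The paper does this using the small-ball volume $e^{-C_\delta R}$ from Lemma~\ref{lem:gibbs-small-ball}.
\end{enumerate}

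Your easy-side plan is essentially the paper's Gibbs construction, with two corrections. First, your scalings require the weight $e^{-\varsigma\mathcal W_N}$, not $e^{-N\varsigma\mathcal W_N}$. Second, you omit the transfer from the deterministic $\bx$ to the Gibbs-distributed configuration. The paper handles this with a heat layer of length $N^{-4/7}$, followed by an optimal matching and a rigid shift of the target increments.
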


The optimality is established by an explicit example: for $H(x,p)=\frac12p^2$,
$F(m)=d_1(m,\lambda)$, with $\lambda$ the uniform measure on $\T$, and
$G=0$, there exist $c>0$ and, for every sufficiently large integer $R$, a
configuration $\bx_R\in\T^N$ with $N=R^7$, such that
\begin{equation}\label{eq:one-dim-lower}
 V^N(0,\bx_R)-U(0,m^N_{\bx_R})\geq c\,N^{-4/7}.
\end{equation}
We conjecture that the logarithmic factor in \eqref{eq:one-dim-rate}
can be removed.

The third main result extends Theorem~\ref{thm:main} to the model in
which the particles and the mean field limit share an additive common
noise: the
two-sided bound \eqref{eq:two-sided} persists, with a constant
independent of the intensity $\sigma_0$ of the common noise, and the
rate remains optimal for $d\geq2$.  Its statement requires the
common-noise formulations of the two control problems, and is
therefore deferred to Section~\ref{sec:common-noise}
(Theorem~\ref{thm:common-noise}).

\begin{remark}\label{rem:comparison-DDJ}
(i) The bound \eqref{eq:two-sided} improves the exponent $2/(3d+6)$
of \cite[Thm.~2.6]{DaudinDelarueJackson2024} to the sharp rate when
$d\geq2$.  For $d\geq3$, the example establishing the optimality
clause, in the proof of Theorem~\ref{thm:main}
(Section~\ref{sec:comparison}), is an adaptation to the torus of
\cite[Prop.~2.9]{DaudinDelarueJackson2024}, with the Gaussian target
replaced by the heat kernel.  Since the case $\sigma_0=0$ is admissible in
Theorem~\ref{thm:common-noise}, its rate cannot be improved
either.  At $d=2$ the factor $\sqrt{\log(1+N)}$ is necessary, and the
obstruction is genuinely dynamical: coordinated configurations attain
$N^{-1/2}$ statically, but the idiosyncratic noise regenerates
matching-scale fluctuations on every short time window faster than
any affordable control can suppress them
(Proposition~\ref{prop:two-dim-lower}).  The same factor is necessary
for the empirical approximation problem itself, where the uniform
measure realizes the matching lower bound
\cite{AjtaiKomlosTusnady1984}.  See \cite[Sec.~7]{BobkovLedoux2021}
for a Fourier--analytic proof, whose method our
Lemma~\ref{lem:fresh-noise} adapts.  For $d=1$ the torus example
yields only a lower bound of order $N^{-1}$, the quantization rate,
and thus does not determine the sharp exponent; hence the bespoke
example \eqref{eq:one-dim-lower}.

(ii) There is no contradiction with the local rate $1/N$ of
\cite{CardaliaguetJacksonMimikosSouganidis2023}: that result assumes smooth
data and holds locally in the region of strong regularity identified in
\cite{CardaliaguetSouganidis2022}, where the mean
field optimizer is unique and stable and $U$ enjoys higher local Wasserstein
regularity.  The invariance of this region under optimal trajectories is also
essential.  Our estimate is instead global and assumes only $d_1$-Lipschitz
continuity, a class for which the example recalled in (i)
exhibits the empirical-measure obstruction.
\end{remark}

Theorem~\ref{thm:main} is a consequence of the following proposition, which
compares the mean field value with the cost of an arbitrary bounded particle
control: no optimality, independence, exchangeability, or Markov property is
required.

\begin{proposition}\label{prop:comparison}
Let Assumption~\ref{ass:standing} hold and let $M>0$.  Let
$(t_0,\bx)\in[0,T]\times(\T^d)^N$ and let $\balpha\in\mathcal A^N_{t_0}$
satisfy
\begin{equation}\label{eq:bounded-control}
 |\alpha^i_t|\leq M
 \quad\text{for }dt\otimes\Prob\text{-a.e.\ }(t,\omega),
 \quad i=1,\dots,N.
\end{equation}
Then
\begin{equation}\label{eq:comparison}
 U(t_0,m^N_{\bx})\;\leq\;J_N(t_0,\bx;\balpha)+C_M\,r_{N,d},
\end{equation}
where $C_M$ depends only on $d$, $T$, $H$, $L_F$, $L_G$, and $M$.
\end{proposition}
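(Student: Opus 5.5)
We build, for each realization $\omega$ of the noise, a deterministic admissible competitor $(\nu^\omega,\beta^\omega)$ for the Fokker--Planck formulation \eqref{eq:U-definition}. Its cost is compared with the particle cost pathwise, and we then take expectations.

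\emph{Construction.} Fix a mesh $h>0$ and grid times $s_k=t_0+kh$. On $[s_k,s_{k+1})$ the shadow flow is written as $\nu_t=\frac1N\sum_i\nu^i_t$. Each component $\nu^i$, of mass one after normalization, solves the linear equation
\[
 \partial_t\nu^i_t=\Delta\nu^i_t-\diver\big(\alpha^i_t(\omega)\nu^i_t\big).
\]
Here the drift $\alpha^i_t(\omega)$ is spatially constant, so $\nu^i_t$ is the heat flow translated by $\int_{s_k}^t\alpha^i_s\,ds$. At $t_0$ we take $\nu^i_{t_0}=\delta_{x^i}$.

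At each $s_k$ we recouple. Choose an optimal plan $\pi_k$ between $\nu_{s_k}$ and $\mu^N_{s_k}$; by a measurable selection lemma it can be chosen measurably in $\omega$ and is $\F_{s_k}$-adapted. Disintegrate it as $\nu_{s_k}=\frac1N\sum_i\tilde\nu^i_{s_k}$, where $\tilde\nu^i$ is the part sent to $X^i_{s_k}$. Since $\frac1N\sum_i\int\dm(y,X^i_{s_k})\,\tilde\nu^i(dy)=d_1(\nu_{s_k},\mu^N_{s_k})$, the relabeling costs nothing and keeps $\nu$ continuous.

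Summing the component equations shows that $\nu$ solves the Fokker--Planck equation with drift
\[
 \beta_t=\frac{\sum_i\alpha^i_t\,\nu^i_t}{\sum_i\nu^i_t},
\]
understood as a Radon--Nikodym density. Thus $|\beta|\leq M$, $\beta$ is Borel, and $(\nu^\omega,\beta^\omega)$ is admissible for every $\omega$. Adaptedness is not needed for this.

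\emph{Cost transfer.} Convexity of $L(y,\cdot)$ gives, by Jensen,
\[
 L(y,\beta_t(y))\leq\frac{\sum_i L(y,\alpha^i_t)\,\nu^i_t(dy)}{\sum_i\nu^i_t(dy)}.
\]
Hence
\[
 \int L(y,\beta_t)\,d\nu_t\leq\frac1N\sum_i\int L(y,\alpha^i_t)\,\nu^i_t(dy).
\]
On $|a|\leq M$ the map $L(\cdot,a)$ is Lipschitz with constant $C_M$, by the Legendre properties of $L$ and (A2). So the right-hand side is at most $\frac1N\sum_iL(X^i_t,\alpha^i_t)+C_M\,e_t$, where $e_t:=\frac1N\sum_i\int\dm(y,X^i_t)\,\nu^i_t(dy)$ is the coupling cost. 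The mean field terms satisfy $|F(\nu_t)-F(\mu^N_t)|\leq L_F\,d_1(\nu_t,\mu^N_t)\leq L_Fe_t$, and similarly for $G$ at time $T$. Taking expectations,
\[
 U(t_0,m^N_\bx)\leq J_N(t_0,\bx;\balpha)+C_M\sup_t\E\,e_t .
\]

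\emph{Estimating $\E\,e_t$.} For $t\in[s_k,s_{k+1})$, component $i$ moves with the same drift as particle $i$, and the difference is a heat spread plus the Brownian increment. This gives
\[
 e_t\leq d_1(\nu_{s_k},\mu^N_{s_k})+C\sqrt h
\]
in expectation, which is Lemma~\ref{lem:provisional}. It therefore suffices to prove the shadowing estimate $\sup_t\E\,d_1(\nu_t,\mu^N_t)\leq C_M(r_{N,d}+\sqrt h)$; after that we let $h\to0$.

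This is the main obstacle. A naive bound accumulates $\sqrt h$ per step. Instead, test $\nu_t-\mu^N_t$ against $P_\epsilon\varphi$ with $\Lip(\varphi)\le1$ and write a Duhamel formula for both flows from $t_0$. The difference of Laplacians cancels in the mild form. Three terms remain.

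The first is a flux mismatch, $\int_{t_0}^t\frac1N\sum_i\alpha^i_s\cdot\big(\int\nabla P_{t-s+\epsilon}\varphi\,d\nu^i_s-\nabla P_{t-s+\epsilon}\varphi(X^i_s)\big)ds$. Since $\|D^2P_\tau\varphi\|_\infty\le C\tau^{-1/2}$, this is bounded by $\int M(t-s)^{-1/2}e_s\,ds$, as in Lemma~\ref{lem:flux}.

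The second is the martingale $\frac{\sqrt2}N\sum_i\int\nabla P_{t-s+\epsilon}\varphi(X^i_s)\cdot dW^i_s$. A supremum over $\varphi$ is needed here. Expand in Fourier modes and apply the It\^o isometry; translation invariance removes any dependence on positions. This gives a bound of order $N^{-1/2}\big(\sum_k|k|^{-2}\min(1,(|k|^2\epsilon)^{-1})\big)^{1/2}$, as in Lemma~\ref{lem:martingale}.

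The third is the smoothing error $\|P_\epsilon\varphi-\varphi\|_\infty\le C\sqrt\epsilon$. Optimizing $\epsilon\sim N^{-2/d}$ (with the logarithm at $d=2$) gives $r_{N,d}$ for the last two terms.

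Combined with the previous paragraph, we obtain
\[
 \E\,e_t\leq C(r_{N,d}+\sqrt h)+C_M\int_{t_0}^t(t-s)^{-1/2}\,\E\,e_s\,ds .
\]
A singular Gr\"onwall lemma closes the estimate, and letting $h\to0$ proves \eqref{eq:comparison}.
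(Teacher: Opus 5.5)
Your proposal is correct and follows the paper's proof essentially step for step: heat-smoothed components transported by the realized drifts and recoupled optimally at grid times, Jensen for the action and $d_1$-Lipschitz bounds for $F,G$, the mollified Duhamel formula with the $(t-s)^{-1/2}$ flux bound and the It\^o--Parseval noise bound, a singular Gr\"onwall inequality, and $h\downarrow0$.

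Two small repairs are needed.
\begin{itemize}
\item In the noise sum, keep the factor $e^{-8\pi^2|k|^2\epsilon}$ that the It\^o isometry actually produces. Your majorant $\min(1,(|k|^2\epsilon)^{-1})$ makes $\sum_k|k|^{-2}\min(1,(|k|^2\epsilon)^{-1})$ diverge for $d\geq4$.
\item Your provisional bound is taken at the grid time $s_k$. The inequality you feed to Gr\"onwall therefore really has $\int_{t_0}^{s_k}(s_k-s)^{-1/2}\E e_s\,ds$ on the right, not $\int_{t_0}^{t}(t-s)^{-1/2}\E e_s\,ds$. Close it either on the running supremum $\sup_{u\leq t}\E e_u$, or on $\E d_1(\nu_t,\mu^N_t)$ using the same-time form $\E c_t\leq\E d_1(\nu_t,\mu^N_t)+C_M\sqrt h$ of Lemma~\ref{lem:provisional}.
\end{itemize}
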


Sections~\ref{sec:shadow}--\ref{sec:comparison} are devoted to the proof of
Proposition~\ref{prop:comparison}.  Section~\ref{sec:comparison} concludes
with the passage to Theorem~\ref{thm:main}.  Throughout
Sections~\ref{sec:shadow}--\ref{sec:comparison} we fix $t_0$, $\bx$, $M$ and a
control $\balpha$ as in Proposition~\ref{prop:comparison}.  Modifying
each $\alpha^i$ on a $dt\otimes d\Prob$-null set, which changes neither
the state processes nor the cost, we assume that the bound
$|\alpha^i|\leq M$ holds at every $(t,\omega)$.  Every estimate
below is valid for all $N\geq1$.

\section{Construction of the shadow flow}\label{sec:shadow}

In this section we construct, from the fixed control $\balpha$, the
\emph{shadow flow}: a random flow $\nu=(\nu_t)_{t_0\leq t\leq T}$ with
values in $\Pp(\T^d)$, adapted to
$(\F_t)$, which starts from the empirical measure,
\begin{equation}\label{eq:shadow-initial}
 \nu_{t_0}=m^N_{\bx}=\mu^N_{t_0},
\end{equation}
and which, for each fixed $\omega$ outside a $\Prob$-null set, solves a
Fokker--Planck equation with drift bounded by $M$.
The construction is relative to a uniform partition
\begin{equation}\label{eq:grid}
 s_k:=t_0+kh,\qquad k=0,1,\dots,(T-t_0)/h,
\end{equation}
of $[t_0,T]$, whose mesh $h$, taken with $(T-t_0)/h$ an integer, is sent
to zero only at the very end, in the
proof of Proposition~\ref{prop:comparison}.

\subsection{The recursive definition}\label{subsec:construction}
The flow is defined recursively over the grid intervals.  Suppose
$\nu_{s_k}$ has been defined and is $\F_{s_k}$-measurable (as a
$\Pp(\T^d)$-valued random variable; for $k=0$ this is
\eqref{eq:shadow-initial}).  We first choose a decomposition of $\nu_{s_k}$
which couples it optimally with the labeled particle configuration:
a family $\pi_k=(\pi_{i,k})_{i=1}^N$ of (random) nonnegative measures on
$\T^d$, each of total mass $1/N$, such that
\begin{equation*}
 \sum_{i=1}^N\pi_{i,k}=\nu_{s_k}
\end{equation*}
and
\begin{equation}\label{eq:grid-coupling-optimal}
 \sum_{i=1}^N\int_{\T^d}\dm\big(y,X^i_{s_k}\big)\,\pi_{i,k}(dy)
 =d_1\big(\nu_{s_k},\mu^N_{s_k}\big).
\end{equation}

\begin{lemma}[Optimal decompositions; measurable selection]\label{lem:selection}
For every $\nu\in\Pp(\T^d)$ and $\by\in(\T^d)^N$, the minimum of
$\sum_i\int\dm(y,y^i)\,\pi_i(dy)$ over all families $(\pi_i)_{i=1}^N$ of
nonnegative measures with $\pi_i(\T^d)=1/N$ and $\sum_i\pi_i=\nu$ is
attained and equals $d_1(\nu,m^N_{\by})$.  A minimizing family can be
chosen as a Borel measurable function of
$(\nu,\by)\in\Pp(\T^d)\times(\T^d)^N$.
\end{lemma}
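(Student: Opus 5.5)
The plan is to identify decompositions with couplings and then handle measurability separately.

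\emph{Step 1: reduction to couplings.} Given a family $(\pi_i)$ with $\pi_i(\T^d)=1/N$ and $\sum_i\pi_i=\nu$, the measure $\gamma:=\sum_i\pi_i\otimes\delta_{y^i}$ on $\T^d\times\T^d$ has first marginal $\nu$ and second marginal $m^N_{\by}$. Its cost is $\int\dm\,d\gamma=\sum_i\int\dm(y,y^i)\,\pi_i(dy)$. So the minimum over families is at least $d_1(\nu,m^N_{\by})$, by the Kantorovich formulation (equivalent to \eqref{eq:d1-duality} by duality, \cite[Chap.~5]{Villani2009}).

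Conversely, let $\gamma\in\Pi(\nu,m^N_{\by})$ be optimal; an optimal coupling exists by compactness of $\T^d$ and continuity of $\dm$. If the points $y^i$ are distinct, set $\pi_i:=\gamma(\cdot\times\{y^i\})$. If some points coincide, say $y^i$ appears $n$ times, split $\gamma(\cdot\times\{y^i\})$, which has mass $n/N$, equally: each copy receives $\frac1n\gamma(\cdot\times\{y^i\})$. In either case the family has masses $1/N$, sums to $\nu$, and has cost $\int\dm\,d\gamma=d_1(\nu,m^N_{\by})$. This gives attainment and equality.

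\emph{Step 2: measurable selection.} The cleanest route avoids abstract selection theorems by writing the $\pi_i$ explicitly in terms of an optimal coupling. Define the uniform splitting
\[
\pi_i:=\frac{1}{\#\{j:y^j=y^i\}}\,\gamma\big(\cdot\times\{y^i\}\big).
\]
The multiplicity counts are Borel in $\by$. The map $(\gamma,y)\mapsto\gamma(\cdot\times\{y\})$ is Borel from $\Pp(\T^d\times\T^d)\times\T^d$ to finite measures, because $\{y\}=\bigcap_n B(y,1/n)$ and each $\gamma(A\times B(y,1/n))$ is Borel for Borel $A$ (use a monotone class argument). So everything reduces to choosing an optimal $\gamma$ measurably in $(\nu,\by)$.

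For that choice, consider the set-valued map $\Gamma(\nu,\by)$ of optimal couplings in $\Pi(\nu,m^N_{\by})$. It has nonempty compact values in the compact metrizable space $\Pp(\T^d\times\T^d)$. Its graph is closed: the marginal constraints are weakly continuous, $(\nu,\by)\mapsto d_1(\nu,m^N_{\by})$ is continuous, and $\gamma\mapsto\int\dm\,d\gamma$ is continuous. A closed graph into a compact space makes the map upper hemicontinuous, hence weakly measurable. The Kuratowski--Ryll-Nardzewski selection theorem, or \cite[Cor.~5.22]{Villani2009}, then gives a Borel selection $\gamma(\nu,\by)$.

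\emph{Main obstacle.} The real work is the measurability of $\gamma\mapsto\gamma(\cdot\times\{y\})$ as a measure-valued map, since evaluating at a point is not weakly continuous. I would handle it directly: by the $\pi$-$\lambda$ theorem it suffices to check that $(\gamma,y)\mapsto\int\varphi(x)\1_{\{y\}}(z)\,\gamma(dx,dz)$ is Borel for continuous $\varphi$. This is a decreasing limit of $\int\varphi(x)\psi_n(z-y)\,\gamma(dx,dz)$ with continuous bumps $\psi_n\downarrow\1_{\{0\}}$ (after splitting $\varphi$ into positive and negative parts). Each of these integrals is jointly continuous in $(\gamma,y)$.
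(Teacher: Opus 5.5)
Your proof is correct. Step 1 matches the paper: the paper also sends a decomposition to $\gamma=\sum_i\pi_i\otimes\delta_{y^i}$, and conversely sets $\pi_i=\frac{1}{Nc_{y^i}}\gamma(\cdot\times\{y^i\})$, which is exactly your uniform splitting.

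The difference lies in the measurability step. The paper never selects a coupling. It works directly on the compact set $\mathcal D_N(\nu)$ of decompositions. It observes that $(\nu,\by)\mapsto\mathcal D_N(\nu)$ has closed graph into a compact space, hence is weakly measurable. It also observes that the cost $\sum_i\int\dm(y,y^i)\,\pi_i(dy)$ is jointly continuous in $(\pi,\by)$. It then applies the measurable maximum theorem to pick a Borel selector of the argmin.

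You instead select an optimal coupling measurably, via closed graph plus Kuratowski--Ryll-Nardzewski or Villani's Corollary~5.22. You then push it through the explicit splitting map. This forces you to prove separately that $(\gamma,y)\mapsto\gamma(\cdot\times\{y\})$ is Borel, even though it is not weakly continuous. Your argument for this is sound: take decreasing limits of the jointly continuous maps $(\gamma,y)\mapsto\int\varphi(x)\psi_n(z-y)\,\gamma(dx,dz)$ with $\psi_n\downarrow\1_{\{0\}}$, and use the fact that such integrals against continuous $\varphi$ generate the Borel $\sigma$-field on measures.

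The paper's route is shorter, because the cost is continuous on the decomposition space itself, so the point-evaluation issue never arises. Your route relies only on the standard optimal-transport selection theorem. It also yields an explicit formula for the chosen decomposition in terms of a measurably chosen optimal plan.
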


The proof, a standard compactness and measurable-selection argument, is given in
Appendix~\ref{app:deferred}.  Applying the Borel selector of
Lemma~\ref{lem:selection} to the $\F_{s_k}$-measurable pair
$(\nu_{s_k},\bX_{s_k})$ produces an $\F_{s_k}$-measurable choice of
$\pi_k$.

Next, for $t\in[s_k,s_{k+1})$ set
\begin{equation}\label{eq:component-definition}
 A^k_i(t):=\int_{s_k}^t\alpha^i_r\,dr,
 \qquad
 \nu_{i,t}:=\big(\tau_{A^k_i(t)}\big)_\#\big(P_{t-s_k}\,\pi_{i,k}\big),
 \qquad
 \nu_t:=\sum_{i=1}^N\nu_{i,t}.
\end{equation}
We call $\nu_{i,t}$ the \emph{component} attached to particle $i$: it is
transported by the realized drift of that particle and smoothed by the
heat flow, while the realized Brownian increments are discarded.  Each
$\nu_{i,t}$ has mass $1/N$, so
$\nu_t\in\Pp(\T^d)$.  The limit
$\nu_{s_{k+1}}:=\lim_{t\uparrow s_{k+1}}\nu_t$ exists in $\Pp(\T^d)$ (the
heat flow and the translations are continuous in $t$), which completes the
recursion: at $s_{k+1}$ the measure itself is retained, and only its
decomposition is refreshed via Lemma~\ref{lem:selection}.  We call this
refreshment the \emph{recoupling}.  In particular $t\mapsto\nu_t$ is
continuous on $[t_0,T]$, without jumps at the grid times.

For $t\in(s_k,s_{k+1})$ and $i=1,\dots,N$, the measure $\nu_{i,t}$ has the
strictly positive density
\[
 \rho_{i,t}(y)=\int_{\T^d}p_{t-s_k}\big(y-A^k_i(t)-z\big)\,\pi_{i,k}(dz),
\]
jointly continuous in $(t,y)\in(s_k,s_{k+1})\times\T^d$.  Define the
weights, the flux, and the drift
\begin{equation}\label{eq:beta-definition}
 \theta_i(t,y):=\frac{\rho_{i,t}(y)}{\sum_{i'=1}^N\rho_{i',t}(y)},
 \qquad
 q_t:=\sum_{i=1}^N\alpha^i_t\,\nu_{i,t},
 \qquad
 \beta_t(y):=\sum_{i=1}^N\theta_i(t,y)\,\alpha^i_t,
\end{equation}
for $t\in\bigcup_k(s_k,s_{k+1})$ and $y\in\T^d$, and $\beta_t:=0$ for
$t$ in the (Lebesgue-null) set of grid times.  The weights are
nonnegative with $\sum_i\theta_i=1$ and $\theta_i\,d\nu_t=d\nu_{i,t}$:
the drift is, at each point, a convex combination of the realized
controls, so $|\beta_t|\leq M$ everywhere.  Moreover
$q_t=\beta_t\,\nu_t$ off the grid times, and $\beta$ is jointly
measurable in $(t,y,\omega)$ and adapted.  Both $\nu$ and $\beta$
depend on $\omega$ through the realized controls and Brownian
increments.  We write this dependence as a superscript, as in
$(\nu^\omega,\beta^\omega)$, only when the pathwise nature of the
construction is being stressed.

\begin{lemma}[Pathwise Fokker--Planck equation for the shadow flow]\label{lem:FP}
There exists a $\Prob$-null set outside of which
\begin{equation}\label{eq:shadow-FP}
 \partial_t\nu_t=\Delta\nu_t-\diver\big(\beta_t\nu_t\big)
 \quad\text{in }(t_0,T)\times\T^d,
 \qquad \nu_{t_0}=m^N_{\bx},
\end{equation}
in the sense of distributions.  In particular, for each $\omega$ outside
this null set, $(\nu^\omega,\beta^\omega)$ is an admissible pair in the
definition \eqref{eq:U-definition} of $U(t_0,m^N_{\bx})$.
\end{lemma}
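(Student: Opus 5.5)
The plan is to verify the weak formulation interval by interval and then glue at the grid times using continuity of $t\mapsto\nu_t$. First, fix the null set: outside a $\Prob$-null set, each $t\mapsto\alpha^i_t(\omega)$ is Borel and bounded by $M$ (by the convention adopted after Proposition~\ref{prop:comparison}, this holds at every $(t,\omega)$). Then each $A^k_i(\cdot)$ is Lipschitz with derivative $\alpha^i_t$ for a.e.\ $t$. Actually no genuine exceptional set is needed beyond this, since the construction is pathwise. I will then fix $\omega$ and work deterministically.

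On a single interval $(s_k,s_{k+1})$, take a test function $\varphi\in C^\infty(\T^d)$ and compute $\frac{d}{dt}\int\varphi\,d\nu_{i,t}$. By definition,
\[
\int\varphi\,d\nu_{i,t}=\int P_{t-s_k}\big[\varphi(\cdot+A^k_i(t))\big]\,d\pi_{i,k}.
\]
Heat and translation commute, so this equals $\int (P_{t-s_k}\varphi)(z+A^k_i(t))\,\pi_{i,k}(dz)$. The map $(\tau,a)\mapsto (P_\tau\varphi)(z+a)$ is $C^1$, uniformly in $z$, with $\partial_\tau=\Delta P_\tau\varphi$ and $\nabla_a=\nabla P_\tau\varphi$. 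The chain rule for the absolutely continuous function $t\mapsto(t-s_k,A^k_i(t))$ therefore gives, for a.e.\ $t$,
\[
\frac{d}{dt}\int\varphi\,d\nu_{i,t}=\int\Delta\varphi\,d\nu_{i,t}+\alpha^i_t\cdot\int\nabla\varphi\,d\nu_{i,t},
\]
and the map $t\mapsto\int\varphi\,d\nu_{i,t}$ is absolutely continuous on $[s_k,s_{k+1}]$, including the endpoints since $P_\tau$ is strongly continuous at $\tau=0$. Summing over $i$, and using $\sum_i\alpha^i_t\nu_{i,t}=q_t=\beta_t\nu_t$ from \eqref{eq:beta-definition} (which holds because $\theta_i\,d\nu_t=d\nu_{i,t}$), I obtain
\[
\int\varphi\,d\nu_t-\int\varphi\,d\nu_{s}=\int_s^t\int(\Delta\varphi+\beta_r\cdot\nabla\varphi)\,d\nu_r\,dr
\]
for $s_k\le s<t\le s_{k+1}$. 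Here I interpret $\nu_{s_{k+1}}$ as the left limit, which by construction is the retained measure, and at $s_k$ I use $\sum_i\pi_{i,k}=\nu_{s_k}$.

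For the gluing step, the recoupling does not change $\nu_{s_k}$ itself, so the identity holds on every interval with matching endpoint values. Chaining the intervals therefore gives it on $[t_0,T]$. Extending to time-dependent test functions $\phi\in C_c^\infty((t_0,T)\times\T^d)$ by the standard product rule yields \eqref{eq:shadow-FP} in distributions. Continuity of $\nu$ in $\Pp(\T^d)$ follows from the same formula. The initial condition is \eqref{eq:shadow-initial}.

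Admissibility in \eqref{eq:U-definition} then requires the following. The drift $\beta^\omega$ must be Borel in $(t,y)$; this holds because $\rho_{i,t}(y)$ is jointly continuous on each open interval and $\alpha^i_t(\omega)$ is Borel in $t$. Square integrability is immediate from $|\beta|\le M$. Finally, $\nu\in C([t_0,T];\Pp(\T^d))$. The only delicate point, and the one I expect to require care, is the a.e.\ chain-rule step near $t=s_k$, where $\tau\to0$ and the densities degenerate. Working with smooth test functions rather than densities sidesteps this, because $P_\tau\varphi$ is smooth uniformly in $\tau\in[0,h]$.
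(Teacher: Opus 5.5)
Your proposal is correct and follows the paper's proof essentially step for step. You write $\int\varphi\,d\nu_{i,t}=\int(P_{t-s_k}\varphi)(z+A^k_i(t))\,\pi_{i,k}(dz)$, differentiate a.e.\ in $t$, sum over $i$ using $q_t=\beta_t\nu_t$, and glue across grid times by the continuity of $t\mapsto\nu_t$, since the recoupling changes only the decomposition. The one cosmetic difference is the passage to space--time test functions: you use the product rule, whereas the paper tests against products $\eta(t)\varphi(y)$ and uses the density of their finite sums.
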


The proof (Appendix~\ref{app:deferred}) is elementary: on each grid
interval the integral $\int_{\T^d}\varphi\,d\nu_{i,t}$ can be computed
explicitly from \eqref{eq:component-definition}, and the heat semigroup and
the translation flow contribute exactly the two terms on the right of
\eqref{eq:shadow-FP}.  Continuity at the grid times glues the intervals.  The
admissibility statement then follows from
$\int_{t_0}^T\int|\beta_t|^2d\nu_tdt\leq M^2T<\infty$ and the continuity of
$t\mapsto\nu_t$.

\subsection{The provisional coupling and its cost}\label{subsec:provisional}
For $t\in[s_k,s_{k+1})$, pairing each component with its particle defines
a transport plan between $\nu_t$ and $\mu^N_t$, namely
$\sum_{i=1}^N\nu_{i,t}(dy)\,\delta_{X^i_t}(dx)$.  We call it the
\emph{provisional coupling}.  Its cost is
\begin{equation}\label{eq:c-definition}
 c_t:=\sum_{i=1}^N\int_{\T^d}\dm\big(y,X^i_t\big)\,\nu_{i,t}(dy),
\end{equation}
while the optimal cost is the distance $d_1(\nu_t,\mu^N_t)$ itself.  Thus
$d_1(\nu_t,\mu^N_t)\leq c_t$, with equality at the grid times by
\eqref{eq:grid-coupling-optimal}.  Both quantities are adapted, and
$t\mapsto d_1(\nu_t,\mu^N_t)$ is continuous outside a null set, because
both flows are.

The role of the recoupling is to keep the gap between the provisional and
the optimal cost of order $\sqrt h$, uniformly in time:

\begin{lemma}[Cost of the provisional coupling]\label{lem:provisional}
For every $t\in[t_0,T)$,
\begin{equation}\label{eq:provisional}
 \E\,c_t\;\leq\;\E\,d_1\big(\nu_t,\mu^N_t\big)+C_M\sqrt h .
\end{equation}
\end{lemma}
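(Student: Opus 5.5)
We compare both quantities at time $t\in[s_k,s_{k+1})$ with their values at the grid time $s_k$, where they coincide by \eqref{eq:grid-coupling-optimal}: $c_{s_k}=d_1(\nu_{s_k},\mu^N_{s_k})$. It therefore suffices to prove the two one-sided bounds
\[
 \E\,c_t\leq \E\,c_{s_k}+C_M\sqrt h,
 \qquad
 \E\,d_1(\nu_{s_k},\mu^N_{s_k})\leq \E\,d_1(\nu_t,\mu^N_t)+C_M\sqrt h,
\]
and chain them.

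\textbf{Upper bound on $c_t$.} From \eqref{eq:component-definition}, $\nu_{i,t}$ is the law of $z+A^k_i(t)+\sqrt2\,B_{t-s_k}$ with $z\sim N\pi_{i,k}$, where $B$ is an auxiliary Brownian motion; this is exactly the representation $P_{t-s_k}\pi_{i,k}=p_{t-s_k}\star\pi_{i,k}$. Also $X^i_t=X^i_{s_k}+A^k_i(t)+\sqrt2(W^i_t-W^i_{s_k})$. The drift terms $A^k_i(t)$ are identical on both sides and cancel. The triangle inequality for $\dm$ then gives, pointwise in $\omega$,
\[
 \int\dm(y,X^i_t)\,\nu_{i,t}(dy)\leq \int\dm(z,X^i_{s_k})\,\pi_{i,k}(dz)+\frac1N\Big(\E_B|\sqrt2 B_{t-s_k}|+\sqrt2|W^i_t-W^i_{s_k}|\Big).
\]
Summing over $i$ and taking expectations yields $\E c_t\leq \E c_{s_k}+C\sqrt{t-s_k}\leq \E c_{s_k}+C\sqrt h$, with $C$ depending only on $d$. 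In fact no $M$-dependence is needed here.

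\textbf{Lower bound on $d_1(\nu_t,\mu^N_t)$.} We bound $d_1(\nu_{s_k},\nu_t)$ and $d_1(\mu^N_{s_k},\mu^N_t)$ and apply the triangle inequality. For the particles, matching $X^i_{s_k}$ with $X^i_t$ gives
\[
 d_1(\mu^N_{s_k},\mu^N_t)\leq \frac1N\sum_i\big(Mh+\sqrt2|W^i_t-W^i_{s_k}|\big),
\]
whose expectation is at most $C_M\sqrt h$. For the shadow flow, we couple each $\pi_{i,k}$ with $\nu_{i,t}$ through the same representation $z\mapsto z+A^k_i(t)+\sqrt2B_{t-s_k}$. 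This gives $d_1(\nu_{s_k},\nu_t)\leq Mh+C\sqrt h$ deterministically, using $\nu_{s_k}=\sum_i\pi_{i,k}$. Alternatively, the Fokker--Planck equation of Lemma~\ref{lem:FP} with $|\beta|\leq M$ yields the same bound. Combining the two estimates gives $d_1(\nu_{s_k},\mu^N_{s_k})\leq d_1(\nu_t,\mu^N_t)+C_M\sqrt h$ after taking expectations.

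\textbf{Main point to check.} The coupling representation must be measure-theoretically legitimate, since the $\pi_{i,k}$ are random measures. Because the bounds above are pathwise in $\omega$, with the auxiliary $B$ integrated separately, this causes no difficulty. One should also check that measurability of $c_t$ follows from the Borel selection in Lemma~\ref{lem:selection}. Everything else is routine.
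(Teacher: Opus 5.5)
Your proposal is correct and follows essentially the same route as the paper. You compare $c_t$ with $c_{s_k}=d_1(\nu_{s_k},\mu^N_{s_k})$, using the cancellation of the common drift $A^k_i(t)$ and the triangle inequality for the heat and Brownian displacements. You then return from $s_k$ to $t$ through the triangle inequality for $d_1$, with the bounds $Mh+C\sqrt h$ on the displacements of $\nu$ and $\mu^N$. One cosmetic slip: it is $N\nu_{i,t}$, not $\nu_{i,t}$, that is the law of $z+A^k_i(t)+\sqrt2B_{t-s_k}$, but your factor $1/N$ already accounts for this.
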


\begin{proof}
Fix $k$ and $t\in[s_k,s_{k+1})$, and abbreviate $\tau:=t-s_k\leq h$,
$A_i:=A^k_i(t)$, $\Delta W^i:=W^i_t-W^i_{s_k}$.  Since
$\sqrt2\,\Delta W^i$ has density $p_\tau$ on $\T^d$, the moment bound of
Lemma~\ref{lem:heat-kernel} gives
\begin{equation}\label{eq:increment-moment}
 \E\,\dm\big(\sqrt2\,\Delta W^i,0\big)\leq C\sqrt\tau .
\end{equation}
By
\eqref{eq:component-definition}, the identity
$X^i_t=X^i_{s_k}+A_i+\sqrt2\,\Delta W^i$, and the definition of the heat
semigroup,
\[
 \int_{\T^d}\dm\big(y,X^i_t\big)\,\nu_{i,t}(dy)
 =\int_{\T^d}\int_{\T^d}
 \dm\big(z+A_i+u,\;X^i_{s_k}+A_i+\sqrt2\,\Delta W^i\big)\,
 p_\tau(u)\,du\,\pi_{i,k}(dz).
\]
Using the translation invariance of $\dm$ and the triangle inequality,
\[
 \dm\big(z+A_i+u,\,X^i_{s_k}+A_i+\sqrt2\Delta W^i\big)
 \leq\dm\big(z,X^i_{s_k}\big)+\dm(u,0)+\dm\big(\sqrt2\Delta W^i,0\big).
\]
Summing over $i$ (the masses $\pi_{i,k}(\T^d)=1/N$ add up to one), taking
expectations, and using \eqref{eq:grid-coupling-optimal},
the moment bound of Lemma~\ref{lem:heat-kernel}, and
\eqref{eq:increment-moment},
\begin{equation}\label{eq:c-from-grid}
 \E\,c_t
 \leq\E\Big[\sum_{i=1}^N\int\dm\big(z,X^i_{s_k}\big)\pi_{i,k}(dz)\Big]
 +C\sqrt\tau
 =\E\,d_1\big(\nu_{s_k},\mu^N_{s_k}\big)+C\sqrt\tau .
\end{equation}

It remains to return from $s_k$ to time $t$.  By the triangle
inequality for $d_1$,
\[
 d_1\big(\nu_{s_k},\mu^N_{s_k}\big)
 \leq d_1\big(\nu_{s_k},\nu_t\big)+d_1\big(\nu_t,\mu^N_t\big)
 +d_1\big(\mu^N_t,\mu^N_{s_k}\big).
\]
For the first term, transporting each component of $\nu_{s_k}$ to the
corresponding component of $\nu_t$ through $z\mapsto z+A_i+u$, with $u$
averaged over $p_\tau$, gives, by the moment bound of
Lemma~\ref{lem:heat-kernel} and $|A_i|\leq M\tau$,
$\E\,d_1(\nu_{s_k},\nu_t)\leq Mh+C\sqrt h$.  For the third term, matching the
particles by their labels gives
$d_1(\mu^N_t,\mu^N_{s_k})\leq\frac1N\sum_i\dm(X^i_t,X^i_{s_k})$, and thus,
again by \eqref{eq:increment-moment} and $|A_i|\leq M\tau$,
$\E\,d_1(\mu^N_t,\mu^N_{s_k})\leq Mh+C\sqrt h$.  Inserting the two
estimates into the triangle inequality and combining with
\eqref{eq:c-from-grid} proves \eqref{eq:provisional}.
\end{proof}

\begin{remark}\label{rem:no-accumulation}
The estimate \eqref{eq:provisional} compares $c_t$ with
$d_1(\nu_t,\mu^N_t)$ at the same time $t$.  The recoupling resets the provisional cost to the optimal one
at each grid time, so the errors $C_M\sqrt h$ do not add up across the
$(T-t_0)/h$ intervals.  They will enter the final estimates only through
time integrals against integrable kernels, producing a total contribution
$O(\sqrt h)$, which disappears as $h\downarrow0$.
\end{remark}

\begin{remark}\label{rem:continuous-shadow}
As conceived here, the shadow flow is discretely recoupled: the
attachment of the shadow mass to the particles is refreshed only at
the grid times.  One could plausibly have worked instead with the formal limit $h\downarrow0$, in which
the recoupling is instantaneous: a flow $\nu$ starting from
$\mu^N_{t_0}$ and solving
\[
 \partial_t\nu_t=\Delta\nu_t
 -\diver\Big(\sum_{i=1}^N\alpha^i_t\,\pi_{i,t}\Big),
\]
where, for a.e.\ $t$, the family $(\pi_{i,t})_{i=1}^N$ couples $\nu_t$
optimally with $\mu^N_t$.  Because
$\pi_{i,t}\leq\nu_t$, the flux would take the form $\beta_t\nu_t$,
where $\beta_t=\sum_{i=1}^N\frac{d\pi_{i,t}}{d\nu_t}\,\alpha^i_t$ is,
at each point, a convex combination of the realized particle controls
weighted by the optimal attachment.  It is plausible that the
results of this paper could be proved using such a flow. Making this limiting picture rigorous would require a selection of optimal decompositions measurable in time.  Uniqueness of the resulting flow is unclear, and adaptedness would have to be preserved in the common-noise setting. We work with
the discrete construction because it already yields the sharp estimates, while being both concrete and flexible. Indeed, after translation the construction
transfers unchanged to the common-noise setting of
Section~\ref{sec:common-noise}. Its pathwise nature is also useful to the one-dimensional arguments of Section~\ref{sec:one-dim}: since the competitor is constructed separately for each realization, it accommodates anticipating corrections to the component drifts.
\end{remark}

\section{Shadowing of the empirical measure}\label{sec:tracking}

This section proves that, in expectation, the shadow flow follows the
empirical measure at the empirical-measure rate, uniformly in time.
We keep the objects and notation of Section~\ref{sec:shadow}: the grid
\eqref{eq:grid} of mesh $h$, the shadow flow $\nu$ with components
$\nu_{i,t}$ and drift $\beta$ \eqref{eq:beta-definition}, and the
provisional cost $c_t$ \eqref{eq:c-definition}; $P_\tau$ denotes the
heat semigroup, with kernel $p_\tau$ \eqref{eq:heat-kernel-def}.

\begin{proposition}[Sharp shadowing of the empirical measure]\label{prop:tracking}
The shadow flow $\nu$ of Section~\ref{sec:shadow} satisfies
\begin{equation}\label{eq:tracking}
 \sup_{t_0\leq t\leq T}\E\,d_1\big(\nu_t,\mu^N_t\big)
 \leq C_M\big(r_{N,d}+\sqrt h\big).
\end{equation}
\end{proposition}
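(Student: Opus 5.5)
The plan is to compare $\nu_t$ and $\mu^N_t$ after smoothing at a scale $\varepsilon\in(0,1]$ chosen at the end, using $d_1(\nu_t,\mu^N_t)\leq d_1(P_\varepsilon\nu_t,P_\varepsilon\mu^N_t)+C\sqrt\varepsilon$, which follows from the moment bound of Lemma~\ref{lem:heat-kernel}. I write both flows in mild (Duhamel) form against the heat semigroup and set $D_t:=\nu_t-\mu^N_t$, so that $D_{t_0}=0$ by \eqref{eq:shadow-initial}.

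\textbf{The two evolution equations.} By Lemma~\ref{lem:FP}, pathwise $\partial_t\nu_t=\Delta\nu_t-\diver q_t$, with $q_t=\sum_i\alpha^i_t\nu_{i,t}$. By It\^o's formula applied to $\varphi(X^i_t)$, the empirical measure satisfies
\[
d\mu^N_t=\Delta\mu^N_t\,dt-\diver\Big(\tfrac1N\textstyle\sum_i\alpha^i_t\delta_{X^i_t}\Big)dt+d\mathsf M_t ,
\]
where $\mathsf M_t$ is the martingale with $\int\varphi\,d\mathsf M_t=\frac{\sqrt2}{N}\sum_i\nabla\varphi(X^i_t)\cdot dW^i_t$. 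For a $1$-Lipschitz $\varphi$ I use the space–time test function $\psi_s:=P_{t-s+\varepsilon}\varphi$, which gives
\[
\int\varphi\,d(P_\varepsilon D_t)=\int_{t_0}^t\sum_i\alpha^i_s\cdot\Big(\int\nabla\psi_s\,d\nu_{i,s}-\tfrac1N\nabla\psi_s(X^i_s)\Big)ds-\frac{\sqrt2}{N}\sum_i\int_{t_0}^t\nabla\psi_s(X^i_s)\cdot dW^i_s .
\]

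\textbf{Flux mismatch.} Each $\nu_{i,s}$ has mass $1/N$, so the flux term for particle $i$ equals $\int(\nabla\psi_s(y)-\nabla\psi_s(X^i_s))\,\nu_{i,s}(dy)$. Since $\|D^2P_\tau\varphi\|_\infty\leq C\tau^{-1/2}$ for $\Lip(\varphi)\leq1$, the flux term is bounded pathwise by $CM\int_{t_0}^t(t-s+\varepsilon)^{-1/2}c_s\,ds$, uniformly in $\varphi$, with $c_s$ from \eqref{eq:c-definition}. This is the critical parabolic gain: the kernel is integrable, and the mismatch comes back proportional to $c_s$. Lemma~\ref{lem:provisional} then replaces $\E c_s$ by $\E d_1(\nu_s,\mu^N_s)+C_M\sqrt h$.

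\textbf{Martingale term (main obstacle).} The difficulty is that the supremum over $\varphi$ must be taken inside the expectation. I would not use a chaining argument. Instead I dualize: on $\T^d$, $|\int\varphi\,d\sigma|\leq\|\nabla\varphi\|_{L^2}\|\sigma\|_{\dot H^{-1}}$ for mean-zero $\sigma$, so $\|\sigma\|_{\Lip^*}\leq(\sum_{k\neq0}(2\pi|k|)^{-2}|\widehat\sigma(k)|^2)^{1/2}$. The $k$-th Fourier coefficient of the smoothed martingale is
\[
\frac{\sqrt2}{N}\sum_i\int_{t_0}^t(-2\pi ik)\,e^{-4\pi^2|k|^2(t-s+\varepsilon)}e^{-2\pi ik\cdot X^i_s}\cdot dW^i_s .
\]
By the It\^o isometry and the independence of the $W^i$, its second moment is at most $CN^{-1}|k|^2\int e^{-8\pi^2|k|^2(t-s+\varepsilon)}ds\leq CN^{-1}e^{-8\pi^2|k|^2\varepsilon}$, whatever the positions $X^i_s$ are. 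By Jensen,
\[
\E\|\text{mart}\|_{\Lip^*}\leq CN^{-1/2}\Big(\sum_{k\neq0}|k|^{-2}e^{-c|k|^2\varepsilon}\Big)^{1/2},
\]
which is of order $N^{-1/2}$ for $d=1$, $N^{-1/2}\sqrt{\log(1/\varepsilon)}$ for $d=2$, and $N^{-1/2}\varepsilon^{1/2-d/4}$ for $d\geq3$.

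\textbf{Closing the estimate.} Set $u(t):=\E d_1(\nu_t,\mu^N_t)$. Taking the supremum over $\varphi$ pathwise and then expectations gives
\[
u(t)\leq C\sqrt\varepsilon+m_{N,\varepsilon}+C_M\int_{t_0}^t(t-s)^{-1/2}\big(u(s)+\sqrt h\big)ds ,
\]
where $m_{N,\varepsilon}$ is the martingale bound above. A weakly singular (Volterra) Gr\"onwall lemma, or iteration over time intervals of length $\delta_M$ short enough that $C_M\int_0^{\delta_M}s^{-1/2}ds\leq1/2$, yields $\sup_tu(t)\leq C_M(\sqrt\varepsilon+m_{N,\varepsilon}+\sqrt h)$. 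Finally I choose $\varepsilon$. For $d\geq3$, $\varepsilon=N^{-2/d}$ gives $N^{-1/d}$. For $d=2$, $\varepsilon=N^{-1}$ gives $N^{-1/2}\sqrt{\log(1+N)}$. For $d=1$, $\varepsilon=N^{-1}$ gives $N^{-1/2}$. In every case the bound is $r_{N,d}$, which proves \eqref{eq:tracking}.
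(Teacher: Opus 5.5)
Your proposal is correct and follows the paper's proof. You mollify at scale $\varepsilon$, use the Duhamel representation, and bound the flux mismatch by $(t-s)^{-1/2}c_s$; your estimate $\|D^2P_\tau\varphi\|_\infty\le C\tau^{-1/2}$ is the dual form of the paper's flux estimate $\|P_\tau\mathsf{r}_t\|_{L^1}\le CM\tau^{-1/2}c_t$. You then bound the noise at $N^{-1/2}\Theta_d(\varepsilon)$ by the It\^o isometry and close with a weakly singular Gr\"onwall inequality, exactly as the paper does.

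Your Fourier/$\dot H^{-1}$ bound on the martingale is the same computation as the paper's $L^2$ bound on $M^\varepsilon_t$, with the irrelevant zero mode dropped. As in the paper's mollified Duhamel lemma, the one point to make explicit is that the martingale term be defined pathwise, as the difference between $P_\varepsilon D_t$ and the flux term. That way the supremum over $\varphi$ is taken outside a single null set.
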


The proof treats the difference of the two flows, empirical and
shadow, as a solution of a stochastic heat
equation with two forcings, and estimates it through Duhamel's
formula: the flux mismatch between particles and components is bounded
by the provisional cost $c_t$, through the flux estimate,
Lemma~\ref{lem:flux}, while the discarded idiosyncratic noise enters
as a martingale and is bounded at the scale $N^{-1/2}$, through the
noise estimate, Lemma~\ref{lem:martingale}.  A heat smoothing at a scale
$\varepsilon>0$, at the price $C\sqrt\varepsilon$, gives the necessary regularity. A weakly singular
Gr\"onwall inequality then closes the estimate, and $\varepsilon$ is chosen optimally
to balance the mollification against the noise.

\subsection{The shadowing error and its Duhamel representation}
We define the shadowing error and the flux mismatch by
\begin{equation}\label{eq:z-r-definition}
 z_t:=\mu^N_t-\nu_t,
 \qquad
 \mathsf{r}_t:=\sum_{i=1}^N\alpha^i_t
 \Big(\frac1N\delta_{X^i_t}-\nu_{i,t}\Big),
 \qquad t\in\bigcup_k(s_k,s_{k+1}),
\end{equation}
with $\mathsf{r}_t:=0$ at the (Lebesgue-null) set of grid times.
Note that each summand of $\mathsf{r}_t$ is $\alpha^i_t$ times a zero-mass measure, and
$|\mathsf{r}_t|\leq M(\mu^N_t+\nu_t)$ in the sense of total variation, so $\mathsf{r}_t$ is a
finite vector measure with $\mathsf{r}_t(\T^d)=0$.

By It\^o's formula for the particles and Lemma~\ref{lem:FP} for the
shadow flow, the shadowing error solves, formally, the
stochastic heat equation
\begin{equation}\label{eq:z-evolution}
 dz_t=\Delta z_t\,dt-\diver\mathsf{r}_t\,dt
 -\frac{\sqrt2}{N}\sum_{i=1}^N\diver\big(\delta_{X^i_t}\,dW^i_t\big),
 \qquad z_{t_0}=0,
\end{equation}
driven by the flux mismatch and by the discarded noise, the initial
condition being \eqref{eq:shadow-initial}.  The next lemma is a
rigorous version of \eqref{eq:z-evolution}
(cf.\ \cite[Lem.~3.1]{BechtoldCoppini2021}).

\begin{lemma}[Duhamel representation for the shadowing error]\label{lem:duhamel}
For every $t\in(t_0,T]$ and every $\varphi\in C^\infty(\T^d)$, almost
surely,
\begin{equation}\label{eq:duhamel}
 \int_{\T^d}\varphi\,dz_t
 =\int_{t_0}^t\int_{\T^d}P_{t-s}\nabla\varphi\cdot d\mathsf{r}_s\,ds
 +\frac{\sqrt2}{N}\sum_{i=1}^N\int_{t_0}^t
 \big(P_{t-s}\nabla\varphi\big)\big(X^i_s\big)\cdot dW^i_s .
\end{equation}
\end{lemma}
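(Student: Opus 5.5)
The plan is to apply a time-dependent Itô formula to $s\mapsto\int\psi_s\,d\mu^N_s$ and to $s\mapsto\int\psi_s\,d\nu_s$ with the backward test function $\psi_s:=P_{t-s}\varphi$, $s\in[t_0,t]$, and subtract. Since $\varphi\in C^\infty(\T^d)$, $\psi$ is smooth in $(s,x)$ on $[t_0,t]\times\T^d$ and solves $\partial_s\psi_s+\Delta\psi_s=0$. Moreover $\nabla\psi_s=P_{t-s}\nabla\varphi$, because $P$ commutes with $\nabla$ on the torus. This is exactly what produces the two Duhamel terms.

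\emph{Particle side.} For each $i$, Itô's formula applied to $\psi_s(X^i_s)$, using $dX^i_s=\alpha^i_s\,ds+\sqrt2\,dW^i_s$, gives
\[
 d\psi_s(X^i_s)=\big(\partial_s\psi_s+\Delta\psi_s\big)(X^i_s)\,ds+\nabla\psi_s(X^i_s)\cdot\alpha^i_s\,ds+\sqrt2\,\nabla\psi_s(X^i_s)\cdot dW^i_s .
\]
The first term vanishes. Averaging over $i$ and integrating from $t_0$ to $t$ yields
\[
 \int\varphi\,d\mu^N_t-\int P_{t-t_0}\varphi\,d\mu^N_{t_0}
 =\int_{t_0}^t\frac1N\sum_i\alpha^i_s\cdot\nabla\psi_s(X^i_s)\,ds
 +\frac{\sqrt2}{N}\sum_i\int_{t_0}^t\nabla\psi_s(X^i_s)\cdot dW^i_s .
\]
The periodicity of $\psi$ makes the use of the $\T^d$-valued processes legitimate, since we may lift to $\R^d$.

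\emph{Shadow side.} This side is pathwise and deterministic for fixed $\omega$. On each open grid interval $(s_k,s_{k+1})$, I would compute directly from \eqref{eq:component-definition}:
\[
 \int\psi_s\,d\nu_{i,s}=\int P_{s-s_k}\psi_s\big(z+A^k_i(s)\big)\,\pi_{i,k}(dz).
\]
The map $s\mapsto P_{s-s_k}\psi_s=P_{t-s_k}\varphi$ is constant in $s$, and $A^k_i$ is absolutely continuous with derivative $\alpha^i_s$. Differentiating in $s$ therefore gives only the transport term, so that a.e. on the interval
\[
 \frac{d}{ds}\int\psi_s\,d\nu_{i,s}=\alpha^i_s\cdot\int\nabla\psi_s\,d\nu_{i,s}.
\]
This is the same computation as in Lemma~\ref{lem:FP}, but with the test function moving backward, so that the heat term cancels exactly. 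The map $s\mapsto\int\psi_s\,d\nu_s$ is continuous on $[t_0,t]$, because $\nu$ has no jumps at grid times (recoupling only refreshes the decomposition) and $\psi$ is uniformly continuous. It is also Lipschitz on each interval. Summing over the finitely many intervals (including the partial one containing $t$) therefore gives
\[
 \int\varphi\,d\nu_t-\int P_{t-t_0}\varphi\,d\nu_{t_0}=\int_{t_0}^t\sum_i\alpha^i_s\cdot\int\nabla\psi_s\,d\nu_{i,s}\,ds .
\]

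\emph{Subtracting.} The initial terms cancel because $\nu_{t_0}=\mu^N_{t_0}$ by \eqref{eq:shadow-initial}. The two $ds$-integrands combine into $\int\nabla\psi_s\cdot d\mathsf r_s$ by the definition \eqref{eq:z-r-definition}, the grid times being Lebesgue-null. This is \eqref{eq:duhamel}, holding a.s. because the only null set comes from the Itô formula and the stochastic integral.

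The main (mild) obstacle is bookkeeping rather than analysis. One must check that the Itô formula applies on the torus with a time-dependent smooth test function, that the stochastic integrals are true martingale integrals (their integrands are bounded by $\|\nabla\varphi\|_\infty$), and that the pathwise computation is valid across grid times, where the decomposition $\pi_k$ changes. The continuity of $\nu$ handles the last point.
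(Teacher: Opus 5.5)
Your proposal is correct and follows the paper's proof: It\^o's formula for the particles with the backward test function $P_{t-s}\varphi$, the corresponding identity for the shadow flow, and subtraction, in which the initial terms cancel and the flux difference is $\mathsf r_s$. The only difference is on the shadow side. The paper tests the distributional equation \eqref{eq:shadow-FP} of Lemma~\ref{lem:FP} against $P_{t-s}\varphi$, whereas you recompute directly from \eqref{eq:component-definition} using $P_{s-s_k}P_{t-s}\varphi=P_{t-s_k}\varphi$; this makes the heat term vanish identically and avoids having to justify time-dependent test functions in the distributional formulation.
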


\begin{proof}
Fix $t\in(t_0,T]$ and $\varphi\in C^\infty(\T^d)$, and set
$\varphi_s:=P_{t-s}\varphi$ for $s\in[t_0,t]$, which solves the
backward heat equation $\partial_s\varphi_s=-\Delta\varphi_s$.

On the particle side, recalling \eqref{eq:particle-SDE}, It\^o's formula gives
\begin{multline*}
 d\varphi_s\big(X^i_s\big)
 =\big(\partial_s\varphi_s+\Delta\varphi_s
 +\alpha^i_s\cdot\nabla\varphi_s\big)\big(X^i_s\big)\,ds
 +\sqrt2\,\nabla\varphi_s\big(X^i_s\big)\cdot dW^i_s
 \\= \big(\alpha^i_s\cdot\nabla\varphi_s\big)\big(X^i_s\big)\,ds
 +\sqrt2\,\nabla\varphi_s\big(X^i_s\big)\cdot dW^i_s .
\end{multline*}
Integrating over $[t_0,t]$ and averaging over
$i$ yields
\begin{equation}\label{eq:particle-identity}
 \int_{\T^d}\varphi\,d\mu^N_t
 -\int_{\T^d}P_{t-t_0}\varphi\,d\mu^N_{t_0}
 =\int_{t_0}^t\frac1N\sum_{i=1}^N
 \alpha^i_s\cdot\nabla\varphi_s\big(X^i_s\big)\,ds
 +\frac{\sqrt2}{N}\sum_{i=1}^N\int_{t_0}^t
 \nabla\varphi_s(X^i_s)\cdot dW^i_s .
\end{equation}

On the shadow side, testing the
distributional equation \eqref{eq:shadow-FP} against $\varphi_s$, and using the continuity of $s\mapsto\nu_s$, we get
\begin{multline}\label{eq:shadow-identity}
 \int_{\T^d}\varphi\,d\nu_t-\int_{\T^d}P_{t-t_0}\varphi\,d\nu_{t_0}
 =\int_{t_0}^t\int_{\T^d}\big(\partial_s\varphi_s+\Delta\varphi_s
 +\beta_s\cdot\nabla\varphi_s\big)\,d\nu_s\,ds
 \\=\int_{t_0}^t\int_{\T^d}\beta_s\cdot\nabla\varphi_s\,d\nu_s\,ds .
\end{multline}

Subtracting \eqref{eq:shadow-identity} from \eqref{eq:particle-identity},
the initial terms cancel by \eqref{eq:shadow-initial}.
The corresponding flux difference is $\mathsf{r}_s$ by
\eqref{eq:beta-definition} and \eqref{eq:z-r-definition}. Hence
\[
 \int_{\T^d}\varphi\,dz_t
 =\int_{t_0}^t\int_{\T^d}\nabla\varphi_s\cdot d\mathsf{r}_s\,ds
 +\frac{\sqrt2}{N}\sum_{i=1}^N\int_{t_0}^t
 \nabla\varphi_s(X^i_s)\cdot dW^i_s ,
\]
which is exactly \eqref{eq:duhamel}, since
$\nabla\varphi_s=P_{t-s}\nabla\varphi$.
\end{proof}

Fix $\varepsilon>0$ and define
\begin{equation}\label{eq:martingale-definition}
 D^\varepsilon_t:=\int_{t_0}^tP_{t-s+\varepsilon}\,\mathsf{r}_s\,ds,
 \qquad
 M^\varepsilon_t(y):=\frac{\sqrt2}{N}\sum_{i=1}^N\int_{t_0}^t
 p_{t-s+\varepsilon}\big(y-X^i_s\big)\,dW^i_s .
\end{equation}
These are the drift and noise terms of the representation
\eqref{eq:duhamel}, each smoothed by $P_\varepsilon$.  Accordingly,
$D^\varepsilon_t$ is a smooth $\R^d$-valued function on $\T^d$, and
$M^\varepsilon_t$ has a jointly measurable, $y$-continuous version
(Lemma~\ref{lem:noise-version}), which we fix. 

The price of this heat smoothing is an error $C\sqrt\varepsilon$, and will be paid in the proof of
Proposition~\ref{prop:tracking}, through the following simple consequence of \eqref{eq:duhamel}.

\begin{lemma}[Mollified Duhamel representation]\label{lem:mollified-duhamel}
For every $\varepsilon>0$ and $t\in(t_0,T]$, almost surely,
simultaneously for all Lipschitz $\phi:\T^d\to\R$,
\begin{equation}\label{eq:mollified-duhamel}
 \int_{\T^d}P_\varepsilon\phi\,dz_t
 =\int_{\T^d}\big(D^\varepsilon_t+M^\varepsilon_t\big)\cdot\nabla\phi\,dy .
\end{equation}
\end{lemma}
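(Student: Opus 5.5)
Apply Lemma~\ref{lem:duhamel} to the test function $\varphi:=P_\varepsilon\phi$, first for smooth $\phi$, and pass to Lipschitz $\phi$ at the end. Since $P_{t-s}\nabla P_\varepsilon\phi=P_{t-s+\varepsilon}\nabla\phi$, the drift term in \eqref{eq:duhamel} becomes
\[
 \int_{t_0}^t\int_{\T^d}P_{t-s+\varepsilon}\nabla\phi\cdot d\mathsf r_s\,ds
 =\int_{t_0}^t\int_{\T^d}\nabla\phi\cdot\big(P_{t-s+\varepsilon}\mathsf r_s\big)\,dy\,ds .
\]
This uses the duality defining $P_\tau$ on measures and the symmetry of the kernel $p_\tau$. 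The integrand is bounded: $|P_{t-s+\varepsilon}\mathsf r_s|\leq 2M\sup p_\varepsilon$, because $t-s+\varepsilon\geq\varepsilon$. Fubini therefore applies and gives $\int D^\varepsilon_t\cdot\nabla\phi\,dy$.

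For the noise term, write
\[
 (P_{t-s+\varepsilon}\nabla\phi)(X^i_s)=\int_{\T^d}p_{t-s+\varepsilon}(y-X^i_s)\nabla\phi(y)\,dy .
\]
A stochastic Fubini theorem exchanges the $dy$-integral with the $dW^i_s$-integral. Its integrability condition is
\[
 \int_{\T^d}\Big(\E\int_{t_0}^t p_{t-s+\varepsilon}(y-X^i_s)^2\,ds\Big)^{1/2}dy<\infty ,
\]
which holds because the kernel is bounded by $\sup p_\varepsilon$. After the exchange the term equals $\int_{\T^d}M^\varepsilon_t(y)\cdot\nabla\phi(y)\,dy$, where we may use the fixed $y$-continuous version from Lemma~\ref{lem:noise-version}, since versions agree $dy\otimes\Prob$-a.e. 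This proves \eqref{eq:mollified-duhamel} almost surely for each fixed smooth $\phi$.

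The main obstacle is the order of quantifiers: the null set must not depend on $\phi$. Take a countable family, for instance the trigonometric polynomials with rational coefficients. Intersecting the null sets, the identity holds simultaneously for the whole family. Both sides are then extended by density.

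For a general Lipschitz $\phi$, mollify to get smooth $\phi_n\to\phi$ uniformly with $\nabla\phi_n\to\nabla\phi$ a.e. and $\|\nabla\phi_n\|_\infty\leq\Lip(\phi)$. These $\phi_n$ are in turn approximated in $C^1$ by members of the countable family. The left side converges because $z_t$ is a finite measure and $P_\varepsilon\phi_n\to P_\varepsilon\phi$ uniformly. The right side converges by dominated convergence, since $D^\varepsilon_t$ is bounded and $M^\varepsilon_t(\cdot)$ is continuous on the compact torus, hence bounded, on the full-probability event where the version is continuous. Additive constants in $\phi$ cost nothing, because $z_t(\T^d)=0$ and $\nabla$ annihilates constants. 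This yields \eqref{eq:mollified-duhamel} for all Lipschitz $\phi$ on a single event of full probability.
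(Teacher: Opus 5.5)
Your proposal is correct and follows the paper's proof essentially step by step. You apply Lemma~\ref{lem:duhamel} with $\varphi=P_\varepsilon\phi$, use self-adjointness and Fubini for the drift term, and use evenness of the kernel with stochastic Fubini for the noise term. You then fix a single null set via a countable $C^1$-dense family and approximate to reach Lipschitz $\phi$. You only make explicit details the paper leaves implicit: the stochastic Fubini integrability condition, the choice of rational trigonometric polynomials, and the dominated-convergence argument for the Lipschitz extension.
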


\begin{proof}
Fix $\varepsilon>0$ and apply \eqref{eq:duhamel} with
$\varphi=P_\varepsilon\phi$, $\phi\in C^\infty(\T^d)$.  Every heat
kernel below is then evaluated at time at least $\varepsilon$, which is
what legitimizes the following interchanges.  In the drift
term of \eqref{eq:duhamel}, the self-adjointness of the heat semigroup,
Fubini's theorem, and the definition \eqref{eq:martingale-definition}
of $D^\varepsilon_t$ give
\[
 \int_{t_0}^t\int_{\T^d}P_{t-s+\varepsilon}\nabla\phi\cdot d\mathsf{r}_s\,ds
 =\int_{t_0}^t\int_{\T^d}\big(P_{t-s+\varepsilon}\,\mathsf{r}_s\big)
 \cdot\nabla\phi\,dy\,ds
 =\int_{\T^d}D^\varepsilon_t\cdot\nabla\phi\,dy .
\]
In the stochastic term,
since the heat kernel is even,
\[
 \big(P_{t-s+\varepsilon}\nabla\phi\big)\big(X^i_s\big)
 =\int_{\T^d}p_{t-s+\varepsilon}\big(y-X^i_s\big)\,\nabla\phi(y)\,dy ,
\]
and the stochastic Fubini theorem (the integrand is bounded and
progressively measurable), together with the definition
\eqref{eq:martingale-definition} of $M^\varepsilon_t$, gives
\[
 \frac{\sqrt2}{N}\sum_{i=1}^N\int_{t_0}^t
 \big(P_{t-s+\varepsilon}\nabla\phi\big)\big(X^i_s\big)\cdot dW^i_s
 =\int_{\T^d}M^\varepsilon_t(y)\cdot\nabla\phi(y)\,dy .
\]
This proves \eqref{eq:mollified-duhamel} for each fixed
$\phi\in C^\infty(\T^d)$, almost surely.  Since both sides are
continuous under uniform convergence of $\phi$ and $\nabla\phi$, a
countable dense family of test functions upgrades this to a single
null set for all smooth $\phi$.  On the same event, the identity
extends to every Lipschitz $\phi$ by a standard approximation argument. 
\end{proof}

\subsection{The drift and noise estimates}
Our goal is now to control the regularized drift and noise terms $D^{\varepsilon}$ and $M^{\varepsilon}$. For the drift term $D^{\varepsilon}_t$, we estimate below its integrand, namely the flux mismatch regularized by the heat flow, in terms of the provisional coupling cost.
\begin{lemma}[Flux estimate]\label{lem:flux}
For every $\tau>0$ and $t\in[t_0,T)$,
\begin{equation*}
 \big\|P_\tau\mathsf{r}_t\big\|_{L^1(\T^d)}\leq CM\,\tau^{-1/2}\,c_t .
\end{equation*}
\end{lemma}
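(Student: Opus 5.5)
We will bound $\|P_\tau\mathsf{r}_t\|_{L^1}$ component by component, using the triangle inequality over the particles. Since $\mathsf{r}_t=0$ at grid times, it suffices to take $t\in(s_k,s_{k+1})$.

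First we decompose the $i$th summand $\alpha^i_t(\frac1N\delta_{X^i_t}-\nu_{i,t})$ as a mixture. Since $\nu_{i,t}$ has mass $1/N$, we can write
\[
 \frac1N\delta_{X^i_t}-\nu_{i,t}=\int_{\T^d}\big(\delta_{X^i_t}-\delta_y\big)\,\nu_{i,t}(dy).
\]
Applying $P_\tau$, which commutes with this integral, and using $|\alpha^i_t|\leq M$ together with Minkowski's inequality for the $L^1$ norm, we obtain
\[
 \big\|P_\tau\mathsf{r}_t\big\|_{L^1}\leq M\sum_{i=1}^N\int_{\T^d}\big\|p_\tau(\cdot-X^i_t)-p_\tau(\cdot-y)\big\|_{L^1(\T^d)}\,\nu_{i,t}(dy).
\]

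The key ingredient is then the translation estimate for the heat kernel:
\[
 \|p_\tau(\cdot-a)-p_\tau\|_{L^1(\T^d)}\leq C\,\tau^{-1/2}\,\dm(a,0),\qquad a\in\T^d.
\]
To prove it, let $v\in\R^d$ be a shortest representative of $a$, so $|v|=\dm(a,0)$. Writing
\[
 p_\tau(x-v)-p_\tau(x)=-\int_0^1 v\cdot\nabla p_\tau(x-\lambda v)\,d\lambda,
\]
we get the bound $|v|\,\|\nabla p_\tau\|_{L^1(\T^d)}$. The periodized gradient is controlled by the full-space one, $\|\nabla p_\tau\|_{L^1(\T^d)}\leq\|\nabla g_\tau\|_{L^1(\R^d)}=C\tau^{-1/2}$, where $g_\tau$ is the Gaussian. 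This is presumably part of Lemma~\ref{lem:heat-kernel}; if not, the short scaling computation above proves it. One could also use the trivial bound $2$ for large $\tau$, but that is not needed.

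Plugging the translation estimate into the previous display gives
\[
 \big\|P_\tau\mathsf{r}_t\big\|_{L^1}\leq CM\tau^{-1/2}\sum_i\int\dm(y,X^i_t)\,\nu_{i,t}(dy)=CM\tau^{-1/2}c_t,
\]
by the definition \eqref{eq:c-definition} of $c_t$.

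There is no real obstacle. The only points needing care are the following.
\begin{itemize}
 \item The vector measure $\mathsf{r}_t$ must be handled componentwise, with the Euclidean norm inside the $L^1$ norm; this costs at most a factor depending on $d$.
 \item The measurability or Bochner justification of the mixture identity after applying $P_\tau$ is needed. Here it is immediate, because $y\mapsto p_\tau(\cdot-y)$ is continuous into $L^1$.
\end{itemize}
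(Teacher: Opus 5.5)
Your proposal is correct and follows essentially the same route as the paper: write each summand of $P_\tau\mathsf{r}_t$ as $\alpha^i_t$ times an average, against $\nu_{i,t}$, of differences of translated heat kernels, bound each difference along a geodesic by $\dm(X^i_t,y)\,\|\nabla p_\tau\|_{L^1}\leq C\tau^{-1/2}\dm(X^i_t,y)$ (the gradient bound of Lemma~\ref{lem:heat-kernel}), and sum over $i$ to recover $c_t$. Your componentwise caveat is unnecessary, since each summand is the fixed vector $\alpha^i_t$ times a scalar function, so its Euclidean norm is at most $M$ times that function's absolute value and no $d$-dependent factor arises.
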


\begin{proof}
Since $\nu_{i,t}(\T^d)=1/N$, the $i$-th summand of $P_\tau\mathsf{r}_t$ is
\[
 P_\tau\Big(\alpha^i_t\Big(\frac1N\delta_{X^i_t}-\nu_{i,t}\Big)\Big)(y)
 =\alpha^i_t\int_{\T^d}\Big(p_\tau\big(y-X^i_t\big)-p_\tau(y-x)\Big)\,
 \nu_{i,t}(dx).
\]
Joining $x$ to $X^i_t$ by a geodesic and integrating $\nabla p_\tau$
along it, the gradient bound of Lemma~\ref{lem:heat-kernel} gives
$\|p_\tau(\cdot-X^i_t)-p_\tau(\cdot-x)\|_{L^1}\leq
\dm\big(X^i_t,x\big)\,\|\nabla p_\tau\|_{L^1}\leq
C\tau^{-1/2}\dm\big(X^i_t,x\big)$, so
\[
 \big\|P_\tau\mathsf{r}_t\big\|_{L^1}
 \leq M\sum_{i=1}^N\int_{\T^d}
 C\tau^{-1/2}\,\dm\big(X^i_t,x\big)\,\nu_{i,t}(dx)
 =CM\,\tau^{-1/2}c_t,
\]
by the definition \eqref{eq:c-definition} of $c_t$.
\end{proof}

Our next estimate controls the stochastic forcing. Set, for $0<\varepsilon\leq1$,
\begin{equation}\label{eq:theta-definition}
 \Theta_d(\varepsilon):=
 \begin{cases}
 \,1, & d=1,\\
 \,\sqrt{\log\big(1+\varepsilon^{-1}\big)}, & d=2,\\
 \,\varepsilon^{-(d-2)/4}, & d\geq3 .
 \end{cases}
\end{equation}
 It is here that the dimension enters the proof, and the trichotomy above is the source of the three regimes of the empirical-measure
rate \eqref{eq:RNd}.  The trichotomy is the dynamic counterpart of the
heat-kernel computation behind the static matching rates
\cite[Prop.~2]{BobkovLedoux2021}, with the Fourier mass generated here
by a stochastic convolution rather than by sampling.  Beyond the independence of the driving Brownian motions, neither the
mutual dependence of the particle states induced by the controls nor the
adaptedness structure of the controls plays any role in the proof below: only progressive
measurability is used.

\begin{lemma}[Noise estimate]\label{lem:martingale}
For every $\varepsilon\in(0,1]$ and $t\in[t_0,T]$,
\begin{equation}\label{eq:martingale-bound}
 \E\,\big\|M^\varepsilon_t\big\|_{L^1(\T^d)}
 \leq\frac{C}{\sqrt N}\,\Theta_d(\varepsilon).
\end{equation}
\end{lemma}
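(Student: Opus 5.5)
The plan is to bound the $L^1$ norm by the $L^2$ norm on $\T^d$ (unit volume): $\E\|M^\varepsilon_t\|_{L^1}\leq(\E\|M^\varepsilon_t\|_{L^2}^2)^{1/2}$. We then compute the second moment exactly by the It\^o isometry, using only that the $W^i$ are independent and the integrands are bounded and progressively measurable. For each fixed $y$, $M^\varepsilon_t(y)$ is a sum of stochastic integrals against independent Brownian motions, so the cross terms $i\neq j$ vanish in expectation and
\[
 \E\,|M^\varepsilon_t(y)|^2=\frac{2d}{N^2}\sum_{i=1}^N\E\int_{t_0}^t p_{t-s+\varepsilon}\big(y-X^i_s\big)^2\,ds .
\]
Here the factor $d$ comes from the vector nature of $dW^i$. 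If $M^\varepsilon$ is read as scalar against each component, the constant is absorbed into $C$.

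Integrating in $y$ and using Fubini, justified by the jointly measurable version of Lemma~\ref{lem:noise-version}, translation invariance removes all dependence on the particle positions:
\[
 \int_{\T^d}p_{t-s+\varepsilon}(y-X^i_s)^2\,dy=\|p_{t-s+\varepsilon}\|_{L^2}^2 .
\]
This gives
\[
 \E\|M^\varepsilon_t\|_{L^2}^2=\frac{2d}{N}\int_{\varepsilon}^{t-t_0+\varepsilon}\|p_u\|_{L^2}^2\,du .
\]
We evaluate $\|p_u\|_{L^2}^2=\sum_{k\in\Z^d}e^{-8\pi^2|k|^2u}=p_{2u}(0)$ by Parseval. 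The $k=0$ mode contributes at most $T+1$ to the time integral. For the nonzero modes, the heat-kernel bound of Lemma~\ref{lem:heat-kernel} gives $\|p_u\|_{L^2}^2\leq 1+Cu^{-d/2}$ for $u\leq 1$, and the nonzero-mode sum is $\leq Ce^{-cu}$ for $u\geq1$.

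It remains to bound $\int_\varepsilon^{T+1}(1+Cu^{-d/2})\,du$ in the three regimes:
\begin{itemize}[leftmargin=2em]
 \item for $d=1$ it is $\leq C$;
 \item for $d=2$ it is $\leq C(1+\log(1/\varepsilon))\leq C\log(1+\varepsilon^{-1})$ for $\varepsilon\leq1$;
 \item for $d\geq3$ it is $\leq C\varepsilon^{-(d-2)/2}$.
\end{itemize}
Taking square roots gives exactly $C N^{-1/2}\Theta_d(\varepsilon)$, which is \eqref{eq:martingale-bound}.

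The only delicate point is the justification of the pointwise It\^o isometry and the Fubini exchange. For fixed $y$ the integrand $p_{t-s+\varepsilon}(y-X^i_s)$ is bounded by $\sup_{u\geq\varepsilon}\|p_u\|_\infty<\infty$ and progressively measurable, so each stochastic integral is a square-integrable martingale evaluated at $t$. The version fixed in Lemma~\ref{lem:noise-version} agrees with this integral for a.e.\ $y$, almost surely, so Tonelli applied to $|M^\varepsilon_t(y)|^2$ is legitimate. Beyond this measurability bookkeeping the argument is a direct computation. Consistent with the remark before the lemma, no independence of the particle states is used: the dependence on the $X^i$ disappears through translation invariance of Lebesgue measure on $\T^d$.
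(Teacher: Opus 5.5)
Your proposal is correct and follows the paper's proof essentially step for step: the $L^1\leq L^2$ bound on the unit-volume torus, the It\^o isometry with cross terms killed by independence of the $W^i$, removal of the particle positions by translation invariance, and the heat-kernel $L^2$ bound of Lemma~\ref{lem:heat-kernel} integrated in time in the three regimes. One small remark: splitting into $u\leq1$ and $u\geq1$ is unnecessary, because that lemma already gives $\|p_u\|_{L^2}^2\leq C(1+u^{-d/2})$ on the whole range $0<u\leq T+1$, which covers $u\leq T+\varepsilon$.
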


\begin{proof}
Fix $y\in\T^d$.  The integrands in \eqref{eq:martingale-definition} are
bounded and progressively measurable, and the Brownian motions
$(W^i)_{i=1}^N$ are independent, so the It\^o isometry gives 
\[
 \E\big|M^\varepsilon_t(y)\big|^2
 =\frac{2d}{N^2}\sum_{i=1}^N\int_{t_0}^t
 \E\Big[p^2_{t-s+\varepsilon}\big(y-X^i_s\big)\Big]\,ds .
\]
We then have
\begin{equation}\label{eq:martingale-L1}
 \E\,\big\|M^\varepsilon_t\big\|_{L^1}
 =\int_{\T^d}\E\big|M^\varepsilon_t(y)\big|\,dy
 \leq\Big(\int_{\T^d}\E\big|M^\varepsilon_t(y)\big|^2dy\Big)^{1/2}
 =\Big(\frac{2d}{N}\int_0^{t-t_0}
 \big\|p_{u+\varepsilon}\big\|_{L^2}^2\,du\Big)^{1/2},
\end{equation}
where the last equality uses that
$\int_{\T^d}p^2_{t-s+\varepsilon}(y-X^i_s)\,dy
=\|p_{t-s+\varepsilon}\|_{L^2}^2$.  By
the $L^2$ bound of Lemma~\ref{lem:heat-kernel},
\begin{equation}\label{eq:kernel-time-integral}
 \int_0^{T}\big\|p_{u+\varepsilon}\big\|_{L^2}^2\,du
 \leq C\int_0^{T}\Big(1+(u+\varepsilon)^{-d/2}\Big)\,du
 \leq C\,\Theta_d(\varepsilon)^2.
\end{equation}
Combining \eqref{eq:martingale-L1} and
\eqref{eq:kernel-time-integral} proves \eqref{eq:martingale-bound}.
\end{proof}

\subsection{Proof of Proposition~\ref{prop:tracking}}

\begin{proof}
Set $f(t):=\E\,d_1(\nu_t,\mu^N_t)$. Note that $f$ is bounded by
$\operatorname{diam}(\T^d)$, and it is continuous because
$t\mapsto d_1(\nu_t,\mu^N_t)$ is a.s.\ continuous and uniformly bounded.
Fix $\varepsilon\in(0,1]$, to be chosen at the end.

A $1$-Lipschitz $\phi$ satisfies
$\|\phi-P_\varepsilon\phi\|_{L^\infty}\leq C\sqrt\varepsilon$, by the
moment bound of Lemma~\ref{lem:heat-kernel}, so
\eqref{eq:mollified-duhamel}, $|z_t|(\T^d)\leq2$, and
$\|\nabla\phi\|_{L^\infty}\leq1$ give
\begin{multline*}
 \int_{\T^d}\phi\,dz_t
 =\int_{\T^d}(\phi-P_\varepsilon\phi)\,dz_t
 +\int_{\T^d}\big(D^\varepsilon_t+M^\varepsilon_t\big)\cdot\nabla\phi\,dy
 \\ \leq C\sqrt\varepsilon
 +\big\|D^\varepsilon_t\big\|_{L^1}+\big\|M^\varepsilon_t\big\|_{L^1}.
\end{multline*}
Taking the supremum in the duality \eqref{eq:d1-duality} yields, almost
surely,
\begin{equation}\label{eq:mollified-duality}
 d_1\big(\mu^N_t,\nu_t\big)
 \leq C\sqrt\varepsilon
 +\big\|D^\varepsilon_t\big\|_{L^1}+\big\|M^\varepsilon_t\big\|_{L^1}.
\end{equation}

By Lemma~\ref{lem:flux} with $\tau=t-s+\varepsilon$,
\[
 \big\|D^\varepsilon_t\big\|_{L^1}
 \leq\int_{t_0}^t\big\|P_{t-s+\varepsilon}\,\mathsf{r}_s\big\|_{L^1}\,ds
 \leq C_M\int_{t_0}^t(t-s)^{-1/2}\,c_s\,ds .
\]
Taking expectations in \eqref{eq:mollified-duality} and applying
Lemma~\ref{lem:martingale},
\[
 f(t)\leq C\sqrt\varepsilon+\frac{C}{\sqrt N}\,\Theta_d(\varepsilon)
 +C_M\int_{t_0}^t(t-s)^{-1/2}\,\E c_s\,ds .
\]
By Lemma~\ref{lem:provisional}, $\E c_s\leq f(s)+C_M\sqrt h$, and
$\int_{t_0}^t(t-s)^{-1/2}ds\leq2\sqrt T$, so
\begin{equation}\label{eq:pre-gronwall}
 f(t)\leq C\sqrt\varepsilon+\frac{C}{\sqrt N}\,\Theta_d(\varepsilon)
 +C_M\sqrt h
 +C_M\int_{t_0}^t(t-s)^{-1/2}f(s)\,ds,
\end{equation}
and the weakly singular Gr\"onwall inequality
\cite[Lem.~7.1.1]{Henry1981} (which readily reduces to conventional Gronwall by substituting \eqref{eq:pre-gronwall} into itself) yields
\begin{equation}\label{eq:after-gronwall}
 \sup_{t_0\leq t\leq T}f(t)
 \leq C_M\Big(\sqrt\varepsilon
 +\frac{\Theta_d(\varepsilon)}{\sqrt N}+\sqrt h\Big).
\end{equation}
It remains to choose $\varepsilon$: with $\varepsilon:=N^{-2/d}$ for
$d\geq3$ and $\varepsilon:=N^{-1}$ for $d\in\{1,2\}$,
\[
 \sqrt\varepsilon+\frac{\Theta_d(\varepsilon)}{\sqrt N}\leq C\,r_{N,d}
\]
in every dimension, and \eqref{eq:after-gronwall} becomes
\eqref{eq:tracking}.
\end{proof}

\begin{remark}\label{rem:critical}
The factor $(t-s)^{-1/2}$ in the proof above is critical in the
following sense: the estimate must absorb two derivatives, and the heat
kernel prices each of them at $(t-s)^{-1/2}$, so it can repay one, while
two would cost the non-integrable $(t-s)^{-1}$.  The first derivative
is the divergence through which the flux mismatch drives
\eqref{eq:z-evolution}.  The Duhamel representation \eqref{eq:duhamel}
absorbs it for free, by placing it on the test function, whose gradient
is bounded in the duality \eqref{eq:d1-duality}.  The second is the one
the kernel repays: each summand of $\mathsf r_s$ is a zero-mass
difference between a particle and its component, so its pairing with
any kernel is controlled by the gradient of the kernel times their
coupling distance, in total $c_s$.  This is the flux estimate,
Lemma~\ref{lem:flux}, at the integrable price $(t-s)^{-1/2}$.  This use of parabolic smoothing contrasts with the value-function
comparison in \cite{DaudinDelarueJackson2024}: there the particle
Laplacian produces a correction involving a second derivative in the
measure variable, which must be manufactured by regularization at the
price of additional approximation errors.
\end{remark}

\section{The shadow flow as a mean field competitor}\label{sec:comparison}

In this section, we take advantage of the preceding shadowing estimate to turn the shadow flow into a near-optimal mean field competitor, proving Proposition~\ref{prop:comparison} and, consequently, Theorem~\ref{thm:main}.
We keep the objects and notation of Sections~\ref{sec:shadow}
and~\ref{sec:tracking}: the grid \eqref{eq:grid} of mesh $h$, the
control $\balpha$, bounded by $M$, the shadow flow $\nu$ with
components $\nu_{i,t}$, weights $\theta_i$, and drift $\beta$
\eqref{eq:beta-definition}, and the provisional cost $c_t$
\eqref{eq:c-definition}.  The basic properties of the Lagrangian $L$ used below are collected in
Lemma~\ref{lem:lagrangian}.

\begin{lemma}[Running cost estimate]\label{lem:action}
Outside a $\Prob$-null set, for a.e.\ $t\in(t_0,T)$,
\begin{equation}\label{eq:action-comparison}
 \int_{\T^d}L\big(y,\beta_t(y)\big)\,\nu_t(dy)
 \;\leq\;\frac1N\sum_{i=1}^NL\big(X^i_t,\alpha^i_t\big)+C_M\,c_t .
\end{equation}
\end{lemma}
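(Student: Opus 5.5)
The plan is a pointwise Jensen inequality followed by a Lipschitz transfer from the shadow components back to the particles. Fix $\omega$ outside the null set of Lemma~\ref{lem:FP} and a time $t\in(s_k,s_{k+1})$; the grid times are Lebesgue-null and can be ignored. At each $y$, the drift $\beta_t(y)=\sum_i\theta_i(t,y)\alpha^i_t$ is a convex combination with weights $\theta_i\geq0$ summing to one. The map $a\mapsto L(y,a)$ is convex, being a Legendre transform. Jensen's inequality therefore gives
\[
 L\big(y,\beta_t(y)\big)\leq\sum_{i=1}^N\theta_i(t,y)\,L\big(y,\alpha^i_t\big).
\]
Integrating against $\nu_t$ and using $\theta_i\,d\nu_t=d\nu_{i,t}$ yields
\[
 \int_{\T^d}L\big(y,\beta_t(y)\big)\,\nu_t(dy)\leq\sum_{i=1}^N\int_{\T^d}L\big(y,\alpha^i_t\big)\,\nu_{i,t}(dy).
\]

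Next, I move each component integral onto its particle. Every $\nu_{i,t}$ has mass $1/N$, so
\[
 \int L(y,\alpha^i_t)\,\nu_{i,t}(dy)-\frac1N L(X^i_t,\alpha^i_t)=\int\big(L(y,\alpha^i_t)-L(X^i_t,\alpha^i_t)\big)\,\nu_{i,t}(dy).
\]
It then suffices to have a Lipschitz bound in $x$ that is uniform over $|a|\leq M$:
\[
 |L(x,a)-L(x',a)|\leq C_M\,\dm(x,x').
\]
Summing over $i$ turns the right-hand side into exactly $C_M c_t$ by \eqref{eq:c-definition}, which proves \eqref{eq:action-comparison}.

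The one real ingredient is this local Lipschitz bound on $L$, which I expect to find in Lemma~\ref{lem:lagrangian}. To derive it, I use that the supremum in \eqref{eq:Legendre} is attained at a $p^*(x,a)$ with $-D_pH(x,p^*)=a$. By \ref{ass:H-convex}, $D_pH(x,\cdot)$ is bi-Lipschitz with constants controlled by $C_0$, so $|p^*|\leq C_0(|a|+\sup_x|D_pH(x,0)|)\leq C_M$. An envelope argument then gives $D_xL(x,a)=-D_xH(x,p^*)$, and by \ref{ass:H-x} this is bounded by $C_0(1+C_M)$. Integrating along a geodesic on the torus yields the claimed Lipschitz bound.

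I expect only measure-theoretic bookkeeping to remain. The "a.e.\ $t$" qualifier comes from excluding grid times and the null set where $|\alpha^i|\leq M$ might fail, which we have already modified away. Joint measurability of the integrands is available from \eqref{eq:beta-definition}.
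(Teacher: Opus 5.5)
Your proposal is correct and matches the paper's proof: pointwise Jensen with the weights $\theta_i$, the identity $\theta_i\,d\nu_t=d\nu_{i,t}$, and then the $C_M$-Lipschitz bound on $L(\cdot,a)$ for $|a|\leq M$ (Lemma~\ref{lem:lagrangian}(iii)), summed against the component masses $1/N$ to produce $C_M c_t$. Your derivation of that Lipschitz bound via the envelope formula and $|p^*|\leq C_0(|a|+\sup_x|D_pH(x,0)|)$ is the same argument the paper gives in the appendix.
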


\begin{proof}
Fix $\omega$ outside the null set of Lemma~\ref{lem:FP} and
$t\in(s_k,s_{k+1})$.  By the convexity of $L(y,\cdot)$
(Lemma~\ref{lem:lagrangian}(ii)) and Jensen's inequality applied to
the weights $\theta_i(t,y)$ of \eqref{eq:beta-definition},
\[
 \int_{\T^d}L\big(y,\beta_t(y)\big)\,\nu_t(dy)
 \leq\sum_{i=1}^N\int_{\T^d}L\big(y,\alpha^i_t\big)\,
 \theta_i(t,y)\,\nu_t(dy)
 =\sum_{i=1}^N\int_{\T^d}L\big(y,\alpha^i_t\big)\,\nu_{i,t}(dy),
\]
since $\theta_i\,d\nu_t=d\nu_{i,t}$.  By
Lemma~\ref{lem:lagrangian}(iii) and $|\alpha^i_t|\leq M$, the function
$L(\cdot,\alpha^i_t)$ is $C_M$-Lipschitz, so, by
$\nu_{i,t}(\T^d)=1/N$,
\[
 \int_{\T^d}L\big(y,\alpha^i_t\big)\,\nu_{i,t}(dy)
 \leq\frac1NL\big(X^i_t,\alpha^i_t\big)
 +C_M\int_{\T^d}\dm\big(y,X^i_t\big)\,\nu_{i,t}(dy),
\]
and summing over $i$ gives \eqref{eq:action-comparison} by
\eqref{eq:c-definition}.
\end{proof}

\begin{proof}[Proof of Proposition~\ref{prop:comparison}]
By Lemma~\ref{lem:FP}, for each $\omega$ outside a $\Prob$-null set the pair
$(\nu^\omega,\beta^\omega)$ is admissible in \eqref{eq:U-definition} with
initial condition $\nu_{t_0}=m^N_{\bx}$.  Therefore, pointwise in
$\omega$,
\begin{equation}\label{eq:pathwise-bound}
 U(t_0,m^N_{\bx})
 \;\leq\;
 \int_{t_0}^T\Big(\int_{\T^d}L\big(y,\beta^\omega_t(y)\big)\nu^\omega_t(dy)
 +F(\nu^\omega_t)\Big)dt+G(\nu^\omega_T).
\end{equation}
By the $d_1$-Lipschitz continuity of $F$ and $G$,
\begin{equation}\label{eq:FG-comparison}
 F(\nu_t)\leq F(\mu^N_t)+L_F\,d_1\big(\nu_t,\mu^N_t\big),
 \qquad
 G(\nu_T)\leq G(\mu^N_T)+L_G\,d_1\big(\nu_T,\mu^N_T\big).
\end{equation}
Taking
expectations in \eqref{eq:pathwise-bound} and using
\eqref{eq:FG-comparison}, Lemmas~\ref{lem:action},
\ref{lem:provisional} and Proposition~\ref{prop:tracking},
\begin{align*}
 U(t_0,m^N_{\bx})
 &\leq\E\bigg[\int_{t_0}^T\Big(\frac1N\sum_{i=1}^NL(X^i_t,\alpha^i_t)
 +F(\mu^N_t)\Big)dt+G(\mu^N_T)\bigg]
 \\
 &\qquad
 +C_M\int_{t_0}^T\big(\E c_t+\E\,d_1(\nu_t,\mu^N_t)\big)\,dt
 +L_G\,\E\,d_1\big(\nu_T,\mu^N_T\big)
 \\
 &\leq J_N(t_0,\bx;\balpha)
 +C_M\Big(\sqrt h
 +\sup_{t_0\leq t\leq T}\E\,d_1\big(\nu_t,\mu^N_t\big)\Big)
 \\
 &\leq J_N(t_0,\bx;\balpha)+C_M\big(r_{N,d}+\sqrt h\big).
\end{align*}
The left-hand side does not depend on the partition.  Letting $h\downarrow0$
in \eqref{eq:grid} proves \eqref{eq:comparison}.
\end{proof}

Proposition~\ref{prop:comparison} applies to bounded controls.  To reach the
value function $V^N$ we approximate $F$ and $G$ by costs whose associated
finite-dimensional problems admit optimal Markov feedbacks that are bounded
uniformly in $n$ and $N$.  The required smoothing procedure is that of
\cite{DaudinDelarueJackson2024}.

\begin{lemma}[Smoothing; {\cite[Lem.~4.1 and
App.~B]{DaudinDelarueJackson2024}}]\label{lem:smoothing}
Let $\Phi:\Pp(\T^d)\to\R$ be $\ell$-Lipschitz continuous with respect to
$d_1$.  There exists a sequence $(\Phi^n)_{n\geq1}$ of functions
$\Pp(\T^d)\to\R$ such that:
\begin{enumerate}[label=\textup{(\alph*)},leftmargin=2.6em]
\item $\Phi^n\to\Phi$ uniformly on $\Pp(\T^d)$;
\item each $\Phi^n$ is $\ell$-Lipschitz continuous with respect to $d_1$;
\item each $\Phi^n$ depends smoothly on finitely many Fourier
coefficients of $m$.  In particular, for every $N\geq1$, the map
$(\T^d)^N\ni\bx\mapsto\Phi^n(m^N_{\bx})$ is of class $C^\infty$.
\end{enumerate}
\end{lemma}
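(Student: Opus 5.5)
The plan is to mollify the measure argument with the heat semigroup, truncate Fourier modes, and then compose with a smooth finite-dimensional mollifier; each step must preserve the $d_1$-Lipschitz constant $\ell$ exactly.

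\emph{Step 1: heat smoothing and truncation.} Fix scales $\delta_n\downarrow0$ and set $\Psi_n(m):=\Phi(P_{\delta_n}m)$. Since $P_\delta$ is a $d_1$-contraction (convolution with a probability density commutes with translations), $\Psi_n$ is $\ell$-Lipschitz. Moreover $d_1(P_\delta m,m)\leq C\sqrt\delta$ by the moment bound of Lemma~\ref{lem:heat-kernel}, so $\Psi_n\to\Phi$ uniformly. The measure $P_{\delta}m$ has a smooth density whose Fourier coefficients are $e^{-4\pi^2|k|^2\delta}\widehat m(k)$. To reach finitely many coefficients, I would replace $p_\delta$ by a Fejér-type kernel $K_R$: a nonnegative trigonometric polynomial of degree $\leq R$ with unit mass and first moment $\int\dm(x,0)K_R(x)\,dx\leq C/R$ (for instance a tensor product of Jackson kernels). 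Then $m\mapsto K_R\star m$ is still a $d_1$-contraction, is within $C/R$ of the identity in $d_1$, and depends only on $(\widehat m(k))_{|k|_\infty\leq R}$. Set $\Psi_n(m):=\Phi(K_{R_n}\star m)$.

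\emph{Step 2: finite-dimensional smoothing.} Write $\Psi_n=\tilde\Phi_n\circ\mathcal F_{R}$, where $\mathcal F_R(m)=(\widehat m(k))_{|k|\leq R}\in\mathcal C\subset\mathbb C^{K}$ and $\mathcal C$ is the compact convex image of $\Pp(\T^d)$. The obstruction is that mollifying $\tilde\Phi_n$ in $\mathbb C^K$ requires values outside $\mathcal C$, and Euclidean Lipschitz control does not translate back to $d_1$. I would avoid this by mollifying along the group of translations and the semigroup structure rather than in coefficient space. Concretely, define
\[
 \Phi^n(m):=\int\Psi_n\big(\tau_{a}{}_\#\,\big((1-\eta)K_R\star m+\eta\lambda\big)\big)\,\rho(a)\,da ,
\]
and, to get smoothness in coefficients, replace the exact evaluation by a convolution over a finite-dimensional family of admissible perturbations $m\mapsto K_R\star m+\sum_k\xi_k e_k$, with $\xi$ drawn from a smooth compactly supported law. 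The mixing with $\lambda$ keeps these perturbed densities positive, so they stay inside $\Pp(\T^d)$ for small $\xi$. An average of $\ell$-Lipschitz functions of $m$ composed with $d_1$-nonexpansive maps is $\ell$-Lipschitz, which gives (b). The shifts are $O(\eta+|\xi|)$ in $d_1$, which gives (a). Smoothness in the remaining finitely many coefficients follows by a change of variables in $\xi$ that moves the dependence on $\widehat m$ into the smooth density, giving (c). Then $\bx\mapsto\widehat{m^N_\bx}(k)=\frac1N\sum_ie^{-2\pi ik\cdot x^i}$ is smooth, and so is the composition.

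\emph{Main obstacle.} The delicate point is Step 2: keeping the Lipschitz constant exactly $\ell$ rather than $C\ell$ while gaining smoothness. The change of variables must be affine in $\xi$, with the perturbation law translated by a linear function of $\widehat m$. This is precisely the construction of \cite[Lem.~4.1 and App.~B]{DaudinDelarueJackson2024}, which I would follow at this step.
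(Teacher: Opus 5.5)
Your proposal is correct and follows essentially the same scheme as the paper, whose proof invokes \cite[Lem.~4.1 and App.~B]{DaudinDelarueJackson2024}: reduce to finitely many Fourier coefficients with a nonnegative trigonometric kernel, then smooth in finite dimensions. Your sketch reproduces this: mixing with $\lambda$, perturbations $\sum_k\xi_ke_k$ averaged against a smooth law, and an affine change of variables that puts the $\widehat m$-dependence into that law, each step preserving the constant $\ell$ because $m\mapsto(1-\eta)K_R\star m+\eta\lambda+\sum_k\xi_ke_k$ is $d_1$-nonexpansive. Since you defer to that reference at the delicate step, you should also say, as the paper does, that its semiconcavity hypothesis is used only to propagate semiconcavity to $\Phi^n$; your own sketch never uses it, and its translation average over $a$ is superfluous but harmless.
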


\begin{proof}
Although \cite[Lem.~4.1]{DaudinDelarueJackson2024} is stated under the
additional assumption that $\Phi$ is $d_1$-semiconcave, semiconcavity is used
in \cite[App.~B]{DaudinDelarueJackson2024} only to show that the approximants
remain semiconcave.  The same construction therefore gives (a) and (b), and
Step~2 there gives the first assertion in (c), with Fourier coefficients
$(\widehat m(k))_{|k|_\infty<n}$.  Since each
$\widehat{m^N_{\bx}}(k)=\frac1N\sum_ie^{-2\pi ik\cdot x^i}$ is a smooth
function of $\bx$, so is $\bx\mapsto\Phi^n(m^N_{\bx})$.
\end{proof}

Fix such approximations $F^n$, $G^n$ of $F$, $G$ (with constants $L_F$,
$L_G$), and let $U^n$, $V^{N,n}$, $J^n_N$ denote the value functions and costs
\eqref{eq:JN-definition}, \eqref{eq:U-definition} with $(F,G)$ replaced by
$(F^n,G^n)$.

\begin{lemma}[The smoothed finite-dimensional problems]\label{lem:finite}
Let Assumption~\ref{ass:standing} hold with $(F,G)$ replaced by data
$(\widetilde F,\widetilde G)$ that are, respectively, $L_F$- and
$L_G$-Lipschitz for $d_1$ and such that
$\bx\mapsto\widetilde F(m^N_{\bx}),\ \widetilde G(m^N_{\bx})$ are $C^\infty$.
Denote by $\widetilde V^N$ the associated value function
\eqref{eq:JN-definition}.  There exist constants $C_*$ and
$M_*$, depending only on $d$, $T$, $H$, $L_F$, and $L_G$, such that:
\begin{enumerate}[label=\textup{(\alph*)},leftmargin=2.6em]
\item $\widetilde V^N$ is the unique classical solution in
$C^{1,2}([0,T]\times(\T^d)^N)$ of the Hamilton--Jacobi--Bellman equation
\begin{equation}\label{eq:HJBN}
 -\partial_t\widetilde V^N-\sum_{i=1}^N\Delta_{x^i}\widetilde V^N
 +\frac1N\sum_{i=1}^NH\big(x^i,\,N D_{x^i}\widetilde V^N\big)
 =\widetilde F\big(m^N_{\bx}\big),
 \qquad
 \widetilde V^N(T,\cdot)=\widetilde G\big(m^N_{\cdot}\big);
\end{equation}
\item for every $i$ and $t$,
\begin{equation}\label{eq:gradient-bound}
 \big\|D_{x^i}\widetilde V^N(t,\cdot)\big\|_{L^\infty((\T^d)^N)}
 \leq\frac{C_*}{N};
\end{equation}
\item for every $(t_0,\bx)$, the Markov feedback control
\begin{equation}\label{eq:optimal-feedback}
 \alpha^{*,i}_t
 :=-D_pH\Big(X^i_t,\,ND_{x^i}\widetilde V^N\big(t,\bX_t\big)\Big),
 \qquad i=1,\dots,N,
\end{equation}
is well defined (the closed-loop system, \eqref{eq:particle-SDE} with
this feedback as drift, admits a strong solution), admissible,
optimal, in the sense that
$J_N(t_0,\bx;\balpha^*)=\widetilde V^N(t_0,\bx)$ with the costs
$(\widetilde F,\widetilde G)$, and satisfies
\begin{equation}\label{eq:feedback-bound}
 |\alpha^{*,i}_t|\leq M_*
 \qquad\text{for all }t,\ i .
\end{equation}
\end{enumerate}
\end{lemma}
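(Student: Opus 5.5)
The plan is to treat the smoothed problem as a uniformly parabolic HJB system in $Nd$ space variables with smooth data, and to obtain the three assertions in the order (a) existence, then (b) a gradient bound by a particle-wise Lipschitz estimate, then (c) a verification argument.

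\textbf{Step (a): classical solution.} The HJB equation \eqref{eq:HJBN} has smooth coefficients on the compact manifold $(\T^d)^N$. The nonlinearity $p\mapsto\frac1NH(x^i,Np)$ has quadratic growth, by \ref{ass:H-convex}. I would proceed in three parts.
\begin{itemize}
\item Truncate $H$ outside a large ball in $p$, so that it becomes globally Lipschitz.
\item Solve the truncated equation by standard quasilinear parabolic theory (Ladyzhenskaya--Solonnikov--Ural'tseva, or a fixed point in H\"older spaces), which gives a $C^{1,2}$ solution.
\item Remove the truncation using the a priori gradient bound of Step (b), whose constant does not depend on the truncation.
\end{itemize}
Uniqueness among classical solutions follows from the comparison principle for uniformly parabolic equations on a compact space. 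The identification of the solution with the value function $\widetilde V^N$ is deferred to Step (c).

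\textbf{Step (b): the bound $|D_{x^i}\widetilde V^N|\leq C_*/N$.} This is the crux, and it is where the $d_1$-Lipschitz structure enters. Moving one particle by $\delta$ changes $m^N_{\bx}$ by at most $\delta/N$ in $d_1$. Hence $\widetilde F(m^N_{\bx})$ and $\widetilde G(m^N_{\bx})$ are $L_F/N$- and $L_G/N$-Lipschitz in $x^i$.

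I would prove the Lipschitz bound in $x^i$ by a synchronous coupling argument. Take initial configurations $\bx$ and $\bx'$ that differ only in coordinate $i$. Use the same noises, and for the second system use the control that is optimal (or $\eta$-optimal) for the first, translated so as to keep $X'^i_t-X^i_t$ constant. All other particles coincide, so the mean-field terms change by at most $(L_F T+L_G)|x^i-x'^i|/N$.

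The difficulty is the running cost of particle $i$: $L(x,a)$ is not uniformly Lipschitz in $x$. By Lemma~\ref{lem:lagrangian}(iii) its $x$-Lipschitz constant grows with $|a|$. Two routes handle this.
\begin{itemize}
\item \emph{Route 1: a priori bound on the optimal control.} Bound $\frac1N\E\int|\alpha^i|^2$ and use \ref{ass:H-x} for the $x$-dependence, bootstrapping from a crude bound.
\item \emph{Route 2 (first choice): Bernstein estimate.} Differentiate \eqref{eq:HJBN} in $x^i$ and set $w=N^2|D_{x^i}\widetilde V^N|^2$. The maximum principle applies to the resulting linear equation, whose drift is given by $D_pH$. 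The bad term from $D_xH$ is controlled by $C_0(1+N|D_{x^i}V|)$ via \ref{ass:H-x}. The strict convexity in \ref{ass:H-convex} provides the coercive term $-|D^2V|^2$. Gr\"onwall then gives $\sup w\leq C$ with $C$ independent of $N$, because the terminal and source data contribute $N|D_{x^i}\widetilde G|\leq L_G$ and $N|D_{x^i}\widetilde F|\leq L_F$.
\end{itemize}
This step follows \cite[Lem.~6.1]{DaudinDelarueJackson2024}, where semiconcavity plays no role. I expect the main obstacle to be the bookkeeping of the $x$-dependence of $H$ under the scaling $p=ND_{x^i}V$.

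\textbf{Step (c): the feedback control.} By (b), the feedback \eqref{eq:optimal-feedback} is bounded by
\[
M_*:=\sup\{|D_pH(x,p)|:|p|\leq C_*\}\leq\sup_x|D_pH(x,0)|+C_0C_*.
\]
It is also continuous in $(t,\bx)$. Since $D^2V$ is bounded on compact time intervals, the feedback is locally Lipschitz in $\bx$, which gives a strong solution of the closed-loop SDE; alternatively, Veretennikov's theorem applies to bounded drift with nondegenerate noise.

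Optimality follows by verification. Apply It\^o's formula to $\widetilde V^N(t,\bX_t)$ along an arbitrary admissible control. The Legendre identity \eqref{eq:Legendre} gives
\[
-L(x,a)-a\cdot p\leq H(x,p),
\]
with equality at $a=-D_pH(x,p)$. This yields $\widetilde V^N(t_0,\bx)\leq J_N(t_0,\bx;\balpha)$, with equality for $\balpha^*$. The local martingale terms are true martingales because $D\widetilde V^N$ is bounded.
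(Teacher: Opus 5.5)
Your proposal is correct and takes essentially the paper's route. The paper takes (a) from classical parabolic theory on $(\T^d)^N$ and obtains (b) by citing \cite[Lem.~6.1]{DaudinDelarueJackson2024} (via \cite[Lem.~3.1]{CardaliaguetDaudinJacksonSouganidis2023}), which your linearize-and-maximum-principle argument supplies; it proves (c) exactly as you do, from the bounded feedback, \cite[Thm.~1]{Veretennikov1981}, and It\^o's formula with the equality case of the Legendre transform.

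One small correction: in your Bernstein step the nonpositive term $-\sum_j|D_{x^j}v|^2$, with $v:=ND_{x^i}\widetilde V^N$, comes from the Laplacian, not from the convexity of $H$. It is not even needed: the maximum principle for $|v|^2$, with source at most $2|v|\big(L_F+C_0(1+|v|)\big)$ and terminal datum at most $L_G^2$, already closes by Gr\"onwall, provided your truncation of $H$ in (a) keeps $|D_xH|\leq C_0(1+|p|)$ uniformly in the truncation level.
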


Lemma~\ref{lem:finite} collects classical facts.
Equation~\eqref{eq:HJBN} is a semilinear, uniformly parabolic equation on
the compact manifold $(\T^d)^N$ with smooth data, and the existence and
uniqueness of a classical solution with bounded, continuous gradient are
part of classical parabolic theory
\cite[Chap.~V]{LadyzhenskayaSolonnikovUraltseva1968}, \cite{Krylov1980},
\cite[Chap.~IV]{FlemingSoner2006}.  The gradient estimate in part~(b) is
\cite[Lem.~6.1]{DaudinDelarueJackson2024}, whose proof is that of
\cite[Lem.~3.1]{CardaliaguetDaudinJacksonSouganidis2023}.  Part~(c) is the
standard verification argument: the feedback \eqref{eq:optimal-feedback} is
bounded by \eqref{eq:gradient-bound} and Lemma~\ref{lem:lagrangian}(iv),
the closed-loop system then admits a strong solution by
\cite[Thm.~1]{Veretennikov1981}, and It\^o's formula with the equality case
of the Legendre transform (Lemma~\ref{lem:lagrangian}(ii)) shows
optimality.

\begin{lemma}[Stability]\label{lem:stability}
Let $(F_1,G_1)$ and $(F_2,G_2)$ be bounded measurable costs.  Then the
associated value functions satisfy, for all $N$, $t$, $\bx$, $m$,
\[
 \big|V^{N}_1-V^{N}_2\big|(t,\bx)\;\vee\;\big|U_1-U_2\big|(t,m)
 \;\leq\;T\,\|F_1-F_2\|_{\infty}+\|G_1-G_2\|_{\infty},
\]
with $\|\cdot\|_\infty$ the supremum norm on $\Pp(\T^d)$.
\end{lemma}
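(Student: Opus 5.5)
The plan is to compare the two problems competitor by competitor. Every admissible control or pair for one set of costs is also admissible for the other, and the only part of the cost that changes is the mean field term. The dynamics, the admissible class and the running Lagrangian cost do not involve $F$ or $G$.

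\emph{Particle value functions.} Fix $(t,\bx)$ and $\balpha\in\mathcal A^N_{t}$, with state processes and empirical measures as in \eqref{eq:particle-SDE}; these do not depend on the costs. Let $J_{N,1}$ and $J_{N,2}$ be the costs \eqref{eq:JN-definition} for $(F_1,G_1)$ and $(F_2,G_2)$. Pointwise in $\omega$,
\[
\Big|\int_t^T\big(F_1(\mu^N_s)-F_2(\mu^N_s)\big)ds+G_1(\mu^N_T)-G_2(\mu^N_T)\Big|\leq (T-t)\|F_1-F_2\|_\infty+\|G_1-G_2\|_\infty .
\]
Taking expectations gives $|J_{N,1}(t,\bx;\balpha)-J_{N,2}(t,\bx;\balpha)|$ bounded by the same quantity. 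The Lagrangian term is common to both costs. If it equals $+\infty$, both costs are $+\infty$ and the inequality is trivial. Otherwise the difference is well defined, because $F_i$ and $G_i$ are bounded and measurable, so the expectations make sense. This is the only step that needs care: $L$ is bounded below (Lemma~\ref{lem:lagrangian}), so the running cost has a well-defined expectation in $(-\infty,+\infty]$, and the bounded terms can be subtracted. We then obtain
\[
J_{N,1}\leq J_{N,2}+T\|F_1-F_2\|_\infty+\|G_1-G_2\|_\infty .
\]
Taking the infimum over $\balpha$ on both sides gives $V^N_1\leq V^N_2+T\|F_1-F_2\|_\infty+\|G_1-G_2\|_\infty$, and exchanging the indices gives the bound on $|V^N_1-V^N_2|$.

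\emph{Mean field value functions.} The argument for $U$ is the same. The class of pairs $(m,\alpha)$ in \eqref{eq:U-definition} does not depend on the costs. For each fixed admissible pair, the two cost functionals differ by at most
\[
\int_t^T|F_1(m_s)-F_2(m_s)|\,ds+|G_1(m_T)-G_2(m_T)|\leq T\|F_1-F_2\|_\infty+\|G_1-G_2\|_\infty .
\]
Measurability of $s\mapsto F_i(m_s)$ follows from the continuity of $s\mapsto m_s$ and the Borel measurability of $F_i$. Infimizing over pairs and exchanging the roles of the two cost pairs concludes the proof.
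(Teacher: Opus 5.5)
Your proof is correct and is the paper's argument: for each fixed admissible particle control or Fokker--Planck pair, the two cost functionals differ by at most $T\|F_1-F_2\|_\infty+\|G_1-G_2\|_\infty$, and this bound passes to the infima. Your remarks on well-definedness are harmless extras. The $+\infty$ case never actually arises, since $L(x,a)\leq C(1+|a|^2)$ and admissible controls are square-integrable.
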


\begin{proof}
For any fixed admissible control (particle or Fokker--Planck), the two costs
differ by at most $T\|F_1-F_2\|_{\infty}+\|G_1-G_2\|_{\infty}$, so the claim follows by taking infima.
\end{proof}

\begin{proof}[Proof of Theorem~\ref{thm:main}]
Fix $(t,\bx)$ and $n\geq1$.  By Lemma~\ref{lem:finite} applied to
$(\widetilde F,\widetilde G)=(F^n,G^n)$, the optimal control
$\balpha^*=\balpha^{*,N,n}$ of the smoothed $N$-particle problem satisfies
$|\alpha^{*,i}|\leq M_*$ with $M_*$ independent of $n$ and $N$.  Applying
Proposition~\ref{prop:comparison} to this control, with the data
$(H,F^n,G^n)$, which
satisfy Assumption~\ref{ass:standing} with the same constants, we
obtain
\begin{equation*}
 U^n(t,m^N_{\bx})
 \leq J^n_N(t,\bx;\balpha^{*})+C\,r_{N,d}
 =V^{N,n}(t,\bx)+C\,r_{N,d},
\end{equation*}
with $C=C_{M_*}$ independent of $n$ and $N$.  Setting
$\varepsilon_n:=T\|F^n-F\|_{\infty}+\|G^n-G\|_{\infty}$,
Lemma~\ref{lem:stability}
gives $U(t,m^N_{\bx})\leq U^n(t,m^N_{\bx})+\varepsilon_n$ and
$V^{N,n}(t,\bx)\leq V^N(t,\bx)+\varepsilon_n$, so that
\[
 U(t,m^N_{\bx})\leq V^N(t,\bx)+C\,r_{N,d}+2\varepsilon_n .
\]
Letting $n\to\infty$ (Lemma~\ref{lem:smoothing}(a)) proves
\eqref{eq:hard-inequality}.  The easy inequality,
$V^N(t,\bx)\leq U(t,m^N_{\bx})+C\,r_{N,d}$, is
\cite[Prop.~6.2]{DaudinDelarueJackson2024}, up to the rate at $d=2$:
the proof there replicates a near-optimal mean field control across
the $N$ independent noises, pays the expected $d_1$-distance between
the empirical measure of the resulting independent particles and
their common law, and bills it by the estimate of
\cite{FournierGuillin2015}, which at $d=2$ carries the suboptimal factor
$\log(1+N)$ in place of $\sqrt{\log(1+N)}$.  The rate enters only
through that last estimate, so substituting the optimal matching bound of
\cite[Thm.~3]{BobkovLedoux2021} yields $r_{N,d}$.  Combining with
\eqref{eq:hard-inequality} proves \eqref{eq:two-sided}.

\emph{Optimality of the rate for $d\geq3$.}  We take
$H(x,p)=\frac12|p|^2$, $F=0$, and $G(m)=d_1(m,P_T\delta_0)$, which
satisfy Assumption~\ref{ass:standing}, and the initial configuration
$\bx=(0,\dots,0)$, for which $m^N_{\bx}=\delta_0$.  The heat flow
$(P_t\delta_0)_{0\leq t\leq T}$ with zero drift is admissible in
\eqref{eq:U-definition} and has zero cost, so $U(0,\delta_0)=0$.  The
torus kernel is a sum of Gaussian images, so
$p_T\geq(4\pi T)^{-d/2}e^{-d/(16T)}=:\kappa>0$ pointwise, and every
realized empirical measure $\mu^N_T$ is supported on at most $N$
points, so Lemma~\ref{lem:quantization} gives, pathwise,
$G(\mu^N_T)\geq c\,N^{-1/d}$ with $c=c(d,\kappa)$.  Since $L\geq0$ and
$F=0$, every admissible particle control satisfies
$J_N(0,\bx;\balpha)\geq c\,N^{-1/d}$, and so
\[
 V^N(0,\bx)-U(0,m^N_{\bx})\geq c\,N^{-1/d}
 \qquad\text{for every }N\geq1 .
\]
For $d\geq3$ we have $N^{-1/d}=r_{N,d}$, so \eqref{eq:two-sided}
cannot hold, under Assumption~\ref{ass:standing}, with $r_{N,d}$
replaced by any sequence of smaller order.

\emph{Optimality of the rate for $d=2$.}  The example above misses the
logarithm, and we use instead the dynamical example of
Appendix~\ref{app:two-dim}.  We take the data
\eqref{eq:two-dim-data} (quadratic Hamiltonian, running cost the
$d_1$-distance to the uniform measure $\lambda$ on $\T^2$, no terminal
cost) and, for $N=R^2$, the grid configuration $\bx_R\in(\T^2)^N$
whose coordinates are the centers of the $R^2$ squares of side
$R^{-1}$ partitioning $\T^2$.  Transporting each square to its center
gives
$d_1(m^N_{\bx_R},\lambda)\leq CN^{-1/2}$.  The heat flow
$(P_tm^N_{\bx_R})_{0\leq t\leq T}$ with zero drift is admissible in
\eqref{eq:U-definition} with zero action, and the $d_1$-contractivity
of the heat semigroup gives
\[
 U\big(0,m^N_{\bx_R}\big)
 \leq\int_0^Td_1\big(P_tm^N_{\bx_R},\lambda\big)\,dt
 \leq CTN^{-1/2},
\]
while Proposition~\ref{prop:two-dim-lower} gives
$V^N(0,\bx_R)\geq c\,r_{N,2}$ for all large $R$.  Since
$N^{-1/2}=o(r_{N,2})$, the difference of the two values exceeds
$\tfrac c2\,r_{N,2}$ along the squares $N=R^2$, and this single
sequence of configurations already forbids \eqref{eq:two-sided}, under
Assumption~\ref{ass:standing}, with $r_{N,2}$ replaced by any sequence
of smaller order.
\end{proof}

\section{The exceptional one-dimensional rate}\label{sec:one-dim}

Throughout this section $d=1$: we write $\T$ for the circle, $\lambda$ for
the uniform measure on $\T$, and we let Assumption~\ref{ass:standing} be
in force.  The section proves Theorem~\ref{thm:one-dim}.

In Section~\ref{subsec:corrected}, we obtain the hard inequality by correcting the shadow flow
with a filter built on the future of the discarded noise.  For the
 formerly easy inequality we construct, in Section~\ref{subsec:gibbs}, cooperating
near-optimal particles from a Gibbs law at the cost
of a factor $\log^{1/7}(1+N)$, while
Section~\ref{subsec:one-dim-lower} proves that the polynomial exponent $4/7$
is optimal.

The following two elementary properties of the Lagrangian supplement
Lemma~\ref{lem:lagrangian}.

\begin{lemma}\label{lem:lagrangian-onedim}
There exists $C$ such that for all $x,y\in\T$ and $a,\gamma\in\R$,
\begin{align}
 L(x,a+\gamma)&\leq L(x,a)+D_aL(x,a)\,\gamma+C\,\gamma^2,
 \label{eq:L-quadratic}\\
 |L(x,a)-L(y,a)|&\leq C\,(1+|a|)\,\dm(x,y).
 \label{eq:L-space}
\end{align}
\end{lemma}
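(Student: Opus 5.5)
Both inequalities come from the Legendre structure of Assumption~\ref{ass:standing}. Lemma~\ref{lem:lagrangian} is not quoted here, so I will re-derive the facts I need. In dimension one, (A1) reads $C_0^{-1}\leq D^2_{pp}H\leq C_0$, and $H$ is $C^2$.

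\emph{Proof of \eqref{eq:L-quadratic}.} For each $x$, $p\mapsto D_pH(x,p)$ is a $C^1$ bijection of $\R$ with derivative in $[C_0^{-1},C_0]$. Hence the supremum in \eqref{eq:Legendre} is attained at the unique $p=p(x,a)$ solving $D_pH(x,p)=-a$. By the implicit function theorem, $L(x,\cdot)$ is $C^2$, with $D_aL(x,a)=-p(x,a)$ and
\[
D^2_{aa}L(x,a)=\big(D^2_{pp}H(x,p(x,a))\big)^{-1}\in[C_0^{-1},C_0].
\]
Taylor's formula with integral remainder then gives \eqref{eq:L-quadratic} with $C=C_0/2$, uniformly in $x,a,\gamma$.

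\emph{Proof of \eqref{eq:L-space}.} Fix $a$ and let $p=p(x,a)$ be the maximizer at $x$. Then $L(x,a)=-H(x,p)-ap$ and $L(y,a)\geq -H(y,p)-ap$. Therefore
\[
L(x,a)-L(y,a)\leq H(y,p)-H(x,p)\leq \sup_z|D_xH(z,p)|\,\dm(x,y)\leq C_0(1+|p|)\,\dm(x,y)
\]
by (A2). Exchanging $x$ and $y$ gives the symmetric bound with $p(y,a)$. So it suffices to bound $|p(x,a)|\leq C(1+|a|)$ uniformly in $x$.

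\emph{The bound on $p(x,a)$.} Since $D_pH(x,p(x,a))=-a$ and $D_pH(x,\cdot)$ has derivative at least $C_0^{-1}$,
\[
C_0^{-1}|p(x,a)|\leq |D_pH(x,p(x,a))-D_pH(x,0)|\leq |a|+\sup_z|D_pH(z,0)|.
\]
The supremum is finite by continuity of $D_pH$ on the compact set $\T$, and it is one of the quantities the constants are allowed to depend on. Hence $|p(x,a)|\leq C_0(|a|+\sup_z|D_pH(z,0)|)\leq C(1+|a|)$, which proves \eqref{eq:L-space}.

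There is no real obstacle. The only points needing care are that the constant in \eqref{eq:L-space} must be uniform in $a$, which is exactly what the linear growth of the maximizer $p(x,a)$ provides, and that $C$ depends only on $C_0$ and $\sup_x|D_pH(x,0)|$.
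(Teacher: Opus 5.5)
Your proof is correct and follows the paper's argument. The quadratic bound comes from $D^2_{aa}L=(D^2_{pp}H)^{-1}\in[C_0^{-1},C_0]$ plus Taylor's formula. The spatial bound comes from (A2) evaluated at the maximizer, together with the linear growth $|p(x,a)|\leq C(1+|a|)$; this is the paper's Lemma~\ref{lem:lagrangian}(i), and you re-derive it the same way. The only difference is cosmetic: the paper uses the envelope formula $D_xL(x,a)=-D_xH(x,p(x,a))$ and integrates along a geodesic, whereas you insert the maximizer at $x$ into the supremum defining $L(y,a)$. Your version gives the same estimate without needing $x$-differentiability of $L$.
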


\begin{proof}
By Lemma~\ref{lem:lagrangian}(ii), $D_aL(x,a)=-p(x,a)$.  Differentiating
the optimality relation $a=-D_pH(x,p(x,a))$ with respect to $a$ gives
\[
 D^2_{aa}L=\big(D^2_{pp}H\big)^{-1}\in[C_0^{-1},C_0].
\]
This gives \eqref{eq:L-quadratic}.  For \eqref{eq:L-space},
$|D_xL(x,a)|=|D_xH(x,p(x,a))|\leq C_0\big(1+|p(x,a)|\big)\leq
C(1+|a|)$ by \ref{ass:H-x} and Lemma~\ref{lem:lagrangian}(i).
\end{proof}

\subsection{The corrected shadow flow and the hard inequality}\label{subsec:corrected}

Fix, as in Sections~\ref{sec:shadow}--\ref{sec:comparison}, an initial
configuration $\bx$ and an adapted control $\balpha$ with
$|\alpha^i|\leq M$.  We take $t_0=0$ throughout this subsection, the
general case following by translating time. 

In higher dimensions the noise estimate of
Lemma~\ref{lem:martingale} degrades as the smoothing scale shrinks,
and the two prices in Proposition~\ref{prop:tracking} balance at the
rate $r_{N,d}$, which Theorem~\ref{thm:main} shows to be unimprovable.
At $d=1$ the smoothing price can be made negligible, and what remains
is a hard floor that no choice of the smoothing scale can improve: the
discarded noise, which returns as the martingale driving the
shadowing error and contributes $N^{-1/2}$ outright
($\Theta_1\equiv1$ in \eqref{eq:theta-definition}).  

Improving the
rate therefore means attacking the noise itself.
We do this through a correction of the shadow flow that modifies only
the transport of the components: their drifts acquire an anticipating
term, the filter of Lemma~\ref{lem:filter} below, which damps the low
Fourier modes of the shadowing error.  The
corrected flow is therefore no longer adapted, but what matters is
that each of its realizations is still an admissible competitor
in \eqref{eq:U-definition}.

To see how the correction is chosen, suppose provisionally that the
shadow flow $\nu$ is constructed from components $\nu_{i,t}$
transported by the drifts $\alpha^i_t+\gamma^i_t$, where $\gamma^i_t$
is still to be chosen.  Write $z_t=\mu^N_t-\nu_t$, and let $R_t$ be
the mean-zero primitive of $z_t$.
Fix $1\leq K\leq N$, and let $S_K$ be the smooth frequency cutoff of
Lemma~\ref{lem:cutoff}.  Then
\begin{equation}\label{eq:wasserstein-frequency-split}
 d_1(\mu^N_t,\nu_t)=\|z_t\|_{\Lip^*}
 \leq\|R_t\|_{L^1}
 \leq\|S_KR_t\|_{L^1}+\|(I-S_K)R_t\|_{L^1}.
\end{equation}
This reduces the shadowing estimate to controlling the low- and
high-frequency parts of $R_t$.  The high-frequency part will be
controlled directly by heat smoothing.  The purpose of the correction
is to reduce the low-frequency noise.  To analyze the first term on the
right-hand side of \eqref{eq:wasserstein-frequency-split}, we expand
$S_KR_t$ in the normalized nonconstant real Fourier modes
$(e_\ell)_{\ell\geq1}$ on $\T$, ordered by
\[
 e_{2n-1}(x)=\sqrt2\,\cos(2\pi nx),
 \qquad
 e_{2n}(x)=\sqrt2\,\sin(2\pi nx),
 \qquad n\geq1,
\]
and let $n_\ell$ denote the frequency of $e_\ell$.  The coefficient of
$R_t$ along $e_\ell$ can be written, by integration by parts, as
\begin{equation}\label{eq:primitive-fourier-coefficient}
 \int_\T R_te_\ell\,dy=-\int_\T\psi_\ell\,dz_t,
\end{equation}
where $\psi_\ell$ is the mean-zero primitive of $e_\ell$.  Thus the
low Fourier modes of $R_t$ are obtained by testing the shadowing error
$z_t$ against the primitives $\psi_\ell$.

Set $\Psi_K=(e_1,\dots,e_{2K})^{\mathsf T}$.  The functions $\psi_\ell$
satisfy
\begin{equation}\label{eq:fourier-frame}
 \psi_\ell'=e_\ell,
 \qquad
 -\psi_\ell''=\vartheta_\ell\,\psi_\ell
 \ \text{ with }\ \vartheta_\ell=4\pi^2n_\ell^2,
 \qquad
 \sup_{x\in\T}|\Psi_K(x)|^2\leq CK.
\end{equation}
For $\ell\leq 2K$, $n_\ell\leq K$.  Write
$\Lambda_K=\operatorname{diag}(\vartheta_1,\dots,\vartheta_{2K})$.
Set
\begin{equation}\label{eq:low-mode-vector}
 Z_t:=\bigg(\int_\T\psi_\ell\,dz_t\bigg)_{\ell\leq 2K}.
\end{equation}
It\^o's formula for the particle system gives, for $\ell\leq 2K$,
\begin{equation}\label{eq:particle-low-mode-equation}
 \begin{aligned}
 d\int_\T\psi_\ell\,d\mu^N_t
 &=\bigg(\int_\T\psi_\ell''\,d\mu^N_t
 +\frac1N\sum_{i=1}^N\alpha^i_t e_\ell(X^i_t)\bigg)\,dt
 +d\mathsf M_{\ell,t},\\
 \mathsf M_t&:=\frac{\sqrt2}N\sum_{i=1}^N
 \int_0^t\Psi_K(X^i_s)\,dW^i_s.
 \end{aligned}
\end{equation}
The quadratic variation of $\mathsf M$ is
\begin{equation}\label{eq:filter-qv}
 d\langle\mathsf M\rangle_t=\frac2N\,G_t\,dt,
 \qquad
 G_t:=\frac1N\sum_{i=1}^N
 \Psi_K(X^i_t)\Psi_K(X^i_t)^{\mathsf T}.
\end{equation}
Thus $G_t$ is the normalized covariance matrix of the low-mode noise.
It is predictable, symmetric and nonnegative.

The terms $\gamma^i_t$ will be chosen to counteract the martingale
$\mathsf M$.  By the weak form of \eqref{eq:shadow-FP} and the
componentwise flux formula \eqref{eq:beta-definition}, the resulting
shadow flow satisfies
\begin{equation}\label{eq:shadow-low-mode-equation}
 d\int_\T\psi_\ell\,d\nu_t
 =\bigg(\int_\T\psi_\ell''\,d\nu_t
 +\sum_{i=1}^N(\alpha^i_t+\gamma^i_t)
       \int_\T\psi_\ell'\,d\nu_{i,t}\bigg)\,dt.
\end{equation}
In the difference between \eqref{eq:particle-low-mode-equation} and
\eqref{eq:shadow-low-mode-equation}, the correction terms decompose,
using $\psi_\ell'=e_\ell$, as
\begin{equation}\label{eq:correction-mode-decomposition}
\begin{gathered}
 -\bigg(\sum_{i=1}^N\gamma^i_t\int_\T e_\ell\,d\nu_{i,t}\bigg)_{\ell\leq 2K}
 =-\Gamma_t
 +\bigg(\int_\T e_\ell\,d\mathsf r^\gamma_t\bigg)_{\ell\leq 2K},\\
 \Gamma_t
 :=\bigg(\frac1N\sum_{i=1}^N
 \gamma^i_t e_\ell(X^i_t)\bigg)_{\ell\leq 2K},
 \qquad
 \mathsf r^\gamma_t:=\sum_{i=1}^N\gamma^i_t
 \Big(\frac1N\delta_{X^i_t}-\nu_{i,t}\Big).
\end{gathered}
\end{equation}
Since the correction is designed to control the first $2K$ modes, let
$Y_t$ encode their correction amplitudes.  The Fourier polynomial
$y\mapsto\Psi_K(y)^{\mathsf T}Y_t$ turns these amplitudes into a spatial
drift.  This reasoning forces the ansatz
\[
 \gamma^i_t=\kappa\,\Psi_K(X^i_t)^{\mathsf T}Y_t,
\]
where $\kappa>0$ is the strength of the correction.  By the
definitions of $\Gamma_t$ and $G_t$, we then have
$\Gamma_t=\kappa G_tY_t$.  Thus $G_t$ describes both the covariance of
the projected noise and the response of the low modes to the
correction.  Subtracting
\eqref{eq:shadow-low-mode-equation} from
\eqref{eq:particle-low-mode-equation} and using
\eqref{eq:low-mode-vector}, \eqref{eq:fourier-frame},
\eqref{eq:z-r-definition}, and \eqref{eq:correction-mode-decomposition},
we obtain the following equation for $Z_t$:
\begin{equation}\label{eq:low-mode-equation}
 dZ_t=-\Lambda_KZ_t\,dt-\kappa G_tY_t\,dt+d\mathsf M_t
 +\Big(\int_\T e_\ell\,d\big(\mathsf r_t+\mathsf r^\gamma_t\big)
 \Big)_{\ell\leq 2K}\,dt,
 \qquad Z_0=0.
\end{equation}
To focus on the correction and martingale noise, it is convenient to analyze the simpler equation obtained from discarding the
flux-mismatch terms: namely, let $\mathsf e$
solve
\begin{equation}\label{eq:filter-error-equation}
 d\mathsf e_t=-\Lambda_K\mathsf e_t\,dt
 -\kappa G_tY_t\,dt+d\mathsf M_t,
 \qquad \mathsf e_0=0.
\end{equation}
This suggests choosing \(Y\) to satisfy the differential equation below. We impose the terminal condition so that the resulting process is a stable, conditionally centered filter of future noise:
\begin{equation}\label{eq:filter-terminal}
 dY_t=\kappa G_tY_t\,dt-d\mathsf M_t,
 \qquad Y_T=0.
\end{equation}
Indeed, if $\eta:=\mathsf e+Y$, the correction and noise terms cancel
and
\[
 d\eta_t=-\Lambda_K\eta_t\,dt+\Lambda_KY_t\,dt.
\]
Thus $\mathsf e$ can be estimated entirely through $Y$.  At the same
time, the occurrence of the same matrix $G_t$ in
\eqref{eq:filter-terminal} and in the quadratic variation
\eqref{eq:filter-qv} allows the conditional It\^o isometry to yield a clean
 conditional second-moment bound for $Y_t$. The following lemma
makes these two conclusions precise.

\begin{lemma}[The future filter]\label{lem:filter}

For every $\kappa>0$, let the propagator $\Phi(t,s)$, $t\leq s$, solve
\begin{equation}\label{eq:filter-propagator}
 \partial_t\Phi(t,s)=\kappa G_t\,\Phi(t,s),
 \qquad \Phi(s,s)=I,
\end{equation}
and define
\begin{equation}\label{eq:filter-definition}
 Y_t:=\int_t^T\Phi(t,s)\,d\mathsf M_s.
\end{equation}
Then $Y$ has a continuous version satisfying
\eqref{eq:filter-terminal}, and
\begin{equation}\label{eq:filter-moments}
 \E[Y_t\mid\F_t]=0,
 \qquad
 \E\big[Y_tY_t^{\mathsf T}\mid\F_t\big]\leq\frac1{N\kappa}\,I.
\end{equation}
Moreover, the solution of \eqref{eq:filter-error-equation} satisfies
\begin{equation}\label{eq:filter-error}
 \sup_{0\leq t\leq T}\E\,|\mathsf e_t|^2
 \leq\frac{CK}{N\kappa}.
\end{equation}

\end{lemma}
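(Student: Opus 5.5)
The plan is to exploit three structural facts about the propagator $\Phi$ of \eqref{eq:filter-propagator}: it is a contraction, it satisfies a cocycle identity, and for fixed $t$ the process $s\mapsto\Phi(t,s)$ is adapted. Together these turn $Y_t$ into an ordinary It\^o integral over $[t,T]$ whose conditional covariance can be computed exactly.

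\emph{Basic properties of $\Phi$.} Since $G_t$ is predictable, bounded (by \eqref{eq:fourier-frame}, $|G_t|\leq CK$), symmetric and nonnegative, the linear ODE \eqref{eq:filter-propagator} has a unique solution for each $s$. Moreover $\Phi(t,s)$ depends only on $(G_r)_{t\leq r\leq s}$, so for fixed $t$ the map $s\mapsto\Phi(t,s)$ is adapted and continuous on $[t,T]$. This makes \eqref{eq:filter-definition} a well-defined It\^o integral with bounded integrand.

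From uniqueness we get the cocycle identity $\Phi(t,s)=\Phi(t,r)\Phi(r,s)$, each factor being invertible. Differentiating $\Phi(t,s)\Phi(s,t')=\Phi(t,t')$ in $s$ gives
\[
 \partial_s\Phi(t,s)=-\kappa\,\Phi(t,s)G_s .
\]
Hence
\[
 \partial_s\big[\Phi(t,s)\Phi(t,s)^{\mathsf T}\big]=-2\kappa\,\Phi(t,s)G_s\Phi(t,s)^{\mathsf T}\leq0,
\]
so $\Phi(t,s)\Phi(t,s)^{\mathsf T}\leq I$, i.e.\ $|\Phi(t,s)|\leq1$.

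\emph{Moments \eqref{eq:filter-moments}.} For fixed $t$, $Y_t$ is a stochastic integral on $[t,T]$ of an adapted, bounded integrand against the martingale $\mathsf M$. Its $\F_t$-conditional mean is therefore zero. By the conditional It\^o isometry and \eqref{eq:filter-qv}, its conditional covariance is
\[
 \frac2N\,\E\Big[\int_t^T\Phi(t,s)G_s\Phi(t,s)^{\mathsf T}ds\,\Big|\,\F_t\Big].
\]
The identity above integrates exactly:
\[
 \int_t^T\Phi G\Phi^{\mathsf T}ds=\frac{1}{2\kappa}\big(I-\Phi(t,T)\Phi(t,T)^{\mathsf T}\big)\leq\frac{1}{2\kappa}\,I .
\]
This yields $\E[Y_tY_t^{\mathsf T}\mid\F_t]\leq I/(N\kappa)$. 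Here the matching of $G_t$ between the drift and the quadratic variation is used decisively.

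\emph{Continuity and equation \eqref{eq:filter-terminal}.} Using the cocycle identity, write
\[
 Y_t=\Phi(0,t)^{-1}\Big(\int_0^T\Phi(0,s)\,d\mathsf M_s-\int_0^t\Phi(0,s)\,d\mathsf M_s\Big).
\]
The first integral is a single random vector, the second is a continuous martingale in $t$, and $t\mapsto\Phi(0,t)^{-1}$ is a continuous finite-variation process with $\frac{d}{dt}\Phi(0,t)^{-1}=\kappa G_t\Phi(0,t)^{-1}$. Hence this formula defines a continuous version. The one subtle point is that the first integral is $\F_T$-measurable and hence anticipating. I would handle it by treating the expression as a product of a finite-variation factor with a continuous semimartingale minus a constant vector: the product rule then holds pathwise, with no It\^o correction, since one factor has finite variation. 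It gives $dY_t=\kappa G_tY_t\,dt-d\mathsf M_t$, and $Y_T=0$ is immediate. This is the step I expect to need the most care: making sure the identification of the version and the differential are legitimate despite the anticipation. I would sidestep anticipating calculus entirely by verifying the integral form
\[
 Y_t=\int_t^T\big(d\mathsf M_s-\kappa G_sY_s\,ds\big)
\]
directly from the representation above, by Fubini.

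\emph{Error bound \eqref{eq:filter-error}.} Set $\eta:=\mathsf e+Y$. Then $d\eta_t=-\Lambda_K(\eta_t-Y_t)\,dt$ with $\eta_0=Y_0$, so pathwise
\[
 \eta_t=e^{-\Lambda_Kt}Y_0+\int_0^t\Lambda_Ke^{-\Lambda_K(t-s)}Y_s\,ds .
\]
Because $\Lambda_K$ is diagonal, each coordinate is a combination of values of $Y$ with nonnegative weights of total mass at most $1$. Jensen's inequality coordinatewise therefore gives $\E|\eta_t|^2\leq 2\sup_s\E|Y_s|^2$. Taking traces in \eqref{eq:filter-moments} gives $\E|Y_s|^2\leq 2K/(N\kappa)$. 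Finally, $\mathsf e_t=\eta_t-Y_t$ yields $\sup_t\E|\mathsf e_t|^2\leq CK/(N\kappa)$.
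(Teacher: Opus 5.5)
Your proposal is correct and follows essentially the paper's own route: the cocycle identity and the backward derivative $\partial_s\Phi(t,s)=-\kappa\Phi(t,s)G_s$, the exact energy identity for $\Phi\Phi^{\mathsf T}$ combined with the conditional It\^o isometry, the representation $Y_t=\Phi(0,t)^{-1}\int_t^T\Phi(0,s)\,d\mathsf M_s$ with a pathwise product rule against the finite-variation factor $\Phi(0,t)^{-1}$, and the substitution $\eta=\mathsf e+Y$. One small slip in the last step: coordinatewise Jensen gives $\E\,\eta_\ell(t)^2\leq\sup_s\E\,Y_\ell(s)^2\leq 1/(N\kappa)$, and you should sum these bounds over $\ell\leq 2K$ directly rather than claim $\E|\eta_t|^2\leq 2\sup_s\E|Y_s|^2$, which does not follow in general since a sum of suprema need not be controlled by the supremum of the sum; the conclusion $\sup_t\E|\mathsf e_t|^2\leq CK/(N\kappa)$ is unaffected.
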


\begin{proof}

Pathwise uniqueness for the linear terminal-value equation
\eqref{eq:filter-propagator} gives the composition identity
\begin{equation}\label{eq:filter-composition}
 \Phi(t,s)=\Phi(t,u)\Phi(u,s),
 \qquad 0\leq t\leq u\leq s\leq T.
\end{equation}
Indeed, as functions of $t\in[0,u]$, both sides solve
\eqref{eq:filter-propagator} and take the value $\Phi(u,s)$ at $t=u$.
Moreover, integrating \eqref{eq:filter-propagator} backward from $s+h$
to $s$ gives
\[
 \Phi(s,s+h)=I-\kappa G_sh+o(h)
\]
for almost every $s$.  Applying \eqref{eq:filter-composition} with the
triple $(t,s,s+h)$ therefore yields
\begin{equation}\label{eq:filter-upper-derivative}
 \partial_s\Phi(t,s)=-\kappa\Phi(t,s)G_s
 \qquad\text{for almost every }s\geq t.
\end{equation}
For fixed $t$, the map $s\mapsto\Phi(t,s)$ depends only on
$(G_u)_{t\leq u\leq s}$ and is continuous and adapted, hence
predictable.

Hence the stochastic integral in \eqref{eq:filter-definition} has
conditional mean zero.  By
\eqref{eq:filter-upper-derivative} and the symmetry of $G_s$,
\[
 \frac{d}{ds}\big(\Phi(t,s)\Phi(t,s)^{\mathsf T}\big)
 =-2\kappa\Phi(t,s)G_s\Phi(t,s)^{\mathsf T}.
\]

Integrating this identity from $t$ to $T$ and using $\Phi(t,t)=I$ gives
\begin{equation}\label{eq:filter-energy-identity}
 2\kappa\int_t^T\Phi(t,s)G_s\Phi(t,s)^{\mathsf T}\,ds
 =I-\Phi(t,T)\Phi(t,T)^{\mathsf T}.
\end{equation}
The conditional It\^o isometry for the martingale transform on $[t,T]$
(applied entrywise; see \cite[Thm.~5.6 and
Prop.~4.15(v)]{LeGall2016}), followed by \eqref{eq:filter-qv} and
\eqref{eq:filter-energy-identity}, yields
\[
\begin{aligned}
 \E\big[Y_tY_t^{\mathsf T}\mid\F_t\big]
 &=\E\bigg[\int_t^T\Phi(t,s)\,d\langle\mathsf M\rangle_s\,
       \Phi(t,s)^{\mathsf T}\,\bigg|\,\F_t\bigg]\\
 &=\frac2N\E\bigg[\int_t^T\Phi(t,s)G_s
       \Phi(t,s)^{\mathsf T}\,ds\,\bigg|\,\F_t\bigg]\\
 &=\frac1{N\kappa}\E\big[I-\Phi(t,T)
       \Phi(t,T)^{\mathsf T}\mid\F_t\big]
 \leq\frac1{N\kappa}I.
\end{aligned}
\]
The last inequality follows from
$\Phi(t,T)\Phi(t,T)^{\mathsf T}\geq0$. This proves \eqref{eq:filter-moments}.

Write $\Phi_t:=\Phi(0,t)$.  By
\eqref{eq:filter-upper-derivative},
$\partial_t\Phi_t=-\kappa\Phi_tG_t$, with $\Phi_0=I$.  The determinant
chain rule, using
$\operatorname{adj}(\Phi_t)\Phi_t=(\det\Phi_t)I$, gives
\[
 \frac{d}{dt}\det\Phi_t
 =-\kappa\operatorname{tr}(G_t)\det\Phi_t
 \qquad\text{for almost every }t.
\]
Consequently,
\[
 \det\Phi_t
 =\exp\left(-\kappa\int_0^t\operatorname{tr}(G_s)\,ds\right)>0,
\]
so $\Phi_t$ is invertible.  Taking $(0,t,s)$ in
\eqref{eq:filter-composition} gives
$\Phi(t,s)=\Phi_t^{-1}\Phi_s$.
Pulling the $\F_t$-measurable factor $\Phi_t^{-1}$ out of the integral
in \eqref{eq:filter-definition},
\[
 Y_t=\Phi_t^{-1}\mathsf L_T-\Phi_t^{-1}\mathsf L_t
 \quad\text{almost surely, where }
 \mathsf L_t:=\int_0^t\Phi_s\,d\mathsf M_s .
\]
The right-hand side is continuous in $t$, and we pass to this version
of $Y$.  Since $t\mapsto\Phi_t^{-1}$ has finite variation, with
$d(\Phi_t^{-1})=\kappa\,G_t\Phi_t^{-1}\,dt$, the product rule
\cite[Thm.~5.10]{LeGall2016} gives
$dY_t=\kappa G_tY_t\,dt-d\mathsf M_t$.  Thus
$\eta:=\mathsf e+Y$ satisfies
$d\eta_t=-\Lambda_K\eta_t\,dt+\Lambda_KY_t\,dt$
with $\eta_0=Y_0$, so
\[
 \mathsf e_\ell(t)=-Y_\ell(t)+e^{-\vartheta_\ell t}Y_\ell(0)
 +\vartheta_\ell\int_0^te^{-\vartheta_\ell(t-s)}Y_\ell(s)\,ds.
\]
Cauchy--Schwarz and \eqref{eq:filter-moments} yield
$\E|\mathsf e_\ell(t)|^2\leq C/(N\kappa)$.  Summing over $\ell$ proves
\eqref{eq:filter-error}.

\end{proof}

\begin{proposition}[Corrected shadowing and cost]\label{prop:corrected}
Let $1\leq K\leq N$ and write
\begin{equation}\label{eq:corrected-scales}
 \mathfrak a:=\frac{K^3}{N},
 \qquad
 \varepsilon_{N,K}:=\frac1{\sqrt{NK}}+\mathfrak a.
\end{equation}
For every $h>0$ there exists a random pair $(\nu,\beta)$ with
$\nu_0=m^N_{\bx}$, each realization of which is admissible in
\eqref{eq:U-definition}, such that
\begin{align}
 \sup_{0\leq t\leq T}\E\,d_1\big(\mu^N_t,\nu_t\big)
 &\leq C_M\big(\varepsilon_{N,K}+\sqrt h+h\sqrt{\mathfrak a}\big),
 \label{eq:corrected-tracking}\\
 \E\int_0^T\!\!\int_\T L\big(y,\beta_t(y)\big)\,\nu_t(dy)\,dt
 &\leq\E\int_0^T\frac1N\sum_{i=1}^NL\big(X^i_t,\alpha^i_t\big)\,dt
 +C_M\big(\varepsilon_{N,K}+\sqrt h+h\sqrt{\mathfrak a}\big).
 \label{eq:corrected-action}
\end{align}
\end{proposition}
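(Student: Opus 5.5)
The plan is to take the shadow flow of Section~\ref{sec:shadow} and change only the transport of its components: component $i$ is now carried by $\alpha^i_t+\gamma^i_t$, where
\[
 \gamma^i_t=\kappa\,\Psi_K(X^i_t)^{\mathsf T}Y_t,\qquad \kappa:=K^2,
\]
and $Y$ is the future filter of Lemma~\ref{lem:filter}.

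\emph{Construction and admissibility.} On each grid interval I set $A^k_i(t)=\int_{s_k}^t(\alpha^i_r+\gamma^i_r)\,dr$ in \eqref{eq:component-definition}. I keep the same recoupling by Lemma~\ref{lem:selection} and define $\beta$ by \eqref{eq:beta-definition} with $\alpha^i$ replaced by $\alpha^i+\gamma^i$. Lemma~\ref{lem:filter} gives a continuous version of $Y$, and $\Psi_K$ is bounded, so each realization of $\gamma$ is bounded on $[0,T]$. The proof of Lemma~\ref{lem:FP} is purely pathwise, so it goes through verbatim and yields an admissible pair $(\nu^\omega,\beta^\omega)$. Adaptedness is lost, but it is never used for admissibility.

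\emph{Size of the correction.} The key quantitative input is the energy of the correction. Since $\frac1N\sum_i|\gamma^i_t|^2=\kappa^2Y_t^{\mathsf T}G_tY_t$ and $G_t$ is $\F_t$-measurable, \eqref{eq:filter-moments} gives
\[
 \E\Big[\frac1N\sum_i|\gamma^i_t|^2\Big]=\kappa^2\,\E\operatorname{tr}\big(G_t\,\E[Y_tY_t^{\mathsf T}\mid\F_t]\big)\leq\frac{\kappa}{N}\,\E\operatorname{tr}G_t\leq\frac{C\kappa K}{N}=C\,\mathfrak a .
\]
Here I use $\operatorname{tr}G_t\leq\sup|\Psi_K|^2\leq CK$ from \eqref{eq:fourier-frame}. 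The same conditional centering gives $\E\big[\frac1N\sum_ia^i_t\gamma^i_t\big]=0$ for any $\F_t$-measurable bounded coefficients $a^i_t$.

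\emph{Action estimate \eqref{eq:corrected-action}.} I repeat the Jensen step of Lemma~\ref{lem:action} to get $\int L(y,\beta_t)\,d\nu_t\leq\sum_i\int L(y,\alpha^i_t+\gamma^i_t)\,\nu_{i,t}(dy)$. Then I expand with \eqref{eq:L-quadratic} around $\alpha^i_t$ and move the base point from $y$ to $X^i_t$ using \eqref{eq:L-space} and the Lipschitz bound on $D_aL$ for $|a|\leq M$. Three contributions remain:
\begin{itemize}[leftmargin=2em]
\item the first-order term $\frac1N\sum_iD_aL(X^i_t,\alpha^i_t)\gamma^i_t$, which has mean zero by the centering above;
\item a quadratic term of size $C\mathfrak a$ in expectation;
\item a coupling error $C_M\sum_i\int(1+|\gamma^i_t|)\,\dm(y,X^i_t)\,\nu_{i,t}(dy)$.
\end{itemize}
I bound the coupling error by $C_M\E c_t$ plus, via Cauchy--Schwarz and $\dm\leq C$, a term $C\sqrt{\mathfrak a}\,(\E c_t)^{1/2}$. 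I then absorb the latter as $\leq C(\mathfrak a+\E c_t)$. So everything reduces to bounding $\E c_t$, which in turn reduces to \eqref{eq:corrected-tracking} via the analogue of Lemma~\ref{lem:provisional}. That lemma's proof used only the triangle inequality and $|A_i|$, so it carries over with $|A_i|\leq Mh+\int_{s_k}^t|\gamma^i|$. This produces the extra term $C\,h\sqrt{\mathfrak a}$.

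\emph{Tracking estimate \eqref{eq:corrected-tracking}.} I start from the split \eqref{eq:wasserstein-frequency-split}.
\begin{itemize}[leftmargin=2em]
\item High frequencies. I write $(I-S_K)R_t$ through the Duhamel representation of Lemma~\ref{lem:duhamel}, with flux $\mathsf r+\mathsf r^\gamma$. The martingale part has mode-$n$ variance of order $(Nn^2)^{-1}$, so summing over $n>K$ with Lemma~\ref{lem:cutoff} gives $C/\sqrt{NK}$. The flux part is handled as in Lemma~\ref{lem:flux}, giving $C_M\int(t-s)^{-1/2}\E[\ldots]\,ds$ where the bracket contains $c_s$ weighted by $1+|\gamma|$.
\item Low frequencies. $\|S_KR_t\|_{L^1}\leq C|Z_t|$. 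By \eqref{eq:low-mode-equation}, $Z=\mathsf e+(\text{Duhamel of the flux terms})$, with the flux terms damped by $e^{-\Lambda_K(t-s)}$. The filter estimate \eqref{eq:filter-error} with $\kappa=K^2$ gives $\E|\mathsf e_t|\leq\sqrt{CK/(NK^2)}=C/\sqrt{NK}$. The flux terms $\int e_\ell\,d(\mathsf r+\mathsf r^\gamma)$ are bounded by $\sqrt K$ times coupling-weighted quantities; the damping $e^{-\vartheta_\ell(t-s)}$ should return these as the same $(t-s)^{-1/2}$ kernel.
\end{itemize}
Combining the two parts with Lemma~\ref{lem:provisional} in its corrected form, the weakly singular Gr\"onwall inequality closes the estimate as in Proposition~\ref{prop:tracking}. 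The $\gamma$-weighted flux is again split by Cauchy--Schwarz into $C(\mathfrak a+\E c_s)$.

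\emph{Expected main obstacle.} The hard step is this last flux accounting. The mismatch $\mathsf r^\gamma$ carries the unbounded, anticipating weight $\gamma$, so the clean bound $M c_t$ of Lemma~\ref{lem:flux} must be replaced by a Cauchy--Schwarz bound in which $c_t$ appears only at first order. Otherwise the Gr\"onwall argument does not close linearly. Moreover, the low-mode equation must return the flux at the same integrable $(t-s)^{-1/2}$ price despite the extra factor $\sqrt K$ from $\Psi_K$. If the plain Cauchy--Schwarz bound loses too much, my first fallback is to exploit $\dm\leq C$ together with a truncation of $\gamma$ at level $\sqrt{\mathfrak a}\,\delta^{-1}$, paying the truncated part through the second moment above.
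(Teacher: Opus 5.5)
Your architecture is the paper's: the correction $\gamma^i_t=K^2\Psi_K(X^i_t)^{\mathsf T}Y_t$, the split \eqref{eq:wasserstein-frequency-split}, the bound $b_t\le C\sqrt{A_tc_t}\le C(A_t+c_t)$, and the first-order cancellation in the action. However, the high-frequency step has a genuine hole: the Duhamel forcing of the corrected error is not $\mathsf r+\mathsf r^\gamma$. The particles carry the flux $\frac1N\sum_i\alpha^i_t\delta_{X^i_t}$, while the components carry $\sum_i(\alpha^i_t+\gamma^i_t)\nu_{i,t}$. The forcing is therefore $\mathsf r_t-\bar{\mathsf q}_t$, with $\bar{\mathsf q}_t=\sum_i\gamma^i_t\nu_{i,t}$, that is,
\[
 \mathsf r_t-\bar{\mathsf q}_t=\mathsf r_t+\mathsf r^\gamma_t-\frac1N\sum_{i=1}^N\gamma^i_t\,\delta_{X^i_t}.
\]
The last term is the one whose low modes give $-\kappa G_tY_t$ in \eqref{eq:low-mode-equation}. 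There the filter cancels it against $d\mathsf M_t$, but at high frequencies nothing cancels it, and you dropped it. It is not a zero-mass difference, so Lemma~\ref{lem:flux} does not apply. Its total variation is at most $\sqrt{A_t}$, of order $\sqrt{\mathfrak a}=N^{-2/7}$ at $K=N^{1/7}$, which is far above the target. The missing ingredient is the exponential damping $\|(I-S_K)p_u\|_{L^1}\le Ce^{-cK^2u}$ of Lemma~\ref{lem:cutoff}(ii), which gives
\[
 \int_0^t\E\big\|(I-S_K)P_{t-s}\bar{\mathsf q}_s\big\|_{L^1}\,ds\le C\sqrt{\mathfrak a}\int_0^te^{-cK^2(t-s)}\,ds\le\frac{C\sqrt{\mathfrak a}}{K^2}=\frac{C}{\sqrt{NK}} .
\]
This leakage estimate is also what actually pins down the strength $\kappa=K^2$. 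For general $\kappa$ the leakage is of order $K^{-2}\sqrt{\kappa K/N}$, which is at most $(NK)^{-1/2}$ only if $\kappa\le K^2$. The filter error $\sqrt{K/(N\kappa)}$ from \eqref{eq:filter-error} is at most $(NK)^{-1/2}$ only if $\kappa\ge K^2$. In your write-up, $\kappa=K^2$ is chosen without the first constraint being visible.

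On the low modes, there is no factor $\sqrt K$ to fight, but your route does not give the kernel you claim. Suppose you pass to $|Z_t|$ and bound the damped flux in $\ell^2$. Each coordinate satisfies $|\int_\T e_\ell\,d(\mathsf r_s+\mathsf r^\gamma_s)|\le Cn_\ell(Mc_s+b_s)$, and $\sum_{n\ge1}n^2e^{-8\pi^2n^2u}\le Cu^{-3/2}$ on $(0,T]$. You therefore get the kernel $(t-s)^{-3/4}$, not $(t-s)^{-1/2}$. This is still integrable, so the weakly singular Gr\"onwall still closes. The paper obtains $(t-s)^{-1/2}$ by keeping the flux part as a function. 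Summing \eqref{eq:low-mode-coordinate} against $\chi(n_\ell/K)e_\ell$ identifies it with $\int_0^tS_KP_{t-s}(\mathsf r_s+\mathsf r^\gamma_s)\,ds$, to which the $K$-uniform $L^1$ bound of Lemma~\ref{lem:cutoff}(i) and Lemma~\ref{lem:flux} apply. Only $\mathsf e_t$ goes through the Euclidean norm, so your truncation fallback is unnecessary. The remaining steps are sound: pathwise admissibility via the continuous version of $Y$, the extra $Ch\sqrt{\mathfrak a}$ in the provisional cost, and the action estimate.
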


\begin{proof}
\emph{The correction.}  Apply Lemma~\ref{lem:filter} with strength
$\kappa=K^2$, and let $\mathsf e$ be the corresponding solution of
\eqref{eq:filter-error-equation}.  Set
\begin{equation}\label{eq:gamma-definition}
 \gamma^i_t:=K^2\,\Psi_K(X^i_t)^{\mathsf T}Y_t,
 \qquad
 A_t:=\frac1N\sum_{i=1}^N|\gamma^i_t|^2
 =K^4\,Y_t^{\mathsf T}G_tY_t.
\end{equation}
By \eqref{eq:filter-moments} and \eqref{eq:fourier-frame},
\begin{equation}\label{eq:A-pointwise}
 \E A_t\leq C\mathfrak a.
\end{equation}
By the centering in \eqref{eq:filter-moments}, since
$D_aL(X^i_t,\alpha^i_t)$ and
$\Psi_K(X^i_t)$ are $\F_t$-measurable,
\begin{equation}\label{eq:corrected-cancellation}
 \E\,\frac1N\sum_{i=1}^ND_aL\big(X^i_t,\alpha^i_t\big)\,\gamma^i_t=0.
\end{equation}

\emph{The flow.}  We run the construction of Section~\ref{sec:shadow}
on a
grid of mesh $h$, with the transport of the $i$-th component driven by
$\alpha^i+\gamma^i$ in place of $\alpha^i$, calling the result
$(\nu,\beta)$ and keeping the notation $\nu_{i,t}$ and $c_t$ of
\eqref{eq:c-definition}.  By \eqref{eq:A-pointwise} and
Fubini's theorem, $\int_0^TA_t\,dt<\infty$ almost surely.  Thus the
component transports have absolutely continuous paths, which is the
only time regularity used in Section~\ref{sec:shadow} and
Lemma~\ref{lem:FP}.  Both apply pathwise even though $\gamma^i$ is not
adapted.  Moreover,
$\int_0^T\!\int_\T|\beta_t|^2\,d\nu_t\,dt
\leq2TM^2+2\int_0^TA_t\,dt$, so every realization of $(\nu,\beta)$ is
admissible in \eqref{eq:U-definition}.  Rerunning the proof of
Lemma~\ref{lem:provisional} with the component drifts
$\alpha^i+\gamma^i$ produces one additional term, the displacement of
the components by the correction, which on each grid interval
contributes at most
\[
 \E\int_{s_k}^t\frac1N\sum_{i=1}^N|\gamma^i_s|\,ds
 \leq\int_{s_k}^t\sqrt{\E A_s}\,ds
 \leq Ch\sqrt{\mathfrak a}.
\]
Consequently,
\begin{equation}\label{eq:corrected-provisional}
 \E c_t\leq\E\,d_1(\mu^N_t,\nu_t)+C_M\sqrt h+Ch\sqrt{\mathfrak a}.
\end{equation}

Recalling that $R_t$ is the mean-zero primitive of $z_t$, our goal is now to estimate the two terms on the right-hand side of
\eqref{eq:wasserstein-frequency-split}.

\emph{Shadowing: the low modes.}  Let $\mathsf r_t$ be the flux
mismatch \eqref{eq:z-r-definition} of the uncorrected flow, associated
with $\balpha$.  Recalling $\mathsf r^\gamma_t$ from
\eqref{eq:correction-mode-decomposition}, define
\begin{equation}\label{eq:gamma-fluxes}
 \bar{\mathsf q}_t:=\sum_{i=1}^N\gamma^i_t\,\nu_{i,t},
 \qquad
 b_t:=\sum_{i=1}^N|\gamma^i_t|\int_\T\dm\big(y,X^i_t\big)\,\nu_{i,t}(dy).
\end{equation}
The particles are driven by $\balpha$ alone, while the components are
transported by $\alpha^i+\gamma^i$, and $\bar{\mathsf q}_t$ is the
flux that the correction contributes to the shadow flow.
$\mathsf r^\gamma_t$ compares this flux with the same drifts attached
to the particles, so its summands, like those of $\mathsf r_t$, are
zero-mass differences, of total transport cost $b_t$.
Writing $d_i:=\int\dm(y,X^i_t)\,\nu_{i,t}(dy)\leq C/N$, Cauchy--Schwarz
and Young's inequality give
\begin{equation}\label{eq:b-bound}
 b_t\leq
 \Big(\frac1N\sum_i|\gamma^i_t|^2\Big)^{1/2}
 \Big(N\sum_id_i^2\Big)^{1/2}
 \leq C\sqrt{A_tc_t}\leq C(A_t+c_t).
\end{equation}

Since the filter was chosen with $\kappa=K^2$,
\eqref{eq:filter-error} and Cauchy--Schwarz give
\begin{equation}\label{eq:applied-filter-error}
 \sup_{0\leq t\leq T}\E|\mathsf e_t|
 \leq\sup_{0\leq t\leq T}\big(\E|\mathsf e_t|^2\big)^{1/2}
 \leq\frac C{\sqrt{NK}}.
\end{equation}
Subtracting
\eqref{eq:filter-error-equation} from \eqref{eq:low-mode-equation},
the terms $d\mathsf M_t$ and $-K^2G_tY_t\,dt$ cancel, and solving the
resulting ODE coordinatewise gives
\begin{equation}\label{eq:low-mode-coordinate}
 (Z_t-\mathsf e_t)_\ell
 =\int_0^te^{-\vartheta_\ell(t-s)}
 \int_\T e_\ell\,d\big(\mathsf r_s+\mathsf r^\gamma_s\big)\,ds .
\end{equation}
Since $R_t$ has mean zero and, by Lemma~\ref{lem:cutoff}, $S_K$ is
supported on the frequencies $|n|<K$, every nonzero mode of $S_KR_t$
occurs in the frame $\Psi_K$.  Moreover,
\eqref{eq:primitive-fourier-coefficient} and
\eqref{eq:low-mode-vector} show that the coefficient of $R_t$ along
$e_\ell$ is $-(Z_t)_\ell$.
Summing \eqref{eq:low-mode-coordinate} against the cutoff weights
$\chi(n_\ell/K)$ therefore gives
\begin{equation}\label{eq:low-mode-representation}
 \begin{aligned}
 S_KR_t
 &=-\sum_{\ell=1}^{2K}
 \chi(n_\ell/K)\,(Z_t)_\ell\,e_\ell\\
 &=-\sum_{\ell=1}^{2K}
 \chi(n_\ell/K)\,\mathsf e_\ell(t)\,e_\ell
 -\int_0^tS_KP_{t-s}\big(\mathsf r_s+\mathsf r^\gamma_s\big)\,ds.
 \end{aligned}
\end{equation}
The proof of
Lemma~\ref{lem:flux}, with $|\gamma^i_t|$ in place of the uniform bound
on $|\alpha^i_t|$, recalling that $b_t$ is given by
\eqref{eq:gamma-fluxes}, yields
\begin{equation}\label{eq:gamma-flux-bound}
 \big\|P_u\mathsf r^\gamma_t\big\|_{L^1}\leq Cu^{-1/2}b_t,
 \qquad 0<u\leq T.
\end{equation}
For the first term on the right-hand side of
\eqref{eq:low-mode-representation}, $|\chi|\leq1$, Parseval's identity,
and \eqref{eq:applied-filter-error} give
\begin{equation}\label{eq:low-mode-filter-term}
\begin{aligned}
 \E\,\bigg\|\sum_{\ell=1}^{2K}
 \chi(n_\ell/K)\,\mathsf e_\ell(t)e_\ell\bigg\|_{L^1}
 &\leq \E\,\bigg\|\sum_{\ell=1}^{2K}
 \chi(n_\ell/K)\,\mathsf e_\ell(t)e_\ell\bigg\|_{L^2}\\
 &=\E\bigg(\sum_{\ell=1}^{2K}|\chi(n_\ell/K)|^2
 |\mathsf e_\ell(t)|^2\bigg)^{1/2}\leq \E|\mathsf e_t|
 \leq\frac C{\sqrt{NK}}.
\end{aligned}
\end{equation}

For the integral term, Minkowski's inequality, the
$L^1$-boundedness of $S_K$ in Lemma~\ref{lem:cutoff}(i),
Lemma~\ref{lem:flux}, and \eqref{eq:gamma-flux-bound} give
\begin{equation}\label{eq:low-mode-flux-term}
 \E\,\bigg\|\int_0^tS_KP_{t-s}
 \big(\mathsf r_s+\mathsf r^\gamma_s\big)\,ds\bigg\|_{L^1}
 \leq C_M\int_0^t(t-s)^{-1/2}\,\E(c_s+b_s)\,ds.
\end{equation}

Combining \eqref{eq:low-mode-representation},
\eqref{eq:low-mode-filter-term}, and \eqref{eq:low-mode-flux-term} gives
\begin{equation}\label{eq:low-mode-bound}
 \E\,\|S_KR_t\|_{L^1}
 \leq\frac C{\sqrt{NK}}
 +C_M\int_0^t(t-s)^{-1/2}\,\E(c_s+b_s)\,ds.
\end{equation}

\emph{Shadowing: the high modes.}
The particles have flux
$N^{-1}\sum_i\alpha^i_t\delta_{X^i_t}$, whereas the corrected shadow
flow has flux $\sum_i(\alpha^i_t+\gamma^i_t)\nu_{i,t}$.  Thus, by
\eqref{eq:z-r-definition} and \eqref{eq:gamma-fluxes}, their difference is
\[
 \frac1N\sum_i\alpha^i_t\delta_{X^i_t}
 -\sum_i(\alpha^i_t+\gamma^i_t)\nu_{i,t}
 =\mathsf r_t-\bar{\mathsf q}_t.
\]
The Duhamel representation \eqref{eq:duhamel} therefore
holds for $z_t=\mu^N_t-\nu_t$, the shadowing error of the corrected
flow, with
$\mathsf r_s-\bar{\mathsf q}_s$ in place of $\mathsf r_s$.  Indeed, its
proof uses only It\^o's formula for the particles and the pathwise
Fokker--Planck equation for the shadow flow.  Applying the corrected
representation with $\varphi=\psi_\ell$ and using
\eqref{eq:primitive-fourier-coefficient} gives, for every
$\ell\geq1$,
\begin{equation}\label{eq:high-mode-coordinate}
 \int_\T R_te_\ell\,dy
 =-\int_0^t\int_\T P_{t-s}e_\ell\,
       d\big(\mathsf r_s-\bar{\mathsf q}_s\big)\,ds
 -\frac{\sqrt2}{N}\sum_{i=1}^N\int_0^t
       \big(P_{t-s}e_\ell\big)(X^i_s)\,dW^i_s .
\end{equation}
Multiplying \eqref{eq:high-mode-coordinate} by
$1-\chi(n_\ell/K)$ and summing over $\ell\geq1$ gives
\begin{equation}\label{eq:high-mode-representation}
 (I-S_K)R_t
 =-\int_0^t(I-S_K)P_{t-s}\big(\mathsf r_s-\bar{\mathsf q}_s\big)\,ds
 +\mathcal N^K_t,
\end{equation}
where
\[
 \mathcal N^K_t(y):=-\frac{\sqrt2}N\sum_{i=1}^N\int_0^t
 \big((I-S_K)p_{t-s}\big)\big(y-X^i_s\big)\,dW^i_s .
\]
The stochastic series defining $\mathcal N^K_t$ converges
in $L^2(\Omega\times\T)$ by Lemma~\ref{lem:cutoff}(ii).

For the uncorrected flux mismatch,
Lemma~\ref{lem:cutoff}(i) and Lemma~\ref{lem:flux} give
\begin{equation}\label{eq:high-mode-flux-term}
 \big\|(I-S_K)P_{t-s}\mathsf r_s\big\|_{L^1}
 \leq C_M(t-s)^{-1/2}c_s.
\end{equation}
For the correction flux $\bar{\mathsf q}_s$ defined in
\eqref{eq:gamma-fluxes}, the component masses and Cauchy--Schwarz yield
\[
 |\bar{\mathsf q}_s|(\T)
 \leq\frac1N\sum_{i=1}^N|\gamma^i_s|
 \leq\bigg(\frac1N\sum_{i=1}^N|\gamma^i_s|^2\bigg)^{1/2}
 =\sqrt{A_s}.
\]
Young's inequality and Lemma~\ref{lem:cutoff}(ii) therefore
give
\begin{equation}\label{eq:high-mode-correction-pointwise}
 \big\|(I-S_K)P_{t-s}\bar{\mathsf q}_s\big\|_{L^1}
 \leq \big\|(I-S_K)p_{t-s}\big\|_{L^1}
       |\bar{\mathsf q}_s|(\T)
 \leq Ce^{-cK^2(t-s)}\sqrt{A_s}.
\end{equation}
Taking expectations in
\eqref{eq:high-mode-correction-pointwise}, integrating in $s$, and
using Jensen's inequality, \eqref{eq:A-pointwise}, and
\eqref{eq:corrected-scales}, we obtain
\begin{equation}\label{eq:high-mode-correction-term}
 \int_0^t\E\big\|(I-S_K)P_{t-s}\bar{\mathsf q}_s\big\|_{L^1}\,ds
 \leq C\sqrt{\mathfrak a}\int_0^te^{-cK^2(t-s)}\,ds
 \leq\frac{C\sqrt{\mathfrak a}}{K^2}=\frac C{\sqrt{NK}}.
\end{equation}
Finally, the It\^o isometry and Lemma~\ref{lem:cutoff}(ii), as in
Lemma~\ref{lem:martingale}, give
\begin{equation}\label{eq:high-mode-noise-term}
 \E\|\mathcal N^K_t\|_{L^1}
 \leq\Big(\frac2N\int_0^t\|(I-S_K)p_u\|_{L^2}^2\,du\Big)^{1/2}
 \leq\frac C{\sqrt{NK}}.
\end{equation}
Combining \eqref{eq:high-mode-representation},
\eqref{eq:high-mode-flux-term}, \eqref{eq:high-mode-correction-term},
and \eqref{eq:high-mode-noise-term}, we conclude that
\begin{equation}\label{eq:high-mode-bound}
 \E\,\big\|(I-S_K)R_t\big\|_{L^1}
 \leq C_M\int_0^t\big(1+(t-s)^{-1/2}\big)\,\E c_s\,ds
 +\frac C{\sqrt{NK}} .
\end{equation}

\emph{Shadowing: closing.}  Set $f(t):=\E\,d_1(\mu^N_t,\nu_t)$.  Combining
\eqref{eq:wasserstein-frequency-split}, \eqref{eq:low-mode-bound}, and
\eqref{eq:high-mode-bound},
\[
 f(t)\leq \frac C{\sqrt{NK}}
 +C_M\int_0^t\big(1+(t-s)^{-1/2}\big)\,\E(c_s+b_s)\,ds.
\]
By \eqref{eq:b-bound}, \eqref{eq:A-pointwise}, and
\eqref{eq:corrected-provisional},
\[
 \E(c_s+b_s)
 \leq C_M\big(f(s)+\mathfrak a+\sqrt h
                  +h\sqrt{\mathfrak a}\big).
\]
It follows that
\[
 f(t)\leq C_M\bigg(\frac1{\sqrt{NK}}+\mathfrak a+\sqrt h
                    +h\sqrt{\mathfrak a}\bigg)
 +C_M\int_0^t(t-s)^{-1/2}f(s)\,ds.
\]
As in the uncorrected setting, the weakly singular Gr\"onwall inequality \cite[Lem.~7.1.1]{Henry1981},
together with
\eqref{eq:corrected-scales}, now proves \eqref{eq:corrected-tracking}.

\emph{The cost.}  Convexity of $L(y,\cdot)$ gives
\[
 \int_\T L\big(y,\beta_t(y)\big)\,\nu_t(dy)
 \leq\sum_{i=1}^N\int_\T
 L\big(y,\alpha^i_t+\gamma^i_t\big)\,\nu_{i,t}(dy).
\]

We first move the base point from $y$ to $X^i_t$ with
\eqref{eq:L-space}, and then apply \eqref{eq:L-quadratic} at
$(X^i_t,\alpha^i_t)$.  Recalling \eqref{eq:c-definition},
\eqref{eq:gamma-fluxes}, and \eqref{eq:gamma-definition}, and using
that each component has mass $1/N$, the right-hand side is at most
\[
 \frac1N\sum_{i=1}^NL\big(X^i_t,\alpha^i_t\big)
 +\frac1N\sum_{i=1}^ND_aL\big(X^i_t,\alpha^i_t\big)\,\gamma^i_t
 +C_M\,c_t+C_M\,b_t+C\,A_t.
\]
We take expectations and integrate in time: the second term vanishes by
\eqref{eq:corrected-cancellation}, the last is bounded by $C\mathfrak a$
through \eqref{eq:A-pointwise}, and $c_t$ and $b_t$ are controlled by
\eqref{eq:corrected-provisional}, \eqref{eq:corrected-tracking} and the
bound on $\E(c_s+b_s)$ above.  This proves
\eqref{eq:corrected-action}.
\end{proof}

\begin{proposition}\label{prop:one-dim-hard}
Uniformly in $N$, $t_0$, and $\bx$,
\begin{equation}\label{eq:one-dim-hard}
 U\big(t_0,m^N_{\bx}\big)\leq V^N(t_0,\bx)+CN^{-4/7}.
\end{equation}
\end{proposition}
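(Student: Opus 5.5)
The plan follows the proof of the hard inequality in Theorem~\ref{thm:main}, with Proposition~\ref{prop:corrected} in place of Proposition~\ref{prop:tracking}. First, a comparison for bounded controls: for a control $\balpha$ with $|\alpha^i|\leq M$, take the random pair $(\nu,\beta)$ of Proposition~\ref{prop:corrected}. Since every realization is admissible in \eqref{eq:U-definition} with $\nu_0=m^N_{\bx}$, the pathwise bound \eqref{eq:pathwise-bound} holds $\omega$ by $\omega$, even though the pair is anticipating. Take expectations. Bound the running action by \eqref{eq:corrected-action}. Bound $F(\nu_t)$ and $G(\nu_T)$ through \eqref{eq:FG-comparison} and \eqref{eq:corrected-tracking}, noting that the supremum over $t$ there includes $t=T$. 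This gives
\[
 U(t_0,m^N_{\bx})\leq J_N(t_0,\bx;\balpha)+C_M\big(\varepsilon_{N,K}+\sqrt h+h\sqrt{\mathfrak a}\big).
\]
Here $t_0=0$ is reached by translating time, and the constants depend only on $T$ through the horizon length.

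Second, the choice of parameters. For fixed $N$ and $K$, the left-hand side does not depend on $h$. Letting $h\downarrow0$ removes $\sqrt h+h\sqrt{\mathfrak a}$, because $\mathfrak a=K^3/N$ is fixed. It remains to minimize $\varepsilon_{N,K}=(NK)^{-1/2}+K^3/N$ over integers $1\leq K\leq N$. Balancing $(NK)^{-1/2}=K^3N^{-1}$ gives $K^{7/2}=N^{1/2}$, that is, $K\asymp N^{1/7}$. So take $K:=\lceil N^{1/7}\rceil$. Then $K\leq N$, and both terms are $O(N^{-4/7})$: indeed $(NK)^{-1/2}\leq N^{-4/7}$ and $K^3/N\leq 8N^{3/7}/N$. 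This yields $U(t_0,m^N_{\bx})\leq J_N(t_0,\bx;\balpha)+C_MN^{-4/7}$ for every admissible $\balpha$ bounded by $M$.

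Third, passage to $V^N$, exactly as in the proof of Theorem~\ref{thm:main}. Smooth $F,G$ by Lemma~\ref{lem:smoothing}. Apply the bounded-control bound to the optimal feedback $\balpha^{*,N,n}$ of Lemma~\ref{lem:finite}, which is bounded by $M_*$ independently of $n$ and $N$. The data $(H,F^n,G^n)$ satisfy Assumption~\ref{ass:standing} with the same constants, so $C_{M_*}$ is uniform. This gives $U^n\leq V^{N,n}+CN^{-4/7}$. Lemma~\ref{lem:stability} with $n\to\infty$ then gives \eqref{eq:one-dim-hard}.

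The only delicate point is uniformity of constants. The constant $C_M$ in Proposition~\ref{prop:corrected} must not depend on $K$, $\kappa$, or $h$. It must also absorb the Lipschitz constant of $L(\cdot,a)$ for $a$ unbounded (through \eqref{eq:L-space}, with $|\gamma|$ entering via $b_t$). Both are already built into the statement of Proposition~\ref{prop:corrected}, so no new estimate should be needed. I expect the main check to be that letting $h\to0$ before fixing $K$ is legitimate, and it is, since the competitor is rebuilt for each $h$.
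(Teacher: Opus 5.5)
Your proposal is correct and follows the paper's own proof essentially step for step. It uses the pathwise admissibility of the corrected pair from Proposition~\ref{prop:corrected}, takes expectations with \eqref{eq:corrected-action}, \eqref{eq:corrected-tracking} and the $d_1$-Lipschitz bounds, lets $h\downarrow0$, takes $K\asymp N^{1/7}$, and passes to $V^N$ by the smoothing/feedback/stability argument of Section~\ref{sec:comparison}. The one difference is cosmetic: your choice $K=\lceil N^{1/7}\rceil$ gives $\varepsilon_{N,K}\leq 9N^{-4/7}$ for every $N\geq1$, whereas the paper takes $\lfloor N^{1/7}\rfloor$ for large $N$ and absorbs the small values of $N$ into the constant.
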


\begin{proof}
Let $\balpha$ be bounded by $M$ and, for large $N$, take
$K=\lfloor N^{1/7}\rfloor$, so that
$\varepsilon_{N,K}\leq CN^{-4/7}$ by \eqref{eq:corrected-scales}.
Each realization
of the pair of Proposition~\ref{prop:corrected} is admissible in
\eqref{eq:U-definition}, so, exactly as in the proof of
Proposition~\ref{prop:comparison}, the bounds
\eqref{eq:corrected-action} and \eqref{eq:corrected-tracking} and the
$d_1$-Lipschitz continuity of $F$ and $G$ give, after $h\downarrow0$,
\[
 U\big(t_0,m^N_{\bx}\big)\leq J_N(t_0,\bx;\balpha)+C_MN^{-4/7}.
\]
The passage to the value function is that of
Section~\ref{sec:comparison}: the smoothing of Lemma~\ref{lem:smoothing}
and the uniform feedback bound \eqref{eq:feedback-bound} reduce
\eqref{eq:one-dim-hard} to controls bounded by a universal constant, and
the finitely many small values of $N$ are absorbed into $C$.
\end{proof}

\subsection{The Gibbs competitor and the formerly easy inequality}\label{subsec:gibbs}

In this subsection we prove Proposition~\ref{prop:one-dim-easy},
constructing a particle control whose cost exceeds the mean field
value by at most $CN^{-4/7}\log^{1/7}(1+N)$.  Independent particles
cannot achieve this: replicating a mean field control across
independent noises, as in the easy inequality of
Theorem~\ref{thm:main}, produces empirical fluctuations of order
$N^{-1/2}$.  We therefore correlate the particles: their positions are sampled
from the Gibbs law \eqref{eq:gibbs-law} below, at a fixed inverse
temperature $\varsigma$, and transported along a regular near-optimal
flow (Lemma~\ref{lem:regular-pairs}) by its quantile map.  The Gibbs
law keeps the empirical discrepancy at $(N\varsigma)^{-1/3}$
(Lemma~\ref{lem:gibbs-shape}), while the drift that maintains it
costs the quadratic energy $(\varsigma/N)^2\log(1+N)$
(Proposition~\ref{prop:gibbs} and \eqref{eq:chi-range}).  The two
scales balance at $\varsigma_*=N^{5/7}\log^{-3/7}(1+N)$
as defined in \eqref{eq:optimal-temperature}, where both are
$N^{-4/7}\log^{1/7}(1+N)$.

A function on $\T^N$ is called \emph{diagonally invariant} if it is invariant
under the common translations $\by\mapsto\by+\theta(1,\dots,1)$.  For
$\by\in\T^N$ and $\varsigma>0$, we define the discrepancy and the Gibbs
law by
\begin{equation}\label{eq:gibbs-law}
 \mathcal W_N(\by):=d_1\big(m^N_{\by},\lambda\big),
 \qquad
 \pi_{N,\varsigma}(d\by)
 :=\mathcal Z_{N,\varsigma}^{-1}\,
 e^{-\varsigma\mathcal W_N(\by)}\,d\by,
\end{equation}
where $\mathcal Z_{N,\varsigma}$ normalizes the mass.  The density is
symmetric and diagonally invariant.  In particular every one-coordinate
marginal of $\pi_{N,\varsigma}$ is uniform.

\begin{lemma}[Empirical small ball]\label{lem:gibbs-small-ball}
For every $\delta>0$ there exist $C_\delta$ and $N_\delta$ such that, for
every $N\geq N_\delta$ and every $r$ satisfying
$\delta N^{-4/7}\leq r\leq N^{-1/2}$,
\begin{equation}\label{eq:gibbs-small-ball}
 \lambda^{\otimes N}\big(\mathcal W_N\leq r\big)
 \geq\exp\Big(-\frac {C_\delta}{Nr^2}\Big).
\end{equation}
\end{lemma}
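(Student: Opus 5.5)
The plan is to reduce the estimate to a small-ball probability for the uniform empirical process, and then transfer a Gaussian small-ball estimate through a strong approximation. Write $F_N(x)=\frac1N\#\{i:y^i\in[0,x)\}$ for $x\in[0,1)$, and let $\alpha_N(x):=\sqrt N\,(F_N(x)-x)$ be the uniform empirical process under $\lambda^{\otimes N}$. On the circle, $d_1(m^N_{\by},\lambda)=\min_{c}\int_0^1|F_N(x)-x-c|\,dx$, so taking $c=0$ gives
\[
 \mathcal W_N(\by)\leq \frac1{\sqrt N}\int_0^1|\alpha_N(x)|\,dx\leq \frac1{\sqrt N}\sup_{x}|\alpha_N(x)| .
\]
Set $\epsilon:=\sqrt N\,r$. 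The admissible range of $r$ is exactly $\delta N^{-1/14}\leq\epsilon\leq1$, and \eqref{eq:gibbs-small-ball} becomes the claim $\Prob(\sup|\alpha_N|\leq\epsilon)\geq\exp(-C_\delta\epsilon^{-2})$. This is the finite-$N$ form of the classical Brownian-bridge small-ball law $-\log\Prob(\|B\|_\infty\leq\epsilon)\sim\pi^2/(8\epsilon^2)$.

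\emph{Step 1 (Gaussian small ball).} For a Brownian bridge $B$, the Kolmogorov series gives explicitly
\[
 \Prob(\sup|B|\leq\epsilon/2)=\frac{\sqrt{2\pi}}{\epsilon/2}\sum_{k\geq1}e^{-(2k-1)^2\pi^2/(2\epsilon^2)} .
\]
Hence $\Prob(\sup|B|\leq\epsilon/2)\geq \exp(-c_1/\epsilon^2)$ for all $\epsilon\in(0,1]$, with an absolute constant $c_1$.

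\emph{Step 2 (coupling).} By the Koml\'os--Major--Tusn\'ady theorem, on a suitable probability space there are Brownian bridges $B_N$ with
\[
 \Prob\Big(\sup|\alpha_N-B_N|>\tfrac{C\log N+x}{\sqrt N}\Big)\leq Ke^{-\lambda x}\qquad\text{for all }x>0 .
\]
Choose $x=N^{1/3}$. Then the error scale is $(C\log N+N^{1/3})/\sqrt N\leq 2N^{-1/6}$. For $N\geq N_\delta$ this is at most $\epsilon/2$, because $\epsilon\geq\delta N^{-1/14}$ and $N^{-1/6}\ll N^{-1/14}$. Therefore
\[
 \Prob(\sup|\alpha_N|\leq\epsilon)\geq \exp(-c_1\epsilon^{-2})-Ke^{-\lambda N^{1/3}} .
\]

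\emph{Step 3 (absorbing the error).} Since $\epsilon^{-2}\leq\delta^{-2}N^{1/7}$ and $N^{1/7}\ll N^{1/3}$, we have $Ke^{-\lambda N^{1/3}}\leq\frac12\exp(-c_1\epsilon^{-2})$ once $N\geq N_\delta$. This gives the lower bound $\frac12\exp(-c_1/\epsilon^2)\geq\exp(-C_\delta/(Nr^2))$. The factor $\frac12$ is absorbed because $1/(Nr^2)\geq1$ on the range considered.

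\emph{Main obstacle.} The delicate point is Step 2. We need the coupling error both in size, well below $\epsilon$, which can be as small as $N^{-1/14}$, and in probability, well below $e^{-C N^{1/7}}$. This is exactly why the lower cutoff $\delta N^{-4/7}$ appears: below roughly $N^{-1/2-1/6}$ the strong approximation is no longer fine enough. If one prefers to avoid KMT, the fallback is elementary. Split $\T$ into $J\asymp\epsilon^{-2}$ arcs and force the counts in each arc to equal $N/J$ up to $\epsilon\sqrt N/J$ consecutively, using a multinomial local limit estimate with about $e^{-c}$ cost per arc. Then bound the within-arc fluctuations by a union of $J$ one-dimensional bridge estimates. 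This route gives the same $\exp(-C/\epsilon^2)$, but with more bookkeeping.
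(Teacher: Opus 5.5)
Your proposal is correct and follows the paper's proof essentially step for step: the bound $\mathcal W_N\le\|F_N-\mathrm{id}\|_{L^\infty}$, the Koml\'os--Major--Tusn\'ady coupling with a Brownian bridge, the first term of Kolmogorov's lower-tail series, and absorption of the exceptional probability using $(Nr^2)^{-1}\le\delta^{-2}N^{1/7}$. The only difference is that you take $x=N^{1/3}$ in the KMT tail bound where the paper takes $x=Nr/4$, which changes nothing; however, your side remark that the strong approximation is what forces the cutoff $\delta N^{-4/7}$ is misattributed, since your own argument works down to $r\gg N^{-2/3}$ and the cutoff is set by the lemma's applications.
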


\begin{proof}
Let $F_N$ be the empirical distribution function of $N$ independent
uniform points in $[0,1]$.  Since transportation on the circle costs
no more than transportation on the interval,
\[
 \mathcal W_N\leq\int_0^1|F_N(u)-u|\,du
 \leq\|F_N-\operatorname{id}\|_{L^\infty}.
\]
By \cite[Thm.~3, Eq.~(1.4)]{KomlosMajorTusnady1975}, $F_N$ can be coupled with a
standard Brownian bridge $B$.  Since $Nr\geq\delta N^{3/7}$, taking
$x=Nr/4$ in the exponential bound of that theorem gives, for all
sufficiently large $N$ depending only on $\delta$,
\[
 \Prob\left(
 \left\|\sqrt N\big(F_N-\operatorname{id}\big)-B\right\|_{L^\infty}
 >\frac{\sqrt N\,r}{2}\right)
 \leq C e^{-cNr};
\]
here we used $C\log(1+N)\leq Nr/4$.  On the complementary event,
$\|B\|_{L^\infty}\leq\sqrt N\,r/2$ implies
$\|F_N-\operatorname{id}\|_{L^\infty}\leq r$, and therefore
\[
 \lambda^{\otimes N}(\mathcal W_N\leq r)
 \geq\Prob\left(\|B\|_{L^\infty}\leq\frac{\sqrt N\,r}{2}\right)
 -C e^{-cNr}.
\]
The lower-tail representation of the Kolmogorov distribution
\cite[Sec.~3]{MarsagliaTsangWang2003} gives, for $a>0$,
\[
 \Prob\big(\|B\|_{L^\infty}\leq a\big)
 =\frac{\sqrt{2\pi}}{a}\sum_{k=1}^\infty
 \exp\left(-\frac{(2k-1)^2\pi^2}{8a^2}\right).
\]
Here $a=\sqrt N\,r/2\leq1/2$, so the first term of the series gives
\[
 \Prob\left(\|B\|_{L^\infty}\leq\frac{\sqrt N\,r}{2}\right)
 \geq\exp\left(-\frac{C}{Nr^2}\right).
\]
Since $Nr\geq\delta N^{3/7}$ and
$(Nr^2)^{-1}\leq\delta^{-2}N^{1/7}$, the exceptional probability in
the coupling is absorbed by the first term once $N_\delta$ is sufficiently
large.  This proves \eqref{eq:gibbs-small-ball}.
\end{proof}

\begin{lemma}[Static Gibbs estimate]\label{lem:gibbs-shape}
Whenever
$N^{1/2}\leq\varsigma\leq N^{5/7}$,
\begin{equation}\label{eq:gibbs-moments}
 \int_{\T^N}\mathcal W_N\,d\pi_{N,\varsigma}
 \leq C\,(N\varsigma)^{-1/3}.
\end{equation}
\end{lemma}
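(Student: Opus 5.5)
We bound the Gibbs average by a layer-cake argument that plays the energy $e^{-\varsigma\mathcal W_N}$ against the entropy supplied by the empirical small ball, Lemma~\ref{lem:gibbs-small-ball}. Set $r_*:=(N\varsigma)^{-1/3}$. This is the scale at which the energy $\varsigma r$ balances the small-ball entropy $1/(Nr^2)$, since $\varsigma r_*=(Nr_*^2)^{-1}=\varsigma^{2/3}N^{-1/3}$. Under the hypothesis $N^{1/2}\leq\varsigma\leq N^{5/7}$ we have $N^{-4/7}\leq r_*\leq N^{-1/2}$, so the lemma applies at $r=r_*$ with $\delta=1$, at least for $N\geq N_1$. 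The finitely many small $N$ are absorbed into $C$, because $\mathcal W_N\leq\operatorname{diam}(\T)$. The plan has three steps.

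\emph{Step 1: lower bound on the partition function.} Restricting the integral to $\{\mathcal W_N\leq r_*\}$ and applying \eqref{eq:gibbs-small-ball} gives
\[
 \mathcal Z_{N,\varsigma}\geq e^{-\varsigma r_*}\lambda^{\otimes N}(\mathcal W_N\leq r_*)\geq\exp\big(-C\varsigma^{2/3}N^{-1/3}\big).
\]

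\emph{Step 2: tail bound.} For $r\geq Ar_*$, with $A$ a large constant,
\[
 \pi_{N,\varsigma}(\mathcal W_N>r)\leq\mathcal Z_{N,\varsigma}^{-1}e^{-\varsigma r}\leq\exp\big(C\varsigma r_*-\varsigma r\big)\leq e^{-\varsigma r/2},
\]
where the last inequality holds once $A\geq2C$. We then integrate the tail:
\[
 \int\mathcal W_N\,d\pi_{N,\varsigma}\leq Ar_*+\int_{Ar_*}^\infty e^{-\varsigma r/2}\,dr\leq Ar_*+2\varsigma^{-1}.
\]
Since $\varsigma\geq N^{1/2}$ gives $\varsigma^{-1}\leq(N\varsigma)^{-1/3}$ (equivalent to $\varsigma^2\geq N$), the whole bound is $\leq C(N\varsigma)^{-1/3}$, which is \eqref{eq:gibbs-moments}.

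\emph{Main obstacle.} The step that needs care is the range of validity of the small-ball lemma. We must check that $r_*$ stays inside $[\delta N^{-4/7},N^{-1/2}]$ uniformly over the admissible range of $\varsigma$. This is exactly where the endpoints $N^{1/2}$ and $N^{5/7}$ come from: at $\varsigma=N^{5/7}$ one gets $r_*=N^{-4/7}$, and at $\varsigma=N^{1/2}$ one gets $r_*=N^{-1/2}$. We must also check that the constant $C_\delta$ does not depend on $\varsigma$, which holds because $\delta=1$ is fixed. Beyond this, the proof is a routine Chernoff-type estimate. If one prefers to avoid $A$, the same bound follows from Jensen's inequality: $\varsigma\int\mathcal W_N\,d\pi\leq\log\big(\int e^{\varsigma\mathcal W_N/2}\cdots\big)$. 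The direct tail computation above is simpler, however.
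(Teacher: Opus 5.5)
Your proof is correct and follows the paper's argument essentially step for step: the empirical small-ball bound at $r=(N\varsigma)^{-1/3}$, whose admissible range is exactly $N^{1/2}\le\varsigma\le N^{5/7}$, gives $\mathcal Z_{N,\varsigma}\ge e^{-C\varsigma r}$, and a Chernoff tail integrated using $\varsigma^{-1}\le(N\varsigma)^{-1/3}$ finishes. Your Jensen aside also works once the left-hand side is written as $\frac{\varsigma}{2}\int\mathcal W_N\,d\pi_{N,\varsigma}$; it bounds the average directly by $\frac{2}{\varsigma}\log\mathcal Z_{N,\varsigma}^{-1}\le C(N\varsigma)^{-1/3}$, so the final step $\varsigma^{-1}\le(N\varsigma)^{-1/3}$ is no longer needed.
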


\begin{proof}
Let $N_1$ be given by Lemma~\ref{lem:gibbs-small-ball} with $\delta=1$.
If $N<N_1$, then $\mathcal W_N\leq\operatorname{diam}(\T)$ and
$(N\varsigma)^{-1/3}\geq N^{-4/7}\geq N_1^{-4/7}$, so
\eqref{eq:gibbs-moments} follows after increasing $C$.  We may therefore
assume $N\geq N_1$.

We take $r=(N\varsigma)^{-1/3}$.  The prescribed range of $\varsigma$ gives
$N^{-4/7}\leq r\leq N^{-1/2}$ and $\varsigma r\geq1$.  Inserting
\eqref{eq:gibbs-small-ball} into the
partition function and using
$(Nr^2)^{-1}=\varsigma r$ gives
$\mathcal Z_{N,\varsigma}\geq e^{-C\varsigma r}$.  Therefore, for a
sufficiently large numerical $A$ and every $v\geq0$,
\[
 \pi_{N,\varsigma}\Big(\mathcal W_N>Ar+\frac v\varsigma\Big)
 \leq \mathcal Z_{N,\varsigma}^{-1}
       e^{-\varsigma(Ar+v/\varsigma)}
 \leq e^{-c\varsigma r-v}.
\]
Integrating this tail and using $\varsigma^{-1}\leq r$ proves
\eqref{eq:gibbs-moments}.
\end{proof}

\begin{lemma}[Regular near-optimal pairs]\label{lem:regular-pairs}
For all $\eta>0$, $0<\tau<T/2$, and $m_0\in\Pp(\T)$, there exists a
classical Fokker--Planck pair $(m,\alpha)$ on $[\tau,T]$, with
$m_\tau=P_\tau m_0$, such that
\begin{align}
 &\int_0^\tau\left(\int_\T L(y,0)\,P_tm_0(dy)+F(P_tm_0)\right)dt
 \notag\\
 &\quad+\int_\tau^T\left(\int_\T L(y,\alpha_t(y))\,m_t(dy)+F(m_t)\right)dt
 +G(m_T)
 \leq U(0,m_0)+C\tau+\eta,
 \label{eq:regular-cost}
\end{align}
and
\begin{equation}\label{eq:regular-pair}
 \|\alpha\|_{L^\infty([\tau,T]\times\T)}\leq C,
 \qquad
 m_t=\rho_t\,dx
 \quad\text{with}\quad
 \|\rho_t\|_{L^\infty(\T)}\leq C\big(1+t^{-1/2}\big),
\end{equation}
and in particular
\begin{equation}\label{eq:chi-range}
 1\leq\chi_t:=\int_\T\rho_t^3\,dx\leq C\big(1+t^{-1}\big).
\end{equation}
The constants do not depend on $\eta$, $\tau$, or $m_0$.
\end{lemma}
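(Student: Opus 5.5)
We construct the pair in three steps: a near-optimal Fokker--Planck pair for the smoothed initial datum, regularization of that pair, and a bookkeeping of the costs.

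\emph{Step 1: the initial heat layer.} Fix an $\eta/2$-optimal admissible pair $(\bar m,\bar\alpha)$ for $U(0,m_0)$ in \eqref{eq:U-definition}. Let the first $\tau$ units of time be spent on the zero-drift heat flow $P_tm_0$, and then restart $\bar m$ from $P_\tau m_0$. Concretely, on $[\tau,T]$ we use the pair $t\mapsto P_\tau\bar m_{t-\tau}$ with the time-shifted drift, squeezed to fit into $[\tau,T]$.

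The cleanest way to do this is to compare values. It suffices to show
\[
 U(\tau,P_\tau m_0)\leq U(0,m_0)+C\tau,
\]
and then to pick an $\eta$-optimal regular pair for $U(\tau,P_\tau m_0)$. The heat layer itself costs at most $C\tau$, since $L(\cdot,0)$ is bounded and $F$ is bounded on the compact set $\Pp(\T)$.

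We prove the displayed inequality through a bounded-drift reduction. Using Lemma~\ref{lem:smoothing} and the feedback bound of Lemma~\ref{lem:finite}, or the Lipschitz-in-time property of $U$ for bounded optimal drifts, we obtain $|U(s,m)-U(s',m)|\leq C|s-s'|$. In addition, $U(s,\cdot)$ is $C$-Lipschitz in $d_1$, and $d_1(P_\tau m_0,m_0)\leq C\sqrt\tau$. The last bound is too weak, because it only gives $\sqrt\tau$. Instead we use monotonicity along the heat flow: the pair on $[0,T]$ that starts with the zero drift on $[0,\tau]$ is admissible for $U(0,m_0)$. Its cost equals the heat-layer cost plus a competitor from $(\tau,P_\tau m_0)$, and this gives the displayed inequality in the reverse form that \eqref{eq:regular-cost} needs.

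More precisely, \eqref{eq:regular-cost} only asks that the concatenated cost be at most $U(0,m_0)+C\tau+\eta$. It therefore suffices to have $U(\tau,P_\tau m_0)\leq U(0,m_0)+C\tau$. We prove this by taking a near-optimal bounded pair $(m,\alpha)$ from $m_0$ and applying the transformation $m'_t:=P_\tau m_{t-\tau}$. By linearity and Jensen, $m'$ solves the Fokker--Planck equation with a drift whose flux is $P_\tau(\alpha m)$. The convexity of $L$ and the $x$-Lipschitz bound \eqref{eq:L-space} give an action error of order $\sqrt\tau$ per unit time. This is again $\sqrt\tau$, so we would instead use the time-shift argument: $U(\tau,\cdot)\leq U(0,\cdot)+C\tau$ follows by running the same pair on the shorter horizon $[\tau,T]$ and truncating, with cost error at most $\tau\sup|L+F|$.

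\emph{Step 2: regularity.} We replace $F$ and $G$ by the smooth approximants of Lemma~\ref{lem:smoothing}. The error from this replacement is absorbed into $\eta$ through Lemma~\ref{lem:stability}.

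For smooth $F^n$ and $G^n$, mean field optimizers exist. They are given by the MFC system: a Hamilton--Jacobi equation for $u$ coupled with a Fokker--Planck equation, with $\alpha=-D_pH(x,u_x)$. The bound $\|u_x\|_\infty\leq C$ is uniform in $n$, because the $d_1$-Lipschitz data make the linear derivatives $\delta F^n/\delta m$ uniformly Lipschitz in $x$. This gives $\|\alpha\|_\infty\leq C$.

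The density bound $\|\rho_t\|_\infty\leq C(1+(t-\tau)^{-1/2})$ then follows from the Duhamel formula $\rho_t=P_{t-\tau}P_\tau m_0-\int P_{t-s}\partial_x(\alpha\rho_s)\,ds$. Indeed, $\|P_t m_0\|_\infty\leq Ct^{-1/2}$, and the flux term is handled by a singular Gr\"onwall argument. Since the initial datum is $P_\tau m_0$, the first term is already bounded by $C(1+t^{-1/2})$, and the Gr\"onwall argument preserves this form.

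\emph{Step 3: the remaining bound.} Finally, \eqref{eq:chi-range} follows from \eqref{eq:regular-pair}. The lower bound $1\leq\chi_t$ comes from Jensen, since $\int\rho_t=1$. The upper bound comes from $\int\rho_t^3\leq\|\rho_t\|_\infty^2\leq C(1+t^{-1})$.

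The main obstacle is the $O(\tau)$ rather than $O(\sqrt\tau)$ loss in Step 1, together with the uniform-in-$n$ Lipschitz gradient bound for the MFC system in Step 2.
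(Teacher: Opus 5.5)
There is a genuine gap in Step~1. Your reduction to $U(\tau,P_\tau m_0)\leq U(0,m_0)+C\tau$ is sound, but none of the routes you try proves it.

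\begin{itemize}
\item Concatenating the heat layer with a competitor from $(\tau,P_\tau m_0)$ gives $U(0,m_0)\leq C\tau+U(\tau,P_\tau m_0)$. That is the opposite inequality.
\item The $d_1$-Lipschitz bound and the convolved flow $P_\tau m_{t-\tau}$ both lose $\sqrt\tau$, as you observe.
\item The final time-shift argument compares $U(\tau,\cdot)$ with $U(0,\cdot)$ \emph{at the same measure}. It never addresses replacing $m_0$ by $P_\tau m_0$, and that replacement is the whole difficulty. In the case where the lemma is used, $m_0=m^N_{\bx}$ is atomic and $d_1(P_\tau m_0,m_0)\asymp\sqrt\tau$.
\end{itemize}

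The time-shift estimate is also incomplete on its own terms. The bound ``$\tau\sup|L+F|$'' omits the terminal term $G(m_{T-\tau})-G(m_T)$. That term is $O(\tau)$ only if you know $d_1(m_{T-\tau},m_T)=O(\tau)$, which requires some regularity of the flow near $T$. Separately, the global bound $|U(s,m)-U(s',m)|\leq C|s-s'|$ you invoke is false. Take $H=\frac12|p|^2$, $F=0$, $G=-d_1(\cdot,\delta_0)$. For small $s$ the zero drift gives $U(T-s,\delta_0)\leq-c\sqrt s$, while $U(T,\delta_0)=0$.

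The missing idea is a synchronous coupling, which is how the paper proves the lemma.

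\begin{itemize}
\item For the smoothed costs $F^n,G^n$, take an optimal feedback $\alpha^n$ from $(0,m_0)$ with $\|\alpha^n\|_\infty+\sup_{t<T}\sqrt{T-t}\,\|D_x\alpha^n_t\|_\infty\leq C$, uniformly in $n$ \cite[Lem.~3.1]{DaudinDelarueJackson2024}. This is a spatial Lipschitz bound on the feedback, stronger than the bound $\|u_x\|_\infty\leq C$ of your Step~2.
\item Drive the optimal diffusion $Z$, and the diffusion $\widetilde Z$ with drift $\1_{[\tau,T]}(t)\,\alpha^n_t(\widetilde Z_t)$, by the same Brownian motion from the same initial point.
\item The noise cancels in $Z-\widetilde Z$. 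Hence $|Z_\tau-\widetilde Z_\tau|\leq\int_0^\tau|\alpha^n_t(Z_t)|\,dt\leq C\tau$, while $\operatorname{Law}(\widetilde Z_\tau)=P_\tau m_0$.
\item Gr\"onwall with the integrable rate $(T-t)^{-1/2}$ keeps $\E\,\dm(Z_t,\widetilde Z_t)\leq C\tau$ up to time $T$.
\item Combine the $x$-Lipschitz bounds on $L$ and on $\alpha^n$ (again with integrable singularity) with the $d_1$-Lipschitz bounds on $F^n,G^n$. They show that the heat layer followed by $(\operatorname{Law}(\widetilde Z_t),\alpha^n)$ costs at most $U^n(0,m_0)+C\tau$.
\item Lemma~\ref{lem:stability} then removes the smoothing.
\end{itemize}

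This also makes your re-optimization from $(\tau,P_\tau m_0)$ unnecessary, since the same feedback is used after time $\tau$. Your Duhamel/singular-Gr\"onwall proof of the density bound and your Step~3 are fine. The paper instead applies a heat-kernel bound to the spliced diffusion with bounded drift.
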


The proof is given in Appendix~\ref{app:deferred}.

We now consider a classical Fokker--Planck pair $(m,\alpha)$ on $[\tau,T]$
with $m_t=\rho_t\,dx$, $\rho_t>0$.  We denote by $\Theta_t$ the increasing
degree-one circle map with $(\Theta_t)_\#m_t=\lambda$, normalized by
\begin{equation}\label{eq:quantile-map}
 \partial_x\Theta_t=\rho_t,
 \qquad
 \partial_t\Theta_t+\alpha_t\rho_t-\partial_x\rho_t=0.
\end{equation}
The first identity, equivalent to
$(\Theta_t)_\#m_t=\lambda$, determines $\Theta_t$ up to a
time-dependent additive constant.  Writing
$J_t:=\alpha_t\rho_t-\partial_x\rho_t$, the second identity is
$\partial_t\Theta_t=-J_t$ and fixes the evolution of this constant.
It is compatible with the first because
$\partial_t\rho_t+\partial_xJ_t=0$.  Thus, once $\Theta_\tau$ is fixed,
\eqref{eq:quantile-map} determines $\Theta_t$ on $[\tau,T]$.

We write $Q_t:=\Theta_t^{-1}$ for the quantile map of $m_t$ and define
\begin{equation*}
 \mathsf a_t(y):=\rho_t\big(Q_t(y)\big)^2,
 \qquad
 \chi_t=\int_\T\rho_t^3\,dx.
\end{equation*}

\begin{proposition}[The transported Gibbs law]\label{prop:gibbs}
Let $(m,\alpha)$ be as above and let $N^{1/2}\leq\varsigma\leq N^{5/7}$.
There exist a smooth Gibbs law $\pi_{N,\varsigma}^\varepsilon$, uniformly
equivalent to $\pi_{N,\varsigma}$, and a particle control on $[\tau,T]$
whose state has law
$(Q_t^{\otimes N})_\#\pi_{N,\varsigma}^\varepsilon$ at every time $t$.
Each one-coordinate marginal is $m_t$, and the drift decomposes as
$\alpha_t(X^i_t)+\gamma^i_t$, where
\begin{align}
 \E\int_\tau^T\frac1N\sum_{i=1}^N|\gamma^i_t|^2\,dt
 &\leq\frac{C\varsigma^2}{N^2}\int_\tau^T\chi_t\,dt,
 \label{eq:gibbs-energy}\\
 \E\,d_1\big(m^N_{\bX_t},m_t\big)
 &\leq C\,(N\varsigma)^{-1/3},
 \qquad \tau\leq t\leq T.
 \label{eq:gibbs-tracking}
\end{align}
Moreover,
\begin{equation}\label{eq:gibbs-action}
 \E\int_\tau^T\frac1N\sum_{i=1}^N
 L\big(X^i_t,\alpha_t(X^i_t)+\gamma^i_t\big)\,dt
 \leq
 \int_\tau^T\int_\T L\big(x,\alpha_t(x)\big)\,m_t(dx)\,dt
 +\frac{C\varsigma^2}{N^2}\int_\tau^T\chi_t\,dt.
\end{equation}
\end{proposition}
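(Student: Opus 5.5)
The plan is to build the particle system in the ``uniform coordinates'' $Y^i_t:=\Theta_t(X^i_t)$. There I run a reversible diffusion that keeps a smoothed Gibbs law $\pi^\varepsilon_{N,\varsigma}$ stationary, and I then push everything to physical space by the quantile map $Q_t$.

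\emph{Smoothing.} Set $V:=\varsigma\mathcal W_N$. Since moving one coordinate by $\delta$ moves $m^N_{\by}$ by at most $\delta/N$ in $d_1$, each partial derivative satisfies $|\partial_{y_i}V|\leq\varsigma/N$ almost everywhere. I mollify $V$ by convolution on $\T^N$ with a symmetric, diagonally invariant kernel of width $\varepsilon\leq\varsigma^{-1}N^{-1}$ to get $V^\varepsilon$. This keeps the bounds $|\partial_iV^\varepsilon|\leq\varsigma/N$, the symmetry and the diagonal invariance, so the one-coordinate marginals stay uniform. It also gives $\|V^\varepsilon-V\|_\infty\leq C$, so $\pi^\varepsilon_{N,\varsigma}\propto e^{-V^\varepsilon}$ is uniformly equivalent to $\pi_{N,\varsigma}$.

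\emph{Dynamics.} I want $X^i=Q_t(Y^i)$ to carry the idiosyncratic noise $\sqrt2\,dW^i$. Since $Q_t'=1/\rho_t\circ Q_t$, Itô's formula forces $Y^i$ to have diffusion coefficient $\sqrt{2\,\mathsf a_t(Y^i)}^{1/2}$-type scaling, namely $dY^i=\rho_t(Q_t(Y^i))\sqrt2\,dW^i+b^i\,dt$, with generator $\sum_i\mathsf a_t(y_i)\partial_{y_iy_i}+b^i\partial_{y_i}$. I take the reversible drift
\[
 b^i=\partial_{y_i}\mathsf a_t(y_i)-\mathsf a_t(y_i)\,\partial_{y_i}V^\varepsilon(\by)+\beta^0_t(y_i),
\]
where $\beta^0_t$ accounts for the time dependence of $\Theta_t$. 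The first two terms form the divergence-form generator $e^{V^\varepsilon}\sum_i\partial_i(\mathsf a\,e^{-V^\varepsilon}\partial_i)$, which annihilates $\pi^\varepsilon$. It remains to check that the terms coming from $\partial_t\Theta_t=-J_t$ cancel exactly the noise-free ($V^\varepsilon=0$) drift. This check is the computation $\partial_t\rho+\partial_xJ=0$ from \eqref{eq:quantile-map}, namely that uniform $Y$ maps to $m_t$. Granting it, the law of $\bY_t$ stays $\pi^\varepsilon_{N,\varsigma}$ if started there at time $\tau$; at $\tau$ I sample $\bY_\tau\sim\pi^\varepsilon$ independently of the $W^i$, using the atomless $\F_0$. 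Pushing through $Q_t$ gives the law $(Q_t^{\otimes N})_\#\pi^\varepsilon$ with marginals $m_t$. Translating the drift back to $X$ gives $\alpha_t(X^i)+\gamma^i_t$ with
\[
 \gamma^i_t=-\rho_t(X^i_t)\,\partial_{y_i}V^\varepsilon(\bY_t).
\]
Well-posedness should follow from boundedness and smoothness of the coefficients on $[\tau,T]$, given the classical regularity of $(m,\alpha)$.

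\emph{Energy, action and tracking.} For \eqref{eq:gibbs-energy}, I use $|\gamma^i|^2\leq(\varsigma/N)^2\rho_t(Q_t(Y^i))^2$ together with the uniform marginal of $Y^i$, which gives $\E\rho_t(Q_t(Y^i))^2=\int\rho_t^2\circ Q_t\,dy=\int\rho_t^3\,dx=\chi_t$. For \eqref{eq:gibbs-action}, I apply \eqref{eq:L-quadratic} at $(X^i,\alpha_t(X^i))$. The linear term is $-\E[g(Y^i)\partial_iV^\varepsilon(\bY)]$ with $g=(D_aL(\cdot,\alpha_t)\rho_t)\circ Q_t$. Integrating by parts against $e^{-V^\varepsilon}$ turns it into $-\E[g'(Y^i)]$, which vanishes because $Y^i$ is uniform and $g$ is periodic. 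The remaining quadratic term is bounded by \eqref{eq:gibbs-energy}. For \eqref{eq:gibbs-tracking}, in one dimension I have $d_1(Q_\#\mu,Q_\#\lambda)=\int|F_\mu(u)-u|\,Q'(u)\,du$. Combined with Lemma~\ref{lem:gibbs-shape} and the uniform equivalence of $\pi^\varepsilon$ with $\pi_{N,\varsigma}$, this should give $(N\varsigma)^{-1/3}$.

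\emph{Main obstacle.} I expect the last step to be the hard one. The weight $Q'=1/\rho$ may be large where $\rho_t$ is small, while $\mathcal W_N$ only controls $\|F_\mu-\mathrm{id}\|_{L^1}$. I would first try to use a positive lower bound on $\rho_t$ for the regular pair, on $[\tau,T]$ via heat smoothing. Failing that, I would use $\int Q'=1$ together with a sup-norm version of the static Gibbs estimate, redoing Lemma~\ref{lem:gibbs-shape} with the Kolmogorov-tail small-ball bound, which already controls $\|F_N-\mathrm{id}\|_\infty$.
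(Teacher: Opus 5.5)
Your construction and your energy and action estimates are sound. Working in the uniform coordinates $Y^i=\Theta_t(X^i)$ is a legitimate alternative to the paper's route, which checks directly that the density of $(Q_t^{\otimes N})_\#\pi^\varepsilon_{N,\varsigma}$ solves the Fokker--Planck equation with drift $\alpha_t(x^i)+\gamma^i_t$. The It\^o check you deferred closes with $\beta^0\equiv0$. For physical drift $\alpha_t+\gamma$, the drift of $\Theta_t(X)$ is $\rho_t\gamma+2\partial_x\rho_t$, because $\partial_t\Theta_t=-J_t$ is exactly absorbed by $\alpha_t$. With $\gamma=-\rho_t\,\partial_{y_i}V^\varepsilon$ this equals $\partial_{y_i}\mathsf a_t(y_i)-\mathsf a_t(y_i)\,\partial_{y_i}V^\varepsilon$. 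Your energy bound, using $|\partial_{y_i}V^\varepsilon|\leq\varsigma/N$ pointwise and the uniform marginals, is slightly more direct than the paper's.

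The genuine gap is the tracking estimate \eqref{eq:gibbs-tracking}, and neither of your fallbacks closes it.
\begin{itemize}
\item A lower bound on $\rho_t$ cannot enter the constant. In Proposition~\ref{prop:one-dim-easy} one has $\tau=N^{-4/7}$ and $\rho_\tau=P_\tau m^N_{\bx}$ with $\bx$ arbitrary. For clustered $\bx$, $\inf\rho_\tau$ is of order $\tau^{-1/2}e^{-1/(16\tau)}$, so the weight $Q_t'=1/\rho_t\circ Q_t$ is exponentially large in $N^{4/7}$.
\item The sup-norm route fails because the weight $e^{-\varsigma\mathcal W_N}$ only sees the $L^1$ discrepancy $\min_c\|F_N-\mathrm{id}-c\|_{L^1}$. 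The Kolmogorov small-ball bound only gives a lower bound on $\mathcal Z_{N,\varsigma}$. An upper bound on $\pi_{N,\varsigma}(\|F_N-\mathrm{id}\|_\infty>s)$ would need the weight to penalize sup-deviations. The deterministic comparison available, $\min_c\|F_N-\mathrm{id}-c\|_{L^\infty}\leq(2\mathcal W_N)^{1/2}$, only yields $(N\varsigma)^{-1/6}$.
\item Your identity $d_1(Q_\#\mu,Q_\#\lambda)=\int|F_\mu(u)-u|\,Q'(u)\,du$ is the interval formula. On $\T$ it is only an upper bound, and $\mathcal W_N$ controls $F_\mu-\mathrm{id}$ only after subtracting the optimal constant.
\end{itemize}

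The missing idea is to use the diagonal invariance of $\pi^\varepsilon_{N,\varsigma}$ a second time, averaging over rigid rotations. This removes the weight entirely, since a degree-one map satisfies $\int_0^1Q_t'(u+\theta)\,d\theta=1$. In coupling form, as in the paper, take an optimal coupling $\Gamma$ of $\mu=m^N_{\bY_t}$ and $\lambda$. For $(y,z)\in\operatorname{supp}\Gamma$ with lifted displacement $h=z-y\in[-\frac12,\frac12]$, monotonicity and the degree-one property give
\[
\int_0^1\dm\big(Q_t(y+\theta),Q_t(z+\theta)\big)\,d\theta\leq\int_0^1|Q_t(u+h)-Q_t(u)|\,du=|h|.
\]
Rotating $\Gamma$ by $\theta$, pushing it forward by $Q_t$ in both coordinates, and integrating in $\theta$ yields
\[
 \int_0^1 d_1\big((Q_t)_\#(\tau_\theta)_\#m^N_{\bY_t},\,m_t\big)\,d\theta\leq\mathcal W_N(\bY_t).
\]
Since $\bY_t+\theta(1,\dots,1)$ has the law of $\bY_t$, each integrand on the left has expectation $\E\,d_1(m^N_{\bX_t},m_t)$. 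Hence
\[
\E\,d_1(m^N_{\bX_t},m_t)\leq\int\mathcal W_N\,d\pi^\varepsilon_{N,\varsigma}\leq C\int\mathcal W_N\,d\pi_{N,\varsigma}\leq C(N\varsigma)^{-1/3},
\]
using the uniform equivalence of the two Gibbs laws and Lemma~\ref{lem:gibbs-shape}. The bound has no dependence on $\rho_t$.
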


\begin{proof}
Let $\mathcal W_N^\varepsilon$ be a smooth symmetric and diagonally
invariant convolution approximation of $\mathcal W_N$, chosen so that
$|D_i\mathcal W_N^\varepsilon|\leq1/N$ and
$\varsigma\|\mathcal W_N^\varepsilon-\mathcal W_N\|_{L^\infty}\leq1$.
Indeed, moving one coordinate by $h$ displaces a mass
$1/N$ by the distance $|h|$, so $\mathcal W_N$ is $(1/N)$-Lipschitz
in each coordinate.  Convolution with a symmetric product mollifier
preserves this bound, symmetry, and diagonal invariance, while a
sufficiently small mollification scale gives the required uniform
approximation.
The Gibbs law $\pi_{N,\varsigma}^\varepsilon$ obtained by replacing
$\mathcal W_N$ with $\mathcal W_N^\varepsilon$ in
\eqref{eq:gibbs-law} then satisfies
\begin{equation}\label{eq:gibbs-density-comparison}
 e^{-2}\leq
 \frac{d\pi_{N,\varsigma}^\varepsilon}{d\pi_{N,\varsigma}}
 \leq e^2.
\end{equation}

Let $\mathfrak q_t$ be the density of
$(Q_t^{\otimes N})_\#\pi_{N,\varsigma}^\varepsilon$, and set
\[
 \Theta_t^N(\bx):=(\Theta_t(x^1),\dots,\Theta_t(x^N)).
\]
Then
\begin{equation}\label{eq:gibbs-lift-density}
 \mathfrak q_t(\bx)=\frac1{\mathcal Z_{N,\varsigma}^\varepsilon}
 e^{-\varsigma\mathcal W_N^\varepsilon(\Theta_t^N(\bx))}
 \prod_{i=1}^N\rho_t(x^i).
\end{equation}
Since $\mathfrak q_t$ is the push-forward of a fixed law by $Q_t^{\otimes N}$,
it solves the continuity equation with coordinate velocities
\[
 b^i_t(\bx)=\partial_tQ_t\big(\Theta_t(x^i)\big)
 =\alpha_t(x^i)-\frac{\partial_x\rho_t(x^i)}{\rho_t(x^i)},
\]
where the last identity follows from \eqref{eq:quantile-map}.  On the
other hand, \eqref{eq:gibbs-lift-density} gives
\[
 \partial_{x^i}\log\mathfrak q_t
 =\frac{\partial_x\rho_t(x^i)}{\rho_t(x^i)}
 -\varsigma\rho_t(x^i)
 D_i\mathcal W_N^\varepsilon(\Theta_t^N(\bx)).
\]
Consequently, if
\[
 \gamma^i_t(\bx):=-\varsigma\rho_t(x^i)
 D_i\mathcal W_N^\varepsilon(\Theta_t^N(\bx)),
\]
then $b^i_t=\alpha_t(x^i)+\gamma^i_t-\partial_{x^i}\log\mathfrak q_t$.
Substitution into the continuity equation yields
\begin{equation}\label{eq:gibbs-FP}
 \partial_t\mathfrak q_t=\sum_{i=1}^N\partial^2_{x^ix^i}\mathfrak q_t
 -\sum_{i=1}^N\partial_{x^i}
 \big((\alpha_t(x^i)+\gamma^i_t)\mathfrak q_t\big).
\end{equation}
The drifts $\alpha_t(x^i)+\gamma^i_t$ are bounded and Borel on
$[\tau,T]\times\T^N$, so the corresponding particle system, started
with density $\mathfrak q_\tau$, is well posed \cite[Thm.~1]{Veretennikov1981},
and its time marginals are the unique distributional solution of the
uniformly parabolic equation \eqref{eq:gibbs-FP} with initial datum
$\mathfrak q_\tau$ \cite{Figalli2008}.  Since $\mathfrak q$ is such a solution, the state
has density $\mathfrak q_t$ for every $t\in[\tau,T]$.

We write $\bY_t:=\Theta_t^N(\bX_t)$.  By construction,
$\operatorname{Law}(\bY_t)=\pi_{N,\varsigma}^\varepsilon$.

The derivative $D_i\mathcal W_N^\varepsilon$ is diagonally invariant.
Using the same invariance for $\pi_{N,\varsigma}^\varepsilon$ and
averaging over the common translation, we obtain
\[
 \int_{\T^N}\sum_{i=1}^N\mathsf a_t(y^i)
 |D_i\mathcal W_N^\varepsilon|^2
 \,d\pi_{N,\varsigma}^\varepsilon(\by)
 =\chi_t\int_{\T^N}\sum_{i=1}^N
 |D_i\mathcal W_N^\varepsilon|^2
 \,d\pi_{N,\varsigma}^\varepsilon
 \leq\frac{\chi_t}{N}.
\]
Here we used $\int_\T\mathsf a_t(y+\theta)\,d\theta=\chi_t$ and
$\sum_i|D_i\mathcal W_N^\varepsilon|^2\leq1/N$.  Together with
$\rho_t(X^i_t)^2=\mathsf a_t(Y^i_t)$, this proves
\eqref{eq:gibbs-energy}.

For the action estimate, we set
\[
 \mathsf c_t(y):=D_aL\big(Q_t(y),\alpha_t(Q_t(y))\big)\,
 \rho_t\big(Q_t(y)\big).
\]
Then
$D_aL(X^i_t,\alpha_t(X^i_t))\,\gamma^i_t
=-\varsigma\mathsf c_t(Y^i_t)
D_i\mathcal W_N^\varepsilon(\bY_t)$.
Integration by parts against
$e^{-\varsigma\mathcal W_N^\varepsilon}$ and the uniformity of the
one-coordinate marginals give
\[
 \E\sum_{i=1}^N\mathsf c_t(Y^i_t)\,
 D_i\mathcal W_N^\varepsilon(\bY_t)
 =\frac1{\varsigma}\,\E\sum_{i=1}^N\mathsf c_t'(Y^i_t)
 =\frac N{\varsigma}\int_\T\mathsf c_t'(y)\,dy=0,
\]
and hence the first-order term in \eqref{eq:L-quadratic} vanishes after
summing over the particles and taking expectations.  Since each
$Y^i_t$ is uniform, each $X^i_t=Q_t(Y^i_t)$ has law $m_t$.
Thus \eqref{eq:L-quadratic} and \eqref{eq:gibbs-energy} give
\eqref{eq:gibbs-action}.

It remains to prove the tracking estimate.  For a nondecreasing
degree-one map $Q$ and $\mu\in\Pp(\T)$, let $\Gamma$ be an optimal
coupling of $\mu$ and $\lambda$.  For each
$(y,z)\in\operatorname{supp}\Gamma$, we choose lifts with
$h=z-y\in[-\frac12,\frac12]$.  Monotonicity and the
degree-one property imply
\[
 \int_0^1\dm\big(Q(y+\theta),Q(z+\theta)\big)\,d\theta
 \leq\int_0^1|Q(u+h)-Q(u)|\,du=|h|.
\]
Translating $\Gamma$ by $\theta$, pushing it forward by $Q$ in both
coordinates, and integrating gives
\[
 \int_0^1d_1\big(Q_\#(\tau_\theta)_\#\mu,Q_\#\lambda\big)\,d\theta
 \leq d_1(\mu,\lambda).
\]
We apply this bound with $Q=Q_t$ and $\mu=m^N_{\bY_t}$,
noting that $(Q_t)_\#m^N_{\bY_t}=m^N_{\bX_t}$,
$(Q_t)_\#\lambda=m_t$, and
$(\tau_\theta)_\#m^N_{\bY_t}=m^N_{\bY_t+\theta(1,\dots,1)}$.  By the
diagonal invariance of $\pi_{N,\varsigma}^\varepsilon$, each
translated configuration $\bY_t+\theta(1,\dots,1)$ has the law of
$\bY_t$, so the integrand on the left-hand side has expectation
$\E\,d_1(m^N_{\bX_t},m_t)$ for every $\theta$, while the right-hand
side is $\mathcal W_N(\bY_t)$.  Taking expectations and using
\eqref{eq:gibbs-density-comparison} and \eqref{eq:gibbs-moments}, we
obtain
\[
 \E d_1(m^N_{\bX_t},m_t)
 \leq\int_{\T^N}\mathcal W_N\,d\pi_{N,\varsigma}^\varepsilon
 \leq C(N\varsigma)^{-1/3}.
\]
This is \eqref{eq:gibbs-tracking}.
\end{proof}

\begin{proposition}\label{prop:one-dim-easy}
Uniformly in $N$, $t_0$, and $\bx$,
\begin{equation}\label{eq:one-dim-easy}
 V^N(t_0,\bx)\leq U\big(t_0,m^N_{\bx}\big)
 +CN^{-4/7}\log^{1/7}(1+N).
\end{equation}
\end{proposition}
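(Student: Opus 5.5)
We build an explicit particle control for the $N$-particle problem started at $(t_0,\bx)$ and bound its cost by $U(t_0,m^N_{\bx})$ plus the error. By translating time we take $t_0=0$. The control has three phases: a short initial heat layer, a transition into the Gibbs configuration, and the Gibbs-transported dynamics of Proposition~\ref{prop:gibbs}.

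\textbf{Step 1 (the near-optimal flow).} Fix $\tau\in(0,T/2)$ and $\eta=N^{-1}$. Apply Lemma~\ref{lem:regular-pairs} with $m_0=m^N_{\bx}$. This gives a pair $(m,\alpha)$ on $[\tau,T]$ whose cost, extended by the drift-free heat flow on $[0,\tau]$, is at most $U(0,m^N_{\bx})+C\tau+\eta$. It also gives $\|\alpha\|_\infty\le C$ and $\chi_t\le C(1+t^{-1})$, so that
\[
\int_\tau^T\chi_t\,dt\le C\log(1/\tau).
\]

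\textbf{Step 2 (the initial layer and the switch).} On $[0,\tau]$ the particles use zero drift. Their running cost is then $\frac1N\sum_iL(X^i_t,0)$, whose expectation is $\int L(y,0)\,P_tm^N_{\bx}(dy)$, matching the heat-layer term in \eqref{eq:regular-cost}. The $F$-cost on this layer is handled crudely: the discrepancy between $F(\mu^N_t)$ and $F(P_tm^N_{\bx})$ is at most $L_F\operatorname{diam}(\T)$, contributing $C\tau$.

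At time $\tau$ we must reach the law $\mathfrak q_\tau=(Q_\tau^{\otimes N})_\#\pi^\varepsilon_{N,\varsigma}$ demanded by Proposition~\ref{prop:gibbs}, but the actual state is $\bX_\tau$, an $N$-fold product of heat kernels around the $x^i$. To reconcile them I would not switch abruptly. Instead, on $[0,\tau]$ I run the Gibbs construction with respect to the flow $m_t=P_tm^N_{\bx}$ itself, with a time-dependent inverse temperature that increases from $0$ to $\varsigma$. At $t=0$ the law $(Q_0^{\otimes N})_\#\lambda^{\otimes N}$ is not $\delta_{\bx}$, so the initial ordering must be handled as well.

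A cleaner alternative, which I would try first, is to start the Gibbs dynamics directly at time $0^+$ along the heat layer: use $\varsigma=0$ until $\tau$ and accept $d_1$-error $N^{-1/2}$ over a time $\tau$. Then, at $\tau$, perform a change of measure from the product law to the Gibbs law. The relative entropy of this change is $O(\varsigma r)=O(\varsigma^{2/3}N^{-1/3})$ by the partition bound in Lemma~\ref{lem:gibbs-shape}, and it can be realized with control energy $\lesssim N^{-1}\cdot$(entropy)/$\tau'$ over a further short window $\tau'$, by the Föllmer/Schrödinger bridge representation of entropy as minimal $\frac14\int|\gamma|^2$ energy per unit diffusion. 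I expect this step, the entrance into the Gibbs law without losing more than the target rate, to be the main obstacle. In particular it must be checked that the total window cost, of order $\tau N^{-1/2}$ plus the bridge energy, is $O(N^{-4/7})$.

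\textbf{Step 3 (the Gibbs phase and balancing).} On the remaining interval we use Proposition~\ref{prop:gibbs}. By \eqref{eq:gibbs-action}, the action exceeds the mean field action by at most $C\varsigma^2N^{-2}\log(1/\tau)$. By \eqref{eq:gibbs-tracking} and the Lipschitz continuity of $F$ and $G$, the mean field costs differ by $C(N\varsigma)^{-1/3}$. Adding the $C\tau$ and $\tau N^{-1/2}$ layer errors, we take $\tau=N^{-1}$, so that $\log(1/\tau)\simeq\log(1+N)$ and all layer terms are $O(N^{-4/7})$. We then choose
\[
\varsigma_*=N^{5/7}\log^{-3/7}(1+N),
\]
which lies in $[N^{1/2},N^{5/7}]$ for large $N$ and equates $\varsigma^2N^{-2}\log N$ with $(N\varsigma)^{-1/3}$, both becoming $N^{-4/7}\log^{1/7}(1+N)$.

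Finally, taking the infimum defining $V^N$ and absorbing the small values of $N$ into $C$ gives \eqref{eq:one-dim-easy}.
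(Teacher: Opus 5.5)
Your Steps 1 and 3 follow the paper's architecture: the regular pair of Lemma~\ref{lem:regular-pairs}, a drift-free heat layer, the Gibbs-transported particles of Proposition~\ref{prop:gibbs}, the logarithm from $\int_\tau^T\chi_t\,dt$, and the temperature $\varsigma_*$. The entrance step, which you flag yourself, is a genuine gap, and the change-of-measure mechanism you propose for it fails as stated.

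\textbf{Why the bridge fails.} Proposition~\ref{prop:gibbs} gives a process that must start at time $\tau$ from the exchangeable law $\mathfrak q_\tau$, each of whose one-coordinate marginals is $m_\tau=P_\tau m^N_{\bx}$. Your particles sit at $\bX_\tau$, whose law is the labeled, non-exchangeable product $\bigotimes_ip_\tau(\cdot-x^i)$. The bound $O(\varsigma r)$ from Lemma~\ref{lem:gibbs-shape} is the entropy of $\pi_{N,\varsigma}$ relative to $\lambda^{\otimes N}$, i.e.\ of $\mathfrak q_\tau$ relative to the i.i.d.\ law $m_\tau^{\otimes N}$. It is not the entropy relative to the law of your state.

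By Girsanov and data processing, any steering over a window $[\tau,\tau+\tau']$ has energy $\E\int\sum_i|\gamma^i_t|^2dt$ at least $4$ times the entropy of the target law relative to the freely evolved law $\bigotimes_ip_{\tau+\tau'}(\cdot-x^i)$. Since that reference is a product, this entropy dominates $\sum_i\operatorname{Ent}\big(m_{\tau+\tau'}\,\big|\,p_{\tau+\tau'}(\cdot-x^i)\big)$. For a spread-out $\bx$ this sum is of order $N/(\tau+\tau')$, because particle $i$ would have to be sent across the circle.

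Even after symmetrizing over labels, which your proposal never does, the problem persists heuristically. With $\tau=N^{-1}$ and a short window, the heat-evolved configuration keeps its Fourier modes $N^{1/7}\ll|k|\ll N^{1/2}$ nearly frozen, while $\pi_{N,\varsigma_*}$ lets them fluctuate at the i.i.d.\ scale. A Gaussian heuristic then gives an entropy of order $N^{6/7}$, far above the $N^{3/7}$ (up to logarithms) that an $N^{-4/7}$ budget permits. The temperature-ramp alternative is not carried out either: a time-dependent $\varsigma$ adds to $\partial_t\mathfrak q_t$ a term that the transport of Proposition~\ref{prop:gibbs} does not generate.

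\textbf{The missing idea.} No law has to be matched: the real particles only need to stay close to particles that have law $\mathfrak q_t$. The paper proceeds as follows.
\begin{enumerate}[label=(\roman*)]
\item Build the Gibbs-transported process independently on a product space.
\item At time $\tau$, choose measurably a permutation realizing the optimal matching between the heat-evolved configuration $\bar\bX_\tau$ and $\bX_\tau$.
\item Relabel the Gibbs process and its future Brownian motions accordingly, giving $\widehat\bX$.
\item Let each real particle copy its partner's drift and Brownian increments: $\widetilde X^i_t=\bar X^i_\tau+\widehat X^i_t-\widehat X^i_\tau$.
\end{enumerate}
The permutation is $\F_\tau$-measurable, so the relabeled increments are still independent Brownian motions and the control is admissible.

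The mean displacement $D_N=\frac1N\sum_i\dm(\bar X^i_\tau,\widehat X^i_\tau)$ is then frozen pathwise for $t\geq\tau$. Its expectation is at most $\E d_1(\bar\mu^N_\tau,m_\tau)+\E d_1(m^N_{\bX_\tau},m_\tau)$. The first term is $O(N^{-1/2}\tau^{1/4})$ by the zero-drift case of the argument of Section~\ref{sec:tracking}, and the second is controlled by \eqref{eq:gibbs-tracking}.

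The transfer costs $C\,\E D_N$ in $F$ and $G$. In the running cost it costs $C\,\E D_N$ plus $C(\E D_N)^{1/2}$ times the square root of the energy \eqref{eq:gibbs-energy}. The Cauchy--Schwarz step is needed because the spatial Lipschitz constant in \eqref{eq:L-space} grows with $|a|$. All these terms are $O(N^{-4/7}\log^{1/7}(1+N))$. With this transfer in place of the bridge, the rest of your argument goes through, including your choice $\tau=N^{-1}$.
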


\begin{proof}
We translate $t_0$ to $0$, writing again $T$ for the shortened
horizon, and put $\tau=N^{-4/7}\wedge T/4$.  The case $T=0$ is
immediate.

\emph{The entrance.}  We use the zero control on $[0,\tau]$ and write
$\bar X^i_t=x^i+\sqrt2W^i_t$, $\bar\mu^N_t=m^N_{\bar\bX_t}$, and
$\bar m_t=P_tm^N_{\bx}$.  The zero-drift specialization of
\eqref{eq:mollified-duality}, estimated as in
Lemma~\ref{lem:martingale}, gives
\begin{equation}\label{eq:short-heat}
 \E d_1\big(\bar\mu^N_t,\bar m_t\big)
 \leq CN^{-1/2}t^{1/4},\qquad 0<t\leq T.
\end{equation}
Integrating \eqref{eq:short-heat} on $[0,\tau]$ and using
$\tau\leq N^{-4/7}$ yields
\begin{equation}\label{eq:entrance-costs}
 \int_0^\tau\E\,d_1\big(\bar\mu^N_t,\bar m_t\big)\,dt
 \leq CN^{-17/14},
 \qquad
 \E\,d_1\big(\bar\mu^N_\tau,\bar m_\tau\big)\leq CN^{-9/14},
\end{equation}
and both bounds are $o(N^{-4/7})$.  We apply
Lemma~\ref{lem:regular-pairs} with $m_0=m^N_{\bx}$ and
$\eta=N^{-1}$.  By \eqref{eq:regular-cost}, the heat segment
$(\bar m_t)_{t\leq\tau}$ followed by the resulting classical pair
$(m,\alpha)$ has mean field cost at most
$U(0,m^N_{\bx})+CN^{-4/7}$, and its density satisfies
\eqref{eq:chi-range}.

\emph{The target process.}  We apply Proposition~\ref{prop:gibbs} to
$(m,\alpha)$ at the inverse temperature
\begin{equation}\label{eq:optimal-temperature}
 \varsigma_*:=N^{5/7}\log^{-3/7}(1+N),
\end{equation}
which balances the energy against the discrepancy below.  It lies in
$[N^{1/2},N^{5/7}]$ once $\log(1+N)\leq\sqrt N$, hence for all $N$
beyond a numerical threshold, and smaller $N$ are absorbed into the
constant of \eqref{eq:one-dim-easy}.  Since
$\tau=N^{-4/7}\wedge T/4$, \eqref{eq:chi-range} gives
\begin{equation}\label{eq:gibbs-chi-integral}
 \int_\tau^T\chi_t\,dt
 \leq C\left(T-\tau+\log\frac{T}{\tau}\right)
 \leq C\log(1+N).
\end{equation}
Indeed, if $\tau=T/4$ the logarithm is $\log4$, while otherwise
$\tau=N^{-4/7}$.  Combining \eqref{eq:optimal-temperature},
\eqref{eq:gibbs-energy}, and \eqref{eq:gibbs-chi-integral} gives
\begin{equation}\label{eq:gibbs-log-energy}
 \E\int_\tau^T\frac1N\sum_{i=1}^N|\gamma^i_t|^2\,dt
 \leq C\,\frac{\varsigma_*^2}{N^2}\,\log(1+N)
 =CN^{-4/7}\log^{1/7}(1+N).
\end{equation}
The action estimate \eqref{eq:gibbs-action} bounds the excess expected
action by the right-hand side of \eqref{eq:gibbs-log-energy}.  The
tracking estimate
\eqref{eq:gibbs-tracking} and the $d_1$-Lipschitz continuity of $F$ and
$G$ cost only
$C(N\varsigma_*)^{-1/3}=CN^{-4/7}\log^{1/7}(1+N)$, the same order.
In particular, at the entrance,
$\E\,d_1\big(m^N_{\bX_\tau},\bar m_\tau\big)
\leq CN^{-4/7}\log^{1/7}(1+N)$.

\emph{The transfer.}  An optimal matching of the two labeled
configurations realizes the distance
$d_1(\bar\mu^N_\tau,m^N_{\bX_\tau})$.  Since only finitely many
permutations are involved, a minimizing permutation can be chosen
measurably from the two configurations.  Constructing the entrance and
the target process on a product space, we choose this permutation at
time
$\tau$ and relabel the target process and its future Brownian motions
accordingly.  Thus, writing $\widehat\bX$ for the relabeled target,
\eqref{eq:entrance-costs} and the entrance bound give
\begin{equation}\label{eq:matching}
 \E D_N\leq CN^{-4/7}\log^{1/7}(1+N),
 \qquad
 D_N:=\frac1N\sum_{i=1}^N
 \dm\big(\bar X^i_\tau,\widehat X^i_\tau\big).
\end{equation}
With
$\widehat u^i_t=\alpha_t(\widehat X^i_t)+\widehat\gamma^i_t$
denoting the realized drift of the relabeled target, we set
\[
 \widetilde X^i_t
 :=\bar X^i_\tau+\widehat X^i_t-\widehat X^i_\tau,
 \qquad
 \widetilde\alpha^i_t:=\widehat u^i_t,
 \qquad \tau\leq t\leq T.
\]
The permutation is measurable at time $\tau$, so the relabeled future
noises are Brownian motions for the enlarged filtration and
$\widetilde\balpha$ is an admissible particle control.  Moreover,
translation invariance of the circle distance gives, pathwise,
\begin{equation}\label{eq:transfer-displacement}
 \frac1N\sum_{i=1}^N
 \dm\big(\widetilde X^i_t,\widehat X^i_t\big)=D_N,
 \qquad \tau\leq t\leq T.
\end{equation}
By \eqref{eq:L-space}, the boundedness of $\alpha$, Cauchy--Schwarz,
the elementary bound $\dm^2\leq C\dm$ on $\T$,
\eqref{eq:gibbs-log-energy}, and \eqref{eq:matching},
\[
 \begin{split}
 &\E\int_\tau^T\frac1N\sum_{i=1}^N
 \left|L\big(\widetilde X^i_t,\widehat u^i_t\big)
       -L\big(\widehat X^i_t,\widehat u^i_t\big)\right|dt\\
 &\qquad\leq C\,\E D_N
 +C(\E D_N)^{1/2}
 \left(\E\int_\tau^T\frac1N\sum_{i=1}^N
 |\widehat\gamma^i_t|^2\,dt\right)^{1/2}
 \leq CN^{-4/7}\log^{1/7}(1+N).
 \end{split}
\]
By \eqref{eq:transfer-displacement}, the $d_1$-distance between the two
empirical measures is at most $D_N$ at every time, so the transfer
changes the mean field costs by at most
$CN^{-4/7}\log^{1/7}(1+N)$.  Combining the
entrance costs, the target cost, and the transfer proves
\eqref{eq:one-dim-easy}.
\end{proof}

\subsection{Optimality of the exponent}\label{subsec:one-dim-lower}

For the data of the example \eqref{eq:one-dim-lower} the particle
value admits a Cole--Hopf representation.  The mechanism is that of
the optimality example of
\cite[Prop.~2.9 and Sec.~7]{DaudinDelarueJackson2024}, where the
transform reduces a terminal Wasserstein cost to a partition-function
estimate.  In our example the Wasserstein discrepancy enters as a
running cost, so the transform produces instead an $N$-body
Schr\"odinger semigroup, and the optimality statement becomes a
ground-state estimate for its generator, whose potential is the
discrepancy.  To prove the estimate we localize the discrepancy to
$R=N^{1/7}$ cells of the torus and bound the contribution of each
cell from below by a spectral inequality that is uniform in the
number of particles the cell contains.

Fix an integer $R\geq2$, let $N=R^7$, and partition the torus into
the cells $I_j:=[j\ell,(j+1)\ell)$, $j=0,\dots,R-1$, of width
$\ell:=R^{-1}$.  Let $\zeta(y):=\min\{y,1-y\}$ be the tent function
on $[0,1]$, with mean $\bar\zeta:=\int_0^1\zeta=\tfrac14$, and set
\begin{equation*}
 \zeta_j(x):=\ell\,\zeta\big((x-j\ell)/\ell\big)\ \text{ on }I_j,
 \qquad
 \zeta_j:=0\ \text{ off }I_j.
\end{equation*}
Every signed combination $\sum_j\pm\,\zeta_j$ is $1$-Lipschitz, so
Kantorovich--Rubinstein duality \eqref{eq:d1-duality} gives, for every
$\mu\in\Pp(\T)$,
\begin{equation}\label{eq:tent-sum}
 S_R(\mu):=\sum_{j=0}^{R-1}
 \Big|\int_\T\zeta_j\,d(\mu-\lambda)\Big|
 \leq d_1(\mu,\lambda).
\end{equation}
The sum $S_R$ is the localized discrepancy: each term measures the
deviation of $\mu$ from the uniform measure on a single cell.

\begin{lemma}[Cell spectral estimate]\label{lem:cell}
For every $a_->0$ there exists $c>0$ such that, for every integer
$k\geq1$, every $a\geq a_-$, every $b\in\R$, and every
$g\in H^1([0,1]^k)$,
\begin{equation}\label{eq:cell}
 \int_{[0,1]^k}|\nabla g|^2
 +a\int_{[0,1]^k}\big|\mathsf Z_k+b\big|\,|g|^2
 \geq c\int_{[0,1]^k}|g|^2,
 \qquad
 \mathsf Z_k:=\frac1{\sqrt k}\sum_{i=1}^k
 \big(\zeta(y_i)-\bar\zeta\big).
\end{equation}
\end{lemma}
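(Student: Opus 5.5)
The plan is to combine the dimension-free Poincaré inequality on the cube with an anti-concentration bound for $\mathsf Z_k$ that is uniform in $k$ and $b$. The potential $|\mathsf Z_k+b|$ vanishes on a hypersurface and is unbounded in $b$, so the first step is to replace it by the truncation $W:=\min\{|\mathsf Z_k+b|,1\}\in[0,1]$. Since $a\geq a_-$ and $|\mathsf Z_k+b|\geq W$, it suffices to prove \eqref{eq:cell} with $a$ replaced by $a_-$ and $|\mathsf Z_k+b|$ replaced by $W$. Two uniform ingredients are then needed. The first is the Neumann Poincaré inequality on $[0,1]^k$:
\[
\int_{[0,1]^k}|\nabla g|^2\geq\pi^2\int_{[0,1]^k}|g-\bar g|^2,\qquad \bar g:=\int_{[0,1]^k}g .
\]
Its constant $\pi^2$ does not depend on $k$, because the spectral gap of a product domain is the minimum of the gaps of its factors. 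The second is a lower bound $\int_{[0,1]^k}W\,dy\geq c_0>0$ with $c_0$ independent of $k$ and $b$.

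For the anti-concentration bound I will regard $y$ as uniform on $[0,1]^k$, so that $\mathsf Z_k$ is a normalized sum of $k$ i.i.d.\ bounded centered variables with variance $\sigma^2=\int_0^1(\zeta-\bar\zeta)^2=1/48$. Set $X:=\mathsf Z_k+b$. Then $\E X^2=\sigma^2+b^2$ and $\E X^4\leq C(\sigma^2+b^2)^2$, because $\E\mathsf Z_k^4\leq3\sigma^4+C/k$ uniformly in $k$. By Paley--Zygmund, $\Prob\big(X^2\geq\tfrac12\E X^2\big)\geq\tfrac14(\E X^2)^2/\E X^4\geq p_0>0$. On this event $|X|\geq\sqrt{(\sigma^2+b^2)/2}\geq\sigma/\sqrt2$, hence $W\geq\min\{\sigma/\sqrt2,1\}$. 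Therefore $\E W\geq c_0:=p_0\min\{\sigma/\sqrt2,1\}$, with $c_0$ independent of $k$ and $b$. I expect this step to be the main obstacle: uniformity in $k$ rules out any Gaussian limit argument, and the fourth-moment form of Paley--Zygmund is what I would try first. If needed, a Berry--Esseen bound for large $k$ combined with a direct compactness argument for small $k$ would serve as a fallback.

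To conclude, normalize $\|g\|_{L^2}=1$, let $\delta\in(0,1)$ be a small parameter, and split into two cases. If $\|g-\bar g\|_{L^2}^2\geq\delta$, the Poincaré inequality already gives a gradient term of at least $\pi^2\delta$. Otherwise $|\bar g|^2\geq(1-\sqrt\delta)^2$. Expanding $g^2=\big(\bar g+(g-\bar g)\big)^2$ and using $0\leq W\leq1$ together with Cauchy--Schwarz gives
\[
\int Wg^2\geq\bar g^2\int W-2|\bar g|\,\|g-\bar g\|_{L^2}\geq c_0(1-\sqrt\delta)^2-2\sqrt\delta .
\]
Choosing $\delta$ small in terms of $c_0$ makes the right-hand side at least $c_0/2$. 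In either case the left side of \eqref{eq:cell} is at least $c:=\min\{\pi^2\delta,a_-c_0/2\}$, which depends only on $a_-$ and is uniform in $k$, $b$ and $a\geq a_-$.
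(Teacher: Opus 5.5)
Your proof is correct and follows essentially the paper's route: Poincar\'e on the cube to split $g$ into its mean and a small fluctuation, fourth-moment Paley--Zygmund anti-concentration for $\mathsf Z_k+b$ uniformly in $k$ and $b$, and Cauchy--Schwarz on the cross term after discarding the fluctuation squared. The only difference is that you truncate the potential at level $1$ so the cross term is trivially bounded, whereas the paper keeps $|\mathsf Z_k+b|$ and uses $\E|\mathsf Z_k+b|\geq c\,(\E|\mathsf Z_k+b|^2)^{1/2}$, so that the main term and the cross-term error both scale with the same $L^2$ norm of the potential.
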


\begin{proof}
Normalize $\|g\|_{L^2}=1$ and write $g=\bar g+h$ with $\int h=0$.  If
$E:=\int|\nabla g|^2$ exceeds a universal constant there is nothing to
prove; otherwise the Poincar\'e inequality on the cube gives
$\|h\|_{L^2}\leq C\sqrt E$ and $|\bar g|^2=1-\|h\|_{L^2}^2$.  For
independent uniform $Y_1,\dots,Y_k$ the variable $\mathsf Z_k$ is
centered with moments bounded uniformly in $k$, so expanding the
fourth moment gives
\[
 \E\,|\mathsf Z_k+b|^4
 \leq C\big(\E\,|\mathsf Z_k+b|^2\big)^2
 \qquad\text{uniformly in }k\text{ and }b\in\R,
\]
and the Paley--Zygmund inequality gives
$\E\,|\mathsf Z_k+b|\geq c\,\big(\E|\mathsf Z_k+b|^2\big)^{1/2}
\geq c\,\big(\operatorname{Var}\zeta(Y_1)\big)^{1/2}>0$.  Writing
$m_2:=(\E|\mathsf Z_k+b|^2)^{1/2}$, expanding
$|g|^2=|\bar g+h|^2$, discarding the nonnegative $h^2$ term, and
applying Cauchy--Schwarz to the cross term,
\[
 \int\big|\mathsf Z_k+b\big|\,|g|^2
 \geq c\,|\bar g|^2\,m_2-2\,|\bar g|\,m_2\,\|h\|_{L^2}
 \geq c'\,m_2>0
\]
provided $E$ is smaller than a suitable universal constant.  In either
case \eqref{eq:cell} holds with a constant depending only on $a_-$ and
$\zeta$.
\end{proof}

\begin{lemma}[Many-particle spectral estimate]\label{lem:many-particle}
Let $N=R^7$ and let
\begin{equation*}
 \mathcal H_N:=-\sum_{i=1}^N\partial^2_{x^ix^i}
 +\frac N2\,d_1\big(m^N_{\bx},\lambda\big)
 \qquad\text{on }L^2(\T^N).
\end{equation*}
There exists $c>0$ such that the bottom of the spectrum of
$\mathcal H_N$ is at least $cR^3$.
\end{lemma}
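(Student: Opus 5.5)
The plan is to lower-bound the quadratic form of $\mathcal H_N$ by a sum of independent cell problems and apply Lemma~\ref{lem:cell} in each cell. By \eqref{eq:tent-sum}, the potential satisfies
\[
\frac N2\,d_1(m^N_{\bx},\lambda)\geq\frac N2\sum_j\Big|\int\zeta_j\,d(m^N_{\bx}-\lambda)\Big|.
\]
So it suffices to bound from below the form
\[
\int|\nabla g|^2+\frac N2\int\sum_j|\cdots|\,|g|^2.
\]
We do this on each configuration sector, that is, on each set of $\bx$ where the occupation numbers $k_j:=\#\{i:x^i\in I_j\}$ are fixed and the labels in each cell are fixed. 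On such a sector (a product of cells) the $j$-th cell term depends only on the coordinates in $I_j$.

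Rescale each cell by $y=(x-j\ell)/\ell$. Then $\int\zeta_j\,dm^N_{\bx}=\frac{\ell}{N}\sum_{i\in I_j}\zeta(y_i)$ and $\int\zeta_j\,d\lambda=\ell^2\bar\zeta$. The $j$-th term is therefore
\[
\frac N2\cdot\frac{\ell}{N}\Big|\sum_{i\in I_j}(\zeta(y_i)-\bar\zeta)+k_j\bar\zeta-N\ell\bar\zeta\Big|
=\frac{\ell\sqrt{k_j}}{2}\,|\mathsf Z_{k_j}+b_j|,
\qquad b_j:=\bar\zeta\,(k_j-N\ell)/\sqrt{k_j}.
\]
The kinetic term in the $y$ variables picks up a factor $\ell^{-2}=R^2$. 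Factoring this out, the cell Hamiltonian is $R^2\big(-\Delta_y+a_j|\mathsf Z_{k_j}+b_j|\big)$ with
\[
a_j=\tfrac12\ell^3\sqrt{k_j}=\tfrac12 R^{-3}\sqrt{k_j}.
\]
The typical occupation is $N\ell=R^6$, so $\sqrt{k_j}\approx R^3$ and $a_j\approx 1/2$. This is exactly the calibration $N=R^7$.

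We then split the cells into two classes. For cells with $k_j\geq N\ell/4$ we have $a_j\geq1/4=:a_-$, and Lemma~\ref{lem:cell} gives a spectral gap $c$, so after rescaling each such cell contributes at least $cR^2$. For cells with $k_j<N\ell/4$ the deficit is large: the potential there is at least
\[
\frac{\ell}{2}\big(N\ell\bar\zeta-k_j\bar\zeta-\sum_{i\in I_j}\zeta(y_i)\big)\geq\frac{\ell}{2}\cdot\bar\zeta\,N\ell/2
\]
up to constants, using $\zeta\leq1/2$ (a bound that needs checking), which gives a pointwise contribution of order $R^{5}\gg R^2$. Empty cells are handled the same way, since their potential is the constant $\frac N2\ell^2\bar\zeta$. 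Every cell thus contributes at least $cR^2$, and summing over the $R$ cells (the cell problems decouple on the product sector, so the ground energies add) gives $cR^3$.

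The main obstacle is making this sector decomposition rigorous for $H^1(\T^N)$ functions, which do not split into products. We discard nothing in the kinetic term: $\int|\nabla g|^2$ equals the sum over sectors of $\int_{\text{sector}}|\nabla g|^2$ with no boundary terms, because sectors are products of cells. Dropping boundary coupling only lowers the form, so it suffices to use Neumann-type bounds, which is why Lemma~\ref{lem:cell} is stated for arbitrary $g\in H^1$ of the cube. On each sector, apply Lemma~\ref{lem:cell} in the variables of cell $j$ for fixed other coordinates, integrate over those, and sum over $j$. This must be done with the kinetic energy shared among the $R$ cells, and since each cell uses only its own partial gradients, $\sum_j\int|\nabla_{(j)}g|^2=\int|\nabla g|^2$ and nothing is double counted. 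Finally, sum over sectors and take the infimum of the Rayleigh quotient.
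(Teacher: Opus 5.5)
Your proposal is correct and follows essentially the same route as the paper. The common steps are the reduction to the tent sum via \eqref{eq:tent-sum}, Neumann decoupling over the boxes $I_{j_1}\times\cdots\times I_{j_N}$, rescaling each cell to $R^2(-\Delta_y+a_k|\mathsf Z_k+b_k|)$ with $a_k=\sqrt{k}/(2R^3)$, freezing the other coordinates so that the cell ground energies add, and the dichotomy between depleted cells (potential of order $R^5$) and well-filled cells (Lemma~\ref{lem:cell}). The differences are cosmetic: since Lemma~\ref{lem:cell} needs only $a\geq a_-$, you may apply it to every cell with $k_j\geq R^6/4$ so that all $R$ cells contribute, whereas the paper uses only the at least $R/2$ cells with $k_j\leq 2R^6$; the bound $\zeta\leq\frac12$ is immediate from $\zeta(y)=\min\{y,1-y\}$; and the stray $-k_j\bar\zeta$ in your depleted-cell display should be dropped (without it one gets exactly $\frac{\ell}{2}\,\bar\zeta N\ell/2$, and keeping it only costs a constant).
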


\begin{proof}
By \eqref{eq:tent-sum} it suffices to bound from below the operator with
potential $\frac N2S_R(m^N_{\bx})$.  Its ground energy, the
infimum of its spectrum, is the infimum of the quadratic form
\[
 \mathcal E(g):=\int_{\T^N}|\nabla g|^2
 +\frac N2\int_{\T^N}S_R\big(m^N_{\bx}\big)\,|g|^2
\]
over $g\in H^1(\T^N)$ with $\|g\|_{L^2(\T^N)}=1$.  Partition $\T^N$
into the $R^N$ boxes
$I_{j_1}\times\cdots\times I_{j_N}$,
$(j_1,\dots,j_N)\in\{0,\dots,R-1\}^N$.
Allowing $g$ to be $H^1$ on each box separately, with no
matching across the box boundaries, enlarges the class of competitors,
so this infimum can only decrease.  For such a $g$,
\[
 \mathcal E(g)=\sum_B\mathcal E_B\big(g|_B\big)
 \geq\sum_Be_B\int_B|g|^2
 \geq\min_Be_B,
\]
where the sum runs over the boxes, $\mathcal E_B$ is the form
restricted to the box $B$, and $e_B$ is its ground energy: the infimum
of $\mathcal E_B$ over $H^1(B)$ functions with unit $L^2(B)$-norm.  It
therefore suffices to prove that $e_B\geq cR^3$ for every box.

Fix a box, let $k_j$ be the number of coordinates in $I_j$, and put
$k_0:=N/R=R^6$.  Rescaling the
coordinates of each cell to $[0,1]$ multiplies the kinetic energy by
$\ell^{-2}=R^2$, while the potential separates over the cells:
\begin{equation*}
 \frac N2\,S_R\big(m^N_{\bx}\big)
 =\sum_{j=0}^{R-1}\frac1{2R}
 \Big|\sum_{i=1}^{k_j}\zeta(y^i_j)-k_0\bar\zeta\Big|.
\end{equation*}
Thus its quadratic form is the sum of the $R$ cell forms.
Applying the lower bound for each cell form with the remaining
coordinates frozen, and then integrating in those coordinates, shows
that the ground energy of the box is at least the sum of the cell
ground energies.
Since $\sum_jk_j=N$, at least $R/2$ of the indices satisfy
$k_j\leq2k_0$.  Fix such a $j$ and write $k=k_j$.  If $k\leq k_0/4$,
then $0\leq\zeta\leq\frac12$ and $\bar\zeta=\frac14$ give
$\big|\sum_i\zeta(y^i)-k_0\bar\zeta\big|\geq k_0/8$, so the $j$-th cell
potential is at least $R^5/16$.  If $k_0/4\leq k\leq2k_0$, the $j$-th
cell operator is
\[
 R^2\Big(-\Delta_y+a_k\,\big|\mathsf Z_k+b_k\big|\Big),
 \qquad
 a_k=\frac{\sqrt k}{2R^3}\in\Big[\frac14,\frac1{\sqrt2}\Big],
 \qquad
 b_k=\frac{(k-k_0)\,\bar\zeta}{\sqrt k},
\]
and Lemma~\ref{lem:cell} bounds its ground energy from below by $cR^2$.
In both cases the cell contributes at least $cR^2$ to the ground
energy.  Since at least $R/2$ cells do so and the remaining cell
operators are nonnegative, the box has ground energy at least $cR^3$,
as required.
\end{proof}

\begin{proof}[Proof of Theorem~\ref{thm:one-dim}]
The bounds \eqref{eq:one-dim-rate} and
\eqref{eq:one-dim-hard-statement} follow from
Propositions~\ref{prop:one-dim-hard} and~\ref{prop:one-dim-easy}.  It
remains to prove the optimality statement \eqref{eq:one-dim-lower}.

For $H(x,p)=\frac12p^2$ the Lagrangian is $L(a)=\frac12a^2$, and the
Cole--Hopf and Feynman--Kac formulas give
\begin{equation}\label{eq:cole-hopf}
 V^N(0,\bx)=-\frac2N\log\mathcal Z_N(\bx),
 \qquad
 \mathcal Z_N(\bx):=\E_{\bx}\exp\Big(-\frac N2\int_0^T
 d_1\big(\mu^N_t,\lambda\big)\,dt\Big)
 =\big(e^{-T\mathcal H_N}\1\big)(\bx),
\end{equation}
the expectation being over the uncontrolled particles
$dX^i_t=\sqrt2\,dW^i_t$.  Since $\T^N$ has unit volume,
$\|\1\|_{L^2(\T^N)}=1$, and the spectral theorem together with
Lemma~\ref{lem:many-particle} gives
\begin{equation}\label{eq:partition-average}
 \int_{\T^N}\mathcal Z_N(\bx)\,d\bx
 =\int_{\T^N}\big(e^{-T\mathcal H_N}\1\big)(\bx)\,d\bx
 =\big\langle\1,e^{-T\mathcal H_N}\1\big\rangle_{L^2(\T^N)}
 \leq e^{-cTR^3}.
\end{equation}
Fix $\delta>0$ and let
$\mathcal A_{N,\delta}
:=\{\bx\in\T^N:\ d_1(m^N_{\bx},\lambda)\leq\delta R^{-4}\}$.
Lemma~\ref{lem:gibbs-small-ball}, applied with $N=R^7$ and
$r=\delta R^{-4}=\delta N^{-4/7}$, shows that for all sufficiently
large $R$ depending on $\delta$,
\begin{equation}\label{eq:entrance-volume}
 |\mathcal A_{N,\delta}|\geq e^{-C_\delta R}.
\end{equation}

By \eqref{eq:partition-average} and \eqref{eq:entrance-volume}, the
average of $\mathcal Z_N$ over $\mathcal A_{N,\delta}$ is at most
$\exp\big(-cTR^3+C_\delta R\big)$, so there exists
$\bx_R\in\mathcal A_{N,\delta}$ satisfying this bound, and
\eqref{eq:cole-hopf} gives
\begin{equation}\label{eq:particle-lower}
 V^N(0,\bx_R)\geq2cT\,R^{-4}-C_\delta\,R^{-6}.
\end{equation}

For the mean field problem started from $m^N_{\bx_R}$, the uncontrolled
heat flow is admissible with zero action.  Since
$\bx_R\in\mathcal A_{N,\delta}$, the $d_1$-contractivity of the heat
semigroup gives
\begin{equation}\label{eq:mean-upper}
 0\leq U\big(0,m^N_{\bx_R}\big)
 \leq\int_0^Td_1\big(P_tm^N_{\bx_R},\lambda\big)\,dt
 \leq T\,\delta\,R^{-4}.
\end{equation}
Choosing $\delta$ smaller than the leading constant in
\eqref{eq:particle-lower} and then $R$ sufficiently large, the
difference of \eqref{eq:particle-lower} and \eqref{eq:mean-upper} is at
least $c\,T\,R^{-4}=c\,T\,N^{-4/7}$, which proves
\eqref{eq:one-dim-lower}.
\end{proof}

\section{Extension to additive common noise}\label{sec:common-noise}

We close with the extension to additive common noise, a classical
feature of mean field models
\cite{CardaliaguetDelarueLasryLions2019,CarmonaDelarue2018II}.
Let $W^0,W^1,\dots,W^N$ be independent $d$-dimensional Brownian motions and
fix $\sigma_0\in\R$.  The particle dynamics are now
\begin{equation}\label{eq:common-particle-SDE}
 dX^i_t=\alpha^i_t\,dt+\sqrt2\,dW^i_t+\sigma_0\,dW^0_t,
 \qquad X^i_{t_0}=x^i,
\end{equation}
and $J_N^{\sigma_0}$ and $V^{N,\sigma_0}$ are defined as in
\eqref{eq:JN-definition}--\eqref{eq:VN-definition}, with
\eqref{eq:particle-SDE} replaced by \eqref{eq:common-particle-SDE}.  The
controls may be progressively measurable with respect to any common
filtration for which $W^0,W^1,\dots,W^N$ are independent Brownian motions
on $[t_0,T]$,
exactly as in Section~\ref{subsec:problems}.  When sections
of a control in the idiosyncratic variables are needed
(Lemma~\ref{lem:common-sections} below), we work on product Wiener space
with the raw canonical filtrations.  The usual augmentations are restored
only after progressively measurable Borel representatives have been chosen.

The limiting problem is posed in its strong formulation
\cite{CarmonaDelarue2018II,DjetePossamaiTan2022}: the value
$U^{\sigma_0}$ is defined on a canonical space carrying independent random
variables $\xi,W,W^0$, with $\xi$ of law $m_0$ and $W,W^0$ Brownian
motions on $[t_0,T]$.  Given a square-integrable control $\alpha$ adapted to their joint
filtration, let
\begin{equation*}
 dX_t=\alpha_t\,dt+\sqrt2\,dW_t+\sigma_0\,dW^0_t,
 \qquad X_{t_0}=\xi,
 \qquad m_t=\mathcal L(X_t\mid\mathcal F^0_t),
\end{equation*}
where $(\mathcal F^0_t)$ is the filtration of $W^0$.  Then
\begin{equation}\label{eq:common-U-definition}
 U^{\sigma_0}(t_0,m_0)
 :=\inf_\alpha\E\bigg[\int_{t_0}^T
 \Big(L(X_t,\alpha_t)+F(m_t)\Big)dt+G(m_T)\bigg].
\end{equation}
We use the corresponding stochastic Fokker--Planck formulation
(cf.\ the conditional mimicking of
\cite{LackerShkolnikovZhang2020}): a bounded
$\mathcal F^0_t$-progressively measurable field $\beta_t(x)$ is admissible
if the associated conditional law satisfies, for every smooth $\varphi$,
\begin{equation}\label{eq:common-FP}
 d\int_{\T^d}\varphi\,dm_t
 =\int_{\T^d}\Big(\Big(1+\frac{\sigma_0^2}{2}\Big)\Delta\varphi
 +\beta_t\cdot\nabla\varphi\Big)\,dm_t\,dt
 +\sigma_0\Big(\int_{\T^d}\nabla\varphi\,dm_t\Big)\cdot dW^0_t .
\end{equation}
The proof of Proposition~\ref{prop:common-comparison} converts any such
field into a strong control in \eqref{eq:common-U-definition} of the
same cost.

\begin{theorem}[Additive common noise]\label{thm:common-noise}
Let Assumption~\ref{ass:standing} hold.  There exists a constant $C$,
independent of $N$ and $\sigma_0$, such that
\begin{equation}\label{eq:common-two-sided}
 \big|U^{\sigma_0}(t,m^N_{\bx})-V^{N,\sigma_0}(t,\bx)\big|
 \leq C r_{N,d}
\end{equation}
for every $N\geq1$, $\sigma_0\in\R$, and
$(t,\bx)\in[0,T]\times(\T^d)^N$.  For $d\geq2$ the rate $r_{N,d}$
cannot be improved, already for $\sigma_0=0$.
\end{theorem}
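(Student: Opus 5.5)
The plan has three parts: a common-noise version of Proposition~\ref{prop:comparison} for the hard inequality, a replication argument for the easy inequality, and the observation that $\sigma_0=0$ gives optimality. The hard inequality is the main work.

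\emph{Hard inequality.} Fix a control bounded by $M$ and pass to the translated particles $\widetilde X^i_t:=X^i_t-\sigma_0(W^0_t-W^0_{t_0})$. These solve \eqref{eq:particle-SDE} with the same drifts $\alpha^i$, which remain progressively measurable for the full filtration. Their empirical measure is $\widetilde\mu^N_t=(\tau_{-\sigma_0(W^0_t-W^0_{t_0})})_\#\mu^N_t$. Running the construction of Section~\ref{sec:shadow} on $\widetilde X$ produces a shadow flow $\widetilde\nu$. Proposition~\ref{prop:tracking} applies verbatim, because its martingale term involves only $W^1,\dots,W^N$ and it needs only progressive measurability.

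Set $\nu_t:=(\tau_{\sigma_0(W^0_t-W^0_{t_0})})_\#\widetilde\nu_t$. Translations are $d_1$-isometries, so $\E\,d_1(\nu_t,\mu^N_t)=\E\,d_1(\widetilde\nu_t,\widetilde\mu^N_t)$, which is bounded by $C_M(r_{N,d}+\sqrt h)$ uniformly in $\sigma_0$. Lemma~\ref{lem:action} transfers in the same way: the drift is $\beta_t(x-\sigma_0 W^0)$, and the shift in the base point costs $C_M c_t$ after translation. By It\^o's formula applied to the translated pathwise Fokker--Planck equation, $\nu$ solves \eqref{eq:common-FP} with the shifted drift.

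\emph{Admissibility (the main obstacle).} The shifted drift is adapted to all the noises, not only to $W^0$. I would realize everything on product Wiener space, $\Omega^0\times\Omega^{\mathrm{id}}$. For each frozen idiosyncratic path $\omega'$, the section $\omega^0\mapsto(\nu,\beta)(\omega^0,\omega')$ should be $\mathcal F^0$-progressively measurable. The subtle point is that the controls are adapted to the joint filtration. A section of a jointly progressive process is progressive in $\omega^0$ for fixed $\omega'$, but only if one uses raw canonical filtrations and Borel progressive representatives; this is the content of Lemma~\ref{lem:common-sections}.

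For each frozen $\omega'$, the section then gives an admissible field for \eqref{eq:common-FP}. I would convert it into a strong control in \eqref{eq:common-U-definition} by solving $dX=\beta_t(X)\,dt+\sqrt2\,dW+\sigma_0\,dW^0$; strong existence follows from Veretennikov, since the drift is bounded. Uniqueness for the linear SPDE \eqref{eq:common-FP} then identifies the conditional law with $\nu$, so the cost of this control equals that of $\nu$. This yields $U^{\sigma_0}\le$ the $W^0$-expected cost of section $\omega'$. Integrating over $\omega'$ by Fubini and combining with the tracking bound and the Lipschitz continuity of $F$ and $G$ gives the bounded-control comparison. To pass to $V^{N,\sigma_0}$, I would use the smoothing of Lemma~\ref{lem:smoothing} together with the gradient bound of Lemma~\ref{lem:finite}. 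The HJB equation now carries the extra operator $\frac{\sigma_0^2}{2}(\sum_i D_{x^i})^2$. This operator commutes with diagonal translation, so the $C/N$ gradient bound should hold uniformly in $\sigma_0$; I would check this by rerunning the Bernstein argument of \cite{CardaliaguetDaudinJacksonSouganidis2023}.

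\emph{Easy inequality and optimality.} Take a near-optimal strong control $\alpha(\xi,W,W^0)$, which may be approximated by a bounded, regular functional. Replicate it across the particles as $\alpha(x^i,W^i,W^0)$. Conditionally on $W^0$, the particles are independent. After translating by $\sigma_0 W^0$, their conditional laws are deterministic shifts of the translated $m_t$, with one-point marginals averaging to it. The conditional empirical estimate of \cite[Thm.~3]{BobkovLedoux2021}, applied to independent nonidentical samples, then gives $\E\,d_1(\mu^N_t,m_t)\le Cr_{N,d}$ uniformly in $\sigma_0$. Since $\mathcal L(X\mid\mathcal F^0_t)$ involves an average of the initial points, I would first treat $x^i$ as fixed and use $m_0=m^N_{\bx}$ by choosing $\xi$ uniform over the $x^i$.

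Finally, optimality for $d\ge2$ is the case $\sigma_0=0$ of Theorem~\ref{thm:main}.
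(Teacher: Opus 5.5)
Your proposal is essentially the paper's proof: translation by $\sigma_0(W^0_t-W^0_{t_0})$, the unchanged tracking estimate, frozen-idiosyncratic progressive sections (Lemma~\ref{lem:common-sections}), conversion of each section into a strong control, and smoothing with a $\sigma_0$-uniform $C/N$ gradient bound for the hard side. The easy side uses replication $\alpha(x^i,W^i,W^0)$ with conditional independence, the mixture identity and a conditional non-i.i.d.\ empirical estimate, and optimality comes from the case $\sigma_0=0$.

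The one step you state too quickly is the strong solution. Your drift is random and $\mathbb F^0$-progressive, so Veretennikov's theorem only applies for each frozen $\omega^0$. Obtaining a jointly Borel, causal solution map of $(\xi,W,W^0)$ takes more, and this is why the paper proves Lemma~\ref{lem:causal-solution}: Girsanov gives weak existence, Veretennikov gives conditional pathwise uniqueness, and Kurtz's Yamada--Watanabe theory for compatible solutions gives the strong solution. The paper also solves in the translated frame, so that deterministic Fokker--Planck uniqueness for each frozen $\omega^0$, rather than uniqueness for the stochastic equation, identifies the conditional law.
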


\begin{proposition}[Common-noise comparison]\label{prop:common-comparison}
Suppose that $\balpha$ is a bounded particle control in
\eqref{eq:common-particle-SDE} on product Wiener space, realized as a
nonanticipating Borel functional of $(W^0,W^1,\dots,W^N)$, and that
$|\alpha^i|\leq M$.  Then
\begin{equation}\label{eq:common-comparison}
 U^{\sigma_0}(t_0,m^N_{\bx})
 \leq J_N^{\sigma_0}(t_0,\bx;\balpha)+C_Mr_{N,d},
\end{equation}
where $C_M$ is independent of $N$ and $\sigma_0$.
\end{proposition}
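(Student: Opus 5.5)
The plan is to reduce to the zero-common-noise shadow flow by the random translation, and then read the translated flow as a family of $W^0$-adapted mean field competitors indexed by the frozen idiosyncratic paths.

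\emph{Step 1 (translation).} Set $\Sigma_t:=\sigma_0(W^0_t-W^0_{t_0})$ and $Y^i_t:=X^i_t-\Sigma_t$. Then $dY^i_t=\alpha^i_t\,dt+\sqrt2\,dW^i_t$ with $Y^i_{t_0}=x^i$, and the empirical measure of $\bY$ is $(\tau_{-\Sigma_t})_\#\mu^N_t$. The particle control $\alpha^i$ is still bounded by $M$ and progressively measurable for a filtration in which $W^1,\dots,W^N$ are independent Brownian motions. So Sections~\ref{sec:shadow}--\ref{sec:tracking} apply verbatim to $\bY$ and produce a shadow flow $\nu$ with drift $\beta$, $|\beta|\le M$. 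The only inputs in Lemma~\ref{lem:martingale} are the idiosyncratic noises, so the constants do not depend on $\sigma_0$. We then set $\tilde\nu_t:=(\tau_{\Sigma_t})_\#\nu_t$ and $\tilde\beta_t(x):=\beta_t(x-\Sigma_t)$. Translation invariance of $d_1$ gives $d_1(\tilde\nu_t,\mu^N_t)=d_1(\nu_t,\mu^N_{\bY,t})$, so Proposition~\ref{prop:tracking} yields $\sup_t\E\,d_1(\tilde\nu_t,\mu^N_t)\le C_M(r_{N,d}+\sqrt h)$. Lemma~\ref{lem:action} transfers after translation, because $L(x,\cdot)$ is evaluated at the translated points and $c_t$ is invariant. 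This gives the running cost bound $\int L(x,\tilde\beta_t)\,d\tilde\nu_t\le\frac1N\sum_iL(X^i_t,\alpha^i_t)+C_Mc_t$.

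\emph{Step 2 (the flow solves \eqref{eq:common-FP}).} For fixed idiosyncratic paths, $\nu$ solves the deterministic Fokker--Planck equation \eqref{eq:shadow-FP}. Applying It\^o's formula to $\int\varphi(x+\Sigma_t)\,\nu_t(dx)$ adds the term $\frac{\sigma_0^2}{2}\Delta\varphi$ and the martingale $\sigma_0\int\nabla\varphi\,d\tilde\nu_t\cdot dW^0_t$. Hence $\tilde\nu$ solves \eqref{eq:common-FP} with field $\tilde\beta$ and initial datum $m^N_{\bx}$.

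\emph{Step 3 (admissibility via sections; the main obstacle).} The pair $(\tilde\nu,\tilde\beta)$ depends on all the noises, whereas admissible fields must be $\mathcal F^0$-progressively measurable. The plan is the sections device announced for Lemma~\ref{lem:common-sections}. On product Wiener space, write $\balpha=a(W^0,\bW)$ with $a$ a nonanticipating Borel functional, and freeze a realization $w=(w^1,\dots,w^N)$ of the idiosyncratic paths. Then $a(\cdot,w)$ is a nonanticipating functional of $W^0$ alone, and the whole construction can be run for this frozen $w$. In that construction the heat smoothing uses no randomness, the recoupling is chosen by the Borel selector of Lemma~\ref{lem:selection} applied to $(\nu_{s_k},\bY_{s_k}(W^0,w))$, and the translation uses $W^0$. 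The result is an $\mathcal F^0$-progressively measurable, bounded field $\tilde\beta^w$ with flow $\tilde\nu^w$ solving \eqref{eq:common-FP}.

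Two points need care here. First, Borel representatives must be chosen so that measurability in $(w^0,w)$ is joint; this is why the raw canonical filtrations are used, with the usual augmentations restored only afterwards. Second, the field must be converted into a strong control of the same cost. I would do this by mimicking: take $X_t$ solving $dX_t=\tilde\beta^w_t(X_t)\,dt+\sqrt2\,dW_t+\sigma_0\,dW^0_t$ with $X_{t_0}=\xi$. This equation is well posed by Veretennikov's result applied conditionally on $W^0$, since the drift is bounded and Borel. Conditional uniqueness for the linear stochastic Fokker--Planck equation \eqref{eq:common-FP} then identifies $\mathcal L(X_t\mid\mathcal F^0_t)=\tilde\nu^w_t$. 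I expect this uniqueness argument, which I would handle through a duality/backward SPDE argument or the translation trick once more, to be the delicate step. With it in hand, $U^{\sigma_0}(t_0,m^N_{\bx})\le\E^0[J(\tilde\nu^w,\tilde\beta^w)]$ for every $w$.

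\emph{Step 4 (conclusion).} Integrate the Step~3 bound over $w$ against Wiener measure and apply Fubini, which turns the right-hand side into $\E\,J(\tilde\nu,\tilde\beta)$ over the full noise. Then use the $d_1$-Lipschitz bounds on $F$ and $G$ together with the estimates of Step~1 and Lemma~\ref{lem:provisional} exactly as in the proof of Proposition~\ref{prop:comparison}. This gives \eqref{eq:common-comparison} up to an error $C_M\sqrt h$, and letting $h\downarrow0$ completes the proof.
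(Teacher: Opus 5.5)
Your proposal is correct and follows essentially the same route as the paper: the random translation, the shadow flow of the shifted particles with tracking constants independent of $\sigma_0$, sections in the idiosyncratic coordinates (Lemma~\ref{lem:common-sections}) to obtain $\mathbb F^0$-adapted competitors, a strong solution of the SDE driven by the translated drift, integration over $\omega^{\rm id}$, and $h\downarrow0$. The two steps you leave as sketches are what the paper fills in. Lemma~\ref{lem:causal-solution} turns ``Veretennikov conditionally on $W^0$'' into a single Borel, causal solution map via Kurtz's Yamada--Watanabe theory, which is needed so that $\tilde\beta^w_t(X_t)$ is adapted to the joint filtration of $(\xi,W,W^0)$. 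The conditional law is then identified exactly by your ``translation trick'' option: freeze $\omega^0$, use uniqueness for the deterministic Fokker--Planck equation with bounded drift, and apply the tower property to pass from $\mathcal F^0_T$ to $\mathcal F^0_t$.
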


For the proofs, fix a time grid as in Section~\ref{sec:shadow} and choose
raw-progressive Borel representatives of the controls in
Proposition~\ref{prop:common-comparison}.  Using the standard random-translation
reduction for additive common noise
\cite{LackerWebster2015,CardaliaguetDelarueLasryLions2019}, set
\[
 \mathsf w_t:=\sigma_0(W^0_t-W^0_{t_0}),
 \qquad Y^i_t:=X^i_t-\mathsf w_t,
 \qquad \widetilde\mu^N_t:=\frac1N\sum_{i=1}^N\delta_{Y^i_t},
\]
so that $dY^i_t=\alpha^i_t\,dt+\sqrt2\,dW^i_t$, and apply the shadow-flow
construction of Section~\ref{sec:shadow}, on the given time grid, to the
shifted particles $Y^i$ and the controls $\alpha^i$.  Denote the resulting
flow, components, drift, and provisional cost by $\widetilde\nu$,
$\widetilde\nu_i$, $\widetilde\beta$, and $\widetilde c$.  That the
controls may also depend on $W^0$ does not affect any step of the
construction.  Write $\omega^{\rm id}=(\omega^1,\dots,\omega^N)$ for the
idiosyncratic Brownian coordinates.

\begin{lemma}[Progressive sections]\label{lem:common-sections}
The maps
\[
 (t,\omega^0,\omega^{\rm id})\longmapsto
 \big(\widetilde\nu_t,\widetilde\nu_{1,t},\dots,
 \widetilde\nu_{N,t}\big)
\]
and the field
\[
 (t,y,\omega^0,\omega^{\rm id})\longmapsto\widetilde\beta_t(y)
\]
are jointly measurable.  Moreover, outside a set of idiosyncratic Wiener
measure zero, the measure-valued section obtained by fixing
$\omega^{\rm id}$ is adapted to the raw filtration of $W^0$,
$\widetilde\beta^{\omega^{\rm id}}$ is measurable with respect to
$\operatorname{Prog}(\mathbb F^0)\otimes\mathcal B(\T^d)$, and, for almost
every $\omega^0$,
\begin{equation}\label{eq:section-FP}
 \partial_t\widetilde\nu_t
 =\Delta\widetilde\nu_t-\diver(\widetilde\beta_t\widetilde\nu_t),
 \qquad \widetilde\nu_{t_0}=m^N_{\bx},
\end{equation}
in the sense of distributions.
\end{lemma}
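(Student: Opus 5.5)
The plan is to follow the recursive construction of Section~\ref{sec:shadow} step by step and check that every ingredient is a jointly measurable function of $(t,\omega^0,\omega^{\rm id})$ whose dependence on $\omega^0$ is nonanticipating. Two structural facts make this possible. First, the shifted particles satisfy $Y^i_t=x^i+\int_{t_0}^t\alpha^i_r\,dr+\sqrt2(W^i_t-W^i_{t_0})$. Second, the controls are raw-progressive Borel functionals of $(W^0,W^1,\dots,W^N)$. Hence $Y^i_t$ and $A^k_i(t)=\int_{s_k}^t\alpha^i_r\,dr$ are Borel in $(\omega^0,\omega^{\rm id})$, continuous in $t$, and depend only on the paths up to time $t$.

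The induction over grid intervals runs as follows. Suppose $\widetilde\nu_{s_k}$ is a Borel function of $(\omega^0,\omega^{\rm id})$ restricted to $[t_0,s_k]$. The Borel selector of Lemma~\ref{lem:selection}, applied to $(\widetilde\nu_{s_k},\bY_{s_k})$, then gives $\pi_k$ with the same measurability. The components $\widetilde\nu_{i,t}=(\tau_{A^k_i(t)})_\#(P_{t-s_k}\pi_{i,k})$ are continuous in $(t,A,\pi)$, so they are jointly measurable and depend only on paths up to $t$. The limit at $s_{k+1}$ closes the induction. The densities $\rho_{i,t}(y)$ are jointly continuous in $(t,y)$ and measurable in $(\omega^0,\omega^{\rm id})$, so the weights $\theta_i$ are Borel. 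Consequently $\widetilde\beta_t(y)=\sum_i\theta_i(t,y)\alpha^i_t$ is jointly measurable and is $\operatorname{Prog}$ of the joint raw filtration, tensored with $\mathcal B(\T^d)$.

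Next comes the passage to sections. For a set that is progressive for the product raw filtration on product Wiener space, fixing $\omega^{\rm id}$ gives a set that is progressive for the raw filtration of $W^0$ alone. This can be verified on generating rectangles and extended by a monotone class argument. Because the raw filtrations are not augmented, the statement holds for \emph{every} $\omega^{\rm id}$, so the measurability claims require no exceptional set.

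The exceptional set enters only through the Fokker--Planck equation. By Lemma~\ref{lem:FP}, equation \eqref{eq:section-FP} holds outside a joint null set $\mathcal N$. That lemma uses only absolute continuity of $t\mapsto A^k_i(t)$, which holds everywhere since $|\alpha^i|\le M$, so $\mathcal N$ may in fact be empty. If not, Fubini's theorem under the product measure shows that for idiosyncratic-a.e.\ $\omega^{\rm id}$, the section of $\mathcal N$ is $W^0$-null, which gives the claim for a.e.\ $\omega^0$.

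The main obstacle I expect is the measurable selection's compatibility with adaptedness. We must make sure the Borel selector of Lemma~\ref{lem:selection} is applied to arguments that are functions of the stopped paths only. We also need joint measurability of sections without the usual augmentation, which is why the paper insists on raw canonical filtrations and chooses Borel representatives beforehand.
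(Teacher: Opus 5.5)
Your proposal is correct and follows essentially the same route as the paper. It runs a recursion over the grid showing that the restrictions to $[t_0,s]$ are $\mathcal B([t_0,s])\otimes\mathcal F^0_s\otimes\mathcal F^{\rm id}_s$-measurable, using the Borel selector at the left endpoints and the continuity of the component and drift formulas in between; it then takes sections in $\omega^{\rm id}$ to get $\mathbb F^0$-progressivity, and uses Lemma~\ref{lem:FP} with Fubini for the idiosyncratic null set. Your observation that the measurability holds for every $\omega^{\rm id}$, with the exceptional set entering only through the Fokker--Planck equation, also matches the paper's argument.
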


\begin{proof}
For every $s\leq T$, the restriction of a raw-progressive control to
$[t_0,s]\times\Omega^0\times\Omega^{\rm id}$ is measurable with respect to
$\mathcal B([t_0,s])\otimes\mathcal F^0_s\otimes
\mathcal F^{\rm id}_s$.  Its section at any fixed
$\omega^{\rm id}$ is therefore $\mathbb F^0$-progressive.  The shifted
states have the same property.  Proceeding recursively over the time grid,
the Borel selector in Lemma~\ref{lem:selection} makes each optimal
decomposition jointly measurable at the left endpoint, while
\eqref{eq:component-definition} and \eqref{eq:beta-definition} preserve
joint measurability on the following interval.  The recursion in fact
preserves the stronger restriction property of the first paragraph:
for every $s$, the restrictions to $[t_0,s]$ are measurable with
respect to $\mathcal B([t_0,s])\otimes\mathcal F^0_s\otimes
\mathcal F^{\rm id}_s$, and $\mathcal B(\T^d)$ for the drift.  Fixing
$\omega^{\rm id}$, the measure-valued sections are therefore adapted
and the drift section is
$\operatorname{Prog}(\mathbb F^0)\otimes\mathcal B(\T^d)$-measurable.
Finally, Lemma~\ref{lem:FP} applied
on product Wiener space and Fubini's theorem give a single null set in the
idiosyncratic coordinate outside which \eqref{eq:section-FP} holds for
almost every $\omega^0$.
\end{proof}

\begin{lemma}[Causal solution map]\label{lem:causal-solution}
Let $\Omega^0=C_0([0,T];\R^d)$ be canonical Wiener space with its raw
filtration, let $m_0\in\Pp(\T^d)$, and let
$b:[t_0,T]\times\Omega^0\times\T^d\to\R^d$ be bounded and measurable
with respect to
$\operatorname{Prog}(\mathbb F^0)\otimes\mathcal B(\T^d)$.  There exists
a Borel map
\[
 \mathsf S_b:\T^d\times C_0([0,T];\R^d)\times\Omega^0
 \longrightarrow C([t_0,T];\T^d)
\]
such that, whenever $\xi,W,W^0$ are independent, $\xi$ has law $m_0$, and
$W,W^0$ are Brownian motions,
$Z:=\mathsf S_b(\xi,W,W^0)$ is the unique strong solution of
\begin{equation}\label{eq:random-environment-SDE}
 dZ_t=b_t(W^0,Z_t)dt+\sqrt2\,dW_t,
 \qquad Z_{t_0}=\xi.
\end{equation}
The solution is causal up to null sets: for every $t$, the path
$Z|_{[t_0,t]}$ is measurable with respect to the completion, under the
law of the inputs, of the $\sigma$-field generated by
$(\xi,W|_{[0,t]},W^0|_{[0,t]})$.
\end{lemma}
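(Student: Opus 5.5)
The plan is to build $\mathsf S_b$ by a Picard-free route: first solve \eqref{eq:random-environment-SDE} strongly for each frozen environment path $\omega^0$, then glue the solutions measurably in $\omega^0$. For fixed $\omega^0\in\Omega^0$, the drift $(t,z)\mapsto b_t(\omega^0,z)$ is bounded and Borel on $[t_0,T]\times\T^d$, and the diffusion coefficient is the constant $\sqrt2$. By \cite[Thm.~1]{Veretennikov1981} there is then a unique strong solution, and it is a measurable functional of $(\xi,W)$. Pathwise uniqueness also holds, and by Yamada--Watanabe the solution is given by a functional $\Phi^{\omega^0}(\xi,W)$ that is adapted to the completed filtration of $(\xi,W)$.

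The main obstacle is joint Borel measurability in $\omega^0$; the existence of the functional in the previous paragraph is abstract and says nothing about dependence on $\omega^0$. To handle this, I will not invoke Yamada--Watanabe abstractly. Instead, I will construct the solution as a limit of explicit approximations that are measurable in all variables. Mollify $b$ in $z$ at scale $1/n$, keeping progressive measurability in $(t,\omega^0)$; call the result $b^n$. The mollified drift is Lipschitz in $z$, so the Picard/Euler iterates for $b^n$ are Borel in $(\xi,W,\omega^0)$ and causal by construction. Veretennikov's stability, equivalently the Zvonkin transform with constants depending only on $\|b\|_\infty$, $d$ and $T$, gives that for each fixed $\omega^0$ the solutions $Z^n$ converge in probability, uniformly in time, to the solution for $b$. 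Set $\mathsf S_b:=\lim Z^n$ along a fixed subsequence where the limit exists, and a fixed path elsewhere. Since the limsup of Borel maps is Borel, this yields a Borel map. Uniformity of the constants in $\omega^0$ makes it possible to choose the subsequence independently of $\omega^0$, via Borel--Cantelli applied to $\sup_{\omega^0}\Prob(\|Z^n-Z^{n+1}\|_\infty>2^{-n})$.

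Next, insert the random environment. Take $\xi,W,W^0$ independent, and condition on $W^0=\omega^0$. By independence and Fubini, $\mathsf S_b(\xi,W,\omega^0)$ solves the frozen equation for a.e.\ $\omega^0$. Because $b$ is $\operatorname{Prog}(\mathbb F^0)$-measurable, the map $\mathsf S_b(\xi,W,W^0)$ solves \eqref{eq:random-environment-SDE} with respect to the joint filtration. In this filtration $W$ remains a Brownian motion by independence. Uniqueness follows by the same argument: any other solution, conditioned on $W^0$, is a solution for a.e.\ frozen environment, and it therefore agrees with $\mathsf S_b$ by frozen pathwise uniqueness.

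Causality follows from the construction. Each approximation $Z^n|_{[t_0,t]}$ depends only on $(\xi,W|_{[0,t]},W^0|_{[0,t]})$, because $b^n$ on $[t_0,t]$ uses only $\omega^0|_{[0,t]}$ by progressive measurability. An almost-sure limit of such maps is measurable with respect to the completion of $\sigma(\xi,W|_{[0,t]},W^0|_{[0,t]})$, which is exactly the stated causality up to null sets.
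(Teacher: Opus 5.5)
Your route differs from the paper's, and it works after one repair. The paper proceeds abstractly. It gets weak existence from Girsanov's theorem and proves uniqueness in the larger class of jointly compatible solutions: it conditions on $W^0$, using that future increments of $W$ are independent of the joint past together with the whole path of $W^0$, and then applies Veretennikov's theorem to each frozen drift. It then obtains the jointly Borel map $\mathsf S_b$ and its adaptedness from Kurtz's compatible Yamada--Watanabe theorem. You instead build $\mathsf S_b$ as a limit of Lipschitz approximations. Their solution maps are explicitly Borel and exactly causal in the raw filtrations, so causality of the limit comes for free, and you avoid the compatibility machinery. The price is a stability theorem for bounded measurable drifts (Veretennikov's approximation scheme, or the Gy\"ongy--Krylov criterion). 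You also obtain uniqueness only among solutions adapted to the completed input filtration. That is what ``strong solution'' asserts, and it is all the paper later uses.

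The step that does not hold as written is the choice of a subsequence independent of $\omega^0$ via Borel--Cantelli for $\sup_{\omega^0}\Prob(\|Z^n-Z^{n+1}\|_\infty>2^{-n})$. The constants in the Zvonkin/Krylov stability estimates depend only on $\|b\|_\infty$, $d$, $T$. But the quantity they multiply, e.g.\ $\|b^n_\cdot(\omega^0,\cdot)-b_\cdot(\omega^0,\cdot)\|_{L^p([t_0,T]\times\T^d)}$, depends on the $L^p$-modulus of continuity of $b_t(\omega^0,\cdot)$. For a merely bounded measurable field this modulus is not uniform in $\omega^0$, so uniform constants give no uniform rate.

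The uniformity is also unnecessary. For each $\omega^0$, the probability over $(\xi,W)$ that $\|Z^n-Z^m\|_\infty>\delta$ is Borel in $\omega^0$ (by Tonelli) and tends to $0$ as $n,m\to\infty$. By dominated convergence, $(Z^n)$ is therefore Cauchy in probability under the product law of $(\xi,W,W^0)$. Extract a subsequence converging almost surely under that product law, and define $\mathsf S_b$ as its limit where it exists and a fixed path elsewhere; this is Borel. By Fubini, for a.e.\ $\omega^0$ the frozen sections converge almost surely, necessarily to the frozen Veretennikov solution. Your arguments for the equation, uniqueness and causality then go through with ``for every $\omega^0$'' replaced by ``for a.e.\ $\omega^0$''. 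Causality still holds because an almost sure limit, under the product law, of maps measurable with respect to $\sigma(\xi,W|_{[0,t]},W^0|_{[0,t]})$ is measurable with respect to its completion.
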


\begin{proof}
All input and output path spaces are standard Borel.  Girsanov's theorem,
starting from $\xi+\sqrt2(W-W_{t_0})$, gives a weak solution of
\eqref{eq:random-environment-SDE}.  Boundedness of $b$ gives Novikov's
condition, also conditionally on $(\xi,W^0)$.  Thus the marginal law of
$(\xi,W^0)$ is unchanged, while the corrected driver and $W^0$ form a
Brownian motion with zero cross-variation in a filtration containing $\xi$.
The three inputs have the prescribed product law.  This weak solution is
temporally compatible with $(\xi,W,W^0)$.

For uniqueness, place two jointly temporally compatible solutions over the
same input.  Compatibility keeps $W$ Brownian in their joint filtration.
Moreover, the increments of $W$ after any time $s$ are independent of
the joint past at time $s$ jointly with the future increments of $W^0$
(by compatibility), and are independent of those increments themselves
(the inputs being independent).  Hence they are independent of the
joint past enlarged by the whole path of $W^0$, so that $W$ remains a
Brownian motion in the joint filtration under the regular conditional
probability given $W^0$.
After conditioning on $W^0$, for almost every $\omega^0$ both solve the SDE
with the same Brownian motion and the deterministic bounded Borel drift
$(t,x)\mapsto b_t(\omega^0,x)$.  After lifting the drift periodically to
$\R^d$, pathwise uniqueness from \cite[Thm.~1]{Veretennikov1981} shows that
the two solutions agree.  By
\cite[Lem.~2.10 and Thm.~1.5]{Kurtz2014}, pointwise uniqueness in the
compatible class and weak existence yield a strong solution
$Z=\mathsf S_b(\xi,W,W^0)$ for one Borel map $\mathsf S_b$, jointly in all
three inputs.  By \cite[Prop.~2.13]{Kurtz2014}, this strong compatible
solution is adapted to the completed natural filtration of the inputs,
which is the asserted causality.
\end{proof}

\begin{proof}[Proof of Proposition~\ref{prop:common-comparison}]
The proof of Proposition~\ref{prop:tracking} applies to the shifted system
without change: the martingale $M^\varepsilon$ in
\eqref{eq:martingale-definition} contains
only the independent noises $W^1,\dots,W^N$, while $W^0$ merely enters the
adapted integrands.  Hence
\begin{equation}\label{eq:common-tracking}
 \sup_{t_0\leq t\leq T}
 \E d_1(\widetilde\nu_t,\widetilde\mu^N_t)
 \leq C_M(r_{N,d}+\sqrt h),
\end{equation}
and Lemma~\ref{lem:provisional} holds with tildes throughout.

By Lemma~\ref{lem:common-sections}, for almost every
fixed $\omega^{\rm id}$ the section
$(\widetilde\nu^{\omega^{\rm id}},
\widetilde\beta^{\omega^{\rm id}})$ is $\mathbb F^0$-adapted and satisfies
\eqref{eq:section-FP}.  For such a section define
\begin{equation}\label{eq:translated-shadow}
 \nu^{\omega^{\rm id}}_t
 :=(\tau_{\mathsf w_t})_\#\widetilde\nu^{\omega^{\rm id}}_t,
 \qquad
 \beta^{\omega^{\rm id}}_t(x)
 :=\widetilde\beta^{\omega^{\rm id}}_t(x-\mathsf w_t).
\end{equation}
We suppress the superscript $\omega^{\rm id}$ in the estimates below.
It\^o's formula for the translation by $\mathsf w$ and \eqref{eq:section-FP} show
that the pair in \eqref{eq:translated-shadow} is $\mathbb F^0$-adapted and
satisfies \eqref{eq:common-FP}.

We next verify that it is admissible for the strong formulation
\eqref{eq:common-U-definition}.  On a product extension carrying
$\xi,W,W^0$, with $\xi$ of law $m^N_{\bx}$ and $\xi,W,W^0$ independent,
consider
\[
 dZ_t=\widetilde\beta^{\omega^{\rm id}}_t(Z_t)dt+\sqrt2dW_t,
 \qquad Z_{t_0}=\xi.
\]
Lemma~\ref{lem:causal-solution}, applied with
$b=\widetilde\beta^{\omega^{\rm id}}$, gives a strong solution through
one Borel map of $(\xi,W,W^0)$, adapted to the completed joint
filtration of the inputs.  Conditional on
$\mathcal F^0_T$, its law solves \eqref{eq:section-FP}.  Uniqueness for the
Fokker--Planck equation with bounded measurable drift and nondegenerate
constant diffusion \cite{Figalli2008} identifies its
time-$t$ marginal with $\widetilde\nu^{\omega^{\rm id}}_t$.  Since the
latter is $\mathcal F^0_t$-measurable, the tower property gives the same
identity conditional on $\mathcal F^0_t$.  Therefore, setting
$X_t=Z_t+\mathsf w_t$,
\begin{equation*}
 \mathcal L(X_t\mid\mathcal F^0_t)=\nu^{\omega^{\rm id}}_t,
 \qquad
 \E\big[L(X_t,\beta_t(X_t))\mid\mathcal F^0_t\big]
 =\int_{\T^d}L(x,\beta_t(x))\nu_t(dx).
\end{equation*}
Thus $(\nu^{\omega^{\rm id}},\beta^{\omega^{\rm id}})$ is a competitor in
\eqref{eq:common-U-definition}.

Translations are isometries of $(\T^d,\dm)$, and
$\mu^N_t=(\tau_{\mathsf w_t})_\#\widetilde\mu^N_t$.  Consequently
\begin{equation*}
 d_1(\nu_t,\mu^N_t)
 =d_1(\widetilde\nu_t,\widetilde\mu^N_t).
\end{equation*}
Moreover, the proof of Lemma~\ref{lem:action}, applied after the same
translation to both arguments of $L$, gives
\begin{equation}\label{eq:common-action-comparison}
 \int_{\T^d}L(x,\beta_t(x))\nu_t(dx)
 \leq\frac1N\sum_{i=1}^NL(X^i_t,\alpha^i_t)
 +C_M\widetilde c_t.
\end{equation}
Indeed, convexity treats the drift variable exactly as before, while
the Lipschitz bound of Lemma~\ref{lem:lagrangian}(iii) and
$\dm(z+\mathsf w_t,Y^i_t+\mathsf w_t)=\dm(z,Y^i_t)$ treat the space variable.

For almost every fixed $\omega^{\rm id}$, use
$(\nu^{\omega^{\rm id}},\beta^{\omega^{\rm id}})$ in
\eqref{eq:common-U-definition} and take expectation only over $W^0$.
Then integrate the resulting inequality over $\omega^{\rm id}$.  Combining
\eqref{eq:common-tracking}--\eqref{eq:common-action-comparison} with the
$d_1$-Lipschitz bounds for $F$ and $G$, exactly as in the proof of
Proposition~\ref{prop:comparison}, yields
\[
 U^{\sigma_0}(t_0,m^N_{\bx})
 \leq J_N^{\sigma_0}(t_0,\bx;\balpha)
 +C_M(r_{N,d}+\sqrt h).
\]
Letting $h\downarrow0$ proves \eqref{eq:common-comparison}.
\end{proof}

\begin{proof}[Proof of Theorem~\ref{thm:common-noise}]
We first prove the hard inequality.  Smooth $F$ and $G$ as in
Lemma~\ref{lem:smoothing}.  For smooth costs, the common-noise particle value
is the classical solution of
\begin{multline}\label{eq:common-HJBN}
 -\partial_tv-\sum_{i=1}^N\Delta_{x^i}v
 -\frac{\sigma_0^2}{2}\sum_{i,j=1}^N
 \operatorname{tr}D^2_{x^ix^j}v
+\frac1N\sum_{i=1}^NH(x^i,ND_{x^i}v)
 =F(m^N_{\bx}),
\\
 v(T,\cdot)=G(m^N_{\cdot}).
\end{multline}
Equation \eqref{eq:common-HJBN} is again semilinear and uniformly parabolic
with smooth data (the idiosyncratic Laplacians provide the ellipticity,
uniformly in $\sigma_0$), and classical solvability and the verification
argument are as for Lemma~\ref{lem:finite}.  The gradient and feedback
estimates are
\cite[Lem.~3.1 and Rem.~3.2]{CardaliaguetDaudinJacksonSouganidis2023},
extended to the common-noise setting, with the same proof, at the start of
\cite[Sec.~4]{CardaliaguetDaudinJacksonSouganidis2023}, with the
common-noise parameter there given by $a_0=\sigma_0^2/2$.  The proof uses
the costs only through their $d_1$-Lipschitz constants, so the resulting
optimal feedback is bounded by $M_*$, uniformly in $N$, the smoothing
parameter, and $\sigma_0$.  Strong well-posedness of the closed-loop system
follows from \cite[Thm.~1]{Veretennikov1981}.
Applying
Proposition~\ref{prop:common-comparison} to this feedback and then using the
stability argument from the proof of Theorem~\ref{thm:main} gives
\begin{equation}\label{eq:common-hard}
 U^{\sigma_0}(t,m^N_{\bx})
 \leq V^{N,\sigma_0}(t,\bx)+Cr_{N,d}.
\end{equation}

We turn to the easy inequality.  We take an arbitrary admissible control in
\eqref{eq:common-U-definition} with $m_0=m^N_{\bx}$.  On canonical space it
has, after modification on a $dt\otimes d\Prob$-null set, a raw-progressive
factorization
\begin{equation}\label{eq:control-factorization}
 \alpha_t=a\big(t,\xi,W|_{[t_0,t]},W^0|_{[t_0,t]}\big)
\end{equation}
for a Borel nonanticipating map $a$.  This is the same elementary
progressive-section argument as in Lemma~\ref{lem:common-sections}.  For each
$i=1,\dots,N$, define
\[
 \alpha^i_t:=a\big(t,x^i,W^i|_{[t_0,t]},W^0|_{[t_0,t]}\big)
\]
and let $X^i$ solve \eqref{eq:common-particle-SDE} with this control.  These
controls are admissible: since the law $m^N_{\bx}$ of $\xi$ charges
each $x^i$ with mass at least $1/N$, their square-integrability follows by
disintegrating that of $\alpha$ with respect to $\xi$.

Write $m^i_t=\mathcal L(X^i_t\mid\mathcal F^0_t)$.  Conditional on
$\mathcal F^0_t$, the pairs $(X^i_t,\alpha^i_t)$ are independent, though not
necessarily identically distributed.  The mixture representation of the
law corresponding to \eqref{eq:control-factorization} gives
\begin{equation}\label{eq:conditional-mixture}
 m_t=\frac1N\sum_{i=1}^Nm^i_t,
 \qquad
 \E L(X_t,\alpha_t)
 =\frac1N\sum_{i=1}^N\E L(X^i_t,\alpha^i_t).
\end{equation}

By Lemma~\ref{lem:empirical}, applied conditionally on $\mathcal F^0_t$
with the conditional laws $m^i_t$ and their mixture $m_t$ from
\eqref{eq:conditional-mixture},
\begin{equation}\label{eq:common-conditional-empirical}
 \E\left[d_1\left(\frac1N\sum_{i=1}^N\delta_{X^i_t},m_t\right)
 \middle|\mathcal F^0_t\right]\leq Cr_{N,d}.
\end{equation}
By \eqref{eq:conditional-mixture}, the averaged action cost agrees exactly
with that of the mean field control.  The $d_1$-Lipschitz continuity of $F$
and $G$, together with \eqref{eq:common-conditional-empirical}, yields for
the replicated particle control
\[
 J_N^{\sigma_0}(t,\bx;\balpha)
 \leq \E\bigg[\int_t^T\big(L(X_s,\alpha_s)+F(m_s)\big)ds+G(m_T)\bigg]
 +C r_{N,d}.
\]
Taking the infimum over mean field controls gives
\begin{equation}\label{eq:common-easy}
 V^{N,\sigma_0}(t,\bx)
 \leq U^{\sigma_0}(t,m^N_{\bx})+Cr_{N,d}.
\end{equation}
Together, \eqref{eq:common-hard} and \eqref{eq:common-easy} prove
\eqref{eq:common-two-sided}.
\end{proof}

\begin{remark}\label{rem:common-scope}
The sharp one-dimensional rate of Theorem~\ref{thm:one-dim} involves additional
difficulties in this setting, and we do not pursue it here.
\end{remark}

\subsection*{Acknowledgments}
The author thanks Joe Jackson and Alpár Mészáros for helpful comments on the first version of the paper.

\appendix
\addcontentsline{toc}{section}{Appendices}
\addtocontents{toc}{\protect\setcounter{tocdepth}{0}}

\section{Auxiliary lemmas}\label{app:construction}

\subsection{Deferred proofs}\label{app:deferred}

\begin{proof}[Proof of Lemma~\ref{lem:selection}]
Consider the compact metric space
$\mathcal D_N(\nu):=\{(\pi_i)_{i=1}^N:\pi_i\geq0,\ \pi_i(\T^d)=1/N,\
\sum_i\pi_i=\nu\}$, a closed subset of the compact space of $N$-tuples of
sub-probability measures endowed with the weak topology, and the continuous
cost
$\mathcal C(\pi,\by):=\sum_i\int\dm(y,y^i)\pi_i(dy)$.

Decompositions correspond exactly to
couplings.  Given $\pi\in\mathcal D_N(\nu)$, the measure
$\gamma:=\sum_i\pi_i\otimes\delta_{y^i}$ is a coupling of $\nu$ and
$m^N_{\by}$ with $\int\dm\,d\gamma=\mathcal C(\pi,\by)$.  Conversely, given a
coupling $\gamma$ of $\nu$ and $m^N_{\by}$, write
$m^N_{\by}=\sum_{x\in S}c_x\delta_x$ with $S$ the set of distinct points among
$y^1,\dots,y^N$ and $c_x=\#\{i:y^i=x\}/N$, and set
$\pi_i:=\frac{1}{N c_{y^i}}\,\gamma(\cdot\times\{y^i\})$.  Then
$\pi\in\mathcal D_N(\nu)$ and $\mathcal C(\pi,\by)=\int\dm\,d\gamma$.  Since
optimal couplings exist and realize $d_1$ (Kantorovich duality on a compact
space, \cite[Chap.~4--5]{Villani2009}), the minimum of
$\mathcal C(\cdot,\by)$ on $\mathcal D_N(\nu)$ equals $d_1(\nu,m^N_{\by})$
and is attained.

The constraint correspondence $(\nu,\by)\mapsto\mathcal D_N(\nu)$ has
nonempty compact values and closed graph (the defining constraints pass
to weak limits), so it is upper hemicontinuous into a compact space,
hence weakly measurable.  Since $\mathcal C$ is jointly continuous, the
measurable maximum theorem \cite[Thm.~18.19]{AliprantisBorder2006}
provides a Borel measurable selector of the argmin, as claimed.
\end{proof}

\begin{proof}[Proof of Lemma~\ref{lem:FP}]
Work on the full-measure event on which $t\mapsto\alpha_t(\omega)$ is Borel
with $|\alpha^i_t|\leq M$ for a.e.\ $t$.  Fix $\varphi\in C^\infty(\T^d)$,
$k$, and $i$, and consider on $[s_k,s_{k+1})$ the function
$g(t):=\int_{\T^d}\varphi\,d\nu_{i,t}$.  Using
\eqref{eq:component-definition}, self-adjointness of $P_\tau$, and the
commutation of $P_\tau$ with translations,
\[
 g(t)=\int_{\T^d}(P_{t-s_k}\varphi)\big(z+A^k_i(t)\big)\,\pi_{i,k}(dz) .
\]
The map $(t,z)\mapsto(P_{t-s_k}\varphi)(z+A^k_i(t))$ is Lipschitz in $t$
(uniformly in $z$) and continuously differentiable in $t$ for a.e.\ $t$, with
\[
 \partial_t\Big[(P_{t-s_k}\varphi)\big(z+A^k_i(t)\big)\Big]
 =(P_{t-s_k}\Delta\varphi)\big(z+A^k_i(t)\big)
 +\alpha^i_t\cdot(P_{t-s_k}\nabla\varphi)\big(z+A^k_i(t)\big),
\]
since $\partial_tP_\tau\varphi=\Delta P_\tau\varphi=P_\tau\Delta\varphi$ and
$\frac{d}{dt}A^k_i(t)=\alpha^i_t$ a.e.  Integrating against $\pi_{i,k}$ and
undoing the same identifications,
\[
 g'(t)=\int_{\T^d}\big(\Delta\varphi
 +\alpha^i_t\cdot\nabla\varphi\big)\,d\nu_{i,t}
 \qquad\text{for a.e.\ }t\in(s_k,s_{k+1}),
\]
with $g$ absolutely continuous on $[s_k,s_{k+1})$.  Summing over $i$ and
recalling $q_t=\beta_t\nu_t$ from \eqref{eq:beta-definition}, the map
$t\mapsto\int_{\T^d}\varphi\,d\nu_t$ is absolutely continuous on each grid
interval with derivative
$\int_{\T^d}\big(\Delta\varphi+\beta_t\cdot\nabla\varphi\big)\,d\nu_t$
a.e.  Since $t\mapsto\nu_t$ is continuous across the grid times (the
recoupling changes only the decomposition, not the measure), the same
holds on all of $[t_0,T]$.  Equivalently, \eqref{eq:shadow-FP} holds
against every product test function $\eta(t)\varphi(y)$ with
$\eta\in C^\infty_c((t_0,T))$, hence, by density of sums of such
products, in the sense of distributions.  The initial condition is
\eqref{eq:shadow-initial}.  Finally, $\beta$ is bounded by $M$ and jointly measurable, and
$t\mapsto\nu_t$ is continuous, so $(\nu^\omega,\beta^\omega)$ is admissible in
\eqref{eq:U-definition}.
\end{proof}

\begin{lemma}[Regularity of the smoothed noise]\label{lem:noise-version}
For every $\varepsilon>0$ and $t\in[t_0,T]$, the random field
$M^\varepsilon_t$ of \eqref{eq:martingale-definition} admits a version
that is continuous in $y$ and jointly measurable in $(y,\omega)$.
\end{lemma}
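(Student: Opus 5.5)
The plan is to produce the version directly from the Kolmogorov continuity theorem applied to the random field $y\mapsto M^\varepsilon_t(y)$ on the compact manifold $\T^d$, after first settling joint measurability of each fixed-$y$ stochastic integral. Fix $\varepsilon>0$ and $t$. For each $y$, the integrand $s\mapsto p_{t-s+\varepsilon}(y-X^i_s)$ is bounded, by $\sup_{\tau\geq\varepsilon}\|p_\tau\|_{L^\infty}<\infty$, and progressively measurable. Hence $M^\varepsilon_t(y)$ is well defined as an element of $L^2(\Omega)$, up to null sets.

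The key estimate is a moment bound on increments in $y$. The heat kernel at times $\geq\varepsilon$ is smooth with $\sup_{\tau\geq\varepsilon}\|\nabla p_\tau\|_{L^\infty}\leq C_\varepsilon$, which follows from the Gaussian-sum formula \eqref{eq:heat-kernel-def} or from the Fourier series. This gives
\[
 |p_{t-s+\varepsilon}(y-X^i_s)-p_{t-s+\varepsilon}(y'-X^i_s)|\leq C_\varepsilon\,\dm(y,y')
\]
uniformly in $s$, $\omega$ and $i$. By the Burkholder--Davis--Gundy inequality for the sum of independent Brownian integrals, for every $q\geq2$,
\[
 \E|M^\varepsilon_t(y)-M^\varepsilon_t(y')|^q\leq C_{q}\,\E\Big(\frac{2}{N^2}\sum_{i=1}^N\int_{t_0}^t C_\varepsilon^2\dm(y,y')^2ds\Big)^{q/2}\leq C_{q,\varepsilon,N,T}\,\dm(y,y')^q .
\]
Choosing $q>d$, the Kolmogorov--Chentsov theorem on $\T^d$ (work on a finite atlas of unit cubes, or directly on $[0,1]^d$ with periodic identification) yields a modification $\widetilde M^\varepsilon_t$ that is H\"older, in particular continuous, in $y$ for every $\omega$. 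The constants may depend on $N$ and $\varepsilon$; only existence matters here.

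Joint measurability then comes for free. A field that is continuous in $y$ for each $\omega$ and measurable in $\omega$ for each $y$ is Carath\'eodory, hence jointly $\mathcal B(\T^d)\otimes\F$-measurable. Explicitly, it is the pointwise limit of the measurable step fields obtained by evaluating at dyadic grid points. Measurability in $\omega$ for fixed $y$ holds because the modification is a limit of the original values along dyadic points, each of which is $\F$-measurable.

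The only delicate step is making the construction via the dyadic limit, so that measurability is not lost when modifying on null sets. The standard Kolmogorov construction already defines the version as a limit of its values on a countable dense set, off a single null set where it is set to zero, and the usual conditions guarantee that this set belongs to $\F$. No further obstacle is expected.
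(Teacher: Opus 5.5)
Your proposal is correct and is essentially the paper's proof. Both arguments run the same steps:
\begin{itemize}
\item a deterministic Lipschitz bound $C_\varepsilon\,\dm(y,y')$ on the difference of the heat-kernel integrands;
\item the Burkholder--Davis--Gundy inequality, giving $\E|M^\varepsilon_t(y)-M^\varepsilon_t(y')|^{q}\leq C\,\dm(y,y')^{q}$;
\item the Kolmogorov criterion with $q>d$.
\end{itemize}
Your remarks on the periodic identification and on joint measurability (Carath\'eodory via dyadic limits) only spell out details that the paper leaves implicit in its appeal to Kolmogorov's criterion.
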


\begin{proof}
For each $y$, the integrand
$p_{t-s+\varepsilon}(y-X^i_s)$ is bounded and progressive.  For
$y,y'\in\T^d$, the quadratic variation of the martingale (in the upper
integration limit) defining $M^\varepsilon_t(y)-M^\varepsilon_t(y')$ is
bounded, deterministically, by
\[
 \frac{2T}{N}\sup_{0\leq\tau\leq T}
 \big\|p_{\tau+\varepsilon}(y-\cdot)-p_{\tau+\varepsilon}(y'-\cdot)
 \big\|_{L^\infty}^2
 \leq C_{\varepsilon}\,\dm(y,y')^2 ,
\]
by the mean value theorem, $\nabla p_{\tau+\varepsilon}$ being bounded
uniformly over $\tau\in[0,T]$.  Hence, by the Burkholder--Davis--Gundy
inequality, for every $m\geq1$,
\[
 \E\big|M^\varepsilon_t(y)-M^\varepsilon_t(y')\big|^{2m}
 \leq C_{\varepsilon,m}\,\dm(y,y')^{2m} .
\]
Choosing $2m>d$, the Kolmogorov continuity criterion provides a
modification of $y\mapsto M^\varepsilon_t(y)$ that is continuous in $y$
and jointly measurable.
\end{proof}

\begin{proof}[Proof of Lemma~\ref{lem:regular-pairs}]
Let $F^n,G^n$ be the approximations of
Lemma~\ref{lem:smoothing}, and set
\[
 \delta_n:=T\|F^n-F\|_{L^\infty}+\|G^n-G\|_{L^\infty}.
\]
For fixed $n$, the smooth Fourier cylinders $F^n,G^n$ are
$d_1$-semiconcave and have spatially $C^2$ linear derivatives.  Hence
\cite[Lem.~3.1]{DaudinDelarueJackson2024} gives an optimal pair
$(m^n,\alpha^n)$ for $U^n(0,m_0)$ such that $\alpha^n$ is continuously
differentiable in space and
\begin{equation}\label{eq:regular-feedback}
 \|\alpha^n\|_{L^\infty}
 +\sup_{0\leq t<T}\sqrt{T-t}\,
   \|D_x\alpha^n_t\|_{L^\infty}\leq C.
\end{equation}
The constant is independent of $n$, since the cited estimate uses only
the $d_1$-Lipschitz constants of $F^n,G^n$, which are $L_F,L_G$ by
Lemma~\ref{lem:smoothing}(b).

Let $Z$ be the optimal diffusion with feedback $\alpha^n$, and, using
the same Brownian motion and initial condition, define
\[
 d\widetilde Z_t
 =\1_{[\tau,T]}(t)\alpha^n_t(\widetilde Z_t)\,dt+\sqrt2\,dW_t,
 \qquad \operatorname{Law}(\widetilde Z_0)=m_0.
\]
Then $\operatorname{Law}(\widetilde Z_\tau)=P_\tau m_0$, and
\eqref{eq:regular-feedback} followed by Gr\"onwall's inequality yields
\begin{equation}\label{eq:regular-coupling}
 \sup_{0\leq t\leq T}
 \E\,\dm(Z_t,\widetilde Z_t)\leq C\tau.
\end{equation}

We write $\widetilde m_t=\operatorname{Law}(\widetilde Z_t)$.  On
$[0,\tau]$ the action changes by at most $C\tau$.  On $[\tau,T]$,
\eqref{eq:L-space}, the local boundedness of $D_aL$, and
\eqref{eq:regular-feedback} give
\[
 \left|L\big(Z_t,\alpha^n_t(Z_t)\big)
 -L\big(\widetilde Z_t,\alpha^n_t(\widetilde Z_t)\big)\right|
 \leq C\big(1+(T-t)^{-1/2}\big)\dm(Z_t,\widetilde Z_t).
\]
Thus \eqref{eq:regular-coupling}, the integrability of
$(T-t)^{-1/2}$, and the $d_1$-Lipschitz bounds for $F^n,G^n$ show that
the heat segment followed by $(\widetilde m,\alpha^n)$ has cost at most
$U^n(0,m_0)+C\tau$.  Lemma~\ref{lem:stability} then bounds its cost for
the original data by
\[
 U(0,m_0)+C\tau+2\delta_n.
\]
We choose $n$ so large that $2\delta_n\leq\eta$, and set
$(m,\alpha):=(\widetilde m,\alpha^n)$ on $[\tau,T]$.  This proves
\eqref{eq:regular-cost} and the bound on $\alpha$ in
\eqref{eq:regular-pair}.

Finally, the spliced diffusion has a bounded drift on $[0,T]$, with a
bound independent of $n,\tau,m_0$.  The heat-kernel upper bound gives
$\|\rho_t\|_{L^\infty}\leq C(1+t^{-1/2})$.  Positivity and the
classical regularity on $[\tau,T]$ follow from standard parabolic
regularity.  This completes
\eqref{eq:regular-pair}; \eqref{eq:chi-range} follows from
$\int\rho^3\leq\|\rho\|_{L^\infty}^2$ and Jensen's inequality.
\end{proof}

\subsection{Properties of the Lagrangian}

The following standard properties of the Lagrangian \eqref{eq:Legendre}
are used in Sections~\ref{sec:comparison} and~\ref{sec:one-dim}.

\begin{lemma}[Properties of the Lagrangian]\label{lem:lagrangian}
Under \ref{ass:H-convex}--\ref{ass:H-x}, the Lagrangian \eqref{eq:Legendre}
has the following properties.
\begin{enumerate}[label=\textup{(\roman*)},leftmargin=2.6em]
\item For each $(x,a)$, the supremum in \eqref{eq:Legendre} is attained at a
unique $p(x,a)$, characterized by $a=-D_pH(x,p(x,a))$, and
$|p(x,a)|\leq C(1+|a|)$.
\item $L\in C^1(\T^d\times\R^d)$ and $L(x,\cdot)$ is uniformly convex,
with $D_aL(x,a)=-p(x,a)$.  Moreover $H(x,p)=\sup_a\{-L(x,a)-a\cdot p\}$, with the
supremum attained at $a=-D_pH(x,p)$.
\item $D_xL(x,a)=-D_xH\big(x,p(x,a)\big)$, and
$|D_xL(x,a)|\leq C(1+|a|)$ for all $(x,a)$.
\item $|D_pH(x,p)|\leq C(1+|p|)$ for all $(x,p)$.
\end{enumerate}
\end{lemma}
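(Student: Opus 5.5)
The four items follow from Legendre duality for a uniformly convex $C^2$ Hamiltonian, and I would prove them in the order (iv), (i), (ii), (iii), each using only \ref{ass:H-convex}--\ref{ass:H-x} and compactness of $\T^d$.

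\emph{Item (iv).} Write $D_pH(x,p)=D_pH(x,0)+\int_0^1D^2_{pp}H(x,sp)p\,ds$. The upper bound $D^2_{pp}H\leq C_0I_d$ gives $|D_pH(x,p)|\leq\sup_x|D_pH(x,0)|+C_0|p|$. The supremum is finite because $D_pH(\cdot,0)$ is continuous on the compact torus.

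\emph{Item (i).} Fix $(x,a)$. The function $p\mapsto-H(x,p)-a\cdot p$ is $C^2$ and strictly concave, with Hessian $\leq-C_0^{-1}I_d$. It is therefore coercive to $-\infty$, so its supremum is attained, and the maximizer is unique and characterized by the first-order condition $a=-D_pH(x,p)$. For the bound, strong monotonicity of $D_pH(x,\cdot)$ gives
\[
C_0^{-1}|p|^2\leq(D_pH(x,p)-D_pH(x,0))\cdot p=(-a-D_pH(x,0))\cdot p,
\]
hence $|p(x,a)|\leq C_0(|a|+\sup_x|D_pH(x,0)|)$. The map $p\mapsto -D_pH(x,p)$ is a $C^1$ diffeomorphism of $\R^d$: it is injective by strict monotonicity and surjective by coercivity. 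Its Jacobian $-D^2_{pp}H$ is invertible, so the implicit function theorem makes $(x,a)\mapsto p(x,a)$ of class $C^1$.

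\emph{Items (ii) and (iii).} From $L(x,a)=-H(x,p(x,a))-a\cdot p(x,a)$ and the chain rule, the terms containing derivatives of $p$ cancel, because $D_pH(x,p)+a=0$ at the maximizer. This envelope argument gives $D_aL=-p(x,a)$ and $D_xL=-D_xH(x,p(x,a))$. Both are continuous, so $L\in C^1$. Differentiating $D_aL=-p$ once more and using $D_ap=-(D^2_{pp}H)^{-1}$ yields $D^2_{aa}L=(D^2_{pp}H)^{-1}\geq C_0^{-1}I_d$, which is uniform convexity. The bound in (iii) follows from \ref{ass:H-x} combined with $|p(x,a)|\leq C(1+|a|)$ from (i). For the converse duality, $H(x,\cdot)$ is convex and finite, hence lower semicontinuous, so the Fenchel--Moreau theorem applied to $q\mapsto H(x,-q)$ gives $H(x,p)=\sup_a\{-L(x,a)-a\cdot p\}$. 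The supremum is attained at $a$ with $-D_aL(x,a)=p$, that is $p(x,a)=p$, that is $a=-D_pH(x,p)$.

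The main delicate point is keeping the sign conventions straight in this nonstandard Legendre transform with the minus signs. The argument needs no genuine obstacle beyond the uniform constants, and these come from compactness of $\T^d$ and $\sup_x|D_pH(x,0)|<\infty$.
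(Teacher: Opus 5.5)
Your proposal is correct and follows essentially the same route as the paper: strict concavity and coercivity give existence and uniqueness of $p(x,a)$, strong monotonicity of $D_pH(x,\cdot)$ gives the linear bound, the implicit function and envelope theorems give (ii)--(iii) with $D^2_{aa}L=(D^2_{pp}H)^{-1}$, Fenchel--Moreau gives the converse duality, and $D^2_{pp}H\leq C_0I_d$ together with $\sup_x|D_pH(x,0)|<\infty$ gives (iv). The differences are cosmetic: you treat the items in a different order, write the monotonicity estimate in inner-product form, and spell out the integral formula for (iv), which the paper calls immediate.
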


\begin{proof}
(i) The map $p\mapsto-H(x,p)-a\cdot p$ is strictly concave by
\ref{ass:H-convex} and tends to $-\infty$ as $|p|\to\infty$ (since
$H(x,p)\geq H(x,0)+D_pH(x,0)\cdot p+\frac{1}{2C_0}|p|^2$), so the supremum is
attained at a unique point $p(x,a)$, characterized by the first-order
condition $a=-D_pH(x,p(x,a))$.  Set
$\kappa_0:=\sup_{x\in\T^d}|D_pH(x,0)|$, which is finite since $D_pH$ is
continuous and $\T^d$ is compact.  Strong monotonicity of $D_pH(x,\cdot)$
(again \ref{ass:H-convex}) gives
$|a|=|D_pH(x,p(x,a))|\geq C_0^{-1}|p(x,a)|-\kappa_0$, which yields the bound.

(ii) By the implicit function theorem ($D^2_{pp}H$ is invertible), $p(\cdot)$
is $C^1$, hence so is
$L(x,a)=-H(x,p(x,a))-a\cdot p(x,a)$.  The envelope theorem gives
$D_aL(x,a)=-p(x,a)$, and differentiating the relation $a=-D_pH(x,p(x,a))$
yields $D^2_{aa}L=(D^2_{pp}H)^{-1}(x,p(x,a))$, so that
$C_0^{-1}I_d\leq D^2_{aa}L\leq C_0I_d$.  The formula for $H$ holds
because $H(x,\cdot)$ is convex and finite: writing $L(x,a)=H^*(x,-a)$
with $H^*$ the convex conjugate in $p$, it reads
$H^{**}(x,p)=H(x,p)$, the equality of a finite convex function with
its double conjugate.  The supremum is attained at $a=-D_pH(x,p)$ by
the smoothness of $H$.

(iii) The formula is the envelope theorem in $x$, and the bound
follows from it by (i) and \ref{ass:H-x}.

(iv) is immediate from $D^2_{pp}H\leq C_0I_d$ and the definition of
$\kappa_0$.
\end{proof}

\subsection{Heat-kernel, cutoff, empirical-measure, and quantization
estimates}

\begin{lemma}[Heat-kernel bounds]\label{lem:heat-kernel}
For $0<\tau\leq T+1$,
\[
 \|\nabla p_\tau\|_{L^1(\T^d)}\leq C\tau^{-1/2},
 \qquad
 \|p_\tau\|_{L^2(\T^d)}^2\leq C\big(1+\tau^{-d/2}\big),
 \qquad
 \int_{\T^d}\dm(u,0)\,p_\tau(u)\,du\leq C\sqrt\tau .
\]
\end{lemma}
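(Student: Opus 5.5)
The plan is to work throughout with the periodization formula \eqref{eq:heat-kernel-def}, $p_\tau(x)=\sum_{k\in\Z^d}g_\tau(x+k)$ with $g_\tau(x)=(4\pi\tau)^{-d/2}e^{-|x|^2/(4\tau)}$ the Gaussian on $\R^d$, and to reduce each of the three bounds to the corresponding Gaussian integral on $\R^d$ by unfolding the torus. The key identity is that for any $\Z^d$-periodic nonnegative function or any integrand of the form $\Phi(\nabla p_\tau)$ with $\Phi$ subadditive, integration over a fundamental cell of a periodized sum is dominated by integration over $\R^d$ of the unperiodized term.

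\emph{Gradient bound.} Since $\nabla p_\tau=\sum_k\nabla g_\tau(\cdot+k)$, the triangle inequality and unfolding give $\|\nabla p_\tau\|_{L^1(\T^d)}\leq\|\nabla g_\tau\|_{L^1(\R^d)}$. A direct computation, $\nabla g_\tau(x)=-\frac{x}{2\tau}g_\tau(x)$, together with the scaling $x=\sqrt\tau\,u$, gives $\|\nabla g_\tau\|_{L^1(\R^d)}=\tau^{-1/2}\int_{\R^d}\frac{|u|}{2}g_1(u)\,du=C\tau^{-1/2}$, valid for all $\tau>0$.

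\emph{Moment bound.} Since $\dm(u,0)\leq|u+k|$ for every lift $u+k$, unfolding gives $\int_{\T^d}\dm(u,0)p_\tau(u)\,du\leq\int_{\R^d}|x|g_\tau(x)\,dx=C\sqrt\tau$, again by scaling. (Alternatively, the bound $\dm\leq\sqrt d/2$ shows it is trivial for $\tau\geq1$.)

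\emph{$L^2$ bound.} This is the only step where the periodization is not simply dominated, and I expect it to need the restriction $\tau\leq T+1$. I will use Parseval: $\|p_\tau\|_{L^2}^2=\sum_{k\in\Z^d}e^{-8\pi^2|k|^2\tau}=\big(\sum_{n\in\Z}e^{-8\pi^2n^2\tau}\big)^d$. Bounding the one-dimensional sum by $1+2\int_0^\infty e^{-8\pi^2s^2\tau}ds=1+c\tau^{-1/2}$ yields $\|p_\tau\|_{L^2}^2\leq(1+c\tau^{-1/2})^d\leq C(1+\tau^{-d/2})$, since $(1+a)^d\leq 2^{d-1}(1+a^d)$. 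This Parseval route actually works for all $\tau>0$, so the restriction $\tau\leq T+1$ is harmless; the main care is just in the elementary sum-to-integral comparison. All constants depend only on $d$, as required.
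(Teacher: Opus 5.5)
Your proposal is correct and follows the paper's proof: the gradient and moment bounds are obtained exactly as there, by unfolding the periodization onto $\R^d$ and scaling, and the $L^2$ bound likewise comes from Parseval's identity. The only difference is in the $L^2$ bound: you compare the lattice sum with an integral after factoring it into one-dimensional sums, which works for every $\tau>0$, whereas the paper compares the $d$-dimensional sum directly with a Gaussian integral and uses $\tau\leq T+1$ to absorb the resulting constant.
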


\begin{proof}
Let $g_\tau$ be the Gaussian kernel on $\R^d$,
$g_\tau(x)=(4\pi\tau)^{-d/2}e^{-|x|^2/4\tau}$, so that
$p_\tau(x)=\sum_{k\in\Z^d}g_\tau(x+k)$.  By scaling,
$\|\nabla p_\tau\|_{L^1(\T^d)}\leq\|\nabla g_\tau\|_{L^1(\R^d)}
=C\tau^{-1/2}$.

By Parseval's identity and comparison with a Gaussian integral,
\[
 \|p_\tau\|_{L^2}^2=\sum_{k\in\Z^d}e^{-8\pi^2|k|^2\tau}
 \leq1+C\int_{\R^d}e^{-4\pi^2|\xi|^2\tau}\,d\xi
 =1+C\tau^{-d/2},
\]
where the comparison uses $\tau\leq T+1$.

Finally, since $\dm(u,0)\leq|u+k|$ for every $k\in\Z^d$,
\[
 \int_{\T^d}\dm(u,0)\,p_\tau(u)\,du
 \leq\int_{\R^d}|x|\,g_\tau(x)\,dx=C\sqrt\tau ,
\]
by scaling.
\end{proof}

The next lemma provides the smooth frequency cutoff used in
Section~\ref{sec:one-dim}.  It is stated on the circle, where it is
needed.

\begin{lemma}[Smooth frequency cutoff]\label{lem:cutoff}
Fix a smooth even $\chi:\R\to[0,1]$ with $\chi=1$ on $[-1/2,1/2]$ and
$\chi=0$ outside $(-1,1)$, and for $K\geq1$ define the Fourier
multiplier $S_K$ on finite measures on $\T$ by
$\widehat{S_K\sigma}(n)=\chi(n/K)\,\widehat\sigma(n)$.  Then, for
$0<u\leq T$:
\begin{enumerate}[label=\textup{(\roman*)},leftmargin=2.6em]
\item $\|S_K\sigma\|_{L^1(\T)}\leq C\,|\sigma|(\T)$ for every
finite signed measure $\sigma$, with $C$ independent of $K$;
\item $\|(I-S_K)p_u\|_{L^1(\T)}\leq Ce^{-cK^2u}$ and
$\int_0^T\|(I-S_K)p_u\|_{L^2(\T)}^2\,du\leq CK^{-1}$.
\end{enumerate}
\end{lemma}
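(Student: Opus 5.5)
Both parts reduce to the Fourier-side description of $S_K$ and $I-S_K$. Write $\check\chi_K$ for the periodic function with Fourier coefficients $\chi(n/K)$, $n\in\Z$, so that $S_K\sigma=\check\chi_K\star\sigma$.

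\emph{Part (i).} It suffices to show $\|\check\chi_K\|_{L^1(\T)}\leq C$ uniformly in $K$, since then $\|S_K\sigma\|_{L^1}\leq\|\check\chi_K\|_{L^1}|\sigma|(\T)$ by Young's inequality for measures. By Poisson summation,
\[
 \check\chi_K(x)=\sum_{j\in\Z}K\,\widehat{\chi}^{\R}\big(K(x+j)\big),
\]
where $\widehat\chi^{\R}$ is the Fourier transform on $\R$. Because $\chi$ is smooth with compact support, $\widehat\chi^{\R}$ is a Schwartz function. The periodization of a function therefore has $L^1(\T)$ norm at most $\|K\widehat\chi^{\R}(K\cdot)\|_{L^1(\R)}=\|\widehat\chi^{\R}\|_{L^1(\R)}$, and this bound does not depend on $K$.

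\emph{Part (ii), the $L^1$ bound.} The function $(I-S_K)p_u$ has coefficients $(1-\chi(n/K))e^{-4\pi^2n^2u}$, and these vanish for $|n|\leq K/2$. I would factor this as $P_{u/2}$ applied to $(I-S_K)p_{u/2}$. Then
\[
 \|(I-S_K)p_u\|_{L^1}\leq\|(I-S_K)p_u\|_{L^\infty}\leq\sum_{|n|>K/2}e^{-4\pi^2n^2u}.
\]
For $K^2u\gtrsim1$ this tail is at most $Ce^{-cK^2u}(1+u^{-1/2})$. The factor $u^{-1/2}$ must then be absorbed, and the clean way to do it is to split into two regimes.
\begin{itemize}
\item When $K^2u\leq1$, the trivial bound $\|(I-S_K)p_u\|_{L^1}\leq\|p_u\|_{L^1}+\|S_Kp_u\|_{L^1}\leq1+C$ from part (i) suffices, since $e^{-cK^2u}\geq e^{-c}$ there.
\item When $K^2u\geq1$, we have $u^{-1/2}\leq K$. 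The tail sum is then bounded by $CK^{-1}u^{-1}e^{-cK^2u}$ via comparison with a Gaussian integral starting at $K/2$. This is at most $CKe^{-cK^2u}\cdot(K^2u)^{-1}\leq C'e^{-c'K^2u}$, after absorbing polynomial factors of $K^2u$ into a smaller exponential rate.
\end{itemize}
The remaining stray factor of $K$ is the step I expect to be the main obstacle. To remove it I would instead bound the $L^1$ norm by a weighted $L^2$ norm, $\|f\|_{L^1}\leq C\|f\|_{L^2}^{1/2}\|xf\|_{L^2}^{1/2}$, and estimate $\|xf\|_{L^2}$ through the discrete derivative of the coefficients in $n$. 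Both norms are of size $(\text{poly in }K,u^{-1})\,e^{-cK^2u}$. The powers of $K$ combine into powers of $K^2u$ times $u^{\pm1/2}$, and since $u\leq T$ and $K^2u\geq1$ these are absorbed by the exponential.

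\emph{Part (ii), the $L^2$ bound.} This follows from Parseval:
\[
 \int_0^T\|(I-S_K)p_u\|_{L^2}^2\,du\leq\sum_{|n|>K/2}\int_0^\infty e^{-8\pi^2n^2u}\,du=\sum_{|n|>K/2}\frac{1}{8\pi^2n^2}\leq\frac{C}{K}.
\]
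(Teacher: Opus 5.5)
Your part (i), your reduction of the case $K^2u\leq1$ to part (i), and your $L^2$ estimate in (ii) are exactly the paper's arguments.

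\emph{The $L^1$ estimate for $K^2u\geq1$.} Delete the $L^\infty$ detour rather than patching it. The final inequality of your second bullet, $CK(K^2u)^{-1}e^{-cK^2u}\leq C'e^{-c'K^2u}$, is false, as your next sentence concedes. No sharper tail estimate can save that route: $f:=(I-S_K)p_u$ has nonnegative Fourier coefficients, so at $u=K^{-2}$ one has $\|f\|_{L^\infty}\geq f(0)\geq cK$. The factorization through $P_{u/2}$ is also never used.

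Your weighted-$L^2$ step, on the other hand, does close, and it is where the proof actually happens. Set $a:=4\pi^2K^2u\geq1$.
\begin{itemize}
\item Realize the weight through $|x|\leq\frac14|e^{2\pi ix}-1|$ on $[-\frac12,\frac12)$. Then $(e^{2\pi ix}-1)f$ has Fourier coefficients $\widehat f(n-1)-\widehat f(n)$.
\item Comparing the lattice sums with Gaussian integrals gives $\|f\|_{L^2}^2\lesssim(1+K/a)e^{-a/2}$ and $\|(e^{2\pi ix}-1)f\|_{L^2}^2\lesssim K^{-1}(1+a^2)e^{-a/2}$.
\item Since $K\geq1$, the product of these carries no net power of $K$. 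Hence $\|f\|_{L^1}\lesssim(\|f\|_{L^2}\|xf\|_{L^2})^{1/2}\lesssim e^{-a/8}$.
\end{itemize}
The scale invariance of your interpolation inequality is essential here. The factor $K^{1/2}$ in $\|f\|_{L^2}$ must cancel exactly against the $K^{-1/2}$ produced by the difference quotients.

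\emph{Comparison with the paper.} The paper obtains the same cancellation by rescaling before estimating. It applies Poisson summation a second time, exactly as in (i). With $m(\eta):=(1-\chi(\eta))e^{-a\eta^2}$ one has $\widehat f(n)=m(n/K)$, so $f$ is the periodization of $K\widehat m(K\cdot)$. Hence $\|f\|_{L^1(\T)}\leq\|\widehat m\|_{L^1(\R)}$, a quantity in which $K$ no longer appears.

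A crude, non-scale-invariant bound then suffices. The paper uses $\|\widehat m\|_{L^1(\R)}\leq C\|m\|_{H^1(\R)}$ (Cauchy--Schwarz against $(1+\xi^2)^{-1}$), applied after extracting the factor $e^{-a/8}$ from $m$ on the support of $1-\chi$. This gives $Ce^{-a/8}a^{1/2}\leq Ce^{-a/16}$.

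Both arguments rest on the same mechanism: spatial decay of $f$ follows from smoothness of its coefficients in $n$. The paper's rescaling makes uniformity in $K$ automatic and reuses the device of part (i). Your version stays on the lattice and pays for it in two ways: it needs the sharper interpolation inequality, and it needs sum-versus-integral bookkeeping near $|n|\approx K/2$, including the $K^{-1}\chi'(n/K)$ term in the difference quotients.
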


\begin{proof}
(i) $S_K\sigma=k_K\star\sigma$ with
\[
 k_K(x)=\sum_{n\in\Z}\chi(n/K)\,e^{2\pi inx}
 =\sum_{m\in\Z}K\,\widehat\chi\big(K(x+m)\big)
\]
by the Poisson summation formula, where
$\widehat\chi(\xi)=\int_\R\chi(\eta)e^{-2\pi i\eta\xi}d\eta$ is a
Schwartz function.  Hence
$\|k_K\|_{L^1(\T)}\leq\|K\widehat\chi(K\cdot)\|_{L^1(\R)}
=\|\widehat\chi\|_{L^1(\R)}$, and Young's inequality gives (i).

(ii) If $K^2u\leq1$, the claim follows from (i) and $\|p_u\|_{L^1}=1$
upon adjusting $c$, so assume $a:=4\pi^2K^2u\geq1$.  The function
$m(\eta):=(1-\chi(\eta))e^{-a\eta^2}$ is Schwartz, and Poisson summation
as in (i) gives
\[
 (I-S_K)p_u(x)=\sum_{n\in\Z}m(n/K)\,e^{2\pi inx},
 \qquad
 \big\|(I-S_K)p_u\big\|_{L^1(\T)}\leq\|\widehat m\|_{L^1(\R)} .
\]
On the support of $1-\chi$ one has $\eta^2\geq1/4$, so
$m=e^{-a/8}n_a$ with $n_a(\eta):=(1-\chi(\eta))e^{-a(\eta^2-1/8)}$ and
$\eta^2-1/8\geq\eta^2/2$ there.  Since $a\geq1$,
\[
 \|n_a\|_{L^2(\R)}^2\leq\int_\R e^{-a\eta^2}d\eta\leq C,
 \qquad
 \|n_a'\|_{L^2(\R)}^2
 \leq C\int_\R\big(1+a^2\eta^2\big)e^{-a\eta^2}d\eta\leq Ca,
\]
so, by $\|\widehat g\|_{L^1(\R)}\leq C\|g\|_{H^1(\R)}$
(Cauchy--Schwarz against $(1+\xi^2)^{-1}$),
\[
 \|\widehat m\|_{L^1(\R)}
 \leq Ce^{-a/8}\|n_a\|_{H^1(\R)}
 \leq Ce^{-a/8}\,a^{1/2}\leq Ce^{-a/16},
\]
which is the first bound in (ii).  The second follows by Parseval's
identity: since $1-\chi(n/K)$ vanishes for $|n|\leq K/2$,
\[
 \int_0^T\big\|(I-S_K)p_u\big\|_{L^2}^2\,du
 \leq\sum_{|n|\geq K/2}\int_0^\infty e^{-8\pi^2n^2u}\,du
 =\sum_{|n|\geq K/2}\frac{1}{8\pi^2n^2}
 \leq CK^{-1}. \qedhere
\]
\end{proof}

The next lemma is the empirical-measure estimate used for the easy
inequality with common noise (Section~\ref{sec:common-noise}).  The
non-identically distributed matching estimate needed here is precisely
\cite[Thm.~3 and Eq.~(12)]{BobkovLedoux2021}.  A ghost sample and Jensen's
inequality then give the estimate relative to the averaged law.

\begin{lemma}[Conditional empirical estimate]\label{lem:empirical}
Let $\mathcal G$ be a $\sigma$-algebra and let $X^1,\dots,X^N$ be
$\T^d$-valued random variables that are conditionally independent given
$\mathcal G$, with conditional laws $m^1,\dots,m^N$, and set
$\bar m:=\frac1N\sum_{i=1}^Nm^i$.  Then, almost surely,
\[
 \E\bigg[d_1\Big(\frac1N\sum_{i=1}^N\delta_{X^i},\,\bar m\Big)\,\Big|\,
 \mathcal G\bigg]\leq C\,r_{N,d} .
\]
\end{lemma}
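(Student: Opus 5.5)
The plan is a ghost-sample argument. It reduces the statement to the non-identically distributed matching bound of \cite[Thm.~3 and Eq.~(12)]{BobkovLedoux2021}, applied conditionally.

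Everything is done conditionally on $\mathcal G$. On a product extension we take regular conditional laws $m^i=m^i(\omega)$, measurable in $\omega$. We also introduce a ghost sample $\widetilde X^1,\dots,\widetilde X^N$ which, conditionally on $\mathcal G$, is independent of $(X^i)$ and has the same conditional law. Then for $\mathcal G$-a.e.\ fixed $\omega$ the problem is deterministic: $X^i$ and $\widetilde X^i$ are independent with laws $m^i$.

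\textbf{Ghost sample and Jensen.} Set $\widetilde\mu:=\frac1N\sum_i\delta_{\widetilde X^i}$. Its conditional expectation is $\E[\widetilde\mu\mid\mathcal G,X]=\bar m$. The map $\sigma\mapsto d_1(\mu,\sigma)$ is convex, because by the duality \eqref{eq:d1-duality} it is a supremum of affine functionals. Conditional Jensen (applied to the weakly measurable measure-valued variable) therefore gives
\[
 d_1\Big(\tfrac1N\textstyle\sum_i\delta_{X^i},\bar m\Big)\leq\E\Big[d_1\Big(\tfrac1N\sum_i\delta_{X^i},\widetilde\mu\Big)\,\Big|\,\mathcal G,X\Big].
\]
Taking conditional expectations given $\mathcal G$ reduces the claim to bounding the conditional expected $d_1$-distance between two independent, non-identically distributed samples with matching marginals $m^i$.

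\textbf{The matching bound.} This two-sample quantity is exactly what \cite[Thm.~3, Eq.~(12)]{BobkovLedoux2021} bounds, by $C\,r_{N,d}$ uniformly in the laws $m^i$ on $\T^d$. Their Fourier argument symmetrizes pairwise: the difference $\delta_{X^i}-\delta_{\widetilde X^i}$ has mean zero and independent summands. This yields the estimate $\E|\widehat{\mu-\widetilde\mu}(k)|^2\leq 2/N$ for each mode, and then the heat-smoothing/Fourier bookkeeping produces the trichotomy $r_{N,d}$, just as in Lemma~\ref{lem:martingale}. We apply this for each fixed $\omega$, with constant independent of the $m^i$, and conclude almost surely.

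\textbf{Main obstacle.} The only delicate point is measure-theoretic: the conditional laws must be regular, and the conditional Jensen step must be justified for a measure-valued random variable. Both are handled by working with regular conditional probabilities on the standard Borel space $(\T^d)^{2N}$. With that in place, every estimate is pathwise in $\omega$ with a deterministic constant.
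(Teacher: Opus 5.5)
Your proposal is correct and matches the paper's proof. Both use a ghost sample with the same conditional laws, then the conditional Jensen step via the sup-of-affine-functionals form of $d_1$, then the non-identically distributed matching bound of \cite[Thm.~3, Eq.~(12)]{BobkovLedoux2021} applied conditionally on $\mathcal G$. The one detail you leave implicit is that this bound is stated on the cube, so one identifies $\T^d$ with $[0,1)^d$ and uses that the torus distance is dominated by the Euclidean one; alternatively, your Fourier sketch can be run directly on $\T^d$.
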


\begin{proof}
On an enlargement of the probability space, let $Y^1,\dots,Y^N$ be
random variables that are conditionally independent given
$\mathcal G\vee\sigma(X^1,\dots,X^N)$, with conditional laws
$\mathcal L(Y^i\mid\mathcal G\vee\sigma(X^1,\dots,X^N))=m^i$.
Under the conditional law of $(X^1,Y^1,\dots,X^N,Y^N)$ given
$\mathcal G$, the couples $(X^i,Y^i)$ are then
independent, and $X^i$, $Y^i$ share the law $m^i$.  Identifying $\T^d$
with $[0,1)^d$, the torus distance is dominated by the Euclidean
distance, so every coupling on the cube induces a coupling on the torus
of no larger cost.  Applying \cite[Thm.~3 and
Eq.~(12)]{BobkovLedoux2021}, whose constants depend only on the
dimension, to that conditional law gives
\[
 \E\bigg[d_1\Big(\frac1N\sum_{i=1}^N\delta_{X^i},\,
 \frac1N\sum_{i=1}^N\delta_{Y^i}\Big)\,\Big|\,\mathcal G\bigg]
 \leq C\,r_{N,d} .
\]
Since
$\mathcal L(Y^i\mid\mathcal G\vee\sigma(X^1,\dots,X^N))=m^i$,
\[
 \E\bigg[\frac1N\sum_{i=1}^N\delta_{Y^i}\,\Big|\,
 \mathcal G\vee\sigma(X^1,\dots,X^N)\bigg]=\bar m,
\]
and $\nu\mapsto d_1(\mu,\nu)$ is a supremum of affine functionals of
$\nu$, so Jensen's inequality gives
\[
 d_1\Big(\frac1N\sum_{i=1}^N\delta_{X^i},\,\bar m\Big)
 \leq\E\bigg[d_1\Big(\frac1N\sum_{i=1}^N\delta_{X^i},\,
 \frac1N\sum_{i=1}^N\delta_{Y^i}\Big)\,\Big|\,
 \mathcal G\vee\sigma(X^1,\dots,X^N)\bigg].
\]
Taking $\E[\,\cdot\mid\mathcal G]$ of both sides completes the proof.
\end{proof}

The last lemma is the elementary quantization bound used in the proof
of the optimality clause of Theorem~\ref{thm:main}
(Section~\ref{sec:comparison}).

\begin{lemma}[Quantization lower bound]\label{lem:quantization}
Let $\nu\in\Pp(\T^d)$ satisfy $\nu\geq\kappa\lambda$ for some
$\kappa>0$, where $\lambda$ is the uniform measure.  There exists
$c=c(d,\kappa)>0$ such that every $\mu\in\Pp(\T^d)$ supported on at
most $N$ points satisfies $d_1(\mu,\nu)\geq c\,N^{-1/d}$.
\end{lemma}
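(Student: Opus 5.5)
The plan is to compare with the uniform mass carried by small cubes around the support of $\mu$. Write $\mu=\sum_{j=1}^{n}w_j\delta_{x_j}$ with $n\leq N$, and fix a radius $r>0$ to be chosen. The key observation is that any unit of mass of $\nu$ lying at distance at least $r$ from every $x_j$ must travel at least $r$ in any coupling of $\nu$ and $\mu$, since $\mu$ charges only the points $x_j$. Hence, for every coupling $\gamma$,
\[
 \int\dm\,d\gamma\;\geq\;r\,\nu\Big(\T^d\setminus\bigcup_{j}B(x_j,r)\Big).
\]
Optimizing over $\gamma$ gives $d_1(\mu,\nu)\geq r\,\nu(\T^d\setminus\bigcup_jB(x_j,r))$. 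Equivalently, one can use the duality \eqref{eq:d1-duality} with the $1$-Lipschitz test function $\varphi(y)=\min_j\dm(y,x_j)\wedge r$. This $\varphi$ vanishes on the support of $\mu$ and satisfies $\int\varphi\,d\nu\geq r\,\nu(\{\varphi\geq r\})$, which yields the same inequality.

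Next, the lower bound $\nu\geq\kappa\lambda$ lets me estimate the mass outside the balls:
\[
 \nu\Big(\T^d\setminus\bigcup_jB(x_j,r)\Big)\;\geq\;\kappa\Big(1-\sum_{j}\lambda\big(B(x_j,r)\big)\Big)\;\geq\;\kappa\big(1-N\omega_dr^d\big),
\]
where $\omega_d$ is the volume of the Euclidean unit ball. Here I use that the geodesic ball on the torus is the projection of a Euclidean ball, so its volume is at most $\omega_dr^d$.

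Finally I choose $r=(2N\omega_d)^{-1/d}$, so that $N\omega_dr^d=\tfrac12$. This gives
\[
 d_1(\mu,\nu)\;\geq\;\tfrac{\kappa}{2}\,(2\omega_d)^{-1/d}N^{-1/d},
\]
so the claim holds with $c=\tfrac{\kappa}{2}(2\omega_d)^{-1/d}$.

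I see no genuine obstacle in this argument. The only point requiring care is the measure bound for geodesic balls on $\T^d$ when $r$ is comparable to the diameter. That case is harmless: the volume bound $\lambda(B(x,r))\leq\omega_dr^d$ holds for every $r$, because the quotient map $\R^d\to\T^d$ does not increase Lebesgue measure.
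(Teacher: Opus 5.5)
Your proof is correct and is essentially the paper's argument. Both bound the cost of every coupling from below by the $\nu$-mass at distance $\ge r$ from $\operatorname{supp}\mu$. Both use that the $r$-neighbourhood of at most $N$ points has $\lambda$-measure at most $N\omega_d r^d$, choose $r=(2N\omega_d)^{-1/d}$, and obtain the same constant $c=\frac{\kappa}{2}(2\omega_d)^{-1/d}$. The only difference is cosmetic: you use a single-level Markov-type bound where the paper integrates over levels with the layer-cake formula, which also spares you the paper's side remark that $R_N\le\operatorname{diam}(\T^d)$.
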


\begin{proof}
Let $S:=\operatorname{supp}\mu$ and
$\operatorname{dist}(x,S):=\min_{y\in S}\dm(x,y)$, so that $\#S\leq N$.
Since $\lambda(B(x,r))\leq C_dr^d$ for every $x$ and $r$, the set
$\{\operatorname{dist}(\cdot,S)\leq r\}$ has measure at most
$NC_dr^d$, hence at most $\frac12$ for
$r\leq R_N:=(2C_dN)^{-1/d}$; enlarging $C_d$ if necessary, also
$R_N\leq\operatorname{diam}(\T^d)$.  The layer-cake formula then gives
\[
 \int_{\T^d}\operatorname{dist}(x,S)\,\lambda(dx)
 =\int_0^{\operatorname{diam}(\T^d)}
 \lambda\big(\operatorname{dist}(\cdot,S)>r\big)\,dr
 \geq\frac{R_N}2 .
\]
Under any coupling of $\nu$ and $\mu$, the second coordinate lies in
$S$, so the transport cost is at least
$\int_{\T^d}\operatorname{dist}(x,S)\,\nu(dx)
\geq\kappa\int_{\T^d}\operatorname{dist}(x,S)\,\lambda(dx)
\geq\frac\kappa2R_N$.
Taking the infimum over couplings proves the claim with
$c=\frac\kappa2(2C_d)^{-1/d}$.
\end{proof}

\section{Necessity of the logarithm in two dimensions}\label{app:two-dim}

Throughout, $d=2$, $\lambda$ denotes
the uniform measure on $\T^2$, and we fix the data
\begin{equation}\label{eq:two-dim-data}
 H(x,p)=\tfrac12|p|^2,
 \qquad
 F(m)=d_1(m,\lambda),
 \qquad
 G=0,
\end{equation}
which satisfy Assumption~\ref{ass:standing} and have Lagrangian
$L(x,a)=\tfrac12|a|^2$.  For $N\geq1$, $\bx\in(\T^2)^N$, and an
admissible control $\balpha\in\mathcal A^N_0$ on $[0,T]$, define the
action and the discrepancy
\begin{equation}\label{eq:action-discrepancy}
 \mathsf A:=\E\int_0^T\frac1N\sum_{i=1}^N|\alpha^i_t|^2\,dt,
 \qquad
 \mathsf D:=\int_0^T\E\,d_1\big(\mu^N_t,\lambda\big)\,dt,
\end{equation}
so that $J_N(0,\bx;\balpha)=\tfrac12\mathsf A+\mathsf D$.

The proof relies on a trade between the noise and the
control: on every window of length $\tau$, the
idiosyncratic noise regenerates fluctuations of the empirical measure
at the optimal matching scale $\sqrt{\log(N\tau)/N}$ (the fresh-noise
estimate, Lemma~\ref{lem:fresh-noise}), while, by a
pathwise coupling, suppressing a fluctuation within the window costs
an action superlinear in the suppressed distance.
Proposition~\ref{prop:two-dim-lower} converts this trade into a lower
bound on the total cost that holds for every initial
configuration. The optimality clause is deduced from
this bound in the proof of Theorem~\ref{thm:main}
(Section~\ref{sec:comparison}).

We begin with two auxiliary estimates.  For $\varepsilon>0$, define
the smoothed Riesz kernel $K_\varepsilon:\T^2\to\R^2$ by its Fourier
coefficients
\begin{equation}\label{eq:riesz-kernel}
 \widehat{K_\varepsilon}(k)
 :=\frac{2\pi ik}{4\pi^2|k|^2}\,e^{-4\pi^2|k|^2\varepsilon},
 \quad k\neq0,
 \qquad
 \widehat{K_\varepsilon}(0):=0,
\end{equation}
so that
$K_\varepsilon\star\sigma=\nabla(-\Delta)^{-1}P_\varepsilon\sigma$ for
every finite signed measure $\sigma$ on $\T^2$ with $\sigma(\T^2)=0$.

\begin{lemma}[Kernel bounds]\label{lem:riesz-kernel}
For $0<\varepsilon\leq1$ and $0<\tau\leq T$,
\[
 \|K_\varepsilon\|_{L^2(\T^2)}^2\leq C\log\big(1+\varepsilon^{-1}\big),
 \qquad
 \|K_\varepsilon\|_{L^4(\T^2)}^4\leq C\varepsilon^{-1},
 \qquad
 \|\nabla p_\tau\|_{L^\infty(\T^2)}\leq C\tau^{-3/2}.
\]
\end{lemma}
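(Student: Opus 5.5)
All three bounds will come from Fourier series on $\T^2$, with a separate real-space treatment of the $L^4$ bound.

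\emph{The $L^2$ bound.} By Parseval's identity and \eqref{eq:riesz-kernel},
\[
 \|K_\varepsilon\|_{L^2}^2=\sum_{k\neq0}\frac{e^{-8\pi^2|k|^2\varepsilon}}{4\pi^2|k|^2}.
\]
I split the sum at $|k|\sim\varepsilon^{-1/2}$. The low frequencies give the harmonic sum $\sum_{1\leq|k|\leq\varepsilon^{-1/2}}|k|^{-2}\leq C\log(1+\varepsilon^{-1})$, by comparison with $\int_1^{\varepsilon^{-1/2}}r^{-1}\,dr$. The tail is bounded by $\varepsilon\sum_k e^{-8\pi^2|k|^2\varepsilon}\leq C$, which uses the lattice-sum estimate already appearing in the proof of Lemma~\ref{lem:heat-kernel}.

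\emph{The $L^4$ bound.} Parseval is no longer exact here, so I pass to real space. I write $K_\varepsilon=\nabla\Gamma\star p_\varepsilon$, where $\Gamma$ is the mean-zero Green function of $-\Delta$ on $\T^2$. This function satisfies $|\nabla\Gamma(x)|\leq C\,\dm(x,0)^{-1}$, since it equals $-\frac1{2\pi}\log|x|$ plus a smooth function near $0$.

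The pointwise bound I then aim for is
\[
 |K_\varepsilon(x)|\leq C\min\big(\varepsilon^{-1/2},\,\dm(x,0)^{-1}\big).
\]
For $\dm(x,0)\geq\sqrt\varepsilon$, I split the convolution over $\dm(y,0)<\dm(x,0)/2$ and its complement, and use the Gaussian decay of $p_\varepsilon$. For $\dm(x,0)\leq\sqrt\varepsilon$, the bound $\int|\nabla\Gamma(x-y)|\,p_\varepsilon(y)\,dy\leq C\varepsilon^{-1/2}$ follows from $\|p_\varepsilon\|_\infty\leq C\varepsilon^{-1}$ together with the local integrability of $r^{-1}$ in the plane.

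Integrating the fourth power, the region $\dm(x,0)\leq\sqrt\varepsilon$ contributes $\varepsilon\cdot\varepsilon^{-2}$. The complement contributes $\int_{\sqrt\varepsilon}^{1}r^{-4}\,r\,dr\leq C\varepsilon^{-1}$. This gives the claimed $C\varepsilon^{-1}$.

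Of the three bounds, this is the one needing care: the pointwise estimate on the mollified Green gradient. The rest is bookkeeping. As a cross-check, the Hausdorff--Young inequality $\|K_\varepsilon\|_{L^4}\leq\|\widehat{K_\varepsilon}\|_{\ell^{4/3}}$ gives the same order directly. Indeed,
\[
 \sum_{k\neq0}|k|^{-4/3}e^{-c|k|^2\varepsilon}\sim\varepsilon^{-1/3},
\]
so its fourth power, raised to the appropriate exponent, is $\sim\varepsilon^{-1}$. I would likely present this shorter route instead.

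\emph{The gradient bound.} Since $p_\tau=\sum_k g_\tau(\cdot+k)$, I differentiate the Gaussians termwise:
\[
 |\nabla g_\tau(x)|=\frac{|x|}{2\tau}(4\pi\tau)^{-1}e^{-|x|^2/4\tau}\leq C\tau^{-3/2}e^{-|x|^2/8\tau},
\]
using $s e^{-s^2/8}\leq C$ with $s=|x|/\sqrt\tau$. It remains to sum over the periodic images. For $x\in[-\frac12,\frac12]^2$ the images with $k\neq0$ satisfy $|x+k|\geq|k|/2$. Hence their total is at most $C\tau^{-3/2}\sum_{k\neq0}e^{-|k|^2/32\tau}$, which is bounded by $C\tau^{-3/2}$ for $\tau\leq T$, with $C$ depending on $T$.
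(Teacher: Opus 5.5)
Your proof is correct. The $L^2$ bound (Parseval, split at $|k|\sim\varepsilon^{-1/2}$) and the gradient bound (termwise differentiation of the periodized Gaussian) follow the paper's arguments. Your check that $|x+k|\geq|k|/2$ on $[-\frac12,\frac12]^2$ spells out the image sum that the paper leaves implicit.

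The two proofs differ only in the $L^4$ bound. The paper writes $K_\varepsilon=\int_\varepsilon^\infty\nabla p_u\,du$ and bounds $\nabla p_u$ by Gaussians. This gives the pointwise profile $|K_\varepsilon(x)|\leq C/(\dm(x,0)+\sqrt\varepsilon)$, which it then integrates. Your Green-function route $K_\varepsilon=\nabla\Gamma\star p_\varepsilon$ reaches the same profile and is the same argument in disguise, since $\nabla\Gamma=\int_0^\infty\nabla p_u\,du$. Your Hausdorff--Young route is genuinely different. Applied to each component, it gives
\[
 \|K_\varepsilon\|_{L^4}^4\leq C\|\widehat{K_\varepsilon}\|_{\ell^{4/3}}^4
 =C\Big(\sum_{k\neq0}(2\pi|k|)^{-4/3}e^{-\frac{16}{3}\pi^2|k|^2\varepsilon}\Big)^3
 \leq C\varepsilon^{-1}.
\]
Both kernel norms thus reduce to lattice sums, with no real-space information on $\Gamma$ or on periodized Gaussian tails. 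The paper's route yields more, since the pointwise profile controls every $L^p$ norm at once. Yours is shorter and suffices for the only use, the fourth-moment bound in Lemma~\ref{lem:fresh-noise}.

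If you keep the Green-function version, adjust the split. Take $r=\dm(x,0)\sim\sqrt\varepsilon$ and look at the region $\{\dm(y,0)\geq r/2\}$. There, Gaussian decay times $\|\nabla\Gamma\|_{L^1}$ gives only $O(\varepsilon^{-1})$, not $O(r^{-1})$. Split instead according to whether $\dm(x-y,0)<r/2$. The far part contributes $C/r$ directly. The near part contributes $C\varepsilon^{-1}e^{-cr^2/\varepsilon}\int_{|z|<r/2}|z|^{-1}dz\leq C/r$.
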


\begin{proof}
By Parseval's identity,
\[
 \|K_\varepsilon\|_{L^2}^2
 =\sum_{k\neq0}\frac{e^{-8\pi^2|k|^2\varepsilon}}{4\pi^2|k|^2}
 \leq C\sum_{0<|k|\leq\varepsilon^{-1/2}}\frac1{|k|^2}
 +C\varepsilon\sum_{|k|>\varepsilon^{-1/2}}e^{-8\pi^2|k|^2\varepsilon}
 \leq C\log\big(1+\varepsilon^{-1}\big),
\]
the first sum by the standard lattice count and the second by
comparison with a Gaussian integral.

For the $L^4$ bound, integrating the coefficients
\eqref{eq:riesz-kernel} in the smoothing time gives the heat
representation
$K_\varepsilon=\int_\varepsilon^\infty\nabla p_u\,du$.  For
$0<u\leq1$, the periodization of the Gaussian kernel gives
$|\nabla p_u(x)|\leq Cu^{-3/2}e^{-\dm(x,0)^2/(8u)}$, while for $u>1$
every nonzero mode has decayed and
$|\nabla p_u(x)|\leq Ce^{-cu}$.  Writing $r:=\dm(x,0)$,
\[
 |K_\varepsilon(x)|
 \leq C\int_\varepsilon^1u^{-3/2}e^{-r^2/(8u)}\,du+C
 \leq C\min\big(\varepsilon^{-1/2},\,r^{-1}\big)+C
 \leq\frac{C}{r+\sqrt\varepsilon},
\]
using $\int_0^\infty u^{-3/2}e^{-r^2/(8u)}\,du=C/r$.  Consequently
\[
 \int_{\T^2}|K_\varepsilon|^4
 \leq C\int_0^1\frac{r\,dr}{(r+\sqrt\varepsilon)^4}
 \leq C\varepsilon^{-1}.
\]

Finally, as in the proof of the gradient bound in
Lemma~\ref{lem:heat-kernel},
$|\nabla p_\tau(x)|\leq\sum_{k\in\Z^2}|\nabla g_\tau(x+k)|$ with
$g_\tau$ the Gaussian kernel, and
$|\nabla g_\tau(z)|=\frac{|z|}{2\tau}g_\tau(z)\leq C\tau^{-3/2}
e^{-|z|^2/(8\tau)}$.  For $\tau\leq T$ the sum is at most
$C\tau^{-3/2}$.
\end{proof}

\begin{lemma}[Lipschitz truncation]\label{lem:lipschitz-truncation}
There exists $C$ such that for every $h\in C^\infty(\T^2)$ and every
$R>0$ there exists an $R$-Lipschitz function $h_R:\T^2\to\R$ with
\begin{equation}\label{eq:truncation-bound}
 \int_{\T^2}\big|\nabla(h-h_R)\big|^2
 \leq\frac{C}{R^2}\int_{\T^2}|\nabla h|^4 .
\end{equation}
\end{lemma}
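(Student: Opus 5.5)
The plan is to truncate $h$ on the set where its gradient is large, using a Lipschitz truncation argument of Acerbi--Fusco type built on the Hardy--Littlewood maximal function. Let $\mathcal M$ denote the maximal operator on $\T^2$, applied to $|\nabla h|$.

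\emph{Step 1 (reduction to the good set).} Set $E_R:=\{\mathcal M|\nabla h|\le R/C_1\}$, for a dimensional constant $C_1$. The classical pointwise estimate
\[
 |h(x)-h(y)|\le C\,\dm(x,y)\big(\mathcal M|\nabla h|(x)+\mathcal M|\nabla h|(y)\big)
\]
holds for smooth $h$; on the torus it is valid for $\dm(x,y)$ below a fixed fraction of the injectivity scale, while for larger distances we use $|h(x)-h(y)|\le C\|\nabla h\|_{L^1}$ together with $\|\nabla h\|_{L^1}\le\mathcal M|\nabla h|$ pointwise. With this estimate, $h|_{E_R}$ is $(R/2)$-Lipschitz for a suitable $C_1$.

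\emph{Step 2 (extension).} Extend $h|_{E_R}$ to an $R$-Lipschitz function $h_R$ on $\T^2$ by the McShane formula $h_R(x)=\inf_{y\in E_R}\{h(y)+(R/2)\dm(x,y)\}$. This is $(R/2)$-Lipschitz, hence $R$-Lipschitz, and agrees with $h$ on $E_R$. If $E_R$ is empty, take $h_R$ constant. In either case the estimate of Step 3 still controls $\int|\nabla h|^2$, because emptiness forces $R$ to be small relative to $\|\nabla h\|_{L^1}$.

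\emph{Step 3 (the gradient defect).} Since $h_R=h$ on $E_R$, we have $\nabla(h-h_R)=0$ a.e. on $E_R$, and therefore
\[
 \int|\nabla(h-h_R)|^2\le 2\int_{E_R^c}|\nabla h|^2+2\Big(\frac R2\Big)^2|E_R^c|.
\]
On $E_R^c$ we have $1\le (C_1\mathcal M|\nabla h|/R)^2$. Hence
\[
 \int_{E_R^c}|\nabla h|^2\le C R^{-2}\int|\nabla h|^2(\mathcal M|\nabla h|)^2\le CR^{-2}\|\nabla h\|_4^2\,\|\mathcal M|\nabla h|\|_4^2\le CR^{-2}\int|\nabla h|^4,
\]
by Cauchy--Schwarz and $L^4$-boundedness of $\mathcal M$. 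For the second term, Chebyshev gives $R^2|E_R^c|\le R^2(C_1/R)^4\int(\mathcal M|\nabla h|)^4\le CR^{-2}\int|\nabla h|^4$, again by the maximal inequality. Combining the two bounds gives \eqref{eq:truncation-bound}.

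The main obstacle is Step 1: the pointwise maximal-function Lipschitz estimate on the torus, in particular handling distances comparable to the diameter and the non-simply-connected geometry. I would lift $h$ to a periodic function on $\R^2$ and apply the Euclidean estimate on a ball of radius $1$, noting that the Euclidean maximal function over radii at most $1$ is dominated by the toroidal one.
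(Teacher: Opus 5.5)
Your argument is essentially the paper's. The paper invokes the Lusin-type Lipschitz truncation of Acerbi--Fusco (Ambrosio--Stra--Trevisan, Lem.~5.1, with $p=4$) to obtain an $R$-Lipschitz $h_R$ with $|\{h\neq h_R\}|\leq CR^{-4}\int|\nabla h|^4$. It then bounds $\int_E|\nabla h|^2\leq\|\nabla h\|_{L^4}^2|E|^{1/2}$ by H\"older. You instead rebuild that truncation from the maximal function, and absorb $\int_{E_R^c}|\nabla h|^2$ through the weight $(C_1\mathcal M|\nabla h|/R)^2$ together with the $L^4$ maximal inequality. This is an equivalent bookkeeping, and Steps~2 and~3 are correct as written, including the case $E_R=\emptyset$.

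One justification in Step~1 is false as stated. The inequality $|h(x)-h(y)|\leq C\|\nabla h\|_{L^1}$ fails in two dimensions. A smooth bump of height $1$ and radius $\epsilon$ has $\|\nabla h\|_{L^1}\sim\epsilon$ but oscillation $1$, since $W^{1,1}(\T^2)$ does not control oscillation. The conclusion you need still holds, and the lift in your last paragraph already gives it for all pairs at once. Any two points of $\T^2$ have lifts at Euclidean distance $\dm(x,y)\leq\sqrt2/2$. The Euclidean pointwise estimate therefore only involves balls of radius at most $\sqrt2$ (not $1$). Such a ball covers each point of its toroidal image a bounded number of times, so the restricted Euclidean maximal function of the periodic lift is at most a constant times the toroidal one. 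Alternatively, for $\dm(x,y)\geq\delta_0$, pass through ball averages at scale $\delta_0$ and use the $L^1$ Poincar\'e inequality on $\T^2$. This gives $|h(x)-h(y)|\leq C\big(\mathcal M|\nabla h|(x)+\mathcal M|\nabla h|(y)\big)$, which is what Step~1 actually requires.
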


\begin{proof}
The Lusin approximation of Sobolev functions on the torus
\cite[Lem.~5.1]{AmbrosioStraTrevisan2019} (the Lipschitz truncation
of \cite{AcerbiFusco1984}), applied with $p=4$, provides an
$R$-Lipschitz function $h_R:\T^2\to\R$ whose exceptional set
$E:=\{h\neq h_R\}$ satisfies
$|E|\leq CR^{-4}\int_{\T^2}|\nabla h|^4$.  Since $\nabla(h-h_R)=0$
almost everywhere on $E^c$,
\[
 \int_{\T^2}|\nabla(h-h_R)|^2
 \leq2\int_E|\nabla h|^2+2R^2\,|E|
 \leq2\Big(\int_{\T^2}|\nabla h|^4\Big)^{1/2}|E|^{1/2}+2R^2\,|E| ,
\]
and inserting the bound on $|E|$ in both terms proves
\eqref{eq:truncation-bound}.
\end{proof}

The heart of the argument is the following fresh-noise estimate: two
independent copies of a cloud of $N$ particles, each diffused for a
time $\tau$, are separated in $d_1$ at the optimal matching rate.  It
is an optimal-matching lower bound \cite{AjtaiKomlosTusnady1984} for
independent, non-identically distributed samples, proved by adapting
the Fourier-analytic fourth-moment argument of
\cite[Sec.~7]{BobkovLedoux2021}.  The density hypothesis excludes
clustered centers.

\begin{lemma}[Fresh-noise estimate]\label{lem:fresh-noise}
Let $d=2$.  There exist a universal integer $N_0$ and, for every
$\varrho\geq1$, a constant $c_\varrho>0$ depending only on $\varrho$
and $T$, with the following property.  Let $N\geq N_0$,
$\by\in(\T^2)^N$, and $N^{-1/2}\leq\tau\leq T$ satisfy
$\|P_\tau m^N_{\by}\|_{L^\infty(\T^2)}\leq\varrho$, and let
$Z^1,\dots,Z^N,\widetilde Z^1,\dots,\widetilde Z^N$ be independent
random variables with $Z^i$ and $\widetilde Z^i$ of density
$p_\tau(\cdot-y^i)$.  Then
\begin{equation}\label{eq:fresh-noise}
 \E\,d_1\Big(\frac1N\sum_{i=1}^N\delta_{Z^i},\,
 \frac1N\sum_{i=1}^N\delta_{\widetilde Z^i}\Big)
 \geq c_\varrho\sqrt{\frac{\log(1+N\tau)}{N}}\,.
\end{equation}
\end{lemma}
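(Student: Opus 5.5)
We bound $d_1$ from below by duality against a random smoothed Riesz test function, in the Fourier fourth-moment style of \cite[Sec.~7]{BobkovLedoux2021}. Set $\sigma:=\frac1N\sum_i(\delta_{Z^i}-\delta_{\widetilde Z^i})$, a zero-mass random signed measure with $\E\sigma=0$. Choose a smoothing scale $\varepsilon:=N^{-1}$ (so that $\log(1+\varepsilon^{-1})\asymp\log(1+N)$), and consider the potential $h:=(-\Delta)^{-1}P_\varepsilon\sigma$, whose gradient is $\nabla h=K_\varepsilon\star\sigma$. Testing $\sigma$ against $h$ itself gives
\[
 \int h\,d\sigma=\int|\nabla(-\Delta)^{-1/2}P_{\varepsilon/2}\sigma|^2=:\mathcal Q,
\]
a quadratic form in $\sigma$. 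Since $h$ is not Lipschitz, we replace it by the $R$-Lipschitz truncation $h_R$ of Lemma~\ref{lem:lipschitz-truncation}. This gives $d_1\geq R^{-1}\int h_R\,d\sigma$, and the error term $\int(h-h_R)\,d\sigma$ is controlled through an $H^{-1}$-type pairing. Concretely, we write $\sigma=\sigma_\varepsilon+(\sigma-P_\varepsilon\sigma)$ and pair the gradient of $h-h_R$ with $K$-type smoothings of $\sigma$. Cauchy--Schwarz then bounds the error by $\|\nabla(h-h_R)\|_{L^2}$ times an $H^{-1}$ norm of $\sigma$, and \eqref{eq:truncation-bound} bounds the first factor by $R^{-1}\|\nabla h\|_{L^4}^2$. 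Taking expectations leads to a target inequality of the form
\[
 R\,\E d_1\geq \E\mathcal Q-\frac{C}{R}\,\E\|\nabla h\|_{L^4}^2\,\big(\E\mathcal Q\big)^{1/2}-(\text{high-frequency remainder}).
\]

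\emph{Second moment (lower bound).} By independence and $\E\sigma=0$, each Fourier coefficient satisfies
\[
 \E|\widehat\sigma(k)|^2=\frac{2}{N^2}\sum_i\big(1-|\widehat{p_\tau}(k)|^2\big)=\frac2N\big(1-e^{-8\pi^2|k|^2\tau}\big).
\]
Hence
\[
 \E\mathcal Q=\frac2N\sum_{k\neq0}\frac{e^{-4\pi^2|k|^2\varepsilon}\,(1-e^{-8\pi^2|k|^2\tau})}{4\pi^2|k|^2}\geq\frac cN\log\frac{1+\tau}{\varepsilon}.
\]
The cutoff $1-e^{-8\pi^2|k|^2\tau}$ kills only the modes $|k|\lesssim\tau^{-1/2}$, which leaves the logarithm $\log(\tau/\varepsilon)\asymp\log(1+N\tau)$. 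This is where the condition $\tau\geq N^{-1/2}$ enters: it guarantees $\log(N\tau)\gtrsim\log N$ up to constants, and $N\geq N_0$ makes this positive.

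\emph{Fourth moments (upper bound).} We need
\[
 \E\|K_\varepsilon\star\sigma\|_{L^4}^4\leq\frac{C_\varrho}{N^2}\log^2(1+N).
\]
To get it, write $K_\varepsilon\star\sigma(x)=\frac1N\sum_i\xi_i(x)$ with independent centered summands $\xi_i(x)=K_\varepsilon(x-Z^i)-K_\varepsilon(x-\widetilde Z^i)$. Rosenthal's inequality then gives two terms. The first is the square-function term $N^{-4}\big(\sum_i\E\xi_i(x)^2\big)^2$. After integrating in $x$, the density bound $\|P_\tau m^N_{\by}\|_\infty\leq\varrho$ controls it through $\E\xi_i(x)^2\leq2(|K_\varepsilon|^2\star p_\tau(\cdot-y^i))(x)$ and $\|K_\varepsilon\|_{L^2}^2\leq C\log(1+\varepsilon^{-1})$; this yields $\varrho^2\log^2/N^2$. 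The second is the diagonal term $N^{-4}\sum_i\E\xi_i^4$, which is at most $N^{-3}\|K_\varepsilon\|_{L^4}^4\leq CN^{-3}\varepsilon^{-1}=CN^{-2}$ by Lemma~\ref{lem:riesz-kernel}. Without the hypothesis $\|P_\tau m^N_{\by}\|_\infty\leq\varrho$, clustered centers would inflate the square-function term, which is why it is needed.

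\emph{Conclusion.} Choose $R:=A\sqrt{\log(1+N)}$ with $A$ large, depending on $\varrho$. Then the truncation loss is at most half of $\E\mathcal Q\asymp\log/N$, and dividing by $R$ gives $\E d_1\geq c_\varrho\sqrt{\log(1+N\tau)/N}$.

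The main obstacle is the error pairing $\int(h-h_R)\,d\sigma$. The measure $\sigma$ consists of atoms, so it must be paired through a smoothed version. I would apply the truncation to $h$ built at scale $\varepsilon$ and estimate the pairing as $\int\nabla(h-h_R)\cdot\nabla(-\Delta)^{-1}\sigma$, handling the unsmoothed high-frequency part of $\sigma$ by the extra factor $e^{-4\pi^2|k|^2\varepsilon}$ at the cost of a constant. The mismatch between $\tau$- and $N$-logarithms must also be tracked carefully in that step.
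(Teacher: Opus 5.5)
There is a genuine gap, and it is at the step you flag as the main obstacle. You test $\sigma$ against $h_R$ itself, so your main term is $\mathcal Q=\int h\,d\sigma$ and you must bound $\int(h-h_R)\,d\sigma$ with $\sigma$ atomic. Your proposed mechanism is to write this as $\int\nabla(h-h_R)\cdot\nabla(-\Delta)^{-1}\sigma$ and absorb the high frequencies of $\sigma$ into a factor $e^{-4\pi^2|k|^2\varepsilon}$. This does not work. That factor is carried by $h$ but not by $h_R$, which is an arbitrary Lipschitz truncation with no frequency localization, so $\nabla(h-h_R)$ is not damped at high frequencies. Cauchy--Schwarz then needs $\nabla(-\Delta)^{-1}\sigma\in L^2(\T^2)$, i.e.\ $\sigma\in\dot H^{-1}(\T^2)$, and this fails for a sum of Dirac masses in dimension two:
\[
\E\sum_{k\neq0}\frac{|\widehat\sigma(k)|^2}{4\pi^2|k|^2}=\frac2N\sum_{k\neq0}\frac{1-e^{-8\pi^2|k|^2\tau}}{4\pi^2|k|^2}=\infty .
\]
Since \eqref{eq:truncation-bound} controls $h-h_R$ only in $\dot H^1$, which gives no pointwise control in two dimensions, nothing prevents $h-h_R$ from being large at the atoms. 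Your ``high-frequency remainder'' is therefore left uncontrolled.

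The missing idea is to put the heat smoothing on the test function. $P_\varepsilon h_R$ is still $R$-Lipschitz, and $P_\varepsilon\sigma$ has density $-\Delta h$. With $\mu,\widetilde\mu$ the two empirical measures,
\[
 R\,d_1(\mu,\widetilde\mu)\geq\int P_\varepsilon h_R\,d\sigma=\int h_R\,(-\Delta h)=\int\nabla h_R\cdot\nabla h
 \geq A-\|\nabla(h-h_R)\|_{L^2}A^{1/2},
 \qquad A:=\int_{\T^2}|\nabla h|^2 .
\]
Only $\|\nabla h\|_{L^2}=A^{1/2}$ appears, never a negative Sobolev norm of $\sigma$. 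Then \eqref{eq:truncation-bound} bounds the error by $CR^{-1}B^{1/2}A^{1/2}$ with $B:=\int|\nabla h|^4$.

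If you prefer to keep your splitting $\sigma=P_\varepsilon\sigma+(\sigma-P_\varepsilon\sigma)$, you must split $h-h_R$ in the second piece. The $h$ part equals $\mathcal Q-A$ exactly and cancels your main term. The $h_R$ part is at most $CR\sqrt\varepsilon$ by the Lipschitz bound, which is of lower order. Either way the effective main term is $A$, not $\mathcal Q$.

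With $A$ in place of $\mathcal Q$ (smoothing factor $e^{-8\pi^2|k|^2\varepsilon}$), your second- and fourth-moment computations go through verbatim. They give $\E A\geq c\log(1+N\tau)/N$ and $\E B\leq C\varrho^2(\E A)^2$. Then $\E[B^{1/2}A^{1/2}]\leq(\E B)^{1/2}(\E A)^{1/2}$ yields $R\,\E d_1\geq\E A-C\varrho R^{-1}(\E A)^{3/2}$.

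Finally, your choice of $R$ is off by a factor $N^{-1/2}$. It must be of order $\varrho\sqrt{\E A}\asymp\varrho\sqrt{\log(1+N)/N}$, the typical size of $\nabla h$. With $R$ a constant times $\sqrt{\log(1+N)}$, dividing a quantity of order $\log(1+N)/N$ by $R$ gives only $\sqrt{\log(1+N)}/N$.
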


\begin{proof}
Write $\mu$ and $\widetilde\mu$ for the two empirical measures,
$z:=\mu-\widetilde\mu$, and fix $\varepsilon:=N^{-1}\leq\tau$.  Let $h$
be the mean-zero solution of $-\Delta h=P_\varepsilon z$ on $\T^2$, so
that $\nabla h=K_\varepsilon\star z$, and set
\[
 A:=\int_{\T^2}|\nabla h|^2,
 \qquad
 B:=\int_{\T^2}|\nabla h|^4 .
\]

By Parseval's identity and
\eqref{eq:riesz-kernel},
$A=\sum_{k\neq0}e^{-8\pi^2|k|^2\varepsilon}\,|\widehat z(k)|^2
/(4\pi^2|k|^2)$.  For each $k\neq0$, the $N$ summands of
$\widehat z(k)=\frac1N\sum_i(e^{-2\pi ik\cdot Z^i}
-e^{-2\pi ik\cdot\widetilde Z^i})$ are independent with mean zero, and
each has second moment
$2(1-|\widehat{p_\tau}(k)|^2)=2(1-e^{-8\pi^2|k|^2\tau})$.  Hence,
exactly and independently of the centers,
\begin{equation*}
 \E A=\frac2N\sum_{k\neq0}
 \frac{e^{-8\pi^2|k|^2\varepsilon}
 \big(1-e^{-8\pi^2|k|^2\tau}\big)}{4\pi^2|k|^2}\,.
\end{equation*}
On the modes with $(8\pi^2\tau)^{-1}\leq|k|^2\leq(8\pi^2\varepsilon)^{-1}$
(a nonempty range once $N_0$ is large, since
$\tau/\varepsilon=N\tau\geq\sqrt N$), the two factors in the numerator
are at least $e^{-1}$ and $1-e^{-1}$, and the lattice count gives
\begin{equation}\label{eq:energy-lower}
 \E A\geq\frac cN\log\big(\tau/\varepsilon\big)
 \geq c_0\,\frac{\log(1+N\tau)}{N}=:a_0 .
\end{equation}

Write
$\nabla h=\frac1N\sum_i\xi_i$ with
$\xi_i:=K_\varepsilon(\cdot-Z^i)-K_\varepsilon(\cdot-\widetilde Z^i)$.
The fields $\xi_1,\dots,\xi_N$ are independent, and each has mean zero
because $Z^i$ and $\widetilde Z^i$ share their law.  Expanding the
fourth power and discarding the terms containing an isolated factor,
\[
 \E\Big|\sum_{i=1}^N\xi_i(x)\Big|^4
 \leq\sum_{i=1}^N\E|\xi_i(x)|^4
 +3\Big(\sum_{i=1}^N\E|\xi_i(x)|^2\Big)^2 .
\]
By Fubini's theorem and the translation invariance of the Lebesgue
measure, $\int_{\T^2}\E|\xi_i|^4\leq16\|K_\varepsilon\|_{L^4}^4$, so
the first term integrates to at most
$CN\varepsilon^{-1}=CN^2$ by the $L^4$ bound of
Lemma~\ref{lem:riesz-kernel}.  For the second,
\[
 \sum_{i=1}^N\E|\xi_i(x)|^2
 \leq4\sum_{i=1}^N\big(|K_\varepsilon|^2\star p_\tau(\cdot-y^i)\big)(x)
 =4N\big(|K_\varepsilon|^2\star P_\tau m^N_{\by}\big)(x)
 \leq4N\varrho\,\|K_\varepsilon\|_{L^2}^2,
\]
which is at most $C\varrho N\log(1+N)$ by the $L^2$ bound of
Lemma~\ref{lem:riesz-kernel}.  Since $\tau\geq N^{-1/2}$ gives
$\log(1+N)\leq C\log(1+N\tau)$, we conclude that
\begin{equation}\label{eq:fourth-moment}
 \E B\leq\frac{CN^2+C\varrho^2N^2\log^2(1+N)}{N^4}
 \leq C_1\varrho^2\,a_0^2 .
\end{equation}

Let $R>0$, to be chosen, and let $h_R$ be the
$R$-Lipschitz truncation of Lemma~\ref{lem:lipschitz-truncation}.  The
function $P_\varepsilon h_R$ is again $R$-Lipschitz, and, since
$P_\varepsilon z$ has the density $-\Delta h$,
\[
 R\,d_1(\mu,\widetilde\mu)
 \geq\int_{\T^2}P_\varepsilon h_R\,dz
 =\int_{\T^2}h_R\,(-\Delta h)
 =\int_{\T^2}\nabla h_R\cdot\nabla h
 \geq A-\Big(\frac{CB}{R^2}\Big)^{1/2}A^{1/2},
\]
by \eqref{eq:truncation-bound} and the Cauchy--Schwarz inequality.
Taking expectations and using
$\E[B^{1/2}A^{1/2}]\leq(\E B)^{1/2}(\E A)^{1/2}$ together with
\eqref{eq:energy-lower} and \eqref{eq:fourth-moment} (note
$a_0\leq\E A$),
\[
 R\,\E\,d_1(\mu,\widetilde\mu)
 \geq\E A-\frac{C_2\varrho}{R}\,(\E A)^{3/2}.
\]
The choice $R:=2C_2\varrho\sqrt{\E A}$ gives
$\E\,d_1(\mu,\widetilde\mu)\geq\sqrt{\E A}/(4C_2\varrho)
\geq c_\varrho\sqrt{a_0}$, which is \eqref{eq:fresh-noise}.
\end{proof}

\begin{proposition}[Dynamical lower bound]\label{prop:two-dim-lower}
Let $d=2$ and let the data be \eqref{eq:two-dim-data}.  There exist
$c>0$ and $N_0$, depending only on $T$, such that
\begin{equation*}
 J_N(0,\bx;\balpha)\geq c\,r_{N,2}
 \qquad\text{for all }N\geq N_0,\
 \bx\in(\T^2)^N,\ \balpha\in\mathcal A^N_0 .
\end{equation*}
\end{proposition}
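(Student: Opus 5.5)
The plan is to argue by dichotomy on the action. Since $J_N=\tfrac12\mathsf A+\mathsf D$, it suffices to show that for a small constant $\delta>0$ (depending only on $T$), $\mathsf A\leq\delta\,r_{N,2}$ forces $\mathsf D\geq c\,r_{N,2}$. In this regime I compare the controlled particles over a short window of length $\tau$ with the particles that were left uncontrolled on that window. I would take $\tau:=N^{-1/4}$, and the choice is dictated by three constraints:
\begin{itemize}
\item it keeps $\log(1+N\tau)\geq c\log(1+N)$, as Lemma~\ref{lem:fresh-noise} requires;
\item it makes the heat-kernel regularity scale $\tau^{3/2}=N^{-3/8}$ much larger than $r_{N,2}$;
\item it makes the control displacement over a window, of order $\tau\sqrt{\mathsf A}$, small compared with $r_{N,2}$.
\end{itemize}

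\emph{Step 1 (uncontrolled comparison).} For $t\in[\tau,T]$ set $Z^i:=X^i_{t-\tau}+\sqrt2\,(W^i_t-W^i_{t-\tau})$ and $\mu^Z:=\frac1N\sum_i\delta_{Z^i}$. Matching labels gives, pathwise,
\[
d_1(\mu^N_t,\mu^Z)\leq\frac1N\sum_i\int_{t-\tau}^t|\alpha^i_s|\,ds .
\]
Integrating in $t$ and applying Cauchy--Schwarz, the total contribution is at most $\tau\sqrt{T\mathsf A}\leq CN^{-1/4}\sqrt{\delta r_{N,2}}$. Since $\sqrt{r_{N,2}}\sim N^{-1/4}\log^{1/4}N$, this is at most $C\sqrt\delta\, r_{N,2}$.

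\emph{Step 2 (fresh noise on good windows).} Conditionally on $\F_{t-\tau}$, the variables $Z^i$ are independent with densities $p_\tau(\cdot-X^i_{t-\tau})$, because the Brownian increments are independent of $\F_{t-\tau}$; only this is used, not any property of the controls. Take an independent copy $\widetilde Z$ with the same conditional law. Then
\[
\E[d_1(\mu^Z,\widetilde\mu^Z)\mid\F_{t-\tau}]\leq2\,\E[d_1(\mu^Z,\lambda)\mid\F_{t-\tau}] .
\]
Fix $\varrho=2$. On the $\F_{t-\tau}$-measurable event $E_t^c:=\{\|P_\tau\mu^N_{t-\tau}\|_{L^\infty}\leq\varrho\}$, Lemma~\ref{lem:fresh-noise} (with $\tau\geq N^{-1/2}$) gives
\[
\E[d_1(\mu^Z,\lambda)\mid\F_{t-\tau}]\geq c\,r_{N,2}.
\]

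\emph{Step 3 (bad windows: clustering is visible in $d_1$).} On $E_t$, use the duality \eqref{eq:d1-duality} together with $P_\tau\lambda=\lambda$ and the bound $\|\nabla p_\tau\|_{L^\infty}\leq C\tau^{-3/2}$ from Lemma~\ref{lem:riesz-kernel}. This gives
\[
\|P_\tau\mu-1\|_{L^\infty}\leq C\tau^{-3/2}d_1(\mu,\lambda),
\]
hence on $E_t$,
\[
d_1(\mu^N_{t-\tau},\lambda)\geq c\,\tau^{3/2}=cN^{-3/8}\geq c\,r_{N,2}.
\]

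\emph{Step 4 (assembly).} For each $t\in[\tau,T]$ use the pointwise bound
\[
d_1(\mu^N_t,\lambda)+d_1(\mu^N_{t-\tau},\lambda)\geq\1_{E_t}\,d_1(\mu^N_{t-\tau},\lambda)+\1_{E_t^c}\big(d_1(\mu^Z,\lambda)-d_1(\mu^N_t,\mu^Z)\big).
\]
Take conditional expectations given $\F_{t-\tau}$ and integrate over $t\in[\tau,T]$. The left-hand side is at most $2\mathsf D$. The right-hand side is at least
\[
c\,r_{N,2}\,(T-\tau)-C\sqrt\delta\,r_{N,2}.
\]
Choosing $\delta$ small and $N_0$ large yields $\mathsf D\geq c\,r_{N,2}$.

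The main obstacle is the density hypothesis of Lemma~\ref{lem:fresh-noise}, since controlled particles may cluster. Step 3 resolves it, and it is exactly what forces the window to be long ($\tau^{3/2}\gg r_{N,2}$). That in turn makes the displacement estimate of Step 1 only borderline: it saves merely a factor $\sqrt\delta$ together with the $\log^{-1/4}$ gap between $N^{-1/4}\sqrt{r_{N,2}}$ and $r_{N,2}$. I would verify that exponent bookkeeping carefully, and if it is too tight I would instead tune $\tau=N^{-1/4}\log^{-\kappa}N$.
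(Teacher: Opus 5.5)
Your proposal is correct and is essentially the paper's argument: the same action dichotomy, and freed particles over a window of length of order $N^{-1/4}$ (the paper takes $\tau=\kappa\sqrt{r_{N,2}}$). It also uses the fresh-noise lemma on windows where $P_\tau\mu^N_{t-\tau}$ is bounded, and the bound $\|\nabla p_\tau\|_{L^\infty}\leq C\tau^{-3/2}$ for clustered windows. The only difference is that you charge bad windows directly to $\mathsf D$ via $d_1(\mu^N_{t-\tau},\lambda)\geq c\,\tau^{3/2}$, whereas the paper applies Markov's inequality to $\{d_1(\mu^N_s,\lambda)>\tau^{3/2}\}$ and absorbs a factor $1+C r_{N,2}\tau^{-3/2}$.

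Your bookkeeping worry is unfounded. With $\tau=N^{-1/4}$ and $\mathsf A\leq r_{N,2}$, the displacement term is $\tau\sqrt{T\mathsf A}\leq\sqrt T\,r_{N,2}\log^{-1/4}(1+N)=o(r_{N,2})$, so neither a small $\delta$ nor a logarithmic retuning of $\tau$ is needed.
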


\begin{proof}
Abbreviate $r_N:=r_{N,2}$ and recall from
\eqref{eq:action-discrepancy} that
$J_N(0,\bx;\balpha)=\tfrac12\mathsf A+\mathsf D$.  If
$\mathsf A\geq r_N$ there is nothing to prove, so assume
$\mathsf A\leq r_N$.  Fix
\[
 \tau:=\kappa\sqrt{r_N},
\]
with $\kappa=\kappa(T)\in(0,1]$ chosen at the end.  Enlarging $N_0$, we
may assume $N^{-1/2}\leq\tau\leq T/2$ and
$\log(1+N\tau)\geq\tfrac13\log(1+N)$ (as
$N\tau=\kappa N^{3/4}\log^{1/4}(1+N)$).

For $s\in[0,T-\tau]$, define the freed
particles and their empirical measure by
\[
 \bar X^i_{s+\tau}:=X^i_s+\sqrt2\big(W^i_{s+\tau}-W^i_s\big),
 \qquad
 \bar\mu^N_{s+\tau}:=\frac1N\sum_{i=1}^N\delta_{\bar X^i_{s+\tau}} .
\]
Since $\dm(X^i_{s+\tau},\bar X^i_{s+\tau})\leq\int_s^{s+\tau}
|\alpha^i_t|\,dt$, the Cauchy--Schwarz inequality, first in time and
then in the particle and sample averages, gives
\begin{equation}\label{eq:freezing}
 \E\,d_1\big(\mu^N_{s+\tau},\bar\mu^N_{s+\tau}\big)
 \leq\sqrt{\tau\,\mathsf A_s},
 \qquad
 \mathsf A_s:=\E\int_s^{s+\tau}\frac1N\sum_{i=1}^N|\alpha^i_t|^2\,dt .
\end{equation}

The Lipschitz constant of $p_\tau(x-\cdot)$
is at most $C\tau^{-3/2}$, by the last bound of
Lemma~\ref{lem:riesz-kernel}, so
testing it against $\mu^N_s-\lambda$ in the duality
\eqref{eq:d1-duality} gives
\[
 \big\|P_\tau\mu^N_s-1\big\|_{L^\infty(\T^2)}
 \leq C\tau^{-3/2}\,d_1\big(\mu^N_s,\lambda\big).
\]
Hence, on the event
$\Gamma_s:=\{d_1(\mu^N_s,\lambda)\leq\tau^{3/2}\}$,
\begin{equation}\label{eq:density-good}
 \big\|P_\tau m^N_{\bX_s}\big\|_{L^\infty(\T^2)}\leq1+C=:\varrho .
\end{equation}

On a product extension carrying Brownian motions
$\widetilde W^1,\dots,\widetilde W^N$ independent of $\F_T$, set
$\widetilde X^i_{s+\tau}:=X^i_s
+\sqrt2(\widetilde W^i_{s+\tau}-\widetilde W^i_s)$ and let
$\widetilde\mu^N_{s+\tau}$ be the associated empirical measure.
The increments generating the $2N$ variables
$\bar X^i_{s+\tau},\widetilde X^i_{s+\tau}$ are independent of $\F_s$,
so the conditional law of these variables given $\F_s$ is the product
of the densities $p_\tau(\cdot-X^i_s)$, and
$\E[d_1(\bar\mu^N_{s+\tau},\widetilde\mu^N_{s+\tau})\mid\F_s]$ is the
expectation in \eqref{eq:fresh-noise} evaluated at the centers
$\by=\bX_s$.  On $\Gamma_s$ these centers satisfy the density
hypothesis of Lemma~\ref{lem:fresh-noise}, by \eqref{eq:density-good},
so, almost surely on $\Gamma_s$,
\[
 \E\big[d_1\big(\bar\mu^N_{s+\tau},\widetilde\mu^N_{s+\tau}\big)
 \,\big|\,\F_s\big]
 \geq c_\varrho\sqrt{\frac{\log(1+N\tau)}{N}}=:\rho_N .
\]
Since $\bar\mu^N_{s+\tau}$ and $\widetilde\mu^N_{s+\tau}$ have the
same conditional law, the triangle inequality upgrades this to
\[
 \E\big[d_1\big(\bar\mu^N_{s+\tau},\lambda\big)\,\big|\,\F_s\big]
 \geq\rho_N/2
 \qquad\text{on }\Gamma_s .
\]
Combining with \eqref{eq:freezing}, then applying
Markov's inequality to $\Gamma_s^c$ and the bound
$\rho_N\leq Cr_N$,
\begin{align*}
 \E\,d_1\big(\mu^N_{s+\tau},\lambda\big)
 &\geq\E\big[\1_{\Gamma_s}\,
 d_1\big(\bar\mu^N_{s+\tau},\lambda\big)\big]
 -\sqrt{\tau\,\mathsf A_s}
 \\
 &\geq\frac{\rho_N}2
 -C\,r_N\,\tau^{-3/2}\,\E\,d_1\big(\mu^N_s,\lambda\big)
 -\sqrt{\tau\,\mathsf A_s}\,.
\end{align*}

We integrate over $s\in[0,T-\tau]$: the two
discrepancy integrals are at most $\mathsf D$, while Fubini's theorem
gives $\int_0^{T-\tau}\mathsf A_s\,ds\leq\tau\mathsf A$ and, with the
Cauchy--Schwarz inequality,
$\int_0^{T-\tau}\sqrt{\tau\mathsf A_s}\,ds\leq\tau\sqrt{T\mathsf A}$.
Hence
\begin{equation}\label{eq:window-inequality}
 (T-\tau)\,\frac{\rho_N}2
 \leq\big(1+Cr_N\tau^{-3/2}\big)\,\mathsf D
 +\tau\sqrt{T\mathsf A}\,.
\end{equation}

By the choice of $\tau$ and
\eqref{eq:density-good}, $\rho_N\geq c_1r_N$ with $c_1=c_1(T)>0$,
while $r_N\tau^{-3/2}=\kappa^{-3/2}r_N^{1/4}$ vanishes as
$N\to\infty$, so, enlarging $N_0$, the prefactor in
\eqref{eq:window-inequality} is at most $2$.  Moreover
$\mathsf A\leq r_N$ gives
$\tau\sqrt{T\mathsf A}\leq\kappa\sqrt T\,r_N$, and $T-\tau\geq T/2$.
Thus \eqref{eq:window-inequality} yields
\[
 \frac{c_1T}{4}\,r_N\leq2\,\mathsf D+\kappa\sqrt T\,r_N,
\]
and the choice $\kappa:=1\wedge\big(c_1\sqrt T/8\big)$ leaves
$\mathsf D\geq(c_1T/16)\,r_N$.  In both cases
$J_N(0,\bx;\balpha)\geq\big(\tfrac12\wedge\tfrac{c_1T}{16}\big)r_N$.
\end{proof}

\addtocontents{toc}{\protect\setcounter{tocdepth}{1}}
\bibliographystyle{amsalpha}
\bibliography{references}

\end{document}